\documentclass[a4paper,11pt,reqno]{amsart}
\RequirePackage[english]{babel}

\RequirePackage[dvipsnames]{xcolor}

\RequirePackage{fancyhdr}
\RequirePackage{float} % Correct figure placement

\RequirePackage{tikz}
\usetikzlibrary{arrows}
\usetikzlibrary{arrows.meta}
\usetikzlibrary{patterns}
\usetikzlibrary{shapes}
\usetikzlibrary{calc} % Relative position
\usetikzlibrary{decorations.pathreplacing}
\usetikzlibrary{decorations.pathmorphing}
\usetikzlibrary{decorations.markings}
\usetikzlibrary{shapes.geometric}
\usetikzlibrary{shapes.multipart}
\usetikzlibrary{angles}
\usetikzlibrary{quotes}
\usetikzlibrary{positioning}
\usetikzlibrary{babel} % Compability with babel
\usetikzlibrary{graphs}
\usetikzlibrary{fit}
\usetikzlibrary{matrix}
\usetikzlibrary{cd}
\usetikzlibrary{intersections}
\usetikzlibrary{backgrounds}
\usetikzlibrary{tikzmark}
\usetikzlibrary{fadings}

\RequirePackage{rank-2-roots}

\RequirePackage{lmodern} % Schriftart
\RequirePackage[T1]{fontenc}
\RequirePackage[utf8]{inputenc}

\RequirePackage{amsmath}
\RequirePackage{amsfonts}
\RequirePackage{amsxtra}
\RequirePackage{amssymb}
\RequirePackage{amsthm}
\RequirePackage[intlimits]{empheq}
\RequirePackage{mathtools}
\RequirePackage{mathdots}
\RequirePackage{mathrsfs} %Fuer Schreibschrift Buchstaben (use \mathscr{})
\RequirePackage{dsfont}
\RequirePackage{xfrac}
\RequirePackage{stmaryrd}
\SetSymbolFont{stmry}{bold}{U}{stmry}{m}{n} % Um Warning zu unterdruecken

\RequirePackage[shortlabels]{enumitem} % Aufzaehlungen % Advanced enumerating

\RequirePackage{setspace}
\RequirePackage{textcase}
\RequirePackage{mparhack}
\RequirePackage{gensymb} % Erlaubt einige Symbole im Text- und Mathemodus
\RequirePackage{microtype}
\RequirePackage[symbol]{footmisc}

\RequirePackage{geometry}

\RequirePackage{todonotes}

\RequirePackage{booktabs}
\RequirePackage{multirow} % Mehrere Zellen uebere mehrere Reihen einer Tabelle
\RequirePackage{tabularx}
\RequirePackage[fixlanguage]{babelbib} % Fuer babel in Bibliography

\RequirePackage{caption}

\RequirePackage{mdframed} % Fuer Boxen um Theoreme

\RequirePackage{hyperref} % Anklichbare Referenzen
\RequirePackage[nameinlink]{cleveref} % Has to be loaded after hyperref

\RequirePackage{arydshln}

\makeatletter
\newif\iftikz@shading@path

\tikzset{
	shading xsep/.store in=\tikz@pathshadingxsep,
	shading ysep/.store in=\tikz@pathshadingysep,
	shading sep/.style={shading xsep=#1, shading ysep=#1},
	shading sep=0.0cm,
}

\def\tikz@shadepath#1{% 
  \iftikz@shading@path%
  \else%
    \tikz@shading@pathtrue%
    \pgfgetpath\tikz@currentshadingpath%
    \begingroup%
      \pgfsys@beginscope% <- may not be necessary
      \tikzset{#1}%
      \xdef\tikz@tmp{\noexpand\def\noexpand\tikz@pathshadingxsep{\tikz@pathshadingxsep}%
          \noexpand\def\noexpand\tikz@pathshadingysep{\tikz@pathshadingysep}}%
      \pgfsys@endscope%
    \endgroup
    \tikz@tmp%
  \pgfextract@process\pgf@shadingpath@southwest{\pgfpointadd{\pgfqpoint{\pgf@pathminx}{\pgf@pathminy}}%
      {\pgfpoint{-\tikz@pathshadingxsep}{-\tikz@pathshadingysep}}}%%
  \pgfextract@process\pgf@shadingpath@northeast{\pgfpointadd{\pgfqpoint{\pgf@pathmaxx}{\pgf@pathmaxy}}%
      {\pgfpoint{\tikz@pathshadingxsep}{\tikz@pathshadingysep}}}%
    \pgfsetpath\pgfutil@empty%                          
    \let\tikz@options@saved=\tikz@options%
    \let\tikz@mode@saved=\tikz@mode%
    \let\tikz@options=\pgfutil@empty%
    \let\tikz@mode=\pgfutil@empty%
    \tikz@addoption{%
      \pgfinterruptpath%
      \pgfinterruptpicture%
        \begin{tikzfadingfrompicture}[name=.]
        \pgfscope%
          \tikzset{shade path/.style=}% Make absolutely sure shade path is not inherited.
          \path \pgfextra{%
            \pgfsetpath\tikz@currentshadingpath%
            \pgf@shadingpath@southwest
            \expandafter\pgf@protocolsizes{\the\pgf@x}{\the\pgf@y}%
            \pgf@shadingpath@northeast%
            \expandafter\pgf@protocolsizes{\the\pgf@x}{\the\pgf@y}%
            \let\tikz@options=\tikz@options@saved%
            \let\tikz@mode=\tikz@mode@saved%
          };
      \xdef\pgf@shadingboundingbox@southwest{\noexpand\pgfqpoint{\the\pgf@picminx}{\the\pgf@picminy}}%
      \xdef\pgf@shadingboundingbox@northeast{\noexpand\pgfqpoint{\the\pgf@picmaxx}{\the\pgf@picmaxy}}%
        \endpgfscope
      \end{tikzfadingfrompicture}%
      \endpgfinterruptpicture%
      \endpgfinterruptpath%
      \pgftransformreset%
  \pgfpathrectanglecorners{\pgf@shadingboundingbox@southwest}{\pgf@shadingboundingbox@northeast}%
      \let\tikz@path@picture=\pgfutil@empty%
      \tikz@mode@fillfalse%
      \tikz@mode@drawfalse%
      \tikz@mode@doublefalse%
      \tikz@mode@clipfalse%
      \tikz@mode@boundaryfalse%
      \tikz@mode@fade@pathfalse%
      \tikz@mode@fade@scopefalse%
      \tikzset{#1}%
      \tikz@mode%
      \def\tikz@path@fading{.}%
      \tikz@mode@fade@pathtrue%
      \tikz@fade@adjustfalse%
    \pgfpointscale{0.5}{\pgfpointadd{\pgf@shadingboundingbox@southwest}{\pgf@shadingboundingbox@northeast}}%
      \edef\tikz@fade@transform{shift={(\the\pgf@x,\the\pgf@y)}}%
      \pgfsetfading{\tikz@path@fading}{\tikz@do@fade@transform}%
      \tikz@mode@fade@pathfalse%              
    }%
  \fi%
}
\tikzset{
	shade path/.code={%
		\tikz@addmode{\tikz@shadepath{#1}}%
	}
}
\makeatother % <- To close the \makeatletter call

\newcommand{\Nnum}{\mathbb N}
\newcommand{\Znum}{\mathbb Z}
\newcommand{\Qnum}{\mathbb Q}
\newcommand{\Rnum}{\mathbb R}
\newcommand{\Cnum}{\mathbb C}

\DeclareMathOperator{\id}{id}

\newcommand{\thref}[1]{\Cref{#1}}

\pgfmathsetmacro\xMax{8}
\pgfmathsetmacro\yMax{7}
\pgfmathsetmacro\yMAX{2*\yMax}
\pgfmathsetmacro\s{2*sqrt(3)/3}

\tikzset{
font=\scriptsize,
>=stealth', % latex style arrows just look better than the default.
shape triangle/.style = {draw, regular polygon, regular polygon sides=3, minimum height=\s cm, % anchor=center,
	inner sep=0pt,
	},
Point/.style={circle, inner sep=1.5pt}}

\renewcommand{\obar}[1]{\overline{#1}}
\newcommand{\ubar}[1]{\underline{#1}}

\newcommand{\quoted}[1]{``#1''}

\providecommand\given{} % needed for \Set
\newcommand\SetSymbol[1][]{%
	\nonscript\:#1\vert
	\allowbreak
	\nonscript\:
	\mathopen{}
}
\DeclarePairedDelimiterX\Set[1]\{\}{%
	\renewcommand\given{\SetSymbol[\delimsize]}
	#1
}

\newcommand\restr[3][]{
    \left.\kern-\nulldelimiterspace % automatically resize the bar with \right
	#2 % the function
	\vphantom{\big\vert} % size is at least like that of an integral
	\right\vert_{#3}^{#1}
}

\DeclareMathOperator{\Maps}{Maps}
\DeclareMathOperator{\Hom}{Hom}
\DeclareMathOperator{\End}{End}
\DeclareMathOperator{\Aut}{Aut}

\DeclareMathOperator{\Image}{Im} % Image
\DeclareMathOperator{\Ker}{Ker}	% Kernel
\DeclareMathOperator{\Coker}{Coker}	% Cokernel

\newcommand{\eucl}{\mathbb{E}}

\newcommand{\sector}{\mathcal S}
\newcommand{\transl}{t}

\DeclareMathOperator{\conv}{conv}

\newcommand{\sectorMap}{s}
\newcommand{\secMetricFunc}{d_\vartheta} % Metric for sectors
\newcommand{\secMetric}[1]{\secMetricFunc\rbrackets*{#1}} 
\newcommand{\dircSecMetric}[2]{\secMetricFunc^{#1}\rbrackets*{#2}}
\newcommand{\secOnQuot}{s} % Sector on quotient

\newcommand{\lipschitzFunc}[1][\vartheta]{\mathcal F_{#1}}

\newcommand{\stdSimplex}{\Delta}
\newcommand{\topolSpace}{X}
\newcommand{\simplStrucMaps}{\kappa}
\newcommand{\simplexSet}{\mathfrak S}
\newcommand{\labeling}{\operatorname{col}}

\newcommand{\coweightLat}{P^\vee}
\newcommand{\innerCoweights}{\coweightLat_{++}}
\newcommand{\domCoweights}{\coweightLat_+}
\newcommand{\corootLat}{Q^\vee}
\newcommand{\rootSys}{R}
\newcommand{\rootBasis}{B}
\newcommand{\weylGr}{W_0}
\newcommand{\extWeyl}{W_{\mathrm{ext}}}
\newcommand{\affWeyl}{W_{\mathrm{aff}}}

\newcommand{\trAut}{\Aut_{\mathrm{tr}}}
\newcommand{\trAutRoot}{\Aut_{\mathrm{tr}, \rootSys}}
\newcommand{\trHom}{\Hom_{\mathrm{tr}}}

\newcommand{\chambSys}{\mathcal C} % Chamber System

\DeclareMathOperator{\rank}{rank}

\DeclareMathOperator{\Res}{Res}

\newcommand{\build}{\Delta} % Building
\newcommand{\euclbuild}{\Delta} % Euclidean Building
\newcommand{\boundary}{\Omega}

\newcommand{\apartSys}{\mathcal A}

\newcommand{\apartment}{A}
\newcommand{\chambset}{\mathcal C} % Set of Chambers of Building
\newcommand{\coxMat}{M} % Coxeter matrix
\newcommand{\coxCompl}{\chambSys(W)} % Coxeter complex

\newcommand{\euclCox}{\Sigma} % Euclidean Coxeter complex

\newcommand{\quot}{\mathcal C}
\newcommand{\secQuot}{\sector(\quot)} % Set of admissible Sectors

\newcommand{\tOp}{\mathcal L} % Transfer Operator
\newcommand{\paramA}{\mu}
\newcommand{\paramB}{\lambda}

\newcommand{\ball}{B}

\newcommand{\lift}[1]{\widetilde{#1}} % Lift of function

\newcommand{\operator}{T}
\newcommand{\operatorFam}{T}

\newcommand{\diff}{\mathrm{d}} % d for integral

\newcommand{\complVs}{V}
\newcommand{\finiteSpace}{W}
\newcommand{\banachSpace}{X}

\DeclareMathOperator{\Eig}{Eig}
\DeclareMathOperator{\Hau}{H}

\newcommand{\specRad}{r}
\newcommand{\essSpecRad}{\specRad_{\mathrm{ess}}}

\newcommand{\semigr}{H}
\newcommand{\semigrgen}{\varpi}

\newcommand{\semigrelemB}{\varpi} % Das hat Joachim angepasst (von k)

\newcommand{\nPolyAlg}{\Cnum[X_1,\ldots,X_n]}
\newcommand{\polyAlg}{\Cnum[\ubar{X}]}
\newcommand{\genVar}{X}

\newcommand{\annulus}[2]{\mathbb{A}_{#1}^{#2}}

\DeclareMathOperator{\Stab}{Stab} % Stabilizer
\DeclareMathOperator{\Tor}{Tor} % Tor functor
\DeclareMathOperator{\Ext}{Ext} % Ext functor

\DeclarePairedDelimiter\rbrackets{\lparen}{\rparen}
\DeclarePairedDelimiter\abs{\lvert}{\rvert} % Absolute value
\DeclarePairedDelimiter\spn{\langle}{\rangle} % Span
\DeclarePairedDelimiter\spr{\langle}{\rangle} % Skalarprodukt
\DeclarePairedDelimiter{\norm}{\lVert}{\rVert} % Eine Norm (Normstriche)
\newlist{proofcase}{description}{4}
\setlist[proofcase]{font=\scshape}

\newlist{prooflist}{description}{4}
\setlist[prooflist]{font=\normalfont}

\newcommand{\ximpliesy}[2]{\item[\hspace{.65em}\quoted{\( \text{#1} \Rightarrow \text{#2} \)}\hspace{.05em}]}

\makeatletter
\newcommand{\myitem}[1]{%
\item[#1]\protected@edef\@currentlabel{#1}%
}
\makeatother

\definecolor{gray75}{gray}{0.75} % Das ist die Farbe für den Kapitel Strich

\definecolor{upbblue}{RGB}{0,32,91}
\definecolor{upbgray}{RGB}{85,85,85}
\definecolor{upblightgray}{RGB}{199,201,199}
\definecolor{upbred}{RGB}{215,51,103}
\definecolor{upbgreen}{RGB}{164,196,36}
\definecolor{upbcyan}{RGB}{24,176,226}
\definecolor{upborange}{RGB}{242,149,18}
\definecolor{upbcassis}{RGB}{169,57,131}
\definecolor{upblightblue}{RGB}{0,127,185}

\colorlet{remarkColor}{blue!20}

\definecolor{blue1}{HTML}{03045e}
\definecolor{blue2}{HTML}{023e8a}
\definecolor{blue3}{HTML}{0077b6}
\definecolor{blue4}{HTML}{0096c7}
\definecolor{blue5}{HTML}{00b4d8}
\definecolor{blue6}{HTML}{48cae4}
\definecolor{blue7}{HTML}{90e0ef}
\definecolor{blue8}{HTML}{ade8f4}

\theoremstyle{plain}
\newtheorem{thm}{Theorem}[section]
\newtheorem{prop}[thm]{Proposition}
\newtheorem{lem}[thm]{Lemma}
\newtheorem{cor}[thm]{Corollary}

\theoremstyle{definition}
\newtheorem{defi}[thm]{Definition}
\newtheorem{exmp}[thm]{Example}

\theoremstyle{remark}
\newtheorem{rem}[thm]{Remark}

\theoremstyle{empty}

\AddToHook{env/defi/begin}{\crefalias{thm}{defi}}
\AddToHook{env/exmp/begin}{\crefalias{thm}{exmp}}
\AddToHook{env/lem/begin}{\crefalias{thm}{lem}}
\AddToHook{env/rem/begin}{\crefalias{thm}{rem}}
\AddToHook{env/cor/begin}{\crefalias{thm}{cor}}
\AddToHook{env/prop/begin}{\crefalias{thm}{prop}}

\crefname{lem}{lemma}{lemmata}
\crefname{rem}{remark}{remarks}
\crefname{subsection}{subsection}{subsections}
\crefname{defi}{definition}{definitions}

\begin{document}
\title[Transfer operators on buildings]{Spectral theory for transfer operators on compact quotients of Euclidean buildings}

\author{Joachim Hilgert}
\email{hilgert@mail.upb.de}
\address{Institut f\"ur Mathematik, Universit\"at Paderborn, Paderborn, Germany}
\author{Daniel Kahl}
\email{daniel.kahl@mail.upb.de}
\address{Institut f\"ur Mathematik, Universit\"at Paderborn, Paderborn, Germany}
\author{Tobias Weich}
\email{tobias.weich@mail.upb.de}
\address{Institut f\"ur Mathematik und Insittut für Photonische Quantensysteme (PhoQS), Universit\"at Paderborn, Paderborn, Germany}

\begin{abstract}
	In this paper we generalize the geodesic flow on (finite) homogeneous graphs to a multiparameter flow on compact quotients of Euclidean buildings.
	Then we study the joint spectra of the associated transfer operators acting on suitable Lipschitz spaces.
	The main result says that outside an arbitrarily small neighborhood of zero in the set of spectral parameters the Taylor spectrum of the commuting family of transfer operators is contained in the joint point spectrum. 
\end{abstract}

\maketitle

%%%%%%%%%%%%%%%%%%%%%%%%%%%%%%%%%%%%%%%%%%%%%%%%%%%%%%%%%%%%%%%%%%

\setcounter{tocdepth}{2}
\tableofcontents

%%%%%%%%%%%%%%%%%%%%%%%%%%%%%%%%%%%%%%%%%%%%%%%%%%%%%%%%%%%%%%%%%%

\section{Introduction}
A spectral theoretic invariant that can be associated to a large class of dynamical systems that exhibit in some sense enough hyperbolicity is the set of dynamical resonances.
They had their origin in the thermodynamic formalism developed by Ruelle, Bowen, and Pollicott \cite{Ruelle1978, Bowen1975, Pollicott1985} and others in the 70s and were particularly aiming to understand the leading resonance, the associated invariant Gibbs measures and their statistical properties (e.g.\ exponential mixing, central limit theorems, etc.).
In the past 20 years it has however become clear that not only the leading resonance is of interest, but that the whole spectrum of dynamical resonances provides an interesting spectral invariant of the dynamical systems.
The foundation of this paradigm shift was the development of anisotropic Sobolev spaces \cite{BlankKellerLiverani2002, Liverani2004, BaladiTsujii2007, ButterleyLiverani2007, FaureSjostrand2011, DyatlovGuillarmou2016, BonthonneauWeich2022} which allowed to define, for smooth hyperbolic flows, a resonance spectrum in the whole complex plane.
These spectral foundations quickly lead to important applications, such as the proof of Smale's conjecture on the meromorphic continuation of Ruelle's zeta function \cite{GiuliettiLiveraniPollicott2013, DyatlovZworski2016, DyatlovGuillarmou2016, DyatlovGuillarmou2018}, a proof of Fried's conjecture in small dimension \cite{DangGuillarmouRiviereShen2020}, length spectrum rigidity \cite{GL19, GPL25}, and semiclassical zeta functions \cite{FaureTsujii2015}.

A development of central significance for the present article is the study of dynamical resonances on locally symmetric spaces:
Given a rank one real semisimple Lie group \( G \) and a discrete torsion free subgroup \( \Gamma\subset G \), the geodesic flow on the locally symmetric space \( \Gamma\backslash G/K \) is given by the right \( A \cong \Rnum \)-action on \( \Gamma\backslash G/M \), where \( P=MAN \) is the Langlands decomposition of the minimal parabolic \( P\subset G \).
In this setting the geodesic flow can thus be studied via homogeneous dynamics and the algebraic structure has strong implications on the dynamical resonances such as an exact band structure and an exact correspondence to the spectrum of the Laplace--Beltrami operator on the locally symmetric space (often called quantum-classical correspondence, QCC).
We refer to \cite{DyatlovFaureGuillarmou2015} for the pioneering work and to \cite{GuillarmouHilgertWeich2018, DelarueWeich2021} for extensions to convex cocompact groups and general rank one locally symmetric spaces (see also \cite{Hil25} for a current review on this topic).
A natural question concerning these results are the generalizations to higher rank Lie groups \( G \).
In these cases, \( A \cong \Rnum^r \) is a higher dimensional abelian group and the right \( A \)-action on \( \Gamma\backslash G/M \), which is called the Weyl chamber flow, a higher rank Anosov action.
In \cite{BonthonneauGuillarmouHilgertWeich2024} dynamical resonances for higher rank Anosov actions on compact manifolds could be established.
A major challenge in this work was to merge tools from spectral theory of commuting operators and tools for the establishing of resonances.
It turned out that Taylor's framework for spectral theory of commuting operators is particularly useful as its homological approach allows the essential construction of parametrices that allow to transform the problem into a Fredholm problem.
This work quickly lead to applications of new SRB and Gibbs measures as well as equidistribution of torus orbits \cite{BonthonneauGuillarmouWeich_JDG, Humpert_Preprint} and quantum classical correspondence \cite{HWW23}.
Note furthermore that very recently, dynamical resonances could also be defined for higher rank actions related to Anosov representations \cite{BLW26}, building on a breakthrough on the existence of good domains of discontinuity by Delarue, Monclair and Sanders \cite{DMS25GAFA, DMS25}.

The present article aims to extend this theory when transitioning from real Lie groups \( G \) to \( p \)-adic Lie groups such as \( \mathrm{SL}_n(\Qnum_p) \).
The analogues of symmetric spaces for \( p \)-adic Lie groups are the Bruhat--Tits buildings of \( G \), which are simplicial complexes with a high degree of symmetry \cite{kalethaPrasad23}.
These Bruhat--Tits buildings, obtained from \( p \)-adic groups, possess a rich combinatorial structure of chambers, apartments, sectors, galleries, and so on.
This combinatorial structure has been successfully abstracted and axiomatized, leading to the more general framework of Euclidean buildings \cite{abramenkoBrown08}.
In this paper, we will work entirely within the axiomatic framework of Euclidean buildings and will not directly refer to \( p \)-adic groups, although this point of view remains a major motivation for this work. 

Indeed, the translation of analytic and dynamical questions from the smooth setting of symmetric spaces to the discrete geometry of buildings is an active and highly fruitful area of current research.
Notable recent examples include results on quantum ergodicity in the Benjamini--Schramm limit \cite{Pet23}, estimates on heat kernels \cite{AST25} and caloric functions \cite{PT25}, as well as the study of wave equations \cite{ART23}.
Regarding dynamical resonances, to the best of our knowledge, investigations have so far been restricted to the rank-one setting, namely graphs and trees.
In this context, the dynamics under consideration is the non-backtracking random walk.
While the spectral theory of these dynamics has long been understood in the framework of subshifts of finite type, it remains of considerable interest, particularly in the context of Ramanujan graphs and their mixing properties, with applications in computer science \cite{ABLS07, LP16, EFMP25}.
Moreover, aspects of the quantum--classical correspondence between the non-backtracking walk and the vertex averaging operator---an incarnation of the Ihara--Bass identity---have recently seen significant refinements \cite{BHW25, AHF25, AHF25b, AP26}.
Note that in \cite{LLP20} Lubetzky, Lubotzky, and Parzanchevski are also interested in a higher-rank generalization of non-backtracking random walks by introducing $j$ dimensional geodesic flows.
For a relation to the dynamics studied in this work see \thref{rem:llp20}.

Here is a brief description of our setting.
Let \( \euclbuild \) be a Euclidean building, which we will view as a topological space with a simplicial structure satisfying a specific list of axioms (see \thref{sec:EuclBuild}).
Any such building is associated with a Euclidean Coxeter complex \( \euclCox \), which is a tessellation of a Euclidean space \( \eucl \) into simplices by the action of an affine Weyl group \( \affWeyl \), a reflection group arising from a root system \( \rootSys \).
Note that the set of zero-dimensional simplices (the vertices) of this Coxeter complex contains the coweight lattice \( \coweightLat \subset \eucl \)\footnote{In the \( \widetilde{A}_n \) case, the coweight lattice even coincides with the vertices of the Coxeter complex.}.
Choosing a basis (a set of simple positive roots) \( \rootBasis \subset \rootSys \) defines a positive Weyl chamber, respectively a fundamental sector
\[
\sector_0 \coloneqq \Set{x\in\eucl \given \spr{\alpha,x} \geq 0 \text{ for all } \alpha \in \rootBasis}\subset \eucl,
\]
and this sector inherits a simplicial structure from \( \euclCox \).
In particular, \( \domCoweights\coloneqq \sector_0 \cap \coweightLat \) is an abelian monoid of \( \coweightLat \). 
As in \cite{BonthonneauGuillarmouHilgertWeich2024}, we want to work on compact spaces and therefore require that $\quot$ is a compact simplicial complex whose universal cover is a Euclidean building \( \euclbuild \), and that the covering map \( p \colon \euclbuild \to \quot \) is a type-preserving simplicial morphism. We call such a \( \quot \) a compact local building (see \thref{chap:genSetting} for an exact exposition of the geometric setting, as well as a definition of type-preserving morphisms). 
We can then define the space of sectors in \( \quot \) as
\[
\secQuot \coloneqq \Set{s \colon \sector_0 \to \quot \given \text{locally injective, type-rotating simplicial morphism}}.
\]
Note that this space is in complete analogy to the Weyl chamber flow described above:
for a locally symmetric space, the biquotient \( \Gamma \backslash G/M \) on which the Weyl chamber flow acts is in bijection with the space of local isometries of a positive Weyl chamber into \( \Gamma\backslash G/K \)\footnote{In a forthcoming paper that adopts the Lie-theoretic point of view, we will explain, among other things, that for buildings \( \euclbuild \) derived from \( p \)-adic Lie groups, the space \( \secQuot \) is in bijection with a biquotient analogous to \( \Gamma\backslash G/M \).}.

Now, a natural dynamic is described by the action of the dominant coweights.
Note that \( \domCoweights \) acts on \( \euclCox \) by type-rotating morphisms and maps the fundamental sector \( \sector_0 \) into itself.
We can thus, for any $\mu \in \domCoweights$, define the shift
\[
\sigma_\mu \colon \secQuot \to \secQuot,\; \secOnQuot \mapsto \secOnQuot(\bullet + \mu).
\]
This provides us with an action of the abelian monoid \( \domCoweights \) on the space \( \secQuot \), whose spectral theory is our principal object of study.
Note furthermore very similar dynamical systems were studied by Mozes in \cite{mozes95} order to obtain results on Cartan actions on buildings.
While Mozes studied multidimensional subshifts (which implicitly also play a role in our work) he did not study dynamical resonances.

In order to define dynamical resonances, we must endow this space with some structure.
A major difference, compared to the case of real symmetric spaces, is that \( \secQuot \) is no longer a manifold.
However, it can be endowed with a family of natural metrics \( \secMetricFunc \), which are ultrametrics and endow the space \( \secQuot \) with a Cantor-like topology.
The metric leads, in turn, to well-defined Banach spaces \( \lipschitzFunc \) of complex-valued Lipschitz functions on the metric space \( (\secQuot, \secMetricFunc) \).
It is this space that takes the role of the anisotropic Sobolev spaces defined via microlocal analysis in \cite{BonthonneauGuillarmouHilgertWeich2024}.
For any \( \mu\in \domCoweights \), we can define a transfer operator
\[
\lipschitzFunc \ni \varphi \mapsto \tOp_\mu \varphi(\secOnQuot) \coloneqq \frac{1}{M_\mu} \sum_{\secOnQuot^\prime \in \sigma^{-1}_\mu(\secOnQuot)} \varphi(\secOnQuot^\prime).
\]
By the commutativity of the \( \domCoweights \)-action, this yields a family of commuting operators on \( \lipschitzFunc \).
What remains to be discussed is the space in which we naturally define a spectrum of dynamical resonances in this setting.
Note that \( \domCoweights \) is freely generated by the fundamental weights \( \varpi_1,\ldots, \varpi_n \) associated with the basis \( \rootBasis \subset \rootSys \), so we obtain \( n \) commuting operators \( \tOp_{\varpi_1},\ldots, \tOp_{\varpi_n} \), and a joint spectrum is naturally expressed in terms of tuples in \( \Cnum^n \).

As this tuple might depend on choices, it is more convenient to express a spectrum as a character \( \chi\in \Hom_{\Cnum\text{-}\mathrm{alg.}} (\Cnum[\domCoweights],\Cnum) \), where \( \Cnum[\domCoweights] \) is the semigroup algebra of \( \domCoweights \).
Such a character associates to any \( \mu\in \domCoweights \) a spectral value \( \chi(\mu) \) for the operator \( \tOp_\mu \).

As in \cite{BonthonneauGuillarmouHilgertWeich2024}, we want to study the Taylor spectrum of the family of operators \( \tOp_\mu \), and we refer to \thref{sec:SpecTheory} for an introduction to Taylor spectra.
So far, we only emphasize that, in general, the Taylor spectrum \( \sigma_{\mathrm{T}}(\tOp) \) of \( \tOp \coloneqq (\tOp_{\varpi_1},\ldots, \tOp_{\varpi_n}) \) is not discrete and far from being fully described by the spectrum of joint eigenvalues, which is given by
\[
\sigma_{\mathrm{p}}(\tOp) \coloneqq \Set{\chi\in \Hom_{\Cnum\text{-alg.}} (\Cnum[\domCoweights],\Cnum)\given \exists\varphi\in\lipschitzFunc\setminus \{0\}, \ \tOp_\mu\varphi = \chi(\mu) \varphi \ \forall \mu\in\domCoweights}.
\]
Our main theorem, however, establishes that for characters large enough with respect to the parameter \( \vartheta \) (which can be chosen arbitrarily small to make this a very weak condition), the entire Taylor spectrum is given by the discrete eigenvalue spectrum of \( \tOp_\mu \) on \( \lipschitzFunc \):
\begin{thm}
	Let \( \quot \) be a compact local building.
	Then, for any $ 0 < \vartheta < 1 $, we have
	\[
	\sigma_{\mathrm{T}}(\tOp) \cap \Set{\chi \in \Hom_{\Cnum\text{-}\mathrm{alg.}}(\Cnum[\domCoweights],\Cnum) \given \abs{\chi} \geq \vartheta} \subset \sigma_{\mathrm{p}}(\tOp)
	\]
	where we define
	\[
	\abs{\chi} \coloneqq \sup \Set{\abs{\chi(\mu)}\given \mu\in \coweightLat\text{ with } \spr{\alpha,\mu} > 0 \text{ for all } \alpha \in \rootBasis}.
	\]
\end{thm}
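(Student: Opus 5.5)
The plan follows the strategy of \cite{BonthonneauGuillarmouHilgertWeich2024}: prove a quasi-compactness (Lasota--Yorke/Doeblin--Fortet) estimate for each \( \tOp_\mu \) with \( \mu \) strictly dominant, and then feed it into the homological (Koszul complex) description of the Taylor spectrum to reduce everything to a Fredholm problem on \( \lipschitzFunc \).

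\textbf{Step 1: Lasota--Yorke estimate.} For \( \mu\in\coweightLat \) with \( \spr{\alpha,\mu}>0 \) for all \( \alpha\in\rootBasis \), I would show
\[
\norm{\tOp_\mu^k\varphi}_{\vartheta}\leq C\vartheta^{k}\norm{\varphi}_{\vartheta}+C\norm{\varphi}_\infty .
\]
Two ingredients are needed.
\begin{itemize}
\item \(\tOp_\mu\) is a normalized averaging, so \( \norm{\tOp_\mu\varphi}_\infty\le\norm{\varphi}_\infty \).
\item Two preimages under \( \sigma_\mu \) of nearby sectors can be paired so that their agreement region grows. If \( s,s' \) agree on a ball of radius \( N \) in \( \sector_0 \), the paired preimages agree on the preimage of that region under translation by \( \mu \). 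Since \( \mu \) lies in the open cone, this preimage contains a ball whose radius is at least \( N+1 \).
\end{itemize}
This pairing is where the local building axioms (extension of sectors, covering by \( \euclbuild \)) enter, and it is also where the count \( M_\mu \) of preimages being locally constant is needed.

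Since \( \norm{\cdot}_\infty \) is compact relative to \( \norm{\cdot}_\vartheta \) on the compact Cantor-type space \( \secQuot \) (Arzel\`a--Ascoli), Hennion's theorem gives \( \essSpecRad(\tOp_\mu)\le\vartheta^{m(\mu)} \). Here \( m(\mu) \) measures how much \( \mu \) contracts the metric; after adjusting the normalisation of \( \secMetricFunc \) it equals \( 1 \) for the relevant \( \mu \). The consequence is that \( \tOp_\mu-\lambda \) is Fredholm of index zero whenever \( \abs{\lambda}>\vartheta \).

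\textbf{Step 2: from Fredholmness to the Taylor spectrum.} Let \( \chi\in\sigma_{\mathrm{T}}(\tOp) \) with \( \abs{\chi}\ge\vartheta \). By definition of \( \abs{\chi} \) as a supremum, I would choose a strictly dominant \( \mu \) with \( \abs{\chi(\mu)}>\vartheta \). The edge case \( \abs{\chi}=\vartheta \) needs care: either the estimate in Step 1 is strict, or one uses powers \( k\mu \) and replaces \( \vartheta \) by any \( \vartheta'<\vartheta \) in the Lipschitz space, using that the spaces are nested. Then \( P:=\tOp_\mu-\chi(\mu) \) is Fredholm.

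Decompose \( \lipschitzFunc=E\oplus F \), where \( E \) is the finite-dimensional generalized eigenspace of \( \tOp_\mu \) at \( \chi(\mu) \), obtained from the Riesz projection. Both summands are invariant under the whole commuting family, and \( P \) is invertible on \( F \). Taylor's projection and spectral-mapping properties give
\[
\sigma_{\mathrm{T}}(\tOp|_F)\subset\Set{\chi' \given \chi'(\mu)\in\sigma(\tOp_\mu|_F)},
\]
so \( \chi\notin\sigma_{\mathrm{T}}(\tOp|_F) \). Since the Taylor spectrum of a direct sum is the union of the Taylor spectra of the summands, \( \chi\in\sigma_{\mathrm{T}}(\tOp|_E) \). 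On the finite-dimensional space \( E \), the Taylor spectrum of a commuting tuple equals its set of joint eigenvalues, obtained by simultaneous triangularization. Hence there is a common eigenvector \( \varphi \) for all \( \tOp_{\varpi_i} \), and so for all \( \tOp_\nu \), with eigenvalue \( \chi \), i.e. \( \chi\in\sigma_{\mathrm{p}}(\tOp) \).

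\textbf{Main obstacle.} I expect the hard part to be Step 1, the contraction estimate. Precisely, one must show that shifting by a strictly dominant coweight contracts \( \secMetricFunc \) uniformly on the fibres of \( \sigma_\mu \). This requires a careful combinatorial description of \( \sigma_\mu^{-1}(s) \) via the building axioms. The main concern there is the behaviour near the walls of \( \sector_0 \): shifts that are only dominant, not strictly dominant, do not enlarge the agreement region in every direction. This is exactly why \( \abs{\chi} \) is defined using the open cone.
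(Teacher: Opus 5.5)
Away from the boundary, your argument is the paper's. The pairing of preimages (\thref{prop:basePointProp}) and the contraction \( \secMetricFunc(s_1,s_2)\le\vartheta\,\secMetricFunc(\sigma_\mu s_1,\sigma_\mu s_2) \) for \( \mu\in\innerCoweights \) (\thref{prop:shiftKeyInequality}) give the Lasota--Yorke inequality. Hennion's criterion then gives \( \essSpecRad(\tOp_\mu)\le\vartheta \) (\thref{thm:transferOpInteriorQuasiComp}). Finally, the Riesz splitting reduces the Taylor homology to a finite-dimensional invariant generalized eigenspace, where Taylor spectrum and joint point spectrum coincide. The deviations are minor:
\begin{itemize}
\item you get compactness from Arzel\`a--Ascoli, where the paper uses the finite-rank approximations \( \tOp_\mu\Pi_n \);
\item you kill the homology on the complement with Taylor's spectral mapping theorem, where the paper writes \( \tOp_\mu-\chi(\mu)=\sum_i(\tOp_{\varpi_i}-\chi(\varpi_i))B_i \) with \( B_i \) in the algebra (\thref{prop:parametrixForSemigr}) and uses the resulting chain homotopy (\thref{lem:projectionOnEigenspace}). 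The paper's route is more elementary; the effect is the same.
\end{itemize}

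What fails is your treatment of the edge case \( \abs{\chi}=\vartheta \), and it cannot be repaired within this method. If \( \abs{\chi}=\vartheta \), then by definition of the supremum \( \abs{\chi(\mu)}\le\vartheta \) for every strictly dominant \( \mu \). So no \( \tOp_\mu-\chi(\mu) \) is known to be Fredholm. Neither of your two fixes works.
\begin{itemize}
\item \emph{Powers.} They do not help, since \( \abs{\chi(k\mu)}=\abs{\chi(\mu)}^k \) while the available bound \( \essSpecRad(\tOp_{k\mu})\le\vartheta^k \) scales identically.
\item \emph{A strict estimate.} The bound \( \vartheta \) is sharp in general. Already in rank one (non-backtracking paths on a finite regular graph), take \( \varpi=\varpi_1 \), \( 0\neq g\in F_m \) with \( \tOp_{\varpi}g=0 \) (such \( g \) exist because \( \tOp_\varpi(F_m)\subseteq F_{m-1} \) by \thref{prop:fnInvariantQuot}). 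Then \( \varphi=\sum_{j\ge0}\lambda^j\,g\circ\sigma_{\varpi}^j \) satisfies \( \tOp_\varpi\varphi=\lambda\varphi \) and is Lipschitz for every \( \abs{\lambda}\le\vartheta \). Since \( \dim F_m-\dim F_{m-1}\to\infty \), these eigenvalues have infinite multiplicity. Hence the whole closed disc of radius \( \vartheta \), circle included, lies in the essential spectrum.
\item \emph{Passing to \( \vartheta'<\vartheta \).} This is not legitimate. The Taylor spectrum is computed on the fixed space \( \lipschitzFunc \) and is not monotone under \( \mathcal F_{\vartheta'}\subset\lipschitzFunc \). By the example, points with \( \vartheta'<\abs{z}\le\vartheta \) lie in the spectrum on \( \lipschitzFunc \) but generally not on \( \mathcal F_{\vartheta'} \).
\end{itemize}

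So your argument proves the inclusion only for \( \abs{\chi}>\vartheta \). This is also all the paper's proof delivers: \thref{thm:jointEigenvalues} and \thref{thm:jointEigenvaluesOnFiniteDimSpace} require \( \abs{\chi(k)}>\vartheta \) for some \( k\in\innerCoweights \). The boundary case would need a different, non-Fredholm argument; in the rank-one example boundary points do turn out to be eigenvalues, but of infinite multiplicity. You should state the strict version rather than claim the edge case is handled.
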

We thus have obtained a discrete set of dynamical resonances that are an intrinsic spectral invariant associated to the \( \domCoweights \)-action on \( \secQuot \) for any compact local building \( \quot \).
We obtain even much stronger statements that also imply the finiteness of all joint eigenspaces and exactly relate the Taylor cohomologies to the cohomology on a finite-dimensional complex.
As these statements, however, require a precise introduction of the Taylor complexes, we refrain from stating them in the introduction and refer to \thref{sec:SpecTheory} for a statement of these results.

Let us shortly outline the article:
In \thref{chap:genSetting}, we recall the foundational definitions of simplicial structures, labeled chamber systems, and their morphisms.
\thref{sec:ECC} and \thref{sec:EuclBuild} review the necessary geometric background on Euclidean and spherical Coxeter complexes, root systems, and buildings, formalizing the notions of sectors and type-rotating morphisms.
In \thref{sec:dynaSetting}, we introduce the primary dynamical system: we define the space of sectors \( \secQuot \) on a compact local building, equip it with a natural ultrametric, and define the shift dynamics driven by the monoid of dominant coweights.
\thref{sec:Transfer Operator} is dedicated to the analytic properties of the transfer operators associated with these shifts. We define appropriate Banach spaces of Lipschitz functions, establish a crucial Lasota--Yorke-type key inequality, and prove that transfer operators corresponding to strongly dominant coweights are quasi-compact.
Finally, in \thref{sec:SpecTheory}, we merge these analytic results with Taylor's homological spectral theory for commuting operators. By constructing suitable parametrices, we bound the essential Taylor spectrum and prove our main theorem, establishing that the dynamical resonances outside a strictly bounded radius are given precisely by a discrete joint eigenvalue spectrum.

\textbf{Acknowledgements} We thank Dominik Brennecken, Carsten Peterson and Kai-Uwe Bux for numerous stimulating discussions about buildings and Margit Rösler and Lukas Langen for helpful suggestions on the spectral theory of Banach algebras. We furthermore thank Nikhil Srivastava for encouraging discussions and hints to the literature about higher rank analoga of non-backtracking random walks.
All three authors acknowledge support from the Deutsche Forschungsgemeinschaft (DFG) Grant No. SFB-TRR 358/1 2023 - 491392403 (CRC \quoted{Integral Structures in Geometry and Representation Theory}).

\section{General Setting}\label{chap:genSetting}

\subsection{Simplicial Complexes}\label{sec:simpCompl}

The definitions for the simplices are taken from \cite[Section 2.1]{hatcher02}.

For \( n \in \Nnum_0 \) the \emph{standard \( n \)-simplex}, denoted \( \stdSimplex^n \) is the set
\[
\stdSimplex^n \coloneqq \Set*{(t_0,\ldots,t_n) \in \Rnum^{n+1} \given t_i \geq 0 \text{ and } \sum_{i=0}^n t_i = 1} \subseteq \Rnum^{n+1}
\]
together with the subspace topology. A \emph{face} of \( \stdSimplex^n \) is a subset \( F \) defined by a subset of indices \( K \subseteq \Set{0,\ldots,n} \) of the form
\[
F = \Set*{(t_0,\ldots,t_n) \in \Rnum^{n+1} \given t_i \geq 0, \; \sum_{i=0}^n t_i = 1 \text{ and } t_k = 0 \text{ for all } k \in K}.
\]

Obviously any face \( F \) of \( \stdSimplex^n \) is linearly homeomorphic to \( F \cong \stdSimplex^\ell \), where \( \ell = n - \text{\quoted{number of indices equal zero}} \).
The faces linearly homeomorphic to \( \stdSimplex^{n-1} \) of \( \stdSimplex^n \) are called \emph{panels} and the union of all panels of \( \stdSimplex^n \) is the \emph{boundary} of \( \stdSimplex^n \), written \( \partial \stdSimplex^n \).

The \emph{open simplex} \( \mathring{\stdSimplex}^n \coloneqq \stdSimplex^n \setminus \partial \stdSimplex^n \) is the interior of \( \stdSimplex^n \).

\begin{defi}[Simplicial structure]\label{defi:simplStruc}
	A \emph{simplicial structure} on a topological space \( \topolSpace \) is a collection of continuous maps \( (\simplStrucMaps_\alpha \colon \stdSimplex^{n_\alpha} \to \topolSpace)_{\alpha \in J} \) such that
	\begin{enumerate}[(i)]
		\item
			the restriction \( \restr{\simplStrucMaps_\alpha}{\mathring{\stdSimplex}^{n_\alpha}} \) is injective, and each point of \( \topolSpace \) is in the image of exactly one such restriction \( \restr{\simplStrucMaps_\alpha}{\mathring{\stdSimplex}^{n_\alpha}} \).
		\item
			each restriction of \( \simplStrucMaps_\alpha \) to a panel of \( \stdSimplex^{n_\alpha} \) is one of the maps \( \simplStrucMaps_\beta \colon \stdSimplex^{n_\beta} \to \topolSpace \) with \( n_\beta = n_\alpha - 1 \).
			Here we are identifying the panel of \( \stdSimplex^{n_\alpha} \) with \( \stdSimplex^{n_\alpha-1} \) by the canonical linear homeomorphism between them.
			
			Consequently each restriction of \( \simplStrucMaps_\alpha \) to a face of \( \stdSimplex^{n_\alpha} \) is one of the maps \( \simplStrucMaps_\beta \) with \( \beta \in J \).
			In this case we call \( \simplStrucMaps_\beta \) a \emph{face} of \( \simplStrucMaps_\alpha \) and write \( \simplStrucMaps_\beta \leq \simplStrucMaps_\alpha \).
			This defines a partial ordering.
		\item
			for any \( \simplStrucMaps_\alpha, \simplStrucMaps_\beta \) with \( \Image \simplStrucMaps_\alpha \cap \Image \simplStrucMaps_\beta \neq \emptyset \).
			We find a map \( \simplStrucMaps_\gamma \colon \stdSimplex^{n_\gamma} \to \topolSpace \) with \( \Image \simplStrucMaps_\gamma = \Image \simplStrucMaps_\alpha \cap \Image \simplStrucMaps_\beta \)
			and \( \simplStrucMaps_\gamma \) is a face of both \( \simplStrucMaps_\alpha \) and \( \simplStrucMaps_\beta \). By abuse of notation we will also write $\simplStrucMaps_\gamma=\simplStrucMaps_\alpha\cap \simplStrucMaps_\beta$.
		\item
			a set \( A \subseteq \topolSpace \) is open if and only if \( \simplStrucMaps_\alpha^{-1}(A) \) is open in \( \stdSimplex^{n_\alpha} \) for each \( \simplStrucMaps_\alpha \).
	\end{enumerate}
	We will usually write \( (\topolSpace, (\simplStrucMaps_\alpha)) \) for a simplicial structure.
	
	Furthermore, we introduce the following terms:
	\begin{enumerate}[(a)]
		\item
			We call a map \( \simplStrucMaps_\alpha \) \emph{simplex} and write \( \simplexSet(\topolSpace, (\simplStrucMaps_\alpha)) \coloneqq \simplexSet(\topolSpace) \coloneqq \simplexSet \) for the set of all simplices.
		\item
			If a simplex \( \simplStrucMaps \) is no face of any simplex we call it a \emph{maximal simplex}.
			This is equivalent to being maximal with respect to the above defined partial ordering.
		\item
			The \emph{dimension} \( \dim \simplStrucMaps \) of a simplex \( \simplStrucMaps \colon \stdSimplex^{n} \to X \) is \( n \).
			
			For the set of all simplices of \( (\topolSpace, (\simplStrucMaps_\alpha)) \) of a fixed dimension \( \ell \) we write
			\[
			\simplexSet_\ell(\topolSpace, (\simplStrucMaps_\alpha)) \coloneqq \Set{\simplStrucMaps \given \dim \simplStrucMaps = \ell}.
			\]
			We call the elements of \( \simplexSet_0(\topolSpace, (\simplStrucMaps_\alpha)) \) \emph{vertices}.
		\item
			Let \( \simplStrucMaps \colon \stdSimplex^{n} \to \topolSpace \) be a simplex.
			We define the \emph{vertex set} \( \simplexSet_0(\simplStrucMaps) \) of \( \simplStrucMaps \) as the set of all dimension \( 0 \) faces of \( \simplStrucMaps \), i.e.,
			\[
			\simplexSet_0(\simplStrucMaps) \coloneqq \Set{\simplStrucMaps^\prime \given \simplStrucMaps^\prime \leq \simplStrucMaps} \cap \simplexSet_0(\topolSpace, (\simplStrucMaps_\alpha)).
			\]
	\end{enumerate}
\end{defi}

\begin{rem}\label{rem:simplexDescrByVertizes}
	One can check that every simplex \( \simplStrucMaps \) is uniquely determined by its vertex set \( \simplexSet_0(\simplStrucMaps) \).
	That means we have \( \simplexSet_0(\simplStrucMaps_\alpha) = \simplexSet_0(\simplStrucMaps_\beta) \) if and only if \( \simplStrucMaps_\alpha = \simplStrucMaps_\beta \).
	
	As a consequence we can conclude that every simplicial structure gives us an (abstract) simplicial complex as defined in \cite[Section 3.1]{spanier82}.
	To see this we take the vertex set \( \simplexSet_0(\topolSpace, (\simplStrucMaps_\alpha)) \) and the simplices are given by \( \Set{\simplexSet_0(\simplStrucMaps_\alpha) \given \alpha \in J} \).
	
	Every (abstract) simplicial complex has a geometric realization, 
	the construction of which can be found in \cite[Section 2.1]{hatcher02} or \cite[Section 3.1]{spanier82}.
	As one can see the geometric realization agrees with our definition of a simplicial structure.
	The benefit of our approach is that we can use all the terminology and results from the theory of (abstract) simplicial complexes and apply it to our setting.
	Later on we want to use coverings and covering spaces.
	In this setting it is useful to have the topology in the background to apply the results from the covering theory of topological spaces (see \cite[Section 1.3]{hatcher02}).
\end{rem}

\begin{rem}\label{rem:namingAndConventions}
	We use the same naming conventions for \( \simplStrucMaps_\alpha \) and \( \Image \simplStrucMaps_\alpha \).
	Moreover for the simplex sets \( \simplexSet_\ell(\topolSpace, (\simplStrucMaps_\alpha)) \) we write \( \simplexSet_\ell \) whenever the simplicial structure is clear from the context.
	
	Moreover, we want to view \( \simplexSet_0 \) as a subset of \( \topolSpace \).
	This can be done by identifying the one element set \( \Image \simplStrucMaps \subseteq \topolSpace \) with its element (\( \simplStrucMaps \in \simplexSet_0 \)).
		Hence we can view \( \simplexSet_\ell \) as an element of the power set of \( \simplexSet_0 \).
\end{rem}

\begin{defi}[Simplicial Substructure]
	Let \( (\topolSpace, (\simplStrucMaps_\alpha)_{\alpha \in J}) \) be a simplicial structure.
	A \emph{simplicial substructure} is a simplicial structure \( (\topolSpace^\prime, (\simplStrucMaps_\beta)_{\beta \in K}) \) such that
	\begin{enumerate}[(i)]
		\item
			\( \topolSpace^\prime \subseteq \topolSpace \), and the topology on $\topolSpace^\prime$ coincides with the subspace topology and
		\item
			\( (\simplStrucMaps_\beta)_{\beta \in K} \subseteq (\simplStrucMaps_\alpha)_{\alpha \in J} \).
	\end{enumerate}
	In this case we write \( (\topolSpace^\prime, (\simplStrucMaps_\beta)_{\beta \in K}) \subseteq (\topolSpace, (\simplStrucMaps_\alpha)_{\alpha \in J}) \).
\end{defi}

\begin{defi}[Morphisms of Simplicial Structures]\label{defi:morphSimplStruc}
	Let \( (\topolSpace, (\simplStrucMaps_\alpha)_{\alpha \in J}) \) and \( (\topolSpace^\prime, (\simplStrucMaps^\prime_\beta)_{\beta \in K}) \) be two simplicial structures.
	A continuous map \( \varphi \colon \topolSpace \to \topolSpace^\prime \) is a \emph{morphism of simplicial structures} if
			for all simplices \( \simplStrucMaps_\alpha \) we have \( \varphi \circ \simplStrucMaps_\alpha \in \simplexSet(\topolSpace^\prime, (\simplStrucMaps^\prime_\beta)) \).
			In particular, the restriction \( \restr{\varphi}{\simplexSet_0(\topolSpace, (\simplStrucMaps_\alpha))} \colon \simplexSet_0(\topolSpace, (\simplStrucMaps_\alpha)) \to \simplexSet_0(\topolSpace^\prime, (\simplStrucMaps^\prime_\beta)) \) induces a map on the vertices.
\end{defi}

\begin{rem}
	By comparing \thref{defi:morphSimplStruc} to the definition of a simplicial map in \cite[Section 3.1]{spanier82} one sees that every morphism of simplicial structures gives a morphism between the underlying abstract simplicial complexes.
	But we could also start with a morphism between the abstract simplicial complexes in the sense of \cite{spanier82} and obtain a morphism of simplicial structures.
	The only thing that is not immediate is the continuity of the map.
	In \cite[p.~110]{spanier82} it is explained that the construction of the geometric realization gives a covariant functor which maps the simplicial maps to continuous maps.
	As explained in \cite[Theorem 3.1 and the definition following it]{munkres84} any two such realizations are homeomorphic.
	
	Thus whenever needed we can specify all our data combinatorically using (abstract) simplicial complexes.
	Moreover, whenever we want to define a morphism between two simplicial structures we only need to check that it maps vertices to vertices and the image of any simplex is again a simplex of the combinatorial complex.
\end{rem}

From now on we assume that the simplices of our simplicial structures have bounded dimension.
Hence every simplex is contained in a maximal simplex.
Moreover, we want to assume that all maximal simplices of the simplicial structure \( (\topolSpace, (\simplStrucMaps_\alpha)) \) have the same dimension.
With this convention we introduce the following terminology.

\begin{defi}
	Let \( (\topolSpace , (\simplStrucMaps_\alpha))\) be a simplicial structure.
	\begin{enumerate}[(a)]
		\item
			We call a simplex of maximal dimension a \emph{chamber}.
		\item
			The \emph{rank} of the simplicial structure \( (\topolSpace, (\simplStrucMaps_\alpha)) \) is \( \rank (\topolSpace, (\simplStrucMaps_\alpha)) = \max \dim \simplStrucMaps_\alpha \).
			As our simplicial structure is bounded this value is finite and equal to the dimension of a chamber.
		\item
			A simplex \( \simplStrucMaps \) of codimension one, i.e., \( \dim \simplStrucMaps = \rank (\topolSpace,(\simplStrucMaps_\alpha)) - 1 \), is called a \emph{panel}.
	\end{enumerate}
\end{defi}

\begin{defi}[Labeling for Simplicial Structures]\label{def:labsimpcomp}
	A \emph{labeled simplicial structure} is a simplicial structure \( (\topolSpace, (\simplStrucMaps_\alpha)) \) equipped with a finite set \( I \) of \emph{types}, and a \emph{type map} \( \labeling \colon \simplexSet_0 \to I \), such that, for all chambers \( C \)
	\[
	\restr{\labeling}{C} \colon \simplexSet_0(C) \to I
	\]
	is bijective.
	Here we view \( \simplexSet_0 \) as a subset of \( \topolSpace \) as described in \thref{rem:namingAndConventions}.
	
	The \emph{type of a simplex} \( s \) is \( \labeling(\simplexSet_0(s)) \subset I\), and the \emph{cotype} of \( s \) is \( I \setminus \labeling(\simplexSet_0(s)) \).
\end{defi}

From now on we want \( I \) to be a finite index set.

\begin{defi}[Labeled Simplicial Substructures]
	Let \( (\topolSpace, (\simplStrucMaps_\alpha), \labeling) \) be a simplicial structure.
	A labeled simplicial structure \( (\topolSpace^\prime, (\simplStrucMaps_\beta), \labeling^\prime) \) is a \emph{labeled simplicial substructure} if \( (\topolSpace^\prime, (\simplStrucMaps_\beta)) \) is a simplicial substructure of \( (\topolSpace, (\simplStrucMaps_\alpha)) \) and the restriction of \( \labeling \) to \( (\topolSpace^\prime, (\simplStrucMaps_\beta)) \) agrees with \( \labeling^\prime \).
\end{defi}

\begin{defi}[Morphism of Labeled Simplicial Structures]
	Let \( (\topolSpace, (\simplStrucMaps_\alpha)_{\alpha \in J}, \labeling) \) and \( (\topolSpace^\prime, (\simplStrucMaps^\prime_\beta)_{\beta \in K}, \labeling^\prime) \) be two labeled simplicial structures over the same index set \( I \).
	A continuous map \( \varphi \colon \topolSpace \to \topolSpace^\prime \) is a \emph{morphism of labeled simplicial structures} if it is a morphism between the simplicial structures \( (\topolSpace, (\simplStrucMaps_\alpha)) \) and \( (\topolSpace^\prime, (\simplStrucMaps^\prime_\beta)) \) and \( \labeling = \labeling^\prime \circ \varphi \).
\end{defi}

Obviously in a labeled simplicial structure all maximal simplices have the same dimension \( \# I - 1 \).

For the rest of this section, if not otherwise noted, the index set is \( I = \Set{0,1,\ldots,n} \).
Hence \( \simplexSet_n \) is the set of chambers.

Whenever the simplicial structure and its labeling is clear from the context we write \( \topolSpace \) for our simplicial structure instead of \( (\topolSpace, (\simplStrucMaps_\alpha), \labeling) \) and \( \chambset(\topolSpace) \coloneqq \simplexSet_n \) for the set of all chambers of \( \topolSpace \).
While we use capital letters for chambers we will use small letters for simplices of arbitrary dimension.

For a set \( M \) we denote by \( F(M) \) the free monoid over \( M \).

\begin{defi}[Galleries and Residues]\label{defi:terminologyChamberSys}
	Let \( (\topolSpace, (\simplStrucMaps_\alpha), \labeling) \) be a labeled simplicial structure.
	\begin{enumerate}[(a)]
		\item\label{item:iEquivChambers}
			Let \( i \in I \).
			We call two chambers \( C,D \in \chambset(\topolSpace) \) \emph{\( i \)-equivalent} if \( I \setminus \Set{i} \subseteq \labeling(\simplexSet_0(C) \cap \simplexSet_0(D)) \).
			In this case we write \( C \sim_i D \).
			If \( C \sim_i D \) and \( C \neq D \) we say that \( C \) and \( D \) are \emph{\( i \)-adjacent}. This is equivalent to the assumption that \( C \cap D \) has cotype \( i \). 
		\item
			A \emph{gallery} \( \Gamma = (C_0,\ldots,C_k) \) is a finite sequence of chambers in \( \chambset(\topolSpace) \) together with a word \( w(\Gamma) = i_1i_2 \ldots i_{k} \in F(I) \), such that \( C_{j-1} \sim_{i_j} C_j \).
			Sometimes we call a sequence of this kind \emph{pregallery} and only speak about a gallery if \( C_{j-1} \neq C_j \) for all \( 1 \leq j \leq k \).
			In this case we also say the gallery is \emph{non-stuttering}, whereas if \( C_{j-1} = C_j \) for some \( j \) we say the gallery is \emph{stuttering}.
			We say a gallery \emph{connects} \( C \) and \( D \) if its extremities are \( C_0 = C \) and \( C_k = D \) or \( C_0 = D \) and \( C_k = C \).
		\item
			The \emph{type} of a gallery \( \Gamma \) is the word \( w(\Gamma) \).
		\item
			The \emph{length} of a gallery \( \Gamma \) is \( k \).
		\item
			A gallery \( \Gamma = (C_1,\ldots,C_k) \) is called \emph{minimal} if the length of the gallery is minimal for all galleries connecting \( C_1 \) and \( C_k \).
		\item
			For a subset \( J \subseteq I \) we call a gallery \( \Gamma \) a \emph{\( J \)-gallery} if \( w(\Gamma) \in F(J) \).
		\item
			Let \( J \subseteq I \) and \( C \in \chambSys \) be a chamber.
			The \emph{\( J \)-residue} of \( C \) is the set
			\[
			\Res_J(C) \coloneqq \Set{D \in \chambset(\topolSpace) \given \text{there is a \( J \)-gallery from \( C \) to \( D \)}}.
			\]
		\item
			We call \( \chambset(\topolSpace) \) \emph{gallery-connected} if for any two chambers there exists a gallery connecting \( C \) and \( D \).
		\item
			We call \( \chambset(\topolSpace) \) \emph{thin} if for each \( i \in I \) and every chamber \( C \) there exists exactly one chamber \( i \)-adjacent to \( C \).
		\item
			We call \( \chambset(\topolSpace) \) \emph{thick} if for each \( i \in I \) and every chamber \( C \) there exist at least two distinct chambers \( i \)-adjacent to \( C \).
	\end{enumerate}
\end{defi}

\begin{defi}[Chamber System]
	We call a finite dimensional gallery-connected labeled simplicial structure \( (\topolSpace, (\simplStrucMaps_\alpha), \labeling) \) a \emph{chamber system over \( I \)}.
	We denote this chamber system by \( (\chambset(\topolSpace), \labeling) \) or \( (\chambset(\topolSpace), I) \).
\end{defi}

\begin{rem}\label{rem:abstract_chamb1}
		There is an abstract notion of a \emph{chamber system} which is closely related but not equivalent to labeled simplicial complexes or in our case labeled simplicial structures.
		The gap can be closed by adding more assumptions on the simplicial complexes as well as the chamber systems.
		As every building and every Coxeter complex satisfy these conditions we do not go into the details here and refer to \cite[Proposition 1.4]{cartwright97}.
		Later on in the paper we are looking at chamber systems which are quotients of Euclidean buildings and therefore satisfy the extra conditions as well.
		Nevertheless, using the basic notation of chamber complexes already now is useful as it allows intuitive descriptions.
\end{rem}

\begin{rem}\label{rem:onlyChambAndEquiRelImp}
	One important fact about chamber systems is that the data needed to describe it uniquely is the set of chambers as well as the equivalence relations for the index set \( I \).
	In our setting the chamber set is \( \chambset(\topolSpace) \) and the \( i \)-equivalence relations are described by the labeling map \( \labeling \).
	
	As we only need the set of chambers we often write \( \chambSys \) for a chamber system over \( I \) and use the equivalence relations implicitly.
\end{rem}

\begin{rem}
	Let \( \chambSys \) be a chamber system over \( I \), \( J \subseteq I \) and \( C \) a chamber in \( \chambSys \).
	We can equip the \( J \)-residue \( \Res_J(C) \) of \( C \) with the structure of a chamber system over \( J \).
\end{rem}

\begin{defi}[Subcomplex]\label{defi:subcomplex}
	Let \( \chambSys \) be a chamber system over \( I \).
	A labeled simplicial substructure \( \mathcal D \subseteq \chambSys \) is called a \emph{subcomplex} if \( \mathcal D \) is a chamber system of the same dimension as \( \chambSys \).
\end{defi}

\begin{defi}[Types of Morphisms of Chamber Systems]\label{defi:morphiOfCS}
	Let \( \chambSys \) and \( \mathcal D \) be two chamber systems over \( I \).
	We call a morphism of simplicial structures \( \psi \colon \chambSys \to \mathcal D \) that takes chambers to chambers
	\begin{enumerate}[(a)]
		\item\label{item:morphismOfCS}
			a \emph{morphism of chamber systems} if there exists a permutation \( \sigma \in S(I) \) such that for two chambers \( C,D \in \chambSys \) with \( C \sim_i D \) the image is \( \sigma(i) \)-equivalent \( \psi(C) \sim_{\sigma(i)} \psi(D) \).
			
			We write \( \Hom(\chambSys, \mathcal D) \) for the set of a morphisms of chamber systems between \( \chambSys \) and \( \mathcal D \).
		\item
			an \emph{isomorphism of chamber systems} if there exists a permutation \( \sigma \in S(I) \) such that \( \psi \) is a bijection satisfying \( C \sim_i D \) if and only if \( \psi(C) \sim_{\sigma(i)} \psi(D) \).
			
			If \( \mathcal D = \chambSys \) we call an isomorphism \emph{automorphism} and write \( \Aut(\chambSys) \) for the group of all automorphisms.
		\item
			a \emph{type-preserving morphism of chamber systems} if \( \psi \) is a morphism of chamber systems for which we can choose \( \sigma = \id_I \).
			
			We write \( \Hom_0(\chambSys, \mathcal D) \) for the set of type-preserving morphisms.
		\item
			a \emph{type-preserving isomorphism of chamber systems} if \( \psi \) is an isomorphism of chamber systems for which we can choose \( \sigma = \id_I \).
			
			If \( \mathcal D = \chambSys \) we write \( \Aut_0(\chambSys) \) for the group of all type-preserving automorphisms of the chamber system \( \chambSys \).
	\end{enumerate}
\end{defi}

\begin{rem}\label{rem:chamberMorphisms}
	\mbox{}
	\begin{enumerate}[(i)]
		\item
			According to \cite[Proposition A.14]{abramenkoBrown08}, for every morphism of simplicial structures \( \psi \) that preserves the chambers one can find a suitable permutation \( \sigma \in S(I) \).
			This ensures that our definition \thref{defi:morphiOfCS}\ref{item:morphismOfCS} covers the same morphisms as the literature.
		\item
			Given a simplicial morphism \( \psi \colon \chambSys \to \mathcal D \) between two chamber systems over \( I \) which preserves chambers there might be more than one permutation \( \sigma, \sigma^\prime \in S(I) \) such that \( \sigma \) and \( \sigma^\prime \) both describe the equivalence relation of the morphism.

			As an example, take a chamber morphism \( \chambSys \to \chambSys \) which maps all chambers to a fixed chamber.
		\item\label{item:uniquePremuOfMorph}
			Let \( \chambSys \) be a chamber system such that for every \( i \in I \) there exist distinct chambers \( C, D \in \chambSys \) such that \( C \sim_i D \).
			
			One can check that if a morphism of chamber systems \( \psi \colon \chambSys \to \mathcal D \) is injective on all residues of the form \( \Res_{\Set{i}}(C) \subseteq \chambSys \), i.e., \( \psi \) maps adjacent chambers to adjacent chambers, the permutation \( \sigma \) is uniquely determined.
			This condition is equivalent to \( \psi \) mapping galleries to galleries.
			
			To see this recall that the \( i \)-equivalence is given by the labeling (see \thref{defi:terminologyChamberSys}\ref{item:iEquivChambers}).
			Let \( \sigma, \sigma^\prime \in S(I) \) be two different permutations that describe the \( i \)-equivalence of the morphism \( \psi \), i.e., for any chambers \( C, D \in \chambSys \) and any index \( i \in I \) with \( C \sim_i D \) we have
			\[
			\psi(C) \sim_{\sigma(i)} \psi(D) \quad \text{and} \quad \psi(C) \sim_{\sigma^\prime(i)} \psi(D).
			\]
			As \( \sigma \neq \sigma^\prime \) we find an index \( j \in I \) such that \( \sigma(j) \neq \sigma^\prime(j) \).
			Let \( C, D \in \chambSys \) be two distinct chambers with \( C \sim_j D \).
			Their images are \( \sigma(j) \) and \( \sigma^\prime(j) \) adjacent, i.e.,
			\begin{align*}
				I \setminus \Set{\sigma(j)} &\subseteq \labeling(\simplexSet_0(\psi(C)) \cap \simplexSet_0(\psi(D))) \\
				\shortintertext{and}
				I \setminus \Set{\sigma^\prime(j)} &\subseteq \labeling(\simplexSet_0(\psi(C)) \cap \simplexSet_0(\psi(D))).
			\end{align*}
			As \( \sigma(j) \in I \setminus \Set{\sigma^\prime(j)} \) and \( \sigma^\prime(j) \in I \setminus \Set{\sigma(j)} \) we can conclude that \( I \subseteq \labeling(\simplexSet_0(\psi(C)) \cap \simplexSet_0(\psi(D))) \).
			That means the right hand side has \( \#I \) elements and therefore \( \psi(C) \) and \( \psi(D) \) agree on all vertices.
			Since in our setting (see \thref{rem:simplexDescrByVertizes}) a simplex is uniquely determined by its vertices, we have \( \psi(C) = \psi(D) \).
			Hence \( \psi \) is not injective on \( \Res_{\Set{j}}(C) \) which is a contradiction.
		\item
			If \( \mathcal D \subseteq \chambSys \) is a subcomplex the inclusion \( \mathcal D \hookrightarrow \chambSys \) is a type-preserving morphism of chamber systems.
	\end{enumerate}
\end{rem}

\begin{defi}[Combinatorial Convexity]\label{defi:combConvex}
	Let \( \chambSys \) be a chamber system over \( I \).
	\begin{enumerate}[(a)]
		\item
			We call a subset of chambers \( \mathcal D \subseteq \chambSys \) \emph{combinatorially convex} if for any two chambers \( C,D \in \mathcal D \) all chambers in any minimal gallery connecting \( C \) and \( D \) lie in \( \mathcal D \).
		\item
			Let \( C, D \) be two chambers in \( \chambSys \).
			The \emph{combinatorial convex hull} \( \conv \Set{C,D} \) of \( C, D \) is the smallest subset of \( \chambSys \) which contains all minimal galleries connecting \( C \) and \( D \).
	\end{enumerate}
\end{defi}

\section{Euclidean and Spherical Coxeter Complexes}\label{sec:ECC}
In this section we will introduce all the notions and notations of Euclidean and spherical Coxeter complexes that are necessary to describe the Weyl chamber flow dynamics in \Cref{sec:dynaSetting}.
We start by introducing general Coxeter complexes.

\subsection{Coxeter Systems}\label{subsec:Cox}

A reference for the basics on Coxeter groups and related topics is \cite[Chap.~IV]{bourbaki02}.
As before \( I \) is a finite set.

\begin{defi}[Coxeter Matrix]\label{defi:coxeterMat}
	For a finite index set \( I \) we call \( \coxMat \coloneqq (m_{i,j})_{i,j \in I} \) a \emph{Coxeter matrix} if for all \( i,j \in I \)
	\[
	m_{i,j} = m_{j,i} \in
	\begin{cases}
		\Nnum_{\geq 2} \cup \Set{\infty} & i \neq j ,\\
		\Set{1} & i = j.
	\end{cases}
	\]
\end{defi}

\begin{defi}[Coxeter Group]
	Let \( M = (m_{i,j})_{i,j \in I} \) be a Coxeter matrix and \( S \coloneqq \Set{s_i \given i \in I} \) with fixed symbols \( s_i \).
	\begin{enumerate}[(a)]
		\item
			A group \( W \) is a \emph{Coxeter group of type \( \coxMat \)} if it has a presentation
			\begin{equation}\label{eq:W-presentation}
				W = \spr{S \mid (s_is_j)^{m_{i,j}} = 1 \text{ for all } i,j \in I},
			\end{equation}
			following the usual notation for relations on free groups.
			Here \( m_{i,j} = \infty \) means there is no relation between \( s_i \) and \( s_j \) and is therefore omitted.
		\item
			The pair \( (W,S) \) is called a \emph{Coxeter system of type \( M \)}.
		\item
			For a subset \( J \subseteq I \) we write \( W_J \subseteq W \) for the subgroup of \( W \) generated by \( S_J \coloneqq \Set{s_j \given j \in J} \) and \( M_J \) for the Coxeter matrix obtained from \( M \) by deleting all rows and columns not in \( J \).
		\item
			We call a subset \( J \subseteq I \) \emph{spherical} if the subgroup \( W_J \subseteq W \) is finite.
	\end{enumerate}
\end{defi}

\begin{rem}\label{rem:Coxgroup}
	\mbox{}
	\begin{enumerate}[(i)]
		\item
			\( (W_J, S_J) \) is a Coxeter system of type \( M_J \).

	\end{enumerate}
\end{rem}

As mentioned in \thref{rem:onlyChambAndEquiRelImp} to define a chamber system over an index set \( I \) it is enough to define the set of chambers and the equivalence relations for each element in \( I \).
This leads to the following definition of a \emph{Coxeter complex}.

\begin{defi}[Coxeter Complexes]\label{defi:CoxeterComplex}
	Let \( (W,S) \) be a Coxeter system of type \( M \).
	A \emph{Coxeter complex} is a thin labeled chamber system \( (\chambSys, I) \), with a bijection \( \chambSys \cong W \).
	Furthermore the labeling of the chamber system has the property, that \( i \)-equivalence is given by
	\begin{equation}\label{eq:coxeter_adjacent}
		w \sim_i w \quad \text{and} \quad w \sim_i w s_i.
	\end{equation}
\end{defi}

\begin{rem}
	For every Coxeter system \( (W,S) \) \thref{eq:coxeter_adjacent} defines a notion of \( i \)-equivalence on the set \( W \), and with respect to this the set \( W \) is an abstract chamber system (cf.\ \thref{rem:abstract_chamb1,rem:onlyChambAndEquiRelImp}).
	As discussed in those remarks these abstract chamber systems are nice enough such that one can associate a simplicial structure in a unique way.
	Consequently for any Coxeter System \( (W,S) \) there is a (up to isomorphism) unique Coxeter complex (with all its properties of a labeled simplicial structure) which we denote by \( \coxCompl \).
\end{rem}

\begin{defi}\label{defi:sphericalResidue}
	For a spherical subset \( J \subseteq I \) we call the corresponding residue \( \Res_J(C) \subseteq \coxCompl \) \emph{spherical}.
\end{defi}

\begin{rem}\label{rem:sphericalResChambSys}
	\mbox{}
	\begin{enumerate}[(i)]
		\item
			All chamber systems and buildings we are looking at are indexed over some set \( I \) with a Coxeter group \( (W,S) \) in the background.
			By our convention \( S \) is indexed by \( I \).
			This convention allows us to extend the notion of \emph{spherical residues} to chamber systems \( (\chambSys, I) \) over \( I \).
		\item\label{item:linkOfVertex}
			Moreover, all residues of the form \( \Res_{I \setminus \Set{i}}(C) \) will be spherical in our setting.
			If \( x \in C \) is the type \( i \) vertex we call \( \Res_{I \setminus \Set{i}}(C) \) the \emph{link of \( x \)}.
	\end{enumerate}
\end{rem}

\subsection{Root Systems}

All Coxeter complexes that we will consider come from root systems.
We recall their construction and the essential notions.
For references and proofs see \cite[Chap.~VI]{bourbaki02}.

Let \( (\eucl, \spr{\cdot, \cdot}) \) be a finite-dimensional Euclidean vector space with scalar product \( \spr{\cdot,\cdot} \).
We denote the dimension of the Euclidean space by \( n = \dim \eucl \).
Let \( \rootSys \subseteq \eucl \) be a root system.
For \( \alpha \in \eucl \) we define \( \alpha^\vee \coloneqq \frac{2}{\spr{\alpha, \alpha}} \alpha \).

We quickly recall that \( \rootSys \) being a root system implies that \( \rootSys \) is finite and \( \rootSys \) spans \( \eucl \) as a vector space, i.e., \( \spr{\rootSys} = \eucl \).
An element \( \alpha \in \rootSys \) will be called a \emph{root} and for a root \( \alpha \in \rootSys \) the element \( \alpha^\vee \) is called a \emph{dual root}.

A root system \( \rootSys \) is \emph{reduced} if for every \( \alpha, \beta \in \rootSys \) which are linearly-dependend we have \( \alpha = \pm \beta \).
Else we say \( \rootSys \) is \emph{non-reduced}.
In the non-reduced case the only possible multiplies of a root \( \alpha \) are \( \pm \alpha \) and \( \pm 2 \alpha \).
Lastly we call a root system \emph{irreducible} if it cannot be decomposed into orthogonal parts.

\begin{rem}
	In \cite[Chap.~VI, \S 4]{bourbaki02} one can find a classification of all irreducible root systems.
	These are labeled by symbols \( A_n \) (\( n \geq 1 \)), \( B_n \) (\( n \geq 2 \)), \( C_n \) (\( n \geq 2 \)), \( D_n \) (\( n \geq 4 \)), \( E_6 \), \( E_7 \), \( E_8 \), \( F_4 \) and \( G_2 \).
	Except \( B_2 \) and \( C_2 \) non of these systems are isomorphic.
	
	Moreover, there is exactly one (up to isomorphism) irreducible non-reduced root system for each \( n \geq 1 \) given by \( BC_n \).
\end{rem}

For every root \( \alpha \in \rootSys \) and \( k \in \Znum \) we can define an affine hyperplane
\[
H_{\alpha, k} \coloneqq \Set{x \in \eucl \given \spr{\alpha, x} = k}.
\]
We define two collections of hyperplanes
\begin{align*}
	\mathcal H_0 &\coloneqq \Set{H_{\alpha,0} \given \alpha \in \rootSys} \\
	\mathcal H &\coloneqq \Set{H_{\alpha, k} \given \alpha \in \rootSys, k \in \Znum}
\end{align*}
as well as \emph{orthogonal reflections} along these hyperplanes
\[
s_{\alpha, k} \coloneq \eucl \to \eucl, \; x \mapsto x - \left( \spr{x, \alpha} - k \right) \alpha^\vee.
\]
By the definition of a root system the reflections \( s_{\alpha,0} \) leave \( \rootSys \) invariant.
To make the notation easier we write \( s_\alpha \coloneqq s_{\alpha,0} \) and \( H_{\alpha} \coloneqq H_{\alpha,0} \).

These hyperplane arrangements naturally lead to two simplicial complexes:
First, the collection \( \mathcal H \) tesselates the Euclidean space \( \eucl \) and equips it with a simplicial structure.
Second, the collection of codimension one subspaces \( \mathcal H_0 \) leads to a simplicial structure on the \( n-1 \)-dimensional sphere \( \mathbb{S}^{n-1} \).
Both simplicial structures turn out to be Coxeter complexes (see \thref{prop:euclCoxeterCompl}), the \emph{Euclidean Coxeter complex}, respectively the \emph{spherical Coxeter complex} associated to the root system \( \rootSys \).

In order to endow these simplicial complexes with a Coxeter complex structure we choose a basis \( \rootBasis \) for the root system \( \rootSys \).
Recall that a subset \( \rootBasis \subseteq \rootSys \) is a \emph{basis} of the root system \( \rootSys \) if the following hold
\begin{enumerate}[(i)]
	\item
		\( \rootBasis \subseteq \eucl \) is a basis for \( \eucl \),
	\item
		for all \( \alpha \in \rootSys \) we have
		\begin{equation}\label{equ:basisEqu}
			\alpha = \sum\limits_{\beta \in \rootBasis} c_\beta \beta,
		\end{equation}
		where the \( c_\beta \) are either all non-negative or all non-positive integers.
\end{enumerate}
The second condition allows us to speak about \emph{positive} and \emph{negative} roots.
A root \( \alpha \in \rootSys \) is \emph{positive} (with respect to the basis \( \rootBasis \)) if all \( c_\beta \) in \eqref{equ:basisEqu} are non-negative integers and \emph{negative} if all \( c_\beta \) are non-positive.
Given a root \( \alpha = \sum_{\beta \in \rootBasis} c_\beta \beta \), the \emph{height} of \(\alpha\) is defined by
\[
\mathrm{ht}(\alpha) = \sum\limits_{\beta \in \rootBasis} c_\beta.
\]
One can show that there exists a unique \emph{highest root}
\begin{equation}\label{equ:heighestRootRep}
	\check{\alpha} = \sum\limits_{\beta \in \rootBasis} m_\beta \beta \in \rootSys,
\end{equation}
with all \( m_\beta \geq 1 \).

\begin{defi}[Affine Weyl Group]
	Let \( \rootSys \) be a root system.
	We define the \emph{Weyl group} of \( \rootSys \) as
	\[
	\weylGr \coloneqq \weylGr(\rootSys) \coloneqq \spr{s_\alpha \mid \alpha \in \rootSys}_{\mathrm{group}}.
	\]
	The \emph{affine Weyl group} of \( \rootSys \) is the group
	\[
	\affWeyl \coloneqq \affWeyl(\rootSys) \coloneqq \spr{s_{\alpha, k} \mid \alpha \in \rootSys, k \in \Znum}_{\mathrm{group}}.
	\]
\end{defi}

One can deduce from the discussion of Euclidean reflection groups in \cite[Chapter 10]{abramenkoBrown08} that there are two fundamentally different kinds of Euclidean reflection groups, the finite and the infinite Euclidean reflection groups.
This is summarized in the following proposition.

\begin{prop}\label{prop:euclCoxeterCompl}
	Let \( \rootSys \) be an irreducible root system, \( \rootBasis \) a basis of \( \rootSys \). 
	We define \( s_0 \coloneqq s_{\check{\alpha}, 1} \), \( I_0 \coloneqq \Set{1,\ldots,n} \) and \( I \coloneqq I_0 \cup \Set{0} \).
	Moreover, let \( S_0 \coloneqq \Set{s_\beta \given \beta \in \rootBasis} \) and \( S \coloneqq S_0 \cup \Set{s_0} \).
	Then \( (\affWeyl, S) \) and \( (\weylGr, S_0) \) are Coxeter systems.
	Furthermore, the simplicial structure on \( \eucl \) induced by \( \mathcal H \) is a Coxeter complex for \( (\affWeyl, S) \) and the simplicial structure on the sphere \( \mathbb{S}^{n-1} \) induced by \( \mathcal H_0 \) is a Coxeter complex for \( (W_0,S_0) \).
\end{prop}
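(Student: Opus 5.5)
The plan is to reduce everything to the standard theory of reflection groups in \cite[Chap.~VI, \S 1--2]{bourbaki02} and \cite[Chapter 10]{abramenkoBrown08}. I will first treat the finite group. Fix the fundamental chamber \( C_0 = \Set{x \given \spr{\beta,x} > 0 \text{ for all } \beta \in \rootBasis} \) of the arrangement \( \mathcal H_0 \). The standard facts to invoke are:
\begin{enumerate}[(i)]
	\item \( \weylGr \) acts simply transitively on the connected components of \( \eucl \setminus \bigcup \mathcal H_0 \);
	\item \( \weylGr \) is generated by the reflections \( s_\beta \), \( \beta \in \rootBasis \), in the walls of \( C_0 \);
	\item the exchange condition holds with respect to the length function, where the length of \( w \) equals the number of hyperplanes in \( \mathcal H_0 \) separating \( C_0 \) from \( wC_0 \).
\end{enumerate}
Together these give the presentation \eqref{eq:W-presentation}, with \( m_{ij} \) the order of \( s_{\beta_i}s_{\beta_j} \). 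Hence \( (\weylGr,S_0) \) is a Coxeter system.

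Next I intersect the cones with \( \mathbb{S}^{n-1} \). Since \( \rootBasis \) is a basis of \( \eucl \), the closure of \( C_0 \) is a simplicial cone, so \( \overline{C_0}\cap\mathbb{S}^{n-1} \) is a spherical \( (n-1) \)-simplex, and its \( W_0 \)-translates triangulate the sphere. I label a vertex by \( i \) if it lies in the \( W_0 \)-orbit of the ray opposite to the wall \( H_{\beta_i} \). This labeling is well defined because \( \overline{C_0} \) is a strict fundamental domain. With it, \( wC_0 \) and \( w'C_0 \) share the panel of cotype \( i \) exactly when \( w' \in \Set{w, ws_i} \). This matches \eqref{eq:coxeter_adjacent}, and thinness is immediate.

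The affine case follows the same scheme with the alcove \( A_0 = \Set{x \given \spr{\beta,x}>0 \text{ for all } \beta \in \rootBasis,\ \spr{\check\alpha,x}<1} \). There are three steps.
\begin{enumerate}[(i)]
	\item Show that \( A_0 \) is an open simplex. Its walls are the \( n+1 \) hyperplanes \( H_{\beta} \) and \( H_{\check\alpha,1} \). Irreducibility enters here: it gives \( m_\beta\ge 1 \) for all \( \beta \) in \eqref{equ:heighestRootRep}, which forces these \( n+1 \) half-spaces to cut out a bounded simplex with vertices \( 0 \) and \( \varpi_i^\vee/m_{\beta_i} \).
	\item Show that every \( H_{\alpha,k} \) meets no point of \( A_0 \). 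This uses that \( \check\alpha \) is the highest root, so \( 0<\spr{\alpha,x}<1 \) for positive \( \alpha \) and \( x\in A_0 \). Consequently \( A_0 \) is a connected component of \( \eucl\setminus\bigcup\mathcal H \).
	\item Invoke the general theorem for discrete reflection groups generated by reflections in a locally finite arrangement. Such a group acts simply transitively on the chambers and is a Coxeter group on the reflections in the walls of one chamber. Here local finiteness holds because the set of \( k \) with \( H_{\alpha,k} \) meeting a bounded set is finite.
\end{enumerate}
This yields \( (\affWeyl,S) \) as a Coxeter system, together with the simplicial structure on \( \eucl \) and the required adjacency rule, by the same labeling argument as in the spherical case.

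The main obstacle is the combinatorial part of the general theorem: proving that the reflections in the walls of a single chamber generate the group and satisfy no relations beyond the Coxeter ones. I would not reprove this. I would cite it from \cite[Chap.~V, \S 3]{bourbaki02}, or use \cite[Chapter 10]{abramenkoBrown08} directly. What remains to check here is only the two geometric inputs: that \( \overline{C_0} \) and \( \overline{A_0} \) are simplicial, and that the labeling is well defined. Both come down to the strict fundamental domain property.
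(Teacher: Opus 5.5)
Your proposal is correct and takes essentially the same route as the paper, which gives no argument of its own beyond deferring to the standard theory of reflection groups in \cite[Chapter 10]{abramenkoBrown08} (and Bourbaki). Your extra checks that \( A_0 \) is an open simplex and a chamber, using irreducibility and the highest root, are the right geometric inputs. The one hypothesis of the general theorem you leave implicit is that \( \mathcal H \) is stable under \( \affWeyl \); this follows from the \( \weylGr \)-invariance of \( \rootSys \) together with \( \spr{\beta,\alpha^\vee} \in \Znum \) for all roots \( \alpha, \beta \).
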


\begin{rem}
	Given an irreducible root system \( \rootSys \subseteq \eucl \) and a basis \( \rootBasis \) we denote the corresponding Euclidean Coxeter complex by \( \euclCox(\rootSys, \rootBasis) \) or shorter \( \euclCox(\rootSys) \) or \( \euclCox \) if \(\rootBasis\) and \(\rootSys\) are clear from the context.
	Note that the simplicial structure is determined by \( \rootSys \), whereas the basis \( \rootBasis \) defines the labeling of the complex (see \thref{rem:euclCoxEquivRel,rem:euclCoxAndCoweights}\ref{item:euclCoxLabeling}) as choosing a basis is equivalent to choosing a generating set \( S \) and an enumeration for it.
	
	As two of these choices lead to isomorphic complexes we usually do not specify the basis \( \rootBasis \).
\end{rem}

Note that in addition to the Euclidean topology on \( \euclCox \) the Euclidean scalar product also induces a metric on a Euclidean Coxeter complex \cite[Lemma 10.36]{abramenkoBrown08}.

Every hyperplane defines two closed halfspaces \( \mathcal H_{\alpha, k,+} \) and \( \mathcal H_{\alpha, k,-} \) defined by
\begin{align*}
	\mathcal H_{\alpha, k,+} &\coloneqq \Set{x \in \eucl \given \spr{\alpha, x} \geq k} \\
	\mathcal H_{\alpha, k,-} &\coloneqq \Set{x \in \eucl \given \spr{\alpha, x} \leq k}.
\end{align*}

\begin{defi}[Halfspace-Convexity]\label{defi:halfspaceConvex}
	We call a subset \( X \subseteq \Sigma \) of a Euclidean Coxeter complex \emph{halfspace-convex} if it is equal to the intersection of closed halfspaces.
\end{defi}

\begin{rem}\label{rem:notionsOfConvex}
	If \( X \subseteq \Sigma \) contains a chamber, then halfspace-convexity coincides with the definition of combinatorial convexity in \thref{defi:combConvex} (see \cite[Section 3.6.6 and Section 11.5]{abramenkoBrown08}).
	Moreover, from the discussion in \cite[Section 11.5]{abramenkoBrown08} it follows that any halfspace or combinatorially convex subset in a Euclidean Coxeter complex is convex in the Euclidean sense, i.e., it contains the lines connecting any two points in the set.
	Conversely, whenever a convex set $X\subset \eucl$ is additionally a union of simplices then this set is halfspace-convex.
\end{rem}

\begin{defi}[Fundamental Sector and Chamber]\label{defi:fundamentalSectorAndChamb}
	Let \( \rootSys \subseteq \eucl \) be an irreducible root system and \( \rootBasis = \Set{\beta_i \given i \in I_0} \) a basis.
	\begin{enumerate}[(a)]
		\item
			The \emph{fundamental chamber} is given by
			\begin{equation}\label{equ:fundaChamb}
				C_0 \coloneqq \Set{x \in \eucl \given \spr{x, \beta_i} \geq 0 \text{ for all } \beta_i \in \rootBasis \text{ and } \spr{x, \check{\alpha}} \leq 1}.
			\end{equation}
		\item
			The \emph{fundamental sector} is defined by
			\[
			\sector_0 \coloneqq \Set{x \in \eucl \given \spr{x, \beta_i} \geq 0 \text{ for all } \beta_i \in \rootBasis}.
			\]
	\end{enumerate}
\end{defi}

\begin{rem}\label{rem:euclCoxEquivRel}
	The chambers of \( \euclCox \) are the elements \( w C_0 \) for \( w \in \affWeyl \).
	So by \thref{defi:CoxeterComplex} the \( i \)-equivalence relation is given by
	\[
	w C_0 \sim_i w C_0 \quad \text{and} \quad w C_0 \sim_i w s_i C_0.
	\]
\end{rem}

In \cite[Chapter 10]{abramenkoBrown08} one can find that the following holds.
\begin{prop}\label{prop:weylGroupActions}
	Let \( \rootSys \subseteq \eucl \) be an irreducible root system.
	\begin{enumerate}[(i)]
		\item
			The fundamental sector \( \sector_0 \) is a fundamental domain for the action of the Weyl group \( \weylGr \) on \( \eucl \).
		\item
			\( \affWeyl \) acts simply transitively and by type-preserving morphisms on the chambers of \( \euclCox \) and \( C_0 \) is a fundamental domain for the action of \( \affWeyl \) on \( \eucl \).
		\item\label{item:weylGrActsOnZeroClosure}
			The Weyl group \( \weylGr \) acts simply transitively on the chambers having \( 0 \) in their closure.
	\end{enumerate}
\end{prop}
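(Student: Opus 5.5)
The plan is to reduce all three statements to standard facts about the finite reflection group \( \weylGr \) acting linearly on \( \eucl \) and about the affine group \( \affWeyl \cong \corootLat \rtimes \weylGr \). The paper says these facts are in \cite[Chapter 10]{abramenkoBrown08}. I would recall the relevant arguments in the order (ii), (i), (iii), because the arguments for (i) and (iii) are the linear, local shadows of the argument for (ii).

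\textbf{(ii).} By \thref{prop:euclCoxeterCompl}, \( (\affWeyl, S) \) is a Coxeter system whose Coxeter complex is the simplicial structure on \( \eucl \) cut out by \( \mathcal H \). The chambers are the closures of the connected components of \( \eucl \setminus \bigcup \mathcal H \), and \( C_0 \) is one of them by \eqref{equ:fundaChamb}.
\begin{enumerate}[(1)]
	\item \emph{Transitivity.} Take any chamber \( D \) and a generic point of its interior. Connect it to the interior of \( C_0 \) by a segment meeting the walls in finitely many generic points. Crossing a wall \( H \) bounding \( wC_0 \) replaces \( wC_0 \) by \( w s_i C_0 \), since \( H = wH_i \) where \( H_i \) is a wall of \( C_0 \). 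Induction on the number of crossings gives \( D = wC_0 \).
	\item \emph{Freeness.} This is the standard exchange-condition argument. If \( wC_0 = C_0 \) with \( w \neq 1 \), write \( w \) as a reduced word. The number of walls of \( \mathcal H \) separating \( C_0 \) from \( wC_0 \) equals the Coxeter length \( \ell(w) > 0 \), which is a contradiction. Equivalently, \( \ell(w) \) equals the gallery distance, which is the defining property of the Coxeter complex in \thref{prop:euclCoxeterCompl}.
	\item \emph{Fundamental domain.} Every point lies in some chamber \( wC_0 \) since \( \mathcal H \) is locally finite, so it is \( \affWeyl \)-equivalent to a point of \( C_0 \). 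For uniqueness, use the standard fact that the stabilizer of \( x \in C_0 \) is generated by the \( s_i \) fixing \( x \). Hence two points of \( C_0 \) in the same orbit coincide.
	\item \emph{Type preservation.} This is immediate from \thref{rem:euclCoxEquivRel}. The labeling is defined by \( w C_0 \sim_i w s_i C_0 \), and left multiplication by \( v \) sends this pair to \( vwC_0 \sim_i vws_iC_0 \). So \( \sigma = \id \).
\end{enumerate}

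\textbf{(i).} Apply the same argument to \( \weylGr \) acting on \( \eucl \) with hyperplanes \( \mathcal H_0 \). Its chambers are the closed Weyl chambers, \( \sector_0 \) is one of them, and \( (\weylGr, S_0) \) is a Coxeter system by \thref{prop:euclCoxeterCompl}. A slicker route is available for existence: for \( x \in \eucl \), choose \( w \in \weylGr \) maximizing \( \spr{wx, \rho} \), where \( \rho \) is the half-sum of the positive roots. If \( \spr{wx, \beta} < 0 \) for some \( \beta \in \rootBasis \), then \( \spr{s_\beta w x, \rho} = \spr{wx, \rho} - \spr{wx, \beta}\spr{\beta^\vee, \rho} \) is larger, because \( \spr{\beta^\vee, \rho} = 1 \). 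This contradicts maximality. Uniqueness follows again from the stabilizer fact, exactly as in (ii).

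\textbf{(iii).} The chambers of \( \euclCox \) containing \( 0 \) correspond bijectively to the Weyl chambers, i.e.\ the chambers of the \( \mathcal H_0 \)-arrangement. The reason is that near \( 0 \) the only walls are those through \( 0 \), namely \( \mathcal H_0 \). Each such affine chamber is the intersection of a Weyl chamber with a small neighbourhood of \( 0 \), and \( C_0 \) corresponds to \( \sector_0 \). The subgroup \( \weylGr \subseteq \affWeyl \) fixes \( 0 \), so it permutes these chambers. Transitivity follows from (i), and freeness from the freeness in (ii) restricted to \( \weylGr \).

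\textbf{Main obstacle.} The real content is freeness, i.e.\ the identification of Coxeter length with the number of separating walls. I would simply invoke \thref{prop:euclCoxeterCompl}, where it is already packaged, rather than reprove it.
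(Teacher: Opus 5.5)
The paper gives no argument for this proposition; it only points to \cite[Chapter 10]{abramenkoBrown08}. Your sketch reconstructs the standard proofs: wall crossing for transitivity, $\rho$-maximisation for existence in (i), and localisation at $0$ for (iii). For the one substantial input, freeness of the $\affWeyl$-action, you defer to \thref{prop:euclCoxeterCompl}, which the paper quotes from the same chapter. So you rely on the same external input as the paper and add a transparent reduction of everything else to it, which is a sound plan. One ordering point: to read freeness off \thref{prop:euclCoxeterCompl} you must identify its abstract bijection $\affWeyl \cong \chambset(\euclCox)$ with $w \mapsto wC_0$. This follows from thinness once $wC_0 \sim_i ws_iC_0$ is known, so the adjacency statement behind your step (4) has to be in place before step (2).

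Two details need repair.

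First, in the fundamental-domain parts of (i) and (ii), your \emph{hence} does not follow. Knowing that the stabilizer of $x \in C_0$ is generated by the simple reflections fixing $x$ does not imply that two points of $C_0$ in one orbit coincide. What you need is the stronger standard statement that $x \in C_0$ and $wx \in C_0$ force $wx = x$. It is proved by induction on $\ell(w)$. If $s$ is a simple reflection with $\ell(ws) < \ell(w)$ and $H$ is its wall of $C_0$, then $wH$ separates $C_0$ from $wC_0$, so it contains $wx$. Hence $sx = x$, and you may replace $w$ by $ws$.

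Alternatively, use type preservation. If $x$ lies in the open face $F$ of $C_0$, then $wx$ lies in the open simplex $wF$. Every point lies in exactly one open simplex and $wx \in C_0$, so $wF$ is a face of $C_0$ of the same type as $F$. Hence $wF = F$ with every vertex fixed, so $w$ fixes $F$ pointwise. The $\weylGr$-case then follows by scaling, since $tx$ and $t\,wx$ lie in $C_0$ for small $t>0$.

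Second, $\langle \beta^\vee, \rho\rangle = 1$ holds only for reduced $\rootSys$. The paper admits $BC_n$, where the short simple root $\beta = e_n$ gives $\langle \beta^\vee, \rho \rangle = 3$. Your argument only uses $\langle \beta^\vee, \rho\rangle > 0$, which always holds; alternatively, replace $\rho$ by $\sum_i \varpi_i$.
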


\subsection{Coroots and Coweights}

\begin{defi}[Coroot Lattice]
	Let \( \rootSys \subseteq \eucl \) be an irreducible root system.
	Recall that for a root \( \alpha \in \rootSys \) the corresponding dual root is given by \( \alpha^\vee \coloneqq \frac{2}{\spr{\alpha, \alpha}} \alpha \).
	The \emph{coroot lattice} \( \corootLat \) is the \( \Znum \)-span of the dual roots \( \Set{\alpha^\vee \given \alpha \in R} \).
\end{defi}

\begin{rem}
	The dual roots form a root system.
	Moreover, if \( \rootSys \) is an irreducible root system, the dual root system \( \corootLat \) is an irreducible reduced root system if and only if \( \rootSys \) is reduced.
\end{rem}

In \cite[Chap.~VI, \S 2, No.~1, Prop.~1]{bourbaki02} we find that there is a decomposition of the affine Weyl group as follows.

\begin{prop}
	Let \( \rootSys \subseteq \eucl \) be an irreducible root system.
	The affine Weyl group of \( \rootSys \) is the semi-direct product
	\[
	\affWeyl(\rootSys) = \corootLat \rtimes \weylGr(\rootSys).
	\]
\end{prop}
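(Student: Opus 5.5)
The plan is to establish three things: that \( \corootLat \) embeds as a normal subgroup of \( \affWeyl \) via translations, that \( \weylGr \) embeds as a subgroup, and that every element of \( \affWeyl \) factors uniquely as a translation followed by a linear element of \( \weylGr \). We view \( \affWeyl \) as a subgroup of the affine group \( \mathrm{Aff}(\eucl) = \eucl \rtimes \mathrm{GL}(\eucl) \). The linear-part map \( L \colon \mathrm{Aff}(\eucl) \to \mathrm{GL}(\eucl) \) is a group homomorphism whose kernel is the translation group \( \eucl \).

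First we show \( \corootLat \subseteq \affWeyl \). From the formula \( s_{\alpha,k}(x) = x - (\spr{x,\alpha}-k)\alpha^\vee \) we get
\[
s_{\alpha,k} = t_{k\alpha^\vee} \circ s_{\alpha},
\]
where \( t_v \) denotes translation by \( v \). Taking \( k=1 \) gives \( t_{\alpha^\vee} = s_{\alpha,1}\circ s_\alpha \in \affWeyl \). Hence the whole lattice \( \corootLat \), the \( \Znum \)-span of the \( \alpha^\vee \), consists of translations lying in \( \affWeyl \). Also \( \weylGr \subseteq \affWeyl \), because \( s_\alpha = s_{\alpha,0} \).

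Next we identify the image and kernel of \( L \) restricted to \( \affWeyl \). The generators satisfy \( L(s_{\alpha,k}) = s_\alpha \), so \( L(\affWeyl) = \weylGr \). The kernel \( \affWeyl \cap \eucl \) contains \( \corootLat \). For the reverse inclusion, the formula above shows that every generator lies in the set \( \corootLat \cdot \weylGr = \Set{t_v w \given v\in\corootLat,\ w\in\weylGr} \). This set is a subgroup, which follows from two facts:
\[
w\, t_v\, w^{-1} = t_{wv}, \qquad w(\alpha^\vee) = (w\alpha)^\vee \text{ for } w \in \weylGr,
\]
where the second uses that \( w \) is orthogonal and preserves \( \rootSys \). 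Together these show \( \weylGr \) normalizes \( \corootLat \), so \( \affWeyl = \corootLat\,\weylGr \). Consequently any element of \( \affWeyl \) with trivial linear part is of the form \( t_v w \) with \( w = \id \), i.e.\ it lies in \( \corootLat \).

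Finally, the intersection \( \corootLat \cap \weylGr \) is trivial: elements of \( \weylGr \) fix \( 0 \), while a translation fixes \( 0 \) only if it is the identity. The subgroup \( \corootLat \) is normal in \( \affWeyl \) since it is the kernel of \( L|_{\affWeyl} \). Therefore \( \affWeyl = \corootLat \rtimes \weylGr \).

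The only step needing care is the reverse kernel inclusion, i.e.\ that \( \corootLat\,\weylGr \) is closed under products. This rests on the \( \weylGr \)-invariance of the set of dual roots, which I expect to be the main (mild) obstacle; the remaining steps are direct computation.
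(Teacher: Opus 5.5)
Your proof is correct and is essentially the standard argument of Bourbaki, Chap.~VI, \S 2, No.~1, Prop.~1, which the paper cites without reproducing: you write \( s_{\alpha,k}=t_{k\alpha^\vee}\circ s_\alpha \), get \( t_{\alpha^\vee}=s_{\alpha,1}s_\alpha\in\affWeyl \), use the \( \weylGr \)-stability of the dual roots to see that \( \corootLat\,\weylGr \) is a group containing all generators, and note that \( \corootLat\cap\weylGr=1 \) because \( \weylGr \) fixes \( 0 \). Neither irreducibility nor reducedness of \( \rootSys \) is used, so your argument holds in that generality.
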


Another important lattice for the root system \( \rootSys \) is the \emph{coweight lattice} \( \coweightLat \).

\begin{defi}[Coweight Lattice]\label{defi:coweightLattice}
	Let \( \rootSys \subseteq \eucl \) be an irreducible root system and \( \rootBasis = \Set{\beta_i \given i \in I_0} \subseteq \rootSys \) a basis for \( \rootSys \).
	Let \( \Set{\varpi_i \given i \in I_0} \) be the dual basis, i.e., \( \spr{\varpi_i, \beta_j} = \delta_{i,j} \).
	We call the elements of the dual basis \( \varpi_i \) \emph{fundamental coweights}.
	The \( \Znum \)-module
	\[
	\coweightLat \coloneqq \spr{\varpi_1,\ldots,\varpi_n}_{\Znum} 
	\]
	is called the \emph{coweight lattice}.
	The coweight lattice is independent of the basis \( \rootBasis \).
	
	A coweight of the form \( \paramA = \sum_{i=1}^n a_i \varpi_i \) with \( a_i \in \Nnum_0 \) is called \emph{dominant}.
	We write \( \domCoweights \) for the set of all dominant coweights.
	The coweights of the form \( \paramA = \sum_{i=1}^n a_i \varpi_i \) with \( a_i \in \Nnum \) are called \emph{strongly dominant}.
	We write \( \innerCoweights \) for the set of all strongly dominant coweights.
	
	Moreover, we define a partial order \( \leq \) on \( \domCoweights \).
	We write \( \mu \leq \lambda \) if and only if \( \lambda - \mu \in \domCoweights \).
\end{defi}

\begin{rem}\label{rem:euclCoxAndCoweights}
	\mbox{}
	\begin{enumerate}[(i)]
		\item\label{item:euclCoxLabeling}
			Let \( \rootSys \subseteq \eucl \) be an irreducible root system.
			Let \( C_0 \in \chambset(\euclCox) \) be the fundamental chamber of the Euclidean Coxeter complex.
			The vertices of \( C_0 \) are given by (see \cite[Chap.~VI, \S 2, No.~2]{bourbaki02})
			\[
			\Set{0} \cup \Set*{\frac{\varpi_i}{m_i} \given i \in I_0},
			\]
			where the \( m_i \) are the multiplicities of basis elements \( \beta_i \) in the heighest root \eqref{equ:heighestRootRep}.
			The labeling of \( C_0 \) is given by
			\[
			\labeling(0) = 0 \quad \text{and} \quad \labeling\left(\frac{\varpi_i}{m_i}\right) = i.
			\]
			This labeling extends to a unique labeling on all of \( \euclCox \).
		\item
			From the definitions of the coroot and coweight lattice we observe that \( \corootLat \subseteq \coweightLat \).
		\item\label{item:domCoweightsInSector}
			From the definition of the fundamental sector \( \sector_0 \) (\thref{defi:fundamentalSectorAndChamb}) we see that \( \domCoweights \subseteq \sector_0 \).
	\end{enumerate}
\end{rem}

\begin{defi}[Extended Affine Weyl Group]
	Let \( \rootSys \subseteq \eucl \) be an irreducible root system and \( \weylGr \) its Weyl group.
	The group
	\[
	\extWeyl \coloneqq \coweightLat \rtimes \weylGr
	\]
	is the \emph{extended affine Weyl group}.
\end{defi}

\begin{rem}\label{rem:rootSysNotFromEuclCox}
	Given two root systems \( \rootSys, \rootSys^\prime \subseteq \eucl \) it is possible that \( \affWeyl(\rootSys) = \affWeyl(\rootSys^\prime) \) for example if \( \rootSys = C_n \) and \( \rootSys^\prime = BC_n \).
	The reason for this is that if we take all indivisible roots in \( BC_n \) we obtain a root system of type \( C_n \).
	As the affine Weyl group does not detect the different scaling the groups are the same.
	
	This is where the extended affine Weyl groups makes a difference as \( \extWeyl(C_n) \not\cong \extWeyl(BC_n) \).
\end{rem}

\begin{rem}
	As \( \corootLat \subseteq \coweightLat \) we have the following inclusions of subgroups
	\[
	\weylGr \leq \affWeyl \leq \extWeyl \leq \Aut(\euclCox).
	\]
	In other words extended affine Weyl group also acts on \( \euclCox \) and preserves the simplicial structure.
	But in contrast to \( \affWeyl = \Aut_0(\euclCox) \) not all morphisms coming from \(\extWeyl\) are type-preserving.
	We call the morphisms coming from \( \extWeyl \) \emph{type-rotating}.
\end{rem}

By our definition every \( \psi \in \Aut(\euclCox) \) induces a map \( \sigma \in S(I) \) and by \thref{rem:chamberMorphisms}\ref{item:uniquePremuOfMorph} this permuation is unique.
We use this observation for the following definition.

\begin{defi}[Type-rotating Permutation]\label{def:Aut_tr}
	Let \( \rootSys \subseteq \eucl \) be an irreducible root system and \( \rootBasis \) a basis.
	A permuation \( \sigma \in S(I) \) is called \emph{type-rotating} (for type \( \rootSys \) and basis \( \rootBasis \)) if it belongs to a type-rotating automorphism of the Euclidean Coxeter complex \( \euclCox(\rootSys) \).
	In other words, there exists an element \( \psi \in \extWeyl \) such that \( \sigma \) describes the change of the equivalence relations.
	
	We write \( \Aut_{\mathrm{tr}, \rootSys, \rootBasis}(I) \) for the type-rotating permutations belonging to the root system \( \rootSys \).
\end{defi}

\begin{rem}
	\mbox{}
	\begin{enumerate}[(i)]
		\item
			Note that the definition of a type-rotating permutation does not depend on the choice of the basis \( \rootBasis \) for the root system since any two bases are conjugate under \( \weylGr \).
			Hence we omit the basis \( \rootBasis \) and write \( \trAutRoot(I) \).
		\item
			Each type-rotating permutation \( \sigma \in \trAutRoot(I) \) induces an automorphism of Coxeter systems \( \sigma \colon (\affWeyl, S) \to (\affWeyl, S) \) via \( s_i \mapsto s_{\sigma(i)} \).
			One needs to check that we have \( m_{i,j} = m_{\sigma(i),\sigma(j)} \).
			This follows from the discussion in \cite[Section 3.5 \& 3.6]{parkinsonDiss} as our \( \trAutRoot(I) \) corresponds to the group \( \trAut(D) \) (see \cite[Proposition 3.6.1]{parkinsonDiss}).
		\item
			From the same discussion in \cite{parkinsonDiss} we can conclude that in the non-reduced case we have \( \trAutRoot(I) = \Set{\id} \) as \( \trAutRoot(I) \cong \coweightLat/\corootLat \) (see \cite[Chap.~VI, \S 2, No.~3]{bourbaki02} for the reduced case and \cite[Section 3.5]{parkinsonDiss} for the non-reduced case).
	\end{enumerate}
\end{rem}

\section{Euclidean and Spherical Buildings}\label{sec:EuclBuild}

\subsection{Buildings}

We work with the definition of a building as presented in \cite[Definition 4.1]{abramenkoBrown08}.

\begin{defi}[Building]\label{def:building}
	A \emph{building} is a labeled simplicial structure \( \build = (\topolSpace, (\simplStrucMaps_\alpha), \labeling) \) that can be expressed as the union of simplicial substructures \( \apartment \) (called \emph{apartments}) satisfying the following axioms
	\begin{enumerate}
		\myitem{(B0)}\label{item:buildAx0}
			each apartment is isomorphic to a Coxeter complex,
		\myitem{(B1)}\label{item:buildAx1}
			for any two simplices \( a, b \in \simplexSet(\build) \), there is an apartment \( \apartment \) such that \( a,b \in \apartment \).
		\myitem{(B2)}\label{item:buildAx2}
			if \( \apartment, \apartment^\prime \) are two apartments containing \( a \) and \( b \), then there is an isomorphism \( \apartment \to \apartment^\prime \) fixing \( a \) and \( b \) pointwise.
	\end{enumerate}
\end{defi}

\begin{rem}\label{rem:basicsOnBuildings}
	\mbox{}
	\begin{enumerate}[(i)]
		\item\label{item:allApartmentsIsom}
			An easy consequence of \ref{item:buildAx1} and \ref{item:buildAx2} is that any two apartments are isomorphic.
			
			The buildings we are interested in have apartments isomorphic to Euclidean Coxeter complexes hence we can speak of a \emph{Euclidean} respectively \emph{spherical building of type \( \rootSys \)} where \( \rootSys \) represents a root system, if the type of the Coxeter complex is a Euclidean, respectively spherical Coxeter complex for the root System \( \rootSys \).
		\item
			Any collection \( \apartSys \) of subcomplexes \( \apartment \) satisfying the axioms is called a \emph{system of apartments}.
			
			Throughout the paper we assume that \( \apartSys \) is the (unique) maximal system of apartments (see \cite[Theorem 4.54]{abramenkoBrown08}).
	\end{enumerate}
\end{rem}

Even though the following definitions can mostly be made for arbitrary buildings, we assume our buildings to be Euclidean buildings coming from an irreducible root system.
The central geometric object of this paper will be local buildings, defined as follows:

\begin{defi}[Local Building]\label{defi:localBuilding}
	We call a chamber system \( \chambSys \) over \( I \) a \emph{local building (of type \( \rootSys \))} if there exists a Euclidean building \( \euclbuild \) of type \( \rootSys \) and a type-preserving simplicial morphism \( p \colon \euclbuild \to \chambSys \) which is a topological covering.
\end{defi}

\begin{rem}\label{rem:localBijSphericalRes}
	Let \( p \colon \euclbuild \to \chambSys \) be a type-preserving simplicial morphism which is also a topological covering.
	Then \( p \) restricted to spherical residues of \( \euclbuild \) is an isomorphism.
\end{rem}

From now on we denote our Euclidean buildings by \( \euclbuild \) and only specify the rest of the datum when needed.

Let \( \chambset(\euclbuild) \) be the set of chambers of a Euclidean building \( \euclbuild \) of type \( \rootSys \).
According to \cite[Proposition 4.81]{abramenkoBrown08} there exists a function
\begin{equation}\label{equ:weylDistance}
	\delta_{\affWeyl} \colon \chambset(\euclbuild) \times \chambset(\euclbuild) \to \affWeyl
\end{equation}
with the following properties:
\begin{enumerate}[(i)]
	\item
		Given a minimal gallery \( \Gamma = (C_0,\ldots,C_d) \) of type \( w(\Gamma) = (s_1,\ldots,s_d) \), \( \delta_{\affWeyl}(C_0, C_d) \) is the element \( w = s_1 \cdots s_d \) represented by \( w(\Gamma) \).
	\item
		Let \( C \) and \( D \) be chambers, and let \( w = \delta_{\affWeyl}(C,D) \).
		The function \( \Gamma \mapsto w(\Gamma) \) gives a one-to-one correspondence between minimal galleries from \( C \) to \( D \) and reduced decompositions of \( w \).
\end{enumerate}
The function \( \delta_{\affWeyl} \) is called the \emph{Weyl distance function} associated to \( \euclbuild \).
For more properties of \( \delta_{\affWeyl} \) see \cite[Section 4.8]{abramenkoBrown08}.

\begin{defi}[Combinatorial Balls]\label{def:comb-balls}
	Let \( \chambset(\euclbuild) \) be the chamber set of a Euclidean building \( \euclbuild \) of type \( \rootSys \).
	For \( C \in \chambset(\euclbuild) \) and \( s = s_i \in S\) define the set
	\[
	\mathcal C_s(C) \coloneqq \Set{D \in \chambset(\euclbuild) \given C \sim_i D \text{ and } C \neq D}.
	\]
	We make the following definitions:
	\begin{enumerate}[(a)]
		\item
			We call the building \emph{locally finite} if \( \# \mathcal C_s(C) < \infty \) for all \( s \in S \) and \( C \in \chambset \).
		\item
			We call the building \emph{regular} if \( \# \mathcal C_s(C) \) is independent of \( C \in \chambset \).

			In this case we set \( q_s \coloneqq \# \mathcal C_s(C) \) for each \( s \in S \).
		\item
			We call the building \emph{strongly regular} if it is regular and \( q_s = q_{\sigma(s)} \) for all \( \sigma \in \trAutRoot(I) \).
	\end{enumerate}
\end{defi}

\begin{rem}
	For a Euclidean building of type \( \rootSys \) being strongly regular is not neccesarily a restriction.
	In \cite[Section 3.8]{parkinsonDiss} one can find that \quoted{choosing the right root system} overcomes this restriction.
	For example any semi-homogeneous tree is a type \( BC_1 \) building and only homogeneous trees are buildings of type \( A_1 \).
	A similar convention is needed for buildings of type \( C_n \) and \( BC_n \) (\( n \geq 2 \)).
	
	If one sticks to these conventions one always chooses the right root systems and every regular building is automatically strongly regular.
\end{rem}

If \( s = s_i \in S \) we set \( q_i \coloneqq q_s \) and call \( \Set{q_i}_{i \in I} \) a \emph{parameter system} of the Euclidean building \( \euclbuild \).

We use the Weyl distance function \( \delta_{\affWeyl} \) to extend the definition of the combinatorial balls to all of \( \affWeyl \) via
\[
\mathcal C_w(C) \coloneqq \Set{D \in \chambset \given \delta_{\affWeyl}(C,D) = w}.
\]

\begin{prop}[{\cite[Proposition 1.7.1]{parkinsonDiss}}]
	Let \( \euclbuild \) be a locally finite regular Euclidean building.
	\begin{enumerate}[(i)]
		\item
			\( \# \mathcal C_w(C) = q_{i_1} q_{i_2} \cdots q_{i_n} \) whenever \( w = s_{i_1} \cdots s_{i_n} \) is a reduced expression, and
		\item
			\( q_i = q_j \) whenever \( m_{i,j} < \infty \) is odd.
	\end{enumerate}
\end{prop}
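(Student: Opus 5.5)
The plan is to prove (i) by induction on the length \( \ell(w) \) of \( w \in \affWeyl \), using the standard axioms of the Weyl distance \( \delta_{\affWeyl} \) (see \cite[Section 4.8]{abramenkoBrown08}). Then (ii) follows from (i) by comparing two reduced expressions of the longest element of a rank-two parabolic subgroup.

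For (i), the case \( \ell(w)=0 \) is trivial, since \( \mathcal C_1(C)=\Set{C} \). The case \( \ell(w)=1 \) is the definition of \( q_i \). For the inductive step write \( w' = w s_i \) with \( \ell(w') = \ell(w)+1 \), where \( w = s_{i_1}\cdots s_{i_{k}} \) is reduced. I will show that the map
\[
\Set{(D,D') \given D \in \mathcal C_w(C),\ D' \in \mathcal C_{s_i}(D)} \to \mathcal C_{w'}(C),\quad (D,D') \mapsto D'
\]
is well defined and bijective.

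\emph{Well-definedness.} If \( \delta_{\affWeyl}(C,D)=w \), \( D' \sim_i D \), \( D'\neq D \) and \( \ell(ws_i)>\ell(w) \), then the Weyl distance axiom (WD2) gives \( \delta_{\affWeyl}(C,D')=ws_i \).

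\emph{Surjectivity.} Given \( D' \) with \( \delta_{\affWeyl}(C,D') = w' \), take a minimal gallery from \( C \) to \( D' \) of type \( (s_{i_1},\ldots,s_{i_k},s_i) \), which exists by the correspondence between minimal galleries and reduced decompositions. Its penultimate chamber \( D \) lies in \( \mathcal C_w(C) \), and \( D' \in \mathcal C_{s_i}(D) \).

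\emph{Injectivity.} The chamber \( D \) is the unique chamber of the \( i \)-panel of \( D' \) at minimal distance from \( C \); this is the gate property, i.e.\ projection onto the panel. Hence \( D \) is determined by \( D' \).

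Local finiteness and regularity then give \( \#\mathcal C_{w'}(C) = \#\mathcal C_w(C)\cdot q_i \), which completes the induction.

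For (ii), let \( m = m_{i,j} \) be odd and finite, and let \( w_0 \) be the longest element of the finite dihedral group \( W_{\Set{i,j}} \). It has the two reduced expressions \( s_is_js_i\cdots s_i \) and \( s_js_is_j\cdots s_j \), both of length \( m \). Applying (i) to each expression computes \( \#\mathcal C_{w_0}(C) \) in two ways:
\[
q_i^{(m+1)/2}q_j^{(m-1)/2} = q_j^{(m+1)/2} q_i^{(m-1)/2}.
\]
Since all \( q \) are positive integers (thickness is not even needed, only positivity), we may cancel to obtain \( q_i = q_j \).

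The main obstacle is the injectivity step in (i), namely the uniqueness of the predecessor chamber \( D \). I would handle it via the projection (gate) property of panels in buildings: every chamber of a panel \( P \) has distance \( d(C,\operatorname{proj}_P C)+1 \) from \( C \), except the projection itself. Alternatively, one can use axiom (WD2) in the form that at most one chamber \( D \) of the \( i \)-panel satisfies \( \delta_{\affWeyl}(C,D) = w \) when \( \ell(ws_i) > \ell(w) \).
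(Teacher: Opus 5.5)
Your proof is correct, and it is the standard argument for this fact: the disjoint decomposition $\mathcal C_{ws_i}(C)=\bigsqcup_{D\in\mathcal C_w(C)}\mathcal C_{s_i}(D)$ when $\ell(ws_i)=\ell(w)+1$, with (WD2) giving both well-definedness and uniqueness of the predecessor $D$, followed by comparing the two alternating reduced words of the longest element of $W_{\{i,j\}}$; the paper gives no proof of its own and only cites Parkinson's thesis. The one point you assert without justification is $q_i\geq 1$, which the cancellation in (ii) needs; it holds because every panel lies in some apartment, and apartments are thin Coxeter complexes.
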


\begin{rem}
	This justifies the notation \( q_w \coloneqq \# \mathcal C_w(C) \) for \( w \in \affWeyl \).
	Moreover, one can see that \( q_w = q_{w^{-1}} \) for all \( w \in \affWeyl \).
	This follows from the fact that \( q_{s_i} = q_{s_i^{-1}} \) because \( s_i = s_i^{-1} \).
\end{rem}

If \( \affWeyl \) is the affine Weyl group of a locally finite, strongly regular Euclidean building, then we can extend the definition of \( q_w \) to the extended affine Weyl group as follows.

\begin{defi}[Extended Parameters]\label{defi:extendedParameters}
	Let \( \affWeyl \) be the affine Weyl group of a locally finite, strongly regular Euclidean building.
	As any \( w \in \extWeyl \) acts on the chambers of \( \euclCox \), \( wC_0 \) is again a chamber of \( \euclCox \) and as \( \affWeyl \) acts simply transitively on the chambers, there is a unique \( w^\prime \in \affWeyl \) such that \( w C_0 = w^\prime C_0 \) and we define \( q_w \coloneqq q_{w^\prime} \).
\end{defi}

These parameters coincide with the ones in \cite[p.~44 and Definition 5.1.2]{parkinsonDiss} and are independent of the chamber \( C \in \chambset(\euclCox) \).
This is a consequence of the strong regularity which implies \( q_{w^\prime} = q_{\sigma(w^\prime)} \) for all \( \sigma \in \trAutRoot(I) \) and \( w^\prime \in \affWeyl \).

For further use we note that \( q_w = q_{w^{-1}} \) also holds for all \( w \in \extWeyl \).

\begin{lem}\label{lem:extWeylParaInverse}
	Given a locally finite, strongly regular Euclidean building \( \euclbuild \) and \( w \in \extWeyl \) we have \( q_w = q_{w^{-1}} \).
\end{lem}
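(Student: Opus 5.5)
We reduce the claim to the already established identity \( q_{w'} = q_{w'^{-1}} \) for \( w' \in \affWeyl \), combined with the strong regularity \( q_{w'} = q_{\sigma(w')} \) for \( \sigma \in \trAutRoot(I) \). The first step is structural: \( \affWeyl \) is normal in \( \extWeyl \), and \( \extWeyl \) acts on the chambers of \( \euclCox \) while \( \affWeyl \) acts simply transitively on them. Let \( \Omega \coloneqq \Set{g \in \extWeyl \given gC_0 = C_0} \) be the stabilizer of the fundamental chamber. Then \( \extWeyl = \affWeyl \rtimes \Omega \), and every \( w \in \extWeyl \) can be written uniquely as \( w = w'\omega \) with \( w' \in \affWeyl \) and \( \omega \in \Omega \). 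In \thref{defi:extendedParameters} this \( w' \) is exactly the element with \( wC_0 = w'C_0 \), so \( q_w = q_{w'} \).

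Next we compute the inverse. We have \( w^{-1} = \omega^{-1} w'^{-1} = (\omega^{-1} w'^{-1} \omega)\,\omega^{-1} \), and \( \omega^{-1} w'^{-1}\omega \in \affWeyl \) by normality. Hence
\[
q_{w^{-1}} = q_{\omega^{-1} w'^{-1} \omega}.
\]
We then identify conjugation by \( \omega^{-1} \) with the type-rotating permutation attached to \( \omega^{-1} \). Since \( \omega^{-1} \) fixes \( C_0 \) and maps \( i \)-adjacent chambers to \( \sigma(i) \)-adjacent ones, it carries the panel of \( C_0 \) of cotype \( i \) to the panel of cotype \( \sigma(i) \). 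It follows that \( \omega^{-1} s_i \omega = s_{\sigma(i)} \) for the corresponding \( \sigma \in \trAutRoot(I) \), because the reflection of \( \euclCox \) in a wall of \( C_0 \) is conjugated to the reflection in the image wall. Consequently conjugation by \( \omega^{-1} \) is precisely the Coxeter automorphism \( s_i \mapsto s_{\sigma(i)} \) from the remark following \thref{def:Aut_tr}. Strong regularity then gives
\[
q_{\omega^{-1} w'^{-1}\omega} = q_{\sigma(w'^{-1})} = q_{w'^{-1}} = q_{w'} = q_w ,
\]
where the third equality is the affine identity \( q_{w'} = q_{w'^{-1}} \). An alternative route to the middle step is to note that \( \sigma \) maps a reduced word for \( w'^{-1} \) to a reduced word of the same length and then apply the product formula from \cite[Proposition 1.7.1]{parkinsonDiss} together with \( q_i = q_{\sigma(i)} \).

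The main obstacle is the identification \( \omega^{-1}s_i\omega = s_{\sigma(i)} \), that is, showing that conjugation by a chamber-stabilizing element of \( \extWeyl \) realizes exactly a permutation in \( \trAutRoot(I) \) acting on the generators. This requires some care with the left versus right conventions in \thref{rem:euclCoxEquivRel}, but it follows from the facts that \( \omega \) is a simplicial automorphism fixing \( C_0 \) and that the reflections \( s_i \) are determined by the panels of \( C_0 \).
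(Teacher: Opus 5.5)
Your proposal is correct and follows the paper's proof essentially step for step. You decompose \( w = w'\omega \) with \( \omega \) in the stabilizer of \( C_0 \), rewrite \( w^{-1} = (\omega^{-1}w'^{-1}\omega)\,\omega^{-1} \), identify the conjugation with a type-rotating permutation \( \sigma \), and conclude using strong regularity and \( q_{w'} = q_{w'^{-1}} \) on \( \affWeyl \). The only difference is that you justify \( \omega^{-1}s_i\omega = s_{\sigma(i)} \) directly from the action on the walls of \( C_0 \), whereas the paper simply cites relation (3.6.3) of Parkinson's thesis.
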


\begin{proof}
	This follows from the Definition in \cite{parkinsonDiss} of the parameters \( q_w \) for \( w \in \extWeyl \).
	We use the decomposition \( \extWeyl = \affWeyl \ltimes \Stab_{\extWeyl}(C_0) \) and that \( q_{w} = q_{w^\prime} \) when \( w = w^\prime g \) with \( w^\prime \in \affWeyl \) and \( g \in \Stab_{\extWeyl}(C_0) \).
	It holds
	\[
	w^{-1} = (w^\prime g)^{-1} = g^{-1} {w^\prime}^{-1} = g^{-1} {w^\prime}^{-1} g g^{-1} = \sigma({w^\prime}^{-1}) g^{-1}
	\]
	by using the relation \cite[(3.6.3)]{parkinsonDiss} and the type-rotating permutation \( \sigma \) associated to \( g \).
	Hence we have
	\[
	q_{w^{-1}} = q_{\sigma({w^\prime}^{-1})} = q_{{w^\prime}^{-1}} = q_{w^\prime} = q_w \quad \text{for all } w \in \extWeyl.
	\]
	Here we used that our building is strongly regular.
\end{proof}

\begin{cor}
	From \thref{lem:extWeylParaInverse} we obtain \( q_{\transl_{-\paramA}} = q_{\transl_\paramA} \) for all \( \paramA \in \domCoweights \).
\end{cor}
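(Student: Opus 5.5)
The corollary is a direct specialization of \thref{lem:extWeylParaInverse}; there are only two things to check.

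First, for \( \paramA \in \domCoweights \subseteq \coweightLat \), the translation \( \transl_\paramA \colon x \mapsto x + \paramA \) is an element of \( \extWeyl = \coweightLat \rtimes \weylGr \), namely the pair \( (\paramA, 1) \). Its inverse in \( \extWeyl \) is \( (-\paramA, 1) \), i.e.\ \( \transl_\paramA^{-1} = \transl_{-\paramA} \). This uses only that \( \coweightLat \) is a group, so \( -\paramA \in \coweightLat \), and that composing translations adds the translation vectors.

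Second, we work under the standing hypotheses of \thref{lem:extWeylParaInverse}: \( \euclbuild \) is locally finite and strongly regular. Under these hypotheses the extended parameters \( q_w \) of \thref{defi:extendedParameters} are defined for all \( w \in \extWeyl \). Applying the lemma with \( w = \transl_\paramA \) gives
\[
q_{\transl_{-\paramA}} = q_{\transl_\paramA^{-1}} = q_{\transl_\paramA}.
\]

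No step here is a real obstacle. The only point needing care is that \( \transl_{-\paramA} \) is taken as an element of \( \extWeyl \) even though \( -\paramA \) is not dominant. The parameter \( q_{\transl_{-\paramA}} \) is still well defined through \thref{defi:extendedParameters}, because that definition applies to arbitrary \( w \in \extWeyl \).
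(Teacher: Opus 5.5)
Your proposal is correct and matches the paper. The paper gives no separate argument: the corollary is simply \thref{lem:extWeylParaInverse} applied to \( w = \transl_\paramA \in \extWeyl \), using \( \transl_\paramA^{-1} = \transl_{-\paramA} \), exactly as you wrote.
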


The buildings in this paper are assumed to be locally finite and strongly regular Euclidean buildings.
\begin{rem}
	In \cite[Theorem 1.7.4]{parkinsonDiss} it is shown that a thick building with \( m_{i,j} < \infty \) for all \( i,j \in I \) is already regular.
	The condition \( m_{i,j} < \infty \) means that the building has no rank two residues of type \( \widetilde{A}_1 \).
\end{rem}

The points \( x \in \coweightLat \) are special points in the tesselation of the Euclidean space given by the hyperplanes.
They have the property that for every hyperplane \( H \in \mathcal H \) there exists a hyperplane \( H^\prime \in \mathcal H \) parallel to \( H \) going through \( x \) (see \cite[Section 3.4]{parkinsonDiss}).
As discussed in \cite[Section 3.4]{parkinsonDiss} this property is often used to define \emph{special} vertices of a Euclidean Coxeter complex.
If the root system of the Euclidean building is reduced, the special vertices are the elements of the coweight lattice.
But in the cases where the root system of the Euclidean building is non-reduced \( \coweightLat \) is a proper subset of these special vertices.

This leads to the notion of \emph{good} vertices.

\begin{defi}[Good Vertices]\label{defi:good vertices}
	The elements of \( \coweightLat \) are called \emph{good vertices} in \( \euclCox \). 
	To extend the notion to all vertices in a Euclidean building \( \euclbuild \) we use our labeling \( \labeling \) and define
	\[
	I_{\coweightLat} \coloneqq \Set{\labeling(\lambda) \given \lambda \in \coweightLat \subseteq \euclCox}.
	\]
	This is the set of all labels of vertices in the coweight lattice and it holds \( x \in \coweightLat \) if and only if \( \labeling(x) \in I_{\coweightLat} \).
	Hence \( I_{\coweightLat} \) are precisely the labels of the good vertices.
	This allows us to extend the notion of \emph{good} vertices to the Euclidean building \( \euclbuild \).
	A vertex \( x \in \euclbuild \) is a \emph{good} vertex if \( \labeling(x) \in I_{\coweightLat} \).
	To make it easier when speaking about good vertices we denote the set of all good vertices of a Euclidean building by \( V_{\coweightLat} \).
\end{defi}

This definition of good vertices agrees with the one given in \cite[Definition 3.8.1]{parkinsonDiss}.
A useful characterization of a good vertex of a Euclidean building is the following.

\begin{prop}[{{\cite[Proposition 3.8.2]{parkinsonDiss}}}]\label{prop:characGoodVertexBuild}
	A vertex \( x \in \euclbuild \) is good if and only if there exists an apartment \( \apartment \) containing \( x \) and a type-preserving isomorphism \( \psi \colon \apartment \to \euclCox \) such that \( \psi(x) \in \coweightLat \).
\end{prop}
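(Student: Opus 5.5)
The plan is to reduce both directions to two facts. First, every apartment admits a \emph{type-preserving} isomorphism onto \( \euclCox \). Second, in \( \euclCox \) membership in \( \coweightLat \) is detected by the label alone, i.e.\ \( y \in \coweightLat \iff \labeling(y) \in I_{\coweightLat} \), as asserted in \thref{defi:good vertices}.

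For the direction \quoted{\( \Leftarrow \)}, let \( \psi \colon \apartment \to \euclCox \) be type-preserving with \( \psi(x) \in \coweightLat \). Since \( \apartment \) is a labeled simplicial substructure of \( \euclbuild \), type preservation gives \( \labeling(x) = \labeling(\psi(x)) \). Because \( \psi(x) \in \coweightLat \), this label lies in \( I_{\coweightLat} \) by the very definition of \( I_{\coweightLat} \), so \( x \) is good.

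For the direction \quoted{\( \Rightarrow \)}, let \( x \) be good and pick an apartment \( \apartment \ni x \) by \ref{item:buildAx1}. By \ref{item:buildAx0}, \( \apartment \) is a Coxeter complex of type \( (\affWeyl, S) \). By \thref{defi:CoxeterComplex}, this means there is a bijection \( \chambset(\apartment) \cong \affWeyl \) under which \( i \)-equivalence is exactly \( w \sim_i w s_i \). The same holds for \( \euclCox \) via \( w \mapsto wC_0 \) by \thref{rem:euclCoxEquivRel}. Composing these bijections gives a bijection on chambers respecting each \( i \)-equivalence with \( \sigma = \id_I \). By \thref{rem:onlyChambAndEquiRelImp} and \thref{rem:abstract_chamb1}, this determines a type-preserving isomorphism \( \psi \colon \apartment \to \euclCox \) of labeled simplicial structures. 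Then \( \labeling(\psi(x)) = \labeling(x) \in I_{\coweightLat} \), hence \( \psi(x) \in \coweightLat \).

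The step I expect to need the most care is the claim \( y \in \coweightLat \iff \labeling(y) \in I_{\coweightLat} \) for vertices \( y \) of \( \euclCox \). The forward implication is the definition of \( I_{\coweightLat} \), so only the reverse needs an argument. Take \( y \) with \( \labeling(y) = \labeling(\lambda) \) for some \( \lambda \in \coweightLat \). By \thref{prop:weylGroupActions}, \( \affWeyl \) acts simply transitively and type-preservingly on chambers. Hence there are \( w, w' \in \affWeyl \) moving \( y \) and \( \lambda \) respectively into \( C_0 \). Their images are then vertices of \( C_0 \) of the same type, and since \( C_0 \) carries each label exactly once, \( w y = w' \lambda \), so \( y = w^{-1} w' \lambda \). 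It remains to see that \( \coweightLat \) is \( \affWeyl \)-stable. This follows from \( \affWeyl = \corootLat \rtimes \weylGr \), from \( \corootLat \subseteq \coweightLat \), and from \( \weylGr \)-invariance of \( \coweightLat \). The last point holds because \( s_\beta(\varpi) = \varpi - \spr{\varpi,\beta}\beta^\vee \) with \( \spr{\varpi,\beta} \in \Znum \) and \( \beta^\vee \in \corootLat \).

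If one prefers not to rely on the combinatorial reconstruction in the \quoted{\( \Rightarrow \)} step, there is a fallback. Take an arbitrary isomorphism \( \apartment \to \euclCox \) with permutation \( \sigma \). Post-composing with the diagram automorphism of \( \euclCox \) realizing \( \sigma^{-1} \) would fix types, but such an automorphism need not come from \( \extWeyl \), i.e.\ need not be type-rotating. This is why I would use the construction via \thref{defi:CoxeterComplex} directly.
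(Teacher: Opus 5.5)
Your proof is correct. The paper gives no argument of its own here: it cites Parkinson's thesis, and it builds the criterion ``a vertex $y$ of $\euclCox$ lies in $\coweightLat$ iff $\labeling(y)\in I_{\coweightLat}$'' into the definition of good vertices without justification. With that definition, the proposition is exactly the combination of the two facts you isolate. Your $\affWeyl$-stability argument supplies the one point the paper leaves unproved. It rests on $\affWeyl=\corootLat\rtimes\weylGr$, on $\corootLat\subseteq\coweightLat$, and on $s_\beta(\varpi)=\varpi-\spr{\varpi,\beta}\beta^\vee$ with $\spr{\varpi,\beta}\in\Znum$.

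Two small remarks.

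First, your reason for rejecting the fallback is beside the point. The proposition only asks for a \emph{type-preserving} $\psi$, so it does not matter whether the correcting automorphism comes from $\extWeyl$. Any automorphism $\tau$ of the Coxeter system $(\affWeyl,S)$ induces the chamber-system automorphism $wC_0\mapsto\tau(w)C_0$ of $\euclCox$, whose permutation is $\tau$.

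What both of your routes actually use is the standing convention that an apartment, with the labeling inherited from $\euclbuild$, is a Coxeter complex of the same labeled type $(\affWeyl,S)$ as $\euclCox$. Equivalently, the permutation of an isomorphism $\apartment\to\euclCox$ preserves the Coxeter matrix. (B0) alone does not give this. Without it, no type-preserving $\psi$ need exist, for example for a badly relabeled $\widetilde{A}_3$.

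Second, the paper defines ``type-preserving'' via $\sigma=\id$ on adjacency, so the identity $\labeling(\psi(x))=\labeling(x)$ deserves one line. By thinness, the type-$i$ vertex of a chamber $C$ is the vertex not in $C\cap D$, where $D$ is the unique chamber $i$-adjacent to $C$. An isomorphism with $\sigma=\id$ preserves this description.
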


\subsection{Morphisms and their Properties}\label{subsec:MaP}

We want to extend the definition of type-rotating morphisms to morphisms between local buildings.
\begin{defi}[Type-rotating Morphisms]\label{def:tr-morphisms}
	Let \( \rootSys \subseteq \eucl \) be an irreducible root system with a fixed basis \( \rootBasis \).
	Let \( \chambSys \) and \( \mathcal D \) be two local buildings that come from the same Euclidean building of type \( \rootSys \).
	Then we call a morphism of chamber sytems \( \psi \colon \chambSys \to\mathcal D \) \emph{type-rotating} morphism if we can find a corresponding permutation \( \sigma \in \trAutRoot(I) \subseteq S(I) \) (where \( \trAutRoot(I) \) is defined in \thref{def:Aut_tr}).
	We denote the set of type-rotating morphisms from \( \chambSys \) to \( \mathcal D \) by \( \trHom(\chambSys, \mathcal D) \).
\end{defi}

Note that we can directly extend this definition to subcomplexes of local buildings, e.g. to the fundamental sector \( \sector_0 \subset \Sigma \).

\begin{rem}{{{\cite[Definition 4.1.1]{parkinsonDiss}}}}\label{rem:typeRotApartments}
	Let \( \apartment_1, \apartment_2 \subseteq \euclbuild \) be two apartments in a Euclidean building.
	An isomorphism \( \psi \colon \apartment_1 \to \apartment_2 \) is type-rotating if and only if we find type-preserving morphisms \( \psi_i \colon \apartment_i \to \euclCox \) (\( i = 1,2 \)) and an element \( w \in \extWeyl \) such that
	\[
	\psi = \psi_2^{-1} \circ w \circ \psi_1.
	\]
	This follows from our definition that \( \extWeyl \) are the type-rotating automorphisms of \( \euclCox \).
\end{rem}

As a consequence we get the following lemma in \cite[Lemma 4.1.3]{parkinsonDiss}.

\begin{lem}\label{lem:typRotFixWeylGr}
	Suppose \( x \in \euclbuild \) is a good vertex contained in two apartments \( \apartment_1 \) and \( \apartment_2 \) and suppose that \( \psi_i \colon \apartment_i \to \euclCox \) (\( i = 1,2 \)) are type-rotating isomorphisms such that \( \psi_1(x) = 0 = \psi_2(x) \).
	Let \( \phi \colon \apartment_1 \to \apartment_2 \) be a type-preserving isomorphism mapping \( x \) to \( x \).
	Then the composition \( \psi_2 \circ \phi \circ \psi_1^{-1} \in W_0 \).
\end{lem}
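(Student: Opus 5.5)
The map \( w \coloneqq \psi_2 \circ \phi \circ \psi_1^{-1} \colon \euclCox \to \euclCox \) is a composition of isomorphisms of chamber systems, so it is an automorphism of \( \euclCox \). The plan is to show in turn that \( w \in \extWeyl \), that \( w \) fixes \( 0 \), and then that these two facts force \( w \in \weylGr \).

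First, \( w \) is type-rotating. By \thref{rem:typeRotApartments}, each \( \psi_i \) can be written as \( \psi_i = w_i \circ \psi_i^0 \) with \( \psi_i^0 \colon \apartment_i \to \euclCox \) type-preserving and \( w_i \in \extWeyl \). The composition \( \psi_2^0 \circ \phi \circ (\psi_1^0)^{-1} \) is a type-preserving automorphism of \( \euclCox \), hence lies in \( \Aut_0(\euclCox) = \affWeyl \). Therefore
\[
w = w_2 \circ \bigl(\psi_2^0 \circ \phi \circ (\psi_1^0)^{-1}\bigr) \circ w_1^{-1} \in \extWeyl .
\]

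Second, \( w(0) = \psi_2(\phi(\psi_1^{-1}(0))) = \psi_2(\phi(x)) = \psi_2(x) = 0 \). Write \( w = t_\lambda \circ v \) with \( \lambda \in \coweightLat \) and \( v \in \weylGr \), using \( \extWeyl = \coweightLat \rtimes \weylGr \) acting by affine maps. Then \( 0 = w(0) = \lambda + v(0) = \lambda \), so \( w = v \in \weylGr \).

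The main point needing care is the first step: we need the fact that type-preserving automorphisms of \( \euclCox \) are exactly \( \affWeyl \), and that \( \extWeyl \) acts by affine maps, so that the stabilizer of \( 0 \) in \( \extWeyl \) is \( \weylGr \). Both are recorded earlier in the paper: \( \affWeyl = \Aut_0(\euclCox) \) is stated in the remark preceding \thref{def:Aut_tr}, and the semidirect product description is the definition of \( \extWeyl \). The hypothesis that \( x \) is good is used only to make the normalisation \( \psi_i(x) = 0 \in \coweightLat \) consistent with type-rotating maps; this is guaranteed by \thref{prop:characGoodVertexBuild}.
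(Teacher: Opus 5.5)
Your proof is correct and follows the route the paper indicates. The paper gives no separate argument: it states the lemma as a direct consequence of \thref{rem:typeRotApartments} and cites Parkinson's Lemma 4.1.3. That is your argument: the composition is a type-rotating automorphism of \( \euclCox \), hence lies in \( \extWeyl = \coweightLat \rtimes \weylGr \), and fixing \( 0 \) kills the translation part.
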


\begin{rem}
	The idea behind the type-rotating morphisms is that we want to view them as \quoted{orientation-preserving} maps.
\end{rem}

\begin{rem}\label{rem:loc-inj-morph}
	As we have a topology on all buildings and chamber systems the term \emph{locally injective} makes sense.
	Recall \thref{rem:localBijSphericalRes} which stated that local topological properties imply properties for the spherical residues.
	In particular, for a Euclidean Coxeter complex \( \euclCox \) and a chamber system \( \chambSys \), a morphism of chamber systems \( \psi \colon \euclCox \to \chambSys \) is locally injective if and only if \( \psi \) is injective on all spherical residues.
	Moreover, for a subcomplex \( \mathcal D \subseteq \euclCox \) a morphism \( \psi \colon \mathcal D \to \chambSys \) is locally injective if and only if it is injective on all sets \( \mathcal D \cap \Res_J(C) \) for any chamber \( C \in \mathcal D \) and \( J \subseteq I \) spherical.
\end{rem}

\begin{rem}
	As a consequence of \thref{rem:chamberMorphisms}\ref{item:uniquePremuOfMorph} any locally injective morphism has a unique permutation associated to it.
\end{rem}

\begin{lem}\label{lem:localInjMapsGalToGal}
	Let \( \euclCox \) be a Euclidean Coxeter complex of type \( \rootSys \), \( \mathcal D \subseteq \euclCox \) a subcomplex and \( \chambSys \) a local building of the same type.
	A locally injective type-rotating morphism \( \psi \colon \mathcal D \to \chambSys \) maps galleries in \( \mathcal D \) to galleries in \( \chambSys \).
	In other words, \( i \)-adjacent chambers are mapped to \( \sigma(i) \)-adjacent chambers for some \( \sigma \in \trAutRoot(I) \).
\end{lem}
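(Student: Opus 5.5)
Since \( \psi \) is a morphism of chamber systems with type-rotating permutation \( \sigma \in \trAutRoot(I) \), by definition \( C \sim_i D \) in \( \mathcal D \) implies \( \psi(C) \sim_{\sigma(i)} \psi(D) \). Hence the image of a pregallery \( (C_0,\ldots,C_k) \) of type \( i_1\cdots i_k \) is automatically a pregallery of type \( \sigma(i_1)\cdots\sigma(i_k) \). What remains is the non-stuttering property: if \( C \) and \( D \) are \( i \)-adjacent in \( \mathcal D \), i.e.\ \( C \neq D \) and \( C \sim_i D \), we must show \( \psi(C) \neq \psi(D) \). Applying this to each consecutive pair \( (C_{j-1},C_j) \) of a gallery gives the claim.

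To see \( \psi(C)\neq\psi(D) \), I use local injectivity in the form of \thref{rem:loc-inj-morph}: \( \psi \) is injective on \( \mathcal D \cap \Res_J(C) \) for every chamber \( C\in\mathcal D \) and every spherical \( J \subseteq I \). Take \( J = \Set{i} \). The group \( W_{\Set{i}} = \Set{1, s_i} \) has order two, so \( \Set{i} \) is spherical. (Alternatively, take \( J = I\setminus\Set{j} \) for any \( j \neq i \); this residue is a link of a vertex and is spherical by \thref{rem:sphericalResChambSys}\ref{item:linkOfVertex}, and it contains \( \Res_{\Set{i}}(C) \).) Both \( C \) and \( D \) lie in \( \mathcal D \cap \Res_{\Set{i}}(C) \), since \( (C,D) \) is a \( \Set{i} \)-gallery. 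Injectivity on this set then forces \( \psi(C) \neq \psi(D) \).

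Combining the two steps, \( \psi(C) \) and \( \psi(D) \) are distinct and \( \sigma(i) \)-equivalent, hence \( \sigma(i) \)-adjacent. Therefore galleries map to galleries of type \( \sigma(w(\Gamma)) \).

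The only delicate point is justifying that local injectivity really yields injectivity on the rank-one residue inside the subcomplex \( \mathcal D \), rather than only on residues of the whole of \( \euclCox \). I would argue directly with the topology. The chambers \( C \) and \( D \) share the panel \( C \cap D \). Any neighborhood of an interior point of this panel meets the interiors of both chambers. If \( \psi(C) = \psi(D) \), then, because \( \psi \) is simplicial and fixes the common panel, points of \( C \) and \( D \) symmetric across the panel would have the same image arbitrarily close to the panel. This contradicts injectivity on a small neighborhood. This topological argument also underlies \thref{rem:loc-inj-morph}, so I would cite that remark and add this sentence as justification.
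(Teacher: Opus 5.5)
Your proof is correct and follows the paper's argument: a morphism of chamber systems sends galleries to pregalleries, and non-stuttering holds because $C$ and $D$ lie in the spherical residue $\Res_{\{i\}}(C)$ (spherical since $m_{i,i}=1$), on which $\psi$ is injective by \thref{rem:loc-inj-morph}. Your extra topological paragraph is not needed, since the paper just invokes that remark, but it is sound once you replace \quoted{fixes the common panel} by \quoted{is affine on each chamber and sends the two vertices off the panel to the same vertex}.
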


\begin{proof}
	Every morphism of chamber systems maps galleries to pre-galleries.
	Recall \thref{defi:terminologyChamberSys} which tells us that in a gallery two consecutive chambers must be different.
	We need to check that if \( C \sim_i D \) are two \( i \)-adjacent chambers their images are not equal, i.e., \( \psi(C) \neq \psi(D) \).
	As \( C \sim_i D \) we have \( C, D \in \Res_{\Set{i}}(C) \) and this is a spherical residue as \( m_{i,i} = 1 \) in the Coxeter matrix.
\end{proof}

Recall the definition of \( \sigma \)-isometries from \cite[Definition 5.59]{abramenkoBrown08}.

\begin{defi}[\( \sigma \)-Isometries]
	Let \( \euclbuild, \euclbuild^\prime \) be two Euclidean buildings of type \(\rootSys \) and \( \delta_{\affWeyl} , \delta_{\affWeyl}^\prime \) the Weyl distance functions on \( \euclbuild \) and \( \euclbuild^\prime \) as defined in \Cref{equ:weylDistance}.
	Let \( \sigma \in \Aut(\affWeyl,S) \) be an automorphism of the Euclidean Coxeter group \( (\affWeyl,S) \), i.e., \( \sigma \) maps \( S \) bijectively onto \( S \) and preserves the values of the Coxeter matrix \( M \).
	A \emph{\( \sigma \)-isometry} is a morphism \( \phi \colon \euclbuild \to \euclbuild^\prime \) satisfying
	\[
	\delta_{\affWeyl}^\prime(\phi(C), \phi(D)) = \sigma(\delta_{\affWeyl}(C,D)).
	\]
	Obviously this notion can be extended to subcomplexes \( \mathcal D \subseteq \euclbuild \) and maps \( \phi \colon \mathcal D \to \euclbuild^\prime \).
	
	If \( \sigma = \id \) or equivalently \( \phi \) is type-preserving, we call \( \phi \) an isometry.
\end{defi}

\begin{rem}\label{rem:sigmaIsomMetricIsom}
	\mbox{}
	\begin{enumerate}[(i)]
		\item
			As explained in \cite[Examples 5.60(c)]{abramenkoBrown08} a \( \sigma \)-isometry is just another characterization of automorphisms of a (Euclidean) building.
		\item\label{item:sigmaIsomIsMetricIsom}
			Any \( \sigma \)-isometry from a subset of a Euclidean Coxeter complex of type \( \rootSys \) into a Euclidean building of type \( \rootSys \) is an isometry in the (Euclidean) metric sense.
			This follows from the discussions in \cite[p.~532 and p.~570]{abramenkoBrown08}.
	\end{enumerate}
\end{rem}

\begin{lem}\label{lem:localInjIsSigIsom}
	Let \( \euclCox \) be a Euclidean Coxeter complex of type \( \rootSys \), \( \mathcal D \subseteq \euclCox \) be a combinatorially convex subcomplex and \( \euclbuild \) be a Euclidean building of type \( \rootSys \) with apartments isometric to \( \euclCox \).
	Then a locally injective morphism of chamber systems \( \psi \colon \mathcal D \to \euclbuild \) is a \( \sigma \)-isometry.
\end{lem}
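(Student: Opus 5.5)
The plan is to reduce the claim to the statement that \( \psi \) preserves Weyl distances. That is, for all chambers \( C, D \in \mathcal D \) we want \( \delta_{\affWeyl}(\psi(C),\psi(D)) = \sigma(\delta_{\affWeyl}^{\euclCox}(C,D)) \), where \( \sigma \) is the permutation attached to \( \psi \). By \thref{rem:chamberMorphisms}\ref{item:uniquePremuOfMorph} this permutation is unique, since local injectivity gives injectivity on the \( \{i\} \)-residues. The key tool is the standard characterization of Weyl distance via galleries, which we invoke as a black box from \cite[Section 4.8 and Section 5.5]{abramenkoBrown08}: a gallery in a building whose type is a reduced word is minimal, and its endpoints have Weyl distance equal to the product of that word.

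Fix \( C, D \in \mathcal D \) and set \( w = \delta_{\affWeyl}^{\euclCox}(C,D) \). Choose a minimal gallery \( \Gamma = (C_0,\ldots,C_d) \) from \( C \) to \( D \) in \( \euclCox \) of reduced type \( s_{i_1}\cdots s_{i_d} \). Because \( \mathcal D \) is combinatorially convex, every \( C_j \) lies in \( \mathcal D \). By \thref{lem:localInjMapsGalToGal} (whose argument only uses local injectivity), \( \psi(\Gamma) \) is a non-stuttering gallery in \( \euclbuild \) of type \( \sigma(i_1)\cdots\sigma(i_d) \). Since \( \sigma \) induces an automorphism of \( (\affWeyl,S) \), this word is again reduced. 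If we can show that a non-stuttering gallery of reduced type in a building is minimal and realizes the product as Weyl distance, we obtain \( \delta_{\affWeyl}(\psi(C),\psi(D)) = \sigma(w) \), which is exactly the \( \sigma \)-isometry property.

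The main obstacle is this last fact. A gallery of reduced type alone does not suffice in general: we need the type word to be reduced and the gallery to be non-stuttering. Both hold here, and \cite[Lemma 5.16]{abramenkoBrown08} then says that the Weyl distance equals the product of the word. If that reference is not directly applicable, I would argue by induction on \( d \), using axiom (WD2) of Weyl distance functions. That axiom says that if \( \delta(C',D')=w \) and \( E \) is \( s \)-adjacent to \( D' \) with \( \ell(ws)=\ell(w)+1 \), then \( \delta(C',E)=ws \). Applying it along \( \psi(\Gamma) \), with reducedness guaranteeing the length condition at each step, yields the claim. A subtle point is whether \( \sigma \) genuinely preserves the Coxeter matrix, so that reducedness transfers. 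For type-rotating \( \sigma \) this was noted from \cite{parkinsonDiss}. For a general locally injective morphism, I would instead deduce it from the fact that \( \psi \) maps \( \{i,j\} \)-residues injectively onto \( \{\sigma(i),\sigma(j)\} \)-residues, which are generalized \( m_{ij} \)-gons. This forces \( m_{\sigma(i)\sigma(j)}=m_{ij} \).

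Finally, I would remark via \thref{rem:sigmaIsomMetricIsom}\ref{item:sigmaIsomIsMetricIsom} that \( \psi \) is consequently also a metric isometry, which is presumably how the lemma is used later.
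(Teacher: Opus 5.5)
Your gallery argument is sound, and it gives a self-contained proof where the paper only cites \cite[Proposition 1.1]{mozes95}. Convexity keeps a minimal gallery of $\euclCox$ inside $\mathcal D$. Local injectivity on the $\{i\}$-residues makes its image non-stuttering. Iterating (WD2) then shows that the Weyl distance between the endpoints of the image is the product of its type, \emph{provided that type is reduced}.

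The gap is exactly that proviso: the claim that $\sigma$ preserves the Coxeter matrix for an arbitrary locally injective $\psi$. Your justification for it does not work, for two reasons.
\begin{itemize}
\item Local injectivity only concerns the sets $\mathcal D\cap\Res_J(C)$, and $\mathcal D$ need not contain any full $\{i,j\}$-residue of $\euclCox$.
\item Even when it does, the image of that thin $2m_{ij}$-gon is not onto the $\{\sigma(i),\sigma(j)\}$-residue of the building. That residue is a generalized $m_{\sigma(i)\sigma(j)}$-gon; calling it an $m_{ij}$-gon assumes what you want to prove. An injective, closed, non-stuttering image only gives $m_{\sigma(i)\sigma(j)}\le m_{ij}$, via the girth $2m_{\sigma(i)\sigma(j)}$ of that generalized polygon.
\end{itemize}
If $\mathcal D$ contains full residues of every cotype $\{i,j\}$, as a sector does, you can recover equality by summing these inequalities over all pairs, since $\sigma$ permutes the pairs. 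Your argument as written does not do this.

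For small $\mathcal D$ this step genuinely fails, so the lemma needs $\sigma\in\Aut(\affWeyl,S)$ (e.g.\ type-rotating $\psi$) as an extra hypothesis. In type $\tilde C_2$ one has $m_{01}=m_{12}=4$ and $m_{02}=2$, so no diagram automorphism sends $s_0$ to $s_1$. Take $\mathcal D=\{C,C'\}$, two $0$-adjacent chambers; this is a convex subcomplex. Map it simplicially onto two $1$-adjacent chambers of a $\tilde C_2$ building (even $\euclCox$ itself), sending the type-$k$ vertex to a vertex of type $\sigma(k)$ with $\sigma=(0\,1)$. This is an injective morphism of chamber systems. However, $\delta_{\affWeyl}(\psi C,\psi C')=s_1\neq\tau(s_0)$ for every $\tau\in\Aut(\affWeyl,S)$, so $\psi$ is not a $\tau$-isometry.

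Where the paper actually uses the lemma, $\psi$ is type-rotating. Then $\sigma\in\trAutRoot(I)$ induces a Coxeter automorphism by the remark following \thref{def:Aut_tr}, and with that input your proof is complete. A minor point: the uniqueness of $\sigma$ that you quote requires every $i$-adjacency to occur inside $\mathcal D$, but you never need uniqueness.
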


\begin{proof}
	This is \cite[Proposition 1.1]{mozes95}.
\end{proof}

\begin{lem}[{{\cite[Lemma 5.62]{abramenkoBrown08}}}]\label{lem:sigmaImageOfConvex}
	Let \( \euclbuild, \euclbuild^\prime \) be two Euclidean buildings of type \( \rootSys \).
	Suppose we have chosen two different bases \( \rootBasis, \rootBasis^\prime \) of \( \rootSys \).
	This gives us different sets of generators \( S, S^\prime \) of \( \affWeyl \).
	Let \( \sigma \colon (\affWeyl,S) \to (\affWeyl, S^\prime) \) be an isomorphism of (Euclidean) Coxeter systems.
	
	If \( \mathcal M \subseteq \euclbuild \) is a combinatorially convex subset and \( \phi \colon \mathcal M \to \euclbuild^\prime \) is a \( \sigma \)-isometry, then the image \( \phi(\mathcal M) \subseteq \euclbuild^\prime \) is combinatorially convex.
\end{lem}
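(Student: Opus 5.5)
The plan is to reduce the statement to the case of a type-preserving isometry, and then verify the defining property of combinatorial convexity directly using the Weyl distance \( \delta_{\affWeyl}^\prime \) on \( \euclbuild^\prime \).

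Fix two chambers \( \phi(C), \phi(D) \in \phi(\mathcal M) \) with \( C, D \in \mathcal M \), and let \( \Gamma^\prime = (C_0^\prime, \ldots, C_d^\prime) \) be a minimal gallery in \( \euclbuild^\prime \) from \( \phi(C) \) to \( \phi(D) \). We must show that every \( C_j^\prime \) lies in \( \phi(\mathcal M) \).

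First we translate the lengths. Since \( \phi \) is a \( \sigma \)-isometry, we have \( \delta_{\affWeyl}^\prime(\phi(C),\phi(D)) = \sigma(\delta_{\affWeyl}(C,D)) \). Moreover, \( \sigma \) maps \( S \) bijectively onto \( S^\prime \), so it preserves word lengths. Hence \( d = \ell(\delta_{\affWeyl}(C,D)) \). By the correspondence between minimal galleries and reduced decompositions, the type of \( \Gamma^\prime \) is a reduced word \( s_1^\prime \cdots s_d^\prime \) for \( \sigma(w) \), where \( w = \delta_{\affWeyl}(C,D) \). Pulling back, \( \sigma^{-1}(s_1^\prime)\cdots\sigma^{-1}(s_d^\prime) \) is a reduced decomposition of \( w \). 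So there is a unique minimal gallery \( \Gamma = (C = C_0, \ldots, C_d = D) \) in \( \euclbuild \) of that type, and by convexity of \( \mathcal M \) all \( C_j \) lie in \( \mathcal M \).

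Next we show \( \phi(\Gamma) = \Gamma^\prime \), by induction on \( j \). We have \( \delta^\prime_{\affWeyl}(\phi(C), \phi(C_j)) = \sigma(\delta_{\affWeyl}(C, C_j)) = s_1^\prime \cdots s_j^\prime \). Likewise \( \delta^\prime_{\affWeyl}(\phi(C_j), \phi(D)) = s_{j+1}^\prime \cdots s_d^\prime \). Thus \( \phi(C_j) \) is a chamber at Weyl distance \( s_1^\prime\cdots s_j^\prime \) from \( \phi(C) \) and at Weyl distance \( s_{j+1}^\prime\cdots s_d^\prime \) from \( \phi(D) \). Concatenating minimal galleries through \( \phi(C_j) \) therefore gives a minimal gallery from \( \phi(C) \) to \( \phi(D) \) of the same type as \( \Gamma^\prime \). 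By uniqueness of the minimal gallery of a given reduced type between two fixed chambers (property (ii) of \( \delta_{\affWeyl} \)), this gallery equals \( \Gamma^\prime \), so \( C_j^\prime = \phi(C_j) \in \phi(\mathcal M) \).

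The main obstacle is the bookkeeping with the two generating sets \( S \) and \( S^\prime \) coming from different bases. One must make sure that the Weyl distance on \( \euclbuild^\prime \) is expressed in the generators \( S^\prime \), so that "reduced" and "type" match under \( \sigma \). One also needs the fact that minimal galleries correspond bijectively to reduced words for a fixed pair of chambers; this is property (ii) of \( \delta_{\affWeyl} \). Alternatively, one can simply cite \cite[Lemma 5.62]{abramenkoBrown08}, which is exactly this statement.
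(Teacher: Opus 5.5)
Your proposal is correct. The paper gives no proof of its own; it only cites \cite[Lemma 5.62]{abramenkoBrown08}, and your argument is essentially the standard proof behind that citation: you pull the reduced type of \( \Gamma^\prime \) back via \( \sigma^{-1} \), use convexity of \( \mathcal M \) to get the matching minimal gallery \( \Gamma\subseteq\mathcal M \), and conclude \( \phi(\Gamma)=\Gamma^\prime \) by uniqueness of minimal galleries of a given reduced type.

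Two minor points. The reduction to the type-preserving case that you announce is never carried out, and it is not needed. The concatenation step can also be shortened: \( \delta_{\affWeyl}^\prime(\phi(C_{j-1}),\phi(C_j))=\sigma(\sigma^{-1}(s_j^\prime))=s_j^\prime \), so \( \phi(\Gamma) \) is already a gallery of type \( s_1^\prime\cdots s_d^\prime \).
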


Combining \thref{lem:localInjIsSigIsom,lem:sigmaImageOfConvex} we obtain the following corollary.

\begin{cor}\label{cor:localInjConvexSets}
	Let \( \euclCox \) be a Euclidean Coxeter complex of type \( \rootSys \) and \( \euclbuild \) be a Euclidean building of type \( \rootSys \).
	That means the apartments of \( \euclbuild \) are isometric to \( \euclCox \).
	Then the image of a combinatorially convex subset of \( \coxCompl \) under a locally injective map is combinatorially convex in \( \build \).
\end{cor}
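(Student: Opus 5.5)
The corollary follows by composing \thref{lem:localInjIsSigIsom} with \thref{lem:sigmaImageOfConvex}.

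Let \( \mathcal D \subseteq \euclCox \) be a combinatorially convex subset and \( \psi \colon \mathcal D \to \euclbuild \) a locally injective morphism of chamber systems. Here \( \mathcal D \) is understood as a subcomplex, i.e.\ the union of its chambers. By \thref{rem:chamberMorphisms}\ref{item:uniquePremuOfMorph}, the permutation \( \sigma \in S(I) \) attached to \( \psi \) is unique. Local injectivity means injectivity on rank-one residues, so adjacent chambers are sent to adjacent chambers, as in \thref{lem:localInjMapsGalToGal}. The first step is to apply \thref{lem:localInjIsSigIsom}, i.e.\ Mozes' result. Since \( \mathcal D \) is combinatorially convex, this shows that \( \psi \) is a \( \sigma \)-isometry:
\[
\delta_{\affWeyl}(\psi(C),\psi(D)) = \sigma(\delta_{\affWeyl}(C,D)) \quad \text{for all chambers } C, D \in \mathcal D.
\]
For this to make sense, \( \sigma \) must induce an automorphism of the Coxeter system \( (\affWeyl,S) \). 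This is part of the statement of \thref{lem:localInjIsSigIsom}. Alternatively, it follows from the remark after \thref{def:Aut_tr} when \( \sigma \) is type-rotating.

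The second step is to regard \( \euclCox \) itself as a Euclidean building of type \( \rootSys \): it is a thin building with a single apartment. Then \( \mathcal M \coloneqq \mathcal D \) is a combinatorially convex subset of the building \( \euclCox \), and \( \psi \colon \mathcal M \to \euclbuild \) is a \( \sigma \)-isometry. We apply \thref{lem:sigmaImageOfConvex} with \( \euclbuild \) replaced by \( \euclCox \) and \( \euclbuild^\prime \) replaced by \( \euclbuild \). If needed, take \( S^\prime = \sigma(S) \), or equivalently a relabelled basis \( \rootBasis^\prime \). The lemma yields that \( \psi(\mathcal D) \) is combinatorially convex in \( \euclbuild \).

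The only genuine point to check, and the expected obstacle, is bookkeeping. One must make sure that the \( \sigma \) produced by \thref{lem:localInjIsSigIsom} is an isomorphism of Coxeter systems of the form required in \thref{lem:sigmaImageOfConvex}. If the lemma is read with \( S^\prime = S \) and \( \sigma \in \Aut(\affWeyl,S) \), this is immediate from the definition of a \( \sigma \)-isometry. One must also make sure that "combinatorially convex" in \( \coxCompl \) (minimal galleries) matches the notion used in both lemmas. By \thref{rem:notionsOfConvex}, this agrees with halfspace-convexity whenever the set contains a chamber, so both lemmas refer to the same notion.
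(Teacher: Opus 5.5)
Your proposal is correct and follows the paper's proof, which obtains the corollary simply by combining \thref{lem:localInjIsSigIsom} (Mozes) with \thref{lem:sigmaImageOfConvex}. Your extra remark that \( \euclCox \) must be viewed as a thin building before applying the latter lemma is bookkeeping the paper leaves implicit.
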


\begin{thm}[{{\cite[Theorem 11.53]{abramenkoBrown08}}}]\label{thm:subsetOfApartment}
	Let \( \euclbuild \) be a Euclidean building and \( Y \subseteq \euclbuild \) a subset that is either convex (in the Euclidean sense) or has nonempty interior.
	If \( Y \) is isometric to a subset of an apartment (i.e., subset of a Euclidean space), then \( Y \) is contained in an apartment.
\end{thm}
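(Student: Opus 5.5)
Since this is \cite[Theorem 11.53]{abramenkoBrown08}, I will only sketch the argument I would give. It combines the CAT(0) geometry of \( \euclbuild \) with the combinatorial characterization of apartments via Weyl distances. The main tool is the retraction \( \rho = \rho_{\apartment, C} \colon \euclbuild \to \apartment \) onto an apartment \( \apartment \) centred at a chamber \( C \subseteq \apartment \). It is a type-preserving simplicial map, it is \( 1 \)-Lipschitz for the Euclidean metric, and it preserves distances from every point of \( C \). The last property holds because any geodesic \( [c, y] \) with \( c \in C \) lies in a common apartment with \( C \), and \( \rho \) maps that apartment isometrically onto \( \apartment \).

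\textbf{Case of nonempty interior.} I would proceed in the following order.
\begin{enumerate}[(1)]
	\item Choose a chamber \( C \) meeting the interior of \( Y \), an apartment \( \apartment \supseteq C \), and the retraction \( \rho = \rho_{\apartment, C} \).
	\item Fix an isometry \( f \colon Y \to \eucl \) and identify \( \apartment \cong \eucl \). For \( y \in Y \) and \( c \in Y \cap \mathring C \) we have \( d(\rho(y), \rho(c)) = d(y,c) = \abs{f(y) - f(c)} \).
	\item Since \( Y \cap \mathring C \) contains \( n+1 \) affinely independent points, and a point of \( \eucl \) is determined by its distances to such points, the map \( \rho \circ f^{-1} \) agrees on \( f(Y) \) with the unique Euclidean isometry extending it on \( f(Y \cap \mathring C) \). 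Hence \( \rho|_Y \) is an isometry onto its image.
	\item Let \( \mathcal M \) be the set of chambers meeting \( Y \) in a point that is generic, i.e.\ lies in the interior of a chamber. For \( D, D' \in \mathcal M \), I would show that \( \delta_{\affWeyl}(D, D') = \delta_{\affWeyl}(\rho(D), \rho(D')) \). The idea is to compare, via the isometry of step (3), the walls crossed by the segment between generic points of \( D \) and \( D' \) with the walls crossed by its image.
	\item A subset of chambers on which the Weyl distance agrees with that of a subset of \( \euclCox \) is contained in an apartment \cite[Theorem 5.73]{abramenkoBrown08}. Hence \( \mathcal M \) lies in an apartment \( \apartment' \).
	\item Since \( \apartment' \) is closed and the generic points are dense in \( Y \), it follows that \( Y \subseteq \apartment' \).
\end{enumerate}

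\textbf{Convex case.} If \( Y \) is convex with empty interior, I would run the same argument inside the affine span of \( Y \). Concretely, I replace chambers by the minimal simplices containing relative-interior points of \( Y \), and \( C \) by a chamber containing such a simplex. Distances from a relatively open subset of \( Y \) still determine points of the affine hull, so step (3) goes through. I would then use convexity, together with the fact that geodesics are unique in the CAT(0) space \( \euclbuild \), to extend the isometry from \( Y \) to the union of \( Y \) with a small convex neighbourhood of a point in \( C \). This produces a convex set with nonempty interior that is still isometric to a subset of \( \eucl \), which reduces to the first case.

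\textbf{Expected main obstacle.} The difficult step is step (4): passing from a metric statement (\( \rho|_Y \) is an isometry) to the combinatorial statement that Weyl distances are preserved. I would first try to prove it for adjacent chambers, by looking at a short segment crossing a single panel generically. I would then propagate along the image of a straight segment, using that a minimal gallery in \( \euclCox \) is obtained from the walls met by a generic segment. The point to watch is that \( \rho \) cannot fold such a gallery, since any folding would strictly shorten some distance between points of \( Y \).
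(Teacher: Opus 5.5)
The paper does not prove this statement. It only quotes \cite[Theorem 11.53]{abramenkoBrown08}, so your sketch has to stand on its own. Steps (1)--(3) and (5) are sound. Step (6), however, is false as stated: for a non-convex \( Y \) with nonempty interior, the generic points need not be dense in \( Y \). Take \( Y = U \cup \{z_1, z_2\} \), where \( U \) is a small open ball in \( \mathring C \) and \( z_1, z_2 \) are vertices chosen so that \( C, z_1, z_2 \) lie in a common apartment. Then \( \mathcal M = \{C\} \), and steps (4)--(5) produce only some apartment \( \apartment' \supseteq C \). Nothing forces \( z_1, z_2 \in \apartment' \), yet that is exactly what the theorem asserts for this \( Y \).

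\textbf{Repair.} Enlarge \( Y \) before step (4). Since \( \rho \) is \( 1 \)-Lipschitz and \( \rho|_Y \) is isometric, \( \rho \) maps each geodesic \( [a,b] \) with \( a, b \in Y \) isometrically onto \( [\rho(a), \rho(b)] \). Apply the CAT(0) comparison inequality first to triangles \( (a,b,c) \), and then to \( (p,c,d) \) with \( p \in [a,b] \). This shows that \( \rho \) is isometric on the union of all such geodesics. Iterating, \( \rho \) is isometric on \( \conv(Y) \). This convex set has nonempty interior, and its generic points are dense: use the cones from a point to \( U \) inside an apartment containing \( C \). Moreover, every chamber meeting \( \conv(Y) \) generically meets it in a nonempty open subset of its interior. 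Running (4)--(6) on \( \conv(Y) \) instead of \( Y \) then gives the result.

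\textbf{Step (4).} The same lemma closes two further holes. The decisive fact is that the geodesic \( [y,y'] \), which in general leaves \( Y \), is mapped isometrically onto a straight segment. At the same time \( \rho \) is isometric on every chamber of an apartment \( \apartment'' \supseteq D \cup D' \). A fold would therefore produce a bent path of length \( d(y,y') = \abs{\rho(y) - \rho(y')} \) between \( \rho(y) \) and \( \rho(y') \), which is impossible. However, \quoted{the gallery traced by the segment} only makes sense if \( [y,y'] \) avoids faces of codimension \( \geq 2 \). With your \( \mathcal M \) you cannot perturb \( y, y' \) to achieve this, since \( Y \cap \mathring D \) may be a single point. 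For \( \conv(Y) \) you can perturb. Alternatively, at a crossing point whose support face has cotype \( J \), straightness before and after \( \rho \) makes the incoming and outgoing chambers opposite in the residue. Hence both Weyl distances equal the longest element of \( W_J \).

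\textbf{Convex case.} Here \( Y \cup N \) is not convex, and uniqueness of geodesics alone does not yield an isometric convex extension. What does work is that \( \rho|_{Y \cup C} \) is automatically isometric, because \( \rho \) preserves distances from points of \( C \). The hull lemma then supplies the convex set with nonempty interior that you want. Your step (3) in this case also needs \( \rho(Y) \subseteq \mathrm{aff}(V) \) for the relatively open piece \( V \subseteq Y \cap C \). This again follows because \( \rho \) straightens the segments \( [v,y] \subseteq Y \) with \( v \in V \).
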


\section{Dynamics on the Space of Sectors}\label{sec:dynaSetting}

From now on we only consider buildings \( \euclbuild \) which are locally finite, strongly regular Euclidean buildings of type \( \rootSys \), where \( \rootSys \) is an irreducible root system.
Whenever we talk about local buildings, Euclidean buildings or Euclidean Coxeter complexes the whole structure we use is given by the corresponding root system even though we might not specify the root system explicitly.

\subsection{Sectors in Local Buildings}

The space for the Weyl chamber dynamics on Euclidean buildings is defined as follows.

\begin{defi}[Sectors]
	Let \( \quot \) be a local building of type \( \rootSys \) and \( \sector_0 \) the fundamental sector associated to the corresponding Euclidean Coxeter complex of type \( \rootSys \), then we define
	\[
	\secQuot \coloneqq \Set{\secOnQuot \in \trHom(\sector_0, \quot) \given \secOnQuot \text{ locally injective}}
	\]
	as the set of \emph{sectors} in \( \quot \).
\end{defi}

\begin{exmp}\label{exmp:nonbacktrackingGraphPath}
	A quotient of a Euclidean building of type \( A_1 \) is a regular graph.
	In this case \( \secQuot \) is the set of all non-backtracking paths.
	The sector \( \sector_0 \) consists of the non-negative reals \( \Rnum_{\geq 0} \), the coweight lattice is isomorphic to \( \Znum \) and the dominant coweights can be identified with the natural numbers \( \Nnum_0 \).
	To clarify the nature of the image in the quotient we enumerate the edges/chambers by natural numbers \( \Nnum \).
	\begin{figure}[H]
		\centering
		{
\pgfmathsetmacro\side{3} % Length of triangle side
\pgfmathsetmacro\arrowSep{0.05cm} % Space of Edge path to nodes
\pgfmathsetmacro\bendAngle{12.5} % Bending angle for edges
\begin{tikzpicture}[
	font=\tiny, % Fontsize
	Point/.style={circle,inner sep=1.5pt}, % The Nodes
	>={Stealth[length=1.5mm]}, % Arrow style
	description/.style={fill=white,inner sep=1pt} % Can be used if background is white
	]
	% Draw the triangle
	\node[Point, fill=black] (l) at (0,0) {};
	\node[Point, fill=black] (r) at (\side,0) {};
	\node[Point, fill=black] (m) at ($(\side/2,0) + {sqrt(3)/6}*(0,\side)$) {};
	\node[Point, fill=black] (o) at ($(\side/2,0) + {sqrt(3)/2}*(0,\side)$) {};

	% Add the lines of the triangle
	% Add them now so they are over the description boxes
	\draw[-] (l) -- (r);
	\draw[-] (l) -- (m);
	\draw[-] (l) -- (o);
	\draw[-] (r) -- (m);
	\draw[-] (r) -- (o);
	\draw[-] (o) -- (m);

	% Edge Path
	\draw[->,shorten >=\arrowSep,shorten <=\arrowSep,color=blue1] (l) edge[bend left=-\bendAngle] node [description] {\( 1 \)} (r);

	\draw[->,shorten >=\arrowSep,shorten <=\arrowSep,color=blue2] (r) edge[bend left=-\bendAngle] node [description] {\( 2 \)} (m);
	% Add the line again so they are over the description boxes
	\draw[-] (r) -- (m);

	\draw[->,shorten >=\arrowSep,shorten <=\arrowSep,color=blue3] (m) to[bend right=\bendAngle] node [description] {\( 3 \)} (o);

	\draw[->,shorten >=\arrowSep,shorten <=\arrowSep,color=blue4] (o) to[bend left=\bendAngle] node [description] {\( 4 \)} (r);

	\draw[->,shorten >=\arrowSep,shorten <=\arrowSep,color=blue5] (r) to[bend right=\bendAngle] node [description] {\( 5 \)} (l);

	\draw[->,shorten >=\arrowSep,shorten <=\arrowSep,color=blue6] (l) to[bend left=\bendAngle] node [description] {\( 6 \)} (m);
	% Add the lines of the triangle
	% Add them now so they are over the description boxes
	\draw[-] (l) -- (m);

	\draw[->,shorten >=\arrowSep,shorten <=\arrowSep,color=blue7] (m) to[bend left=\bendAngle] node [description] {\( 7 \)} (o);

	\draw[->,shorten >=\arrowSep,shorten <=\arrowSep,transparent!0,shade path={left color=white, right color=blue8}] (o) to[bend right=\bendAngle] (l);
\end{tikzpicture}

}
		\caption{Non-backtracking path}
	\end{figure}
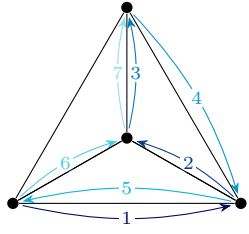
\end{exmp}

Let us note that whenever \( \quot \) is simply-connected, e.g.\ when \( \quot \) is itself a Euclidean building \( \euclbuild \) or the Euclidean Coxeter complex \( \euclCox \), then a sector \( \sectorMap \colon \sector_0 \to \quot \) defines a type-rotating automorphism onto its image \( \Image(\sectorMap) \subset \quot \).
We thus obtain the following lemma.

\begin{lem}\label{lem:maps_sets_are_the_same}
	\mbox{}
	\begin{enumerate}[(i)]
		\item\label{item:sectorsInEuclCox}
			If \( \euclCox \) is a Euclidean Coxeter complex, we have
			\[
			\sector(\Sigma) = \Set{f \colon \sector_0 \to \euclCox, \; x \mapsto w.x \given w \in \extWeyl}.
			\]
			In other words, our definition of sectors of \( \euclCox \) describes all geometric sectors in \( \euclCox \) with a base point in \( \coweightLat \).
		\item\label{item:sectorsInEuclBuild}
			Let \( \euclbuild \) be a Euclidean building.
			\begin{enumerate}[(a)]
				\item\label{item:uniqueSectorImage}
					A sector \( \sectorMap \in \sector(\euclbuild) \) is uniquely determined by its image.
				\item\label{item:apartmentTypeRotSector}
					For every image \( \Image \sectorMap \) of a sector there is an apartment \( \apartment \subseteq \euclbuild \) and a unique type-rotating isomorphism \( \psi \colon \apartment \to \euclCox \) with \( \psi(\Image \sectorMap) = \sector_0 \).
			\end{enumerate}
	\end{enumerate}
\end{lem}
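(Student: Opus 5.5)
For part \ref{item:sectorsInEuclCox} I will prove both inclusions. For \( w \in \extWeyl \), the map \( x \mapsto w.x \) restricted to \( \sector_0 \) is the restriction of a type-rotating automorphism of \( \euclCox \). Hence it is a locally injective morphism of chamber systems, and its permutation lies in \( \trAutRoot(I) \). So it belongs to \( \sector(\euclCox) \).

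For the converse, let \( \sectorMap \in \sector(\euclCox) \). The fundamental sector \( \sector_0 \) is halfspace-convex and contains chambers, so by \thref{rem:notionsOfConvex} it is combinatorially convex. Since \( \euclCox \) is a building with the single apartment \( \euclCox \), \thref{lem:localInjIsSigIsom} shows that \( \sectorMap \) is a \( \sigma \)-isometry with \( \sigma \in \trAutRoot(I) \). Choose \( g \in \extWeyl \) realizing \( \sigma \), and consider \( g^{-1}\circ\sectorMap \). This map is a type-preserving isometry from \( \sector_0 \) into \( \euclCox \), that is, it preserves the Weyl distance \( \delta_{\affWeyl} \). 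Let \( D = g^{-1}\sectorMap(C_0) \), and take \( w' \in \affWeyl \) with \( w' C_0 = D \). For any chamber \( C \subseteq \sector_0 \) we have \( \delta_{\affWeyl}(D, g^{-1}\sectorMap(C)) = \delta_{\affWeyl}(C_0, C) \). In a thin complex the Weyl distance from a fixed chamber determines the target chamber uniquely, so \( g^{-1}\sectorMap(C) = w'C \) for every chamber \( C \) of \( \sector_0 \). A simplicial map is determined by its values on chambers, so \( g^{-1}\circ\sectorMap = w'|_{\sector_0} \). Therefore \( \sectorMap = (g w')|_{\sector_0} \) with \( g w' \in \extWeyl \). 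Finally, \( \sectorMap(0) = g w'.0 \in \coweightLat \), since \( \extWeyl \) preserves \( \coweightLat \); this gives the statement about base points.

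For part \ref{item:sectorsInEuclBuild} I first apply \thref{cor:localInjConvexSets} and \thref{lem:localInjIsSigIsom}. Together with \thref{rem:sigmaIsomMetricIsom}\ref{item:sigmaIsomIsMetricIsom}, they show that \( \Image\sectorMap \) is convex and metrically isometric to \( \sector_0 \subseteq \eucl \). By \thref{thm:subsetOfApartment} it is then contained in an apartment \( \apartment \). Fix a type-preserving isomorphism \( \psi_0\colon \apartment \to \euclCox \). Then \( \psi_0 \circ \sectorMap \in \sector(\euclCox) \), so by part \ref{item:sectorsInEuclCox} it equals \( w|_{\sector_0} \) for some \( w \in \extWeyl \). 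Set \( \psi \coloneqq w^{-1}\circ\psi_0 \). This is a type-rotating isomorphism (\thref{rem:typeRotApartments}), and it satisfies \( \psi\circ\sectorMap = \id_{\sector_0} \); in particular \( \psi(\Image\sectorMap) = \sector_0 \).

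For uniqueness of \( \psi \) on the given \( \apartment \), suppose \( \psi' \) also has these properties. Then \( \psi'\circ\psi^{-1} \) is an element of \( \extWeyl \) that maps \( \sector_0 \) onto itself. Such an element fixes the vertex \( 0 \), because this is the unique cone point of \( \sector_0 \). Hence it lies in \( \weylGr \), and it stabilizes the fundamental domain \( \sector_0 \). By \thref{prop:weylGroupActions} it is the identity. This proves \ref{item:apartmentTypeRotSector}.

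For \ref{item:uniqueSectorImage}, let \( \sectorMap, \sectorMap' \) be two sectors with the same image. Both images lie in the same apartment \( \apartment \), and I take the \( \psi \) constructed from \( \sectorMap \). Then \( \psi\circ\sectorMap' \) is a sector of \( \euclCox \) with image \( \sector_0 \), so by part \ref{item:sectorsInEuclCox} it is an element of \( \extWeyl \) preserving \( \sector_0 \). By the stabilizer argument above it is the identity. Since \( \psi\circ\sectorMap = \id_{\sector_0} \) as well and \( \psi \) is injective, we get \( \sectorMap' = \sectorMap \).

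The step I expect to be most delicate is the rigidity claim in part \ref{item:sectorsInEuclCox}: that a type-preserving Weyl-distance isometry from \( \sector_0 \) into \( \euclCox \) is the restriction of an element of \( \affWeyl \). The argument through thinness and the Weyl distance from the image of \( C_0 \) seems enough. The other point to watch is the identification of the permutation \( \sigma \) with an actual element \( g \in \extWeyl \); this uses that the type-rotating permutations are, by definition, exactly those coming from \( \extWeyl \).
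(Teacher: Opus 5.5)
Your proof is correct, but it runs in the opposite direction from the paper's.

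The paper starts with (ii). It extends each sector, via \cite[Theorem 5.73]{abramenkoBrown08}, to a map \( \overline{\sectorMap}\colon \euclCox \to \euclbuild \). It places the image in an apartment \( \apartment \) by \thref{thm:subsetOfApartment} and takes \( \psi \) to be the inverse of \( \overline{\sectorMap}\colon \euclCox \to \apartment \). For (a) it uses \thref{lem:typRotFixWeylGr} to put \( \overline{\sectorMap}_1^{-1}\circ\overline{\sectorMap}_2 \) into \( \weylGr \), and then simple transitivity of \( \weylGr \) on the chambers containing \( 0 \) forces it to be trivial. Part (i) is then read off as the special case \( \euclbuild=\euclCox \), which has a single apartment.

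You instead prove (i) first and by hand: thinness and the Weyl distance show that a type-preserving isometry \( \sector_0\to\euclCox \) is the restriction of an element of \( \affWeyl \). You then apply \thref{thm:subsetOfApartment} to \( \Image\sectorMap \) itself and reduce (ii) to (i) through a type-preserving chart \( \psi_0 \). Both uniqueness claims follow from one stabilizer argument: an element of \( \extWeyl \) preserving \( \sector_0 \) fixes its apex, so lies in \( \weylGr \), so is the identity.

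Your route buys self-containedness. It needs no extension theorem and no appeal to \thref{lem:typRotFixWeylGr}. Because you work from the outset inside one apartment containing \( \Image\sectorMap=\Image\sectorMap' \), you also avoid a point the paper's one-line argument for (a) leaves implicit. There, \( \overline{\sectorMap}_1^{-1}\circ\overline{\sectorMap}_2 \) is treated as if both extensions landed in a common apartment, which requires choosing them suitably. The paper's route buys brevity, plus the extra fact that every sector extends to an embedding of the whole Coxeter complex.

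One small correction: \thref{cor:localInjConvexSets} gives combinatorial convexity, not the Euclidean convexity that \thref{thm:subsetOfApartment} asks for. The theorem still applies, because \( \Image\sectorMap \) contains a chamber and so has nonempty interior. Alternatively, an isometric image of a convex set in the uniquely geodesic space \( \euclbuild \) is convex.
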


\begin{proof}
	For \ref{item:sectorsInEuclCox} it suffices to prove \ref{item:sectorsInEuclBuild}\ref{item:apartmentTypeRotSector} as any Coxeter complex has only one apartment and the type-rotating isomorphisms of \( \euclCox \) are \( \trAut(\euclCox) = \extWeyl \) by definition.
	
	To show \ref{item:sectorsInEuclBuild}\ref{item:uniqueSectorImage} we take \( \sectorMap_1, \sectorMap_2 \in \sector(\euclbuild) \) with \( \Image \sectorMap_1 = \Image \sectorMap_2 \).
	In particular, we have \( \sectorMap_1(0) = \sectorMap_2(0) \) and \( \sectorMap_1(C_0) = \sectorMap_2(C_0) \).
	
	By \cite[Theorem 5.73]{abramenkoBrown08} we can extend both maps to maps \( \obar{\sectorMap_1}, \obar{\sectorMap_2} \colon \euclCox \to \euclbuild \).
	From \thref{thm:subsetOfApartment} we know that their images lie in an apartment \( \apartment \).
	Now we can apply \thref{lem:typRotFixWeylGr} and find that \( \obar{\sectorMap_1}^{-1} \circ \obar{\sectorMap_2} \in \weylGr \).
	Moreover, we know \( \obar{\sectorMap_1}^{-1} \circ \obar{\sectorMap_2}(C_0) = C_0 \) and as \( \weylGr \) acts simply transitively on the chambers having \( 0 \) in their closure (see \thref{prop:weylGroupActions}\ref{item:weylGrActsOnZeroClosure}) we get \( \obar{\sectorMap_1}^{-1} \circ \obar{\sectorMap_2} = \id \).
	
To show 	 \ref{item:sectorsInEuclBuild}\ref{item:apartmentTypeRotSector}, we take \( \sectorMap \colon \sector_0 \to \euclbuild \) and extend it to an injective morphism \( \obar{\sectorMap} \colon \euclCox \to \euclbuild \).
	By the same argument as before the image is contained in an apartment \( \apartment \).
	Hence we have an injective morphism \( \obar{\sectorMap} \colon \euclCox \to \apartment \).
	Inverting the map gives the desired morphism.
	The uniqueness now follows from \ref{item:uniqueSectorImage}.
\end{proof}

\begin{cor}
	By using the description of \thref{lem:maps_sets_are_the_same}\ref{item:sectorsInEuclBuild}.\ref{item:apartmentTypeRotSector} we find that every image of a sector defines a sector in a Euclidean building as defined in the literature (\cite[Definition 11.45 and Definition 11.61]{abramenkoBrown08}).
\end{cor}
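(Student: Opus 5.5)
The plan is to read off the literature notion of a sector from the normal form provided by \thref{lem:maps_sets_are_the_same}\ref{item:sectorsInEuclBuild}\ref{item:apartmentTypeRotSector}. In \cite[Definitions 11.45 and 11.61]{abramenkoBrown08} a sector in \( \euclCox \) is a set of the form \( x + w_0\sector_0 \), with \( x \) a special vertex and \( w_0 \in \weylGr \). A sector in \( \euclbuild \) is a subset of some apartment \( \apartment \) that is mapped onto such a set by a (type-preserving) isomorphism \( \apartment \to \euclCox \). So for a given \( \sectorMap \in \sector(\euclbuild) \) it suffices to produce an apartment containing \( \Image \sectorMap \) and a type-preserving isomorphism sending \( \Image \sectorMap \) to a set of this shape.

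First, I invoke \thref{lem:maps_sets_are_the_same}\ref{item:sectorsInEuclBuild}\ref{item:apartmentTypeRotSector}. This gives an apartment \( \apartment \supseteq \Image \sectorMap \) and a type-rotating isomorphism \( \psi \colon \apartment \to \euclCox \) with \( \psi(\Image\sectorMap) = \sector_0 \).

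Second, I use \thref{rem:typeRotApartments} with \( \apartment_1 = \apartment \), \( \apartment_2 = \euclCox \) and \( \psi_2 = \id \). It lets me write \( \psi = w \circ \psi_1 \) with \( \psi_1 \colon \apartment \to \euclCox \) a type-preserving isomorphism and \( w \in \extWeyl \). Then
\[
\psi_1(\Image \sectorMap) = w^{-1}\sector_0 .
\]

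Third, I write \( w^{-1} = \transl_\lambda w_0 \) using \( \extWeyl = \coweightLat \rtimes \weylGr \), where \( \lambda \in \coweightLat \) and \( w_0 \in \weylGr \). This gives
\[
\psi_1(\Image\sectorMap) = \lambda + w_0\sector_0 ,
\]
a Weyl sector based at the vertex \( \lambda \in \coweightLat \), with chamber direction \( w_0\sector_0 \), which is a Weyl chamber by \thref{prop:weylGroupActions}.

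The only point needing care is that the base point is special in the sense of \cite{abramenkoBrown08}. I will argue this from the discussion before \thref{defi:good vertices}: every \( x \in \coweightLat \) has, for each \( H \in \mathcal H \), a parallel hyperplane of \( \mathcal H \) through it, hence is special. In the reduced case the good and special vertices coincide, while in the non-reduced case \( \coweightLat \) is contained in the set of special vertices, so the conclusion still holds. Consequently \( \psi_1(\Image\sectorMap) \) is a sector of \( \euclCox \) in the literature sense. Transporting it back along the type-preserving isomorphism \( \psi_1 \) shows that \( \Image \sectorMap \) is a sector of \( \euclbuild \), which proves the corollary.
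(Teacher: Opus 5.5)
Your proposal is correct and follows the same route as the paper, which reads the statement off directly from \thref{lem:maps_sets_are_the_same}\ref{item:sectorsInEuclBuild}.\ref{item:apartmentTypeRotSector}: the image is \( \psi^{-1}(\sector_0) \) for an apartment chart \( \psi \), i.e.\ a sector based at the special vertex \( 0 \). Your factorization \( \psi = w \circ \psi_1 \) through a type-preserving chart and your check that the base point \( \lambda \in \coweightLat \) is special only make explicit what the paper leaves implicit; note that you may indeed take \( \psi_2 = \id \), because any type-preserving automorphism of \( \euclCox \) lies in \( \affWeyl \subseteq \extWeyl \) and can be absorbed into \( w \).
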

	
Based on \thref{lem:maps_sets_are_the_same} we can change between the point of view of a sector in \( \sector(\euclbuild) \) as a locally injective map and their images as subcomplexes.
The latter point of view is usually adopted when speaking of sectors in Euclidean buildings, e.g., in \cite{abramenkoBrown08}.
Note, however, that by our assumption of having type-rotating maps as locally injective morphisms, the base point of our sectors seen as subcomplexes are necessarily good vertices.
In the particular cases where good and special vertices do not coincide, one could also consider the larger class of sectors starting at any special vertex.
But from the point of view of the Weyl chamber dynamics that we will consider this is not helpful, because the space of sectors would then simply split into dynamically disjoint subspaces, which are all equivalent to the space of sector with good vertices as base points.

In a Euclidean building, sectors are used to define the spherical building at infinity.
This can be done as follows.
On the set of sectors \( \sector(\euclbuild) \) of a Euclidean building \( \euclbuild \) one defines an equivalence relation where two sectors \( \sectorMap_1 \sim \sectorMap_2 \) are equivalent if and only if there exists a sector \( \sectorMap_3 \in \sector(\euclbuild) \) such that
\[
\Image(\sectorMap_3) \subset \Image(\sectorMap_1) \cap \Image(\sectorMap_2).
\]
The quotient \( \boundary \coloneqq \sector(\euclbuild)/\sim \) is called the \emph{spherical building at infinity}.
We collect some properties from \cite{abramenkoBrown08}:

\begin{rem}[Chambers at Infinity]\label{rem:propertiesOfBoundary}
	\mbox{}
	\begin{enumerate}[(i)]
		\item\label{item:sphericalBuildingsAtInfty}
			The elements of \( \boundary \) can be given the structure of a combinatorial chamber system and as such it is a spherical building (see \cite[Theorem 11.79]{abramenkoBrown08}).
			Consequently, we call the \( \omega \in \boundary \) \emph{chambers at infinity}.
		\item\label{item:oppoChambersDefineApartment}
			In any spherical building there is a notion of \emph{opposite chambers} (see \cite[Definition 4.68]{abramenkoBrown08}).	Any pair of opposite chambers in a spherical building defines a unique apartment in the building (c.f.~\cite[Theorem 4.70]{abramenkoBrown08}).
			For a Euclidean building \( \euclbuild \) the apartments in the building at infinity \( \boundary \) and the apartments of \( \euclbuild \) are in one-to-one correspondence (see \cite[Theorem 11.79]{abramenkoBrown08}).
			Hence any pair \( (\omega,\omega^\prime) \in \boundary^\prime \) of opposite chambers defines a unique apartment \( \apartment(\omega, \omega^\prime) \subseteq \euclbuild \).
		\item\label{item:sphereFromPoint}
			For any good vertex \( x \in \euclbuild \) and any \( \omega \in \boundary \) there is a unique sector \( \sectorMap \in \sector(\euclbuild) \) such that \( \sectorMap(0)= x \) and \( [s]_\sim = \omega \) (\cite[Lemma 11.75]{abramenkoBrown08}).
		For convenience we write \( \sector^x(\omega) \) to specify a sector with these properties.
	\end{enumerate}
\end{rem}

\begin{rem}\label{rem:sectorLifts}
	Let \( \quot \) be a local building of type \( \rootSys \).
	Hence we have a covering map which is a simplicial morphism \( p \colon \euclbuild \to \quot \) from a Euclidean building \( \euclbuild \) of type \( \rootSys \).
	As both \( \sector_0 \) and \( \euclbuild \) are simply-connected we can lift any morphism \( \secOnQuot \in \secQuot \) to a map \( \lift{\secOnQuot} \colon \sector_0 \to \euclbuild \) such that \( p \circ \lift{\secOnQuot} = \secOnQuot \).
	After choosing base points, e.g. \( 0 \in \sector_0 \), \( \secOnQuot(0) \in \quot \) and \( x_0 \in p^{-1}(\secOnQuot(0)) \) this lift is unique.
	As \( p \) is locally bijective, the local injectivity is preserved and therefore \( \lift{\secOnQuot} \in \sector(\euclbuild) \) defines a sector on the Euclidean building.
	Thus, any lift of a sector \( \secOnQuot \in \secQuot \) defines a unique sector in \( \sector(\euclbuild) \).
	
	On the other hand we can take a sector \( \sectorMap \in \sector(\euclbuild) \) and compose it with \( p \).
	In this way we get a map \( p \circ \sectorMap \colon \sector_0 \to \quot \) which is type-rotating and locally injective, i.e., \( p \circ \sectorMap \in \secQuot \).
\end{rem}

\subsection{Ultrametric on Sectors}

\begin{defi}
	Let \( \secOnQuot_1, \secOnQuot_2 \in \secQuot \) be two sectors.
	We define
	\begin{align*}
		\mathcal E(\secOnQuot_1, \secOnQuot_2) &\coloneqq \Set{x \in \sector_0 \given \secOnQuot_1(x) = \secOnQuot_2(x)} 
	\end{align*}
	and \( \mathcal E_0(\secOnQuot_1, \secOnQuot_2) \) to be the connected component of \( 0 \) in \(\mathcal E(\secOnQuot_1, \secOnQuot_2)\).
	If \( 0 \notin \mathcal E(\secOnQuot_1, \secOnQuot_2) \), then we set \( \mathcal E_0(\secOnQuot_1, \secOnQuot_2) = \emptyset \).
\end{defi}

\begin{exmp}
	As \( \mathcal E_0(\secOnQuot_1, \secOnQuot_2) \) plays an important role in the definition of the metric we take a moment to discuss the difference between \( \mathcal E_0(\secOnQuot_1, \secOnQuot_2) \) and \( \mathcal E(\secOnQuot_1, \secOnQuot_2) \) in the case of a graph.
	In \thref{exmp:nonbacktrackingGraphPath} we already saw an example of a sector.
	We take the 3-regular graph \( \mathcal G \):
	\begin{figure}[H]
		\centering
		{
\pgfmathsetmacro\side{3} % Length of triangle side
\pgfmathsetmacro\arrowSep{0.05cm} % Space of Edge path to nodes
\pgfmathsetmacro\bendAngle{12.5} % Bending angle for edges
\begin{tikzpicture}[
	font=\tiny, % Fontsize
	Point/.style={circle,inner sep=2.5pt, draw}, % The Nodes
	>={Stealth[length=1.5mm]}, % Arrow style
	description/.style={fill=white,inner sep=1pt} % Can be used if background is white
	]
	% Draw the triangle
	\node[Point, fill=white] (l) at (0,0) {\( l \)};
	\node[Point, fill=white] (r) at (\side,0) {\( r \)};
	\node[Point, fill=white] (m) at ($(\side/2,0) + {sqrt(3)/6}*(0,\side)$) {\( m \)};
	\node[Point, fill=white] (o) at ($(\side/2,0) + {sqrt(3)/2}*(0,\side)$) {\( u \)};

	% Add the lines of the triangle
	% Add them now so they are over the description boxes
	\draw[-] (l) -- (r);
	\draw[-] (l) -- (m);
	\draw[-] (l) -- (o);
	\draw[-] (r) -- (m);
	\draw[-] (r) -- (o);
	\draw[-] (o) -- (m);
\end{tikzpicture}
}
	\end{figure}
	Observe that any chamber is uniquely determined by its vertices (this holds for every building).
	Hence we can use the vertices to define a map \( \secOnQuot \colon \sector_0 \to \mathcal G \).
	Recall that the vertices (and in this case the dominant coweights) of \( \sector_0 \) are the natural numbers \( \Nnum_0 \).
	To understand the difference between \( \mathcal E_0(\secOnQuot_1, \secOnQuot_2) \) and \( \mathcal E(\secOnQuot_1, \secOnQuot_2) \) we take two maps \( \sector_1, \sector_2 \colon \Nnum_0 \to \Set{a,b,c,d} \).
	Let
	\[
	\secOnQuot_1 = (l, r, m, u, r, m, u, l, \ldots) \quad \text{and} \quad \secOnQuot_2 = (l, r, m, u, l, m, u, l, \ldots).
	\]
	This looks as follows.
	\begin{figure}[H]
		\centering
		{
\pgfmathsetmacro\side{3} % Length of triangle side
\pgfmathsetmacro\arrowSep{0.05cm} % Space of Edge path to nodes
\pgfmathsetmacro\bendAngle{12.5} % Bending angle for edges
\begin{tikzpicture}[
	font=\tiny, % Fontsize
	Point/.style={circle,inner sep=2.5pt, draw}, % The Nodes
	>={Stealth[length=1.5mm]}, % Arrow style
	description/.style={fill=white,inner sep=1pt} % Can be used if background is white
	]
	% Draw the triangle
	\node[Point, fill=white] (l) at (0,0) {\( l \)};
	\node[Point, fill=white] (r) at (\side,0) {\( r \)};
	\node[Point, fill=white] (m) at ($(\side/2,0) + {sqrt(3)/6}*(0,\side)$) {\( m \)};
	\node[Point, fill=white] (o) at ($(\side/2,0) + {sqrt(3)/2}*(0,\side)$) {\( u \)};

	% Add the lines of the triangle
	% Add them now so they are over the description boxes
	\draw[-] (l) -- (r);
	\draw[-] (l) -- (m);
	\draw[-] (l) -- (o);
	\draw[-] (r) -- (m);
	\draw[-] (r) -- (o);
	\draw[-] (o) -- (m);

	% Edge Path
	\draw[->,shorten >=\arrowSep,shorten <=\arrowSep,color=blue1] (l) edge[bend left=-\bendAngle] node [description] {\( 1 \)} (r);

	\draw[->,shorten >=\arrowSep,shorten <=\arrowSep,color=blue2] (r) edge[bend left=-\bendAngle] node [description] {\( 2 \)} (m);
	% Add the line again so they are over the description boxes
	\draw[-] (r) -- (m);

	\draw[->,shorten >=\arrowSep,shorten <=\arrowSep,color=blue3] (m) to[bend right=\bendAngle] node [description] {\( 3 \)} (o);

	\draw[->,shorten >=\arrowSep,shorten <=\arrowSep,color=blue4] (o) to[bend left=\bendAngle] node [description] {\( 4 \)} (r);

	\draw[->,shorten >=\arrowSep,shorten <=\arrowSep,color=blue5] (r) to[bend left=\bendAngle] node [description] {\( 5 \)} (m);

	\draw[->,shorten >=\arrowSep,shorten <=\arrowSep,color=blue6] (m) to[bend left=\bendAngle] node [description] {\( 6 \)} (o);
	% Add the lines of the triangle
	% Add them now so they are over the description boxes
	\draw[-] (m) -- (o);

	\draw[->,shorten >=\arrowSep,shorten <=\arrowSep,transparent!0,shade path={left color=white, right color=blue8}] (o) to[bend right=\bendAngle] (l);
\end{tikzpicture}
\quad
\begin{tikzpicture}[
	font=\tiny, % Fontsize
	Point/.style={circle,inner sep=2.5pt, draw}, % The Nodes
	>={Stealth[length=1.5mm]}, % Arrow style
	description/.style={fill=white,inner sep=1pt} % Can be used if background is white
	]
	% Draw the triangle
	\node[Point, fill=white] (l) at (0,0) {\( l \)};
	\node[Point, fill=white] (r) at (\side,0) {\( r \)};
	\node[Point, fill=white] (m) at ($(\side/2,0) + {sqrt(3)/6}*(0,\side)$) {\( m \)};
	\node[Point, fill=white] (o) at ($(\side/2,0) + {sqrt(3)/2}*(0,\side)$) {\( u \)};

	% Add the lines of the triangle
	% Add them now so they are over the description boxes
	\draw[-] (l) -- (r);
	\draw[-] (l) -- (m);
	\draw[-] (l) -- (o);
	\draw[-] (r) -- (m);
	\draw[-] (r) -- (o);
	\draw[-] (o) -- (m);

	% Edge Path
	\draw[->,shorten >=\arrowSep,shorten <=\arrowSep,color=blue1] (l) edge[bend left=-\bendAngle] node [description] {\( 1 \)} (r);

	\draw[->,shorten >=\arrowSep,shorten <=\arrowSep,color=blue2] (r) edge[bend left=-\bendAngle] node [description] {\( 2 \)} (m);
	% Add the line again so they are over the description boxes
	\draw[-] (r) -- (m);

	\draw[->,shorten >=\arrowSep,shorten <=\arrowSep,color=blue3] (m) to[bend right=\bendAngle] node [description] {\( 3 \)} (o);

	\draw[->,shorten >=\arrowSep,shorten <=\arrowSep,color=blue4] (o) to[bend left=\bendAngle] node [description] {\( 4 \)} (l);

	\draw[->,shorten >=\arrowSep,shorten <=\arrowSep,color=blue5] (l) to[bend right=\bendAngle] node [description] {\( 5 \)} (m);

	\draw[->,shorten >=\arrowSep,shorten <=\arrowSep,color=blue6] (m) to[bend left=\bendAngle] node [description] {\( 6 \)} (o);
	% Add the lines of the triangle
	% Add them now so they are over the description boxes
	\draw[-] (m) -- (o);

	\draw[->,shorten >=\arrowSep,shorten <=\arrowSep,transparent!0,shade path={left color=white, right color=blue8}] (o) to[bend right=\bendAngle] (l);
\end{tikzpicture}
}
	\end{figure}
	The set \( \mathcal E(\secOnQuot_1, \secOnQuot_2) \) is given by
	\[
	\mathcal E(\secOnQuot_1, \secOnQuot_2) = [0,3] \cup [5,7] \cup \ldots.
	\]
	and \( \mathcal E_0(\secOnQuot_1, \secOnQuot_2) \) is given by
	\[
	\mathcal E_0(\secOnQuot_1, \secOnQuot_2) = [0,3].
	\]
	Both sets can be equiped with the simplicial structure of \( \Rnum_{\geq 0} \).
\end{exmp}

This allows us to define a distance on \( \secQuot \) as follows.

\begin{defi}[Distance on \( \secQuot \)]
	Let \( \lambda \in \coweightLat \) with \( \lambda = \sum_{i=1}^n a_i \varpi_i \).
	We set \( \abs{\lambda} \coloneqq \norm{\lambda}_1 \coloneqq \sum_{i=1}^n \abs{a_i} \).
	For \( \secOnQuot_1, \secOnQuot_2 \in \secQuot \) we define the value
	\[
	k(\secOnQuot_1,\secOnQuot_2) \coloneqq \inf\limits_{\lambda \in \domCoweights} \Set*{\abs{\lambda} \given \lambda \notin \mathcal E_0(\secOnQuot_1, \secOnQuot_2)} \in \Nnum_0 \cup \Set{\infty}.
	\]
	For \( 0 < \vartheta < 1 \) we define the function
	\[
	\secMetricFunc \colon \secQuot \times \secQuot \to \Rnum, \; (\secOnQuot_1, \secOnQuot_2) \mapsto \vartheta^{k(\secOnQuot_1,\secOnQuot_2)}
	\]
	with the convention \( \vartheta^\infty \coloneqq 0 \).
	
	If \( k(\secOnQuot_1, \secOnQuot_2) \in \Nnum_0 \) we call an element \( \lambda \in \domCoweights \) which satisfies \( \abs{\lambda} = k(\secOnQuot_1, \secOnQuot_2) \) and \( \lambda \notin \domCoweights \cap \mathcal E_0(\secOnQuot_1, \secOnQuot_2) \) \emph{distance minimizing}.
\end{defi}

The key observation regarding \( \mathcal E_0(\secOnQuot_1, \secOnQuot_2) \) is the following (recall \thref{rem:sectorLifts}).
Given two sectors \( \secOnQuot_1, \secOnQuot_2 \in \secQuot \) which have the same base point, we can lift these along the covering map \( p \colon \euclbuild \to \quot \).
Let us denote these lifts by \( \lift{\secOnQuot}_1, \lift{\secOnQuot}_2 \).
First we note that the image of such a map defines a (unique) sector in \( \euclbuild \) (\thref{lem:maps_sets_are_the_same}\ref{item:sectorsInEuclBuild}.\ref{item:uniqueSectorImage}).
The second observation we make is
\[
\mathcal E_0(\secOnQuot_1, \secOnQuot_2) = \mathcal E_0(\lift{\secOnQuot}_1, \lift{\secOnQuot}_2) = \mathcal E(\lift{\secOnQuot}_1, \lift{\secOnQuot}_2).
\]
We collect some consequences of this in the following remark.

\begin{rem}\label{rem:distance of sectors}
	\mbox{}
	\begin{enumerate}[1.]
		\item
			The distance between two sectors \( \secOnQuot_1, \secOnQuot_2 \in \secQuot \) (with the same base point \( \secOnQuot_1(0)= \secOnQuot_2 (0) \)) is the same as the distance between the corresponding lifts \( \lift{\secOnQuot}_1, \lift{\secOnQuot}_2 \in \sector(\euclbuild) \).
		
			In the Euclidean building \( \euclbuild \) we do not need the set \( \mathcal E_0 \) and instead can take the minimal \( \mu \in \domCoweights \) such that the image of the two sectors is not equal.
		\item\label{item:conCompZeroConvex}
			The set \( \mathcal E_0(\secOnQuot_1, \secOnQuot_2) \) is a halfspace-convex subset of \( \sector_0 \), as the intersection of two sectors in a Euclidean building is halfspace-convex (in the sense of \thref{defi:halfspaceConvex}).
		\item
			Let \( \secOnQuot_1, \secOnQuot_2 \in \secQuot \) be two sectors and \( \paramA \in \mathcal E_0(\secOnQuot_1, \secOnQuot_2) \cap \domCoweights \).
			For all \( \paramB \in \domCoweights \) with \( \paramB \leq \paramA \) we have \( \paramB \in \mathcal E_0(\secOnQuot_1, \secOnQuot_2) \) as \( \mathcal E_0(\secOnQuot_1, \secOnQuot_2) \) is halfspace-convex (see \cite[Section 7.1]{parkinsonDiss}).
	\end{enumerate}
\end{rem}

\begin{prop}
	The function \( \secMetricFunc \) defines an ultrametric on \( \secQuot \).
\end{prop}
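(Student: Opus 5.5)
We verify the three ultrametric axioms directly from the definition of \( k(\secOnQuot_1,\secOnQuot_2) \). Symmetry is immediate, since \( \mathcal E(\secOnQuot_1,\secOnQuot_2) = \mathcal E(\secOnQuot_2,\secOnQuot_1) \) and hence the connected components of \( 0 \) agree. For definiteness, if \( \secMetric{\secOnQuot_1,\secOnQuot_2} = 0 \), then \( k = \infty \). This means every \( \lambda \in \domCoweights \) lies in \( \mathcal E_0(\secOnQuot_1,\secOnQuot_2) \). By \thref{rem:distance of sectors}\ref{item:conCompZeroConvex}, \( \mathcal E_0 \) is halfspace-convex, and in particular it is a union of simplices of \( \sector_0 \) and convex in the Euclidean sense (\thref{rem:notionsOfConvex}). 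Every chamber of \( \sector_0 \) lies in the convex hull of finitely many dominant coweights, for instance in a sufficiently large dilate \( \conv\{0, N\varpi_1,\dots,N\varpi_n\} \). Hence \( \mathcal E_0 = \sector_0 \), so \( \secOnQuot_1 = \secOnQuot_2 \) as maps. Conversely, \( \secMetric{\secOnQuot,\secOnQuot} = 0 \), because \( \mathcal E_0(\secOnQuot,\secOnQuot) = \sector_0 \) is connected. Nonnegativity is clear.

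The heart of the proof is the strong triangle inequality \( \secMetric{\secOnQuot_1,\secOnQuot_3} \le \max\{\secMetric{\secOnQuot_1,\secOnQuot_2},\secMetric{\secOnQuot_2,\secOnQuot_3}\} \). This is equivalent to \( k(\secOnQuot_1,\secOnQuot_3) \ge \min\{k(\secOnQuot_1,\secOnQuot_2), k(\secOnQuot_2,\secOnQuot_3)\} =: m \). The claim to establish is the inclusion
\[
\mathcal E_0(\secOnQuot_1,\secOnQuot_2) \cap \mathcal E_0(\secOnQuot_2,\secOnQuot_3) \subseteq \mathcal E_0(\secOnQuot_1,\secOnQuot_3).
\]
Given this inclusion, every dominant \( \lambda \) with \( \abs{\lambda} < m \) lies in both sets on the left, hence in \( \mathcal E_0(\secOnQuot_1,\secOnQuot_3) \). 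So no \( \lambda \) with \( \abs{\lambda} < m \) is a witness for \( k(\secOnQuot_1,\secOnQuot_3) \), which gives the inequality. The case \( m = 0 \) is trivial.

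To prove the inclusion, let \( U \) be the left-hand intersection. If it is empty there is nothing to show. Otherwise \( 0 \in U \). Each factor is halfspace-convex by \thref{rem:distance of sectors}\ref{item:conCompZeroConvex}, so \( U \) is convex and therefore connected. On \( U \) we have \( \secOnQuot_1 = \secOnQuot_2 = \secOnQuot_3 \) pointwise, so \( U \subseteq \mathcal E(\secOnQuot_1,\secOnQuot_3) \). Since \( U \) is connected and contains \( 0 \), it lies in the component \( \mathcal E_0(\secOnQuot_1,\secOnQuot_3) \).

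The step I expect to be the main obstacle is the use of \thref{rem:distance of sectors}\ref{item:conCompZeroConvex}, that \( \mathcal E_0 \) is halfspace-convex. This is what justifies connectedness of \( U \) and the definiteness argument. If more detail is needed there, I would pass to lifts as in \thref{rem:sectorLifts}: choose lifts \( \lift{\secOnQuot}_i \) with a common base point. Then \( \mathcal E_0(\secOnQuot_i,\secOnQuot_j) = \mathcal E(\lift{\secOnQuot}_i,\lift{\secOnQuot}_j) \), and this is the preimage under \( \lift{\secOnQuot}_i \) of the intersection of two sectors in \( \euclbuild \). That intersection is convex by \thref{thm:subsetOfApartment} together with \thref{cor:localInjConvexSets}. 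For the definiteness step, agreement on \( \mathcal E(\lift{\secOnQuot}_1,\lift{\secOnQuot}_2) = \sector_0 \) gives \( \lift{\secOnQuot}_1 = \lift{\secOnQuot}_2 \) directly, and hence \( \secOnQuot_1 = \secOnQuot_2 \).
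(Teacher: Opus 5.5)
Your proof is correct and follows the paper's route. Definiteness comes from halfspace-convexity of \(\mathcal E_0\) together with \(\conv \domCoweights = \sector_0\), and the strong triangle inequality from the inclusion \(\mathcal E_0(\secOnQuot_1,\secOnQuot_2)\cap\mathcal E_0(\secOnQuot_2,\secOnQuot_3)\subseteq\mathcal E_0(\secOnQuot_1,\secOnQuot_3)\). The differences are minor improvements: you justify that inclusion (the intersection is convex, hence connected, contains \(0\), and lies in \(\mathcal E(\secOnQuot_1,\secOnQuot_3)\)), which the paper merely asserts, and your direct argument with \(m=\min\{k(\secOnQuot_1,\secOnQuot_2),k(\secOnQuot_2,\secOnQuot_3)\}\) absorbs the paper's case split on base points into the trivial case \(m=0\).
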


\begin{proof}
	Let \( \secOnQuot_1, \secOnQuot_2, \secOnQuot_3 \in \secQuot \) be three sectors.
	\begin{enumerate}[(i)]
		\item
			\( \secMetric{\secOnQuot_1, \secOnQuot_2} \geq 0 \) follows from the definition.
			
			To show that the map is definite we observe that for \( \secOnQuot_1 = \secOnQuot_2 \) we have \( \mathcal E_0(\secOnQuot_1, \secOnQuot_2) = \sector_0 \).
			Hence \( k(\secOnQuot_1, \secOnQuot_2) = \infty \).
			
			Let \( \secMetric{\secOnQuot_1, \secOnQuot_2} = 0 \).
			That means \( \Set{\abs{\lambda} \given \lambda \notin \mathcal E_0(\secOnQuot_1, \secOnQuot_2) \cap \domCoweights} = \emptyset \).
			Which is equivalent to \( \mathcal E_0(\secOnQuot_1, \secOnQuot_2) \cap \domCoweights = \domCoweights \).
			But as \( \mathcal E_0(\secOnQuot_1, \secOnQuot_2) \) is convex and \( \conv \domCoweights = \sector_0 \) we have \( \mathcal E_0(\secOnQuot_1, \secOnQuot_2) = \sector_0 \).
			Thus we have shown \( \secOnQuot_1 = \secOnQuot_2 \).
		\item
			Symmetry follows from the definition of \( k(\secOnQuot_1, \secOnQuot_2) \).
		\item
			We want to show
			\[
			\secMetric{\secOnQuot_1, \secOnQuot_3} \leq \max \Set{\secMetricFunc(\secOnQuot_1, \secOnQuot_2), \secMetricFunc(\secOnQuot_2, \secOnQuot_3)}
			\]
			or equivalently
			\begin{equation}\label{equ:kInequalityOnQuot}
				k(\secOnQuot_1, \secOnQuot_3) \geq \min \Set{k(\secOnQuot_1, \secOnQuot_2), k(\secOnQuot_2, \secOnQuot_3)}.
			\end{equation}
			By \( x_i \coloneqq \secOnQuot_i(0) \) we denote the base points of the sectors.
			\begin{enumerate}[(i)]
				\item
					If \( x_1 \neq x_3 \), then \( x_1 \neq x_2 \) or \( x_2 \neq x_3 \).
					In this case we have \( k(\secOnQuot_1, \secOnQuot_3) = 0 \) and moreover \( k(\secOnQuot_1, \secOnQuot_2) = 0 \) or \( k(\secOnQuot_2, \secOnQuot_3) = 0 \) as desired.
				\item
					If \( x_1 = x_3 \) we again have two cases.
					\begin{proofcase}
						\item[Case 1 \( x_1 \neq x_2 \)]
							In this case we are done since the right hand side of \eqref{equ:kInequalityOnQuot} is \( 0 \).
						\item[Case 2 All base points are equal]
							In this case, suppose
							\[
							k(\secOnQuot_1, \secOnQuot_3) < \min \Set{k(\secOnQuot_1, \secOnQuot_2), k(\secOnQuot_2, \secOnQuot_3)}.
							\]
							Find \( \lambda \notin \mathcal E_0(\secOnQuot_1, \secOnQuot_3) \cap \domCoweights \) such that \( k(\secOnQuot_1, \secOnQuot_3) = \abs{\lambda} \) and \( \lambda \in \mathcal E_0(\secOnQuot_1, \secOnQuot_2) \cap \domCoweights \) and \( \lambda \in \mathcal E_0(\secOnQuot_2, \secOnQuot_3) \).
							As we have
							\[
							\mathcal E_0(\secOnQuot_1, \secOnQuot_2) \cap \mathcal E_0(\secOnQuot_2, \secOnQuot_3) \subseteq \mathcal E_0(\secOnQuot_1, \secOnQuot_3),
							\]
							this is a contradiction as this shows \( \lambda \in \mathcal E_0(\secOnQuot_1, \secOnQuot_3) \). \qedhere
					\end{proofcase}
			\end{enumerate}
	\end{enumerate}
\end{proof}

\begin{rem}\label{rem:controlOfCoweights}
	Let \( \secOnQuot_1, \secOnQuot_2 \in \secQuot \) be two sectors with \( \secMetric{\secOnQuot_1, \secOnQuot_2} < \vartheta^k \).
	Then every \( \paramA \in \domCoweights \) with \( \abs{\paramA} \leq k \) satisfies \( \mu \in \mathcal E_0(\secOnQuot_1, \secOnQuot_2) \).
\end{rem}

\begin{defi}\label{defi:ballSizeN}
	Given a natural number \( n \in \Nnum_0 \) and a sector \( \secOnQuot \in \secQuot \) we define the \emph{ball around \( \secOnQuot \) of size \( n \)} as
	\[
	\ball_n(\secOnQuot) \coloneqq \overline{\ball}_{\vartheta^n}(\secOnQuot) \coloneqq \Set{\secOnQuot^\prime \in \secQuot \given d_\vartheta(\secOnQuot, \secOnQuot^\prime) \leq \vartheta^n}.
	\]
\end{defi}

\begin{rem}\label{rem:propOfUltrametric}
	The following remarks are either straightforward from the definitions or standard facts about ultrametric spaces (see \cite[Chapter 1]{perezSchikhof10}).
	\begin{enumerate}[(i)]
		\item
			For any sector \( \secOnQuot \in \secQuot \) the ball \( \ball_n(\secOnQuot) \) does not depend on the parameter \( \vartheta \in (0,1) \).
		\item
			We have the following inclusions
			\[
			\ldots \subseteq \ball_2(\secOnQuot) \subseteq \ball_1(\secOnQuot) \subseteq \ball_0(\secOnQuot) = \secQuot.
			\]
		\item
			Let \( n \in \Nnum_0 \) and \( \secOnQuot \in \secQuot \) be fixed.
			If \( \secOnQuot^\prime \in \ball_n(\secOnQuot) \), then \( \ball_n(\secOnQuot) = \ball_n(\secOnQuot^\prime) \).
			In other words every point in a ball is the center of the ball.
		\item
			Two balls around \( \secOnQuot \) and \( \secOnQuot^\prime \) of size \( n \) are either equal or distinct.
		\item\label{item:diffBallsSameDist}
			Let \( \secOnQuot_1, \secOnQuot_2 \in \secQuot \) be two sectors, \( n \in \Nnum_0 \) and \( \secOnQuot_1^\prime \in \ball_n(\secOnQuot_1), \secOnQuot_2^\prime \in \ball_n(\secOnQuot_2) \).
			If \( \ball_n(\secOnQuot_1) \neq \ball_n(\secOnQuot_2) \), then \( \secMetric{\secOnQuot_1^\prime, \secOnQuot_2^\prime} = \secMetric{\secOnQuot_1, \secOnQuot_2} \).
		\item
			Let \( n \in \Nnum_0 \).
			There exists a family of sectors \( (\secOnQuot_\iota)_{\iota \in J} \) such that
			\begin{equation}\label{equ:ballCovering}
				\secQuot = \bigsqcup\limits_{\iota \in J} \ball_n(\secOnQuot_\iota).
			\end{equation}
	\end{enumerate}
\end{rem}

\begin{lem}\label{lem:finitelyBallsCover}
	Let \( \quot \) be a compact local building.
	The index set \( J \) in \eqref{equ:ballCovering} can be chosen finite, i.e, for every \( n \in \Nnum_0 \) there exists a finite number of balls of size \( n \) such that \( \secQuot \) is covered by the balls.
\end{lem}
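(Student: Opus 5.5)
The plan is to show that a ball \( \ball_n(\secOnQuot) \) is determined by finitely many combinatorial data, namely the restriction of \( \secOnQuot \) to a fixed finite subcomplex of \( \sector_0 \). Compactness of \( \quot \) then leaves only finitely many possibilities for this restriction.

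Fix \( n \in \Nnum_0 \) and put \( K_n \coloneqq \conv\bigl(\Set{\paramA \in \domCoweights \given \abs{\paramA} \leq n}\bigr) \). This is the convex hull of finitely many good vertices, and it is a finite union of simplices of \( \sector_0 \); enlarging it slightly if necessary, we may take it to be a halfspace-convex finite subcomplex containing \( 0 \).

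\textbf{Step 1 (same restriction gives the same ball).} Suppose \( \secOnQuot_1, \secOnQuot_2 \in \secQuot \) satisfy \( \restr{\secOnQuot_1}{K_n} = \restr{\secOnQuot_2}{K_n} \). Then \( K_n \subseteq \mathcal E(\secOnQuot_1,\secOnQuot_2) \). Since \( K_n \) is connected and contains \( 0 \), it also lies in \( \mathcal E_0(\secOnQuot_1,\secOnQuot_2) \). Hence every \( \lambda \in \domCoweights \) with \( \abs{\lambda} \leq n \) belongs to \( \mathcal E_0 \), so \( k(\secOnQuot_1,\secOnQuot_2) \geq n+1 > n \) and \( \secMetric{\secOnQuot_1,\secOnQuot_2} \leq \vartheta^{n} \). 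By the ball properties in \thref{rem:propOfUltrametric}, this gives \( \ball_n(\secOnQuot_1) = \ball_n(\secOnQuot_2) \). Consequently, the number of distinct balls of size \( n \) is at most the number of distinct restrictions \( \restr{\secOnQuot}{K_n} \) with \( \secOnQuot \in \secQuot \).

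\textbf{Step 2 (finitely many restrictions).} Since \( \quot \) is compact, it is a finite simplicial complex: it has finitely many chambers, because the open simplices of a compact CW-type space are finite in number. A simplicial morphism on \( K_n \) is determined by its values on the finitely many vertices of \( K_n \). Each vertex can go to only finitely many vertices of \( \quot \). There are also only finitely many type-rotating permutations \( \sigma \in \trAutRoot(I) \). So the set of simplicial maps \( K_n \to \quot \) is finite, and with it the set of restrictions.

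\textbf{Step 3 (conclusion).} Choosing one representative \( \secOnQuot_\iota \) for each realised restriction gives a finite family whose balls cover \( \secQuot \). Discarding duplicates yields the disjoint decomposition \eqref{equ:ballCovering} with \( J \) finite.

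The main technical point is Step 2: justifying that a compact simplicial structure in the sense of \thref{defi:simplStruc} has finitely many simplices. I would argue it as follows. Pick a point in the interior of each chamber. The resulting set is closed and discrete, since its preimage under every \( \simplStrucMaps_\alpha \) is finite. A closed discrete subset of a compact space is finite, so there are finitely many chambers. Alternatively, local finiteness of \( \euclbuild \) together with compactness would give the same conclusion.
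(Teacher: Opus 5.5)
Your proposal is correct and is essentially the paper's argument: a ball of size \( n \) is determined by the restriction of a sector to the finite halfspace-convex hull of \( \{\paramA \in \domCoweights : \abs{\paramA} \leq n\} \) in \( \sector_0 \), and compactness of \( \quot \) leaves only finitely many such restrictions. You are a bit more careful than the paper in two places: you use connectedness to get \( K_n \subseteq \mathcal E_0(\secOnQuot_1,\secOnQuot_2) \), and you record the images of all vertices of the hull rather than only the coweights in it, where the paper just counts maps from those coweights into \( \quot \).
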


\begin{proof}
	For \( n = 0 \) we have \( \secQuot = \ball_0(\secOnQuot) \) for any \( \secOnQuot \in \secQuot \).
	As \( \quot \) is compact it has a finite number of vertices.
	In particular only a finite number of good vertices which can be base points of sectors.
	
	Due to the simplicial structure a map \( \secOnQuot \colon \sector_0 \to \quot \) is determined by the image of the vertices of \( \sector_0 \).
	Take all vertices \( Y_n \coloneqq \Set{\paramA \in \domCoweights \given \abs{\paramA} \leq n} \) and form the halfspace-convex hull of \( Y_n \) in \( \sector_0 \).
	This subset is bounded and has finitely many vertices in \( \sector_0 \) (note not all vertices of \( \sector_0 \) are in \( \domCoweights \)).
	
	For two sectors \( \secOnQuot_1, \secOnQuot_2 \in \secQuot \) to have distance \( \secMetric{\secOnQuot_1, \secOnQuot_2} > \vartheta^n \) they have to differ at least on a subset of \( Y_n \).
	But as this is a finite set and \( \quot \) only has finitely many vertices, there are only finitely many possible maps from \( Y_n \to \quot \).
	Hence we can cover \( \secQuot \) by finitely many balls of size \( n \).
\end{proof}

Finally we can introduce the following helpful shorthand notation:
If \( \sectorMap \in \sector(\euclbuild) \) is a sector and \( \apartment \supset \Image \sectorMap \) an apartment, then let \( \psi \colon \apartment \to\euclCox \) be the unique type-rotating isomorphism such that \( \psi(\Image \sectorMap) = \sector_0 \) given by \thref{lem:maps_sets_are_the_same}\ref{item:sectorsInEuclBuild}.\ref{item:apartmentTypeRotSector} (in other words, \( \psi\circ \sectorMap = \id_{\sector_0} \)).
Then recall that for \( \mu \in \coweightLat \) the translation \( \transl_\mu \colon \euclCox \to \euclCox, \; x \mapsto x + \mu \) operates on \( \euclCox \).
As \( \transl_\mu \in \extWeyl \), this is a type-rotating automorphism and conjugating this action by \( \psi \) we obtain a type-rotating automorphism of the apartment \( \apartment \).
We write for \( x \in \apartment \), \( \mu \in \coweightLat \):
\begin{equation}\label{eq:apartment_+}
	x +_{\apartment, \sectorMap} \mu = \psi^{-1}(\transl_\mu(\psi(x))) \quad \text{and} \quad x -_{\apartment, \sectorMap} \mu = \psi^{-1}(\transl_{-\mu}(\psi(x))).
\end{equation}

\subsection{Shift Operators}

Note that by \thref{rem:euclCoxAndCoweights}\ref{item:domCoweightsInSector} we have \( \domCoweights \subseteq \sector_0 \).
Hence every translation \( \transl_\mu \) for \( \mu \in \domCoweights \) maps \( \transl_\mu(\sector_0) \subseteq \sector_0 \).
We use this to define shift operators.

\begin{defi}[Shift Operators on \( \secQuot \)]
	For \( \paramA \in \domCoweights \) we define the \emph{shift operator on \( \secQuot \)} as
	\[
	\sigma_\mu \colon \secQuot \to \secQuot, \; \secOnQuot \mapsto \secOnQuot \circ \transl_\paramA.
	\]
\end{defi}

\begin{lem}\label{lem:shift_image}
	Let \( \secOnQuot \in \secQuot \) and \( \mu, \mu_1, \mu_2 \in \domCoweights \).
	Then
	\begin{enumerate}[(i)]
		\item\label{item:shiftContainedInSector}
			\( \Image \sigma_\mu(s) \subseteq \Image s \), and
		\item 
			\(
			\sigma_{\mu_1} \circ \sigma_{\mu_2} = \sigma_{\mu_1 + \mu_2} = \sigma_{\mu_2} \circ \sigma_{\mu_1}.
			\)
	\end{enumerate}
\end{lem}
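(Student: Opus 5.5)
Both claims reduce to the definition \( \sigma_\mu(\secOnQuot) = \secOnQuot \circ \transl_\mu \) together with the fact, recorded before the definition of the shift operators, that \( \transl_\mu(\sector_0) \subseteq \sector_0 \) for \( \mu \in \domCoweights \) by \thref{rem:euclCoxAndCoweights}\ref{item:domCoweightsInSector}. Before anything else I would confirm that \( \sigma_\mu \) really maps \( \secQuot \) into itself. The restriction \( \restr{\transl_\mu}{\sector_0} \colon \sector_0 \to \sector_0 \) is an injective type-rotating morphism, since \( \transl_\mu \in \extWeyl \). Composing it with a locally injective type-rotating morphism \( \secOnQuot \) therefore gives a locally injective morphism again. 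Its associated permutation is the composite of the two type-rotating permutations, and \( \trAutRoot(I) \) is closed under composition because it is the image of the group \( \extWeyl \). So \( \secOnQuot \circ \transl_\mu \in \secQuot \).

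For \ref{item:shiftContainedInSector} I would simply compute
\[
\Image \sigma_\mu(\secOnQuot) = \secOnQuot\bigl(\transl_\mu(\sector_0)\bigr) \subseteq \secOnQuot(\sector_0) = \Image \secOnQuot .
\]
The inclusion holds because \( \transl_\mu(\sector_0) = \sector_0 + \mu \subseteq \sector_0 \). Indeed, for \( x \in \sector_0 \) and \( \beta \in \rootBasis \) we have \( \spr{x+\mu,\beta} = \spr{x,\beta} + \spr{\mu,\beta} \geq 0 \), since \( \spr{\mu,\beta_i} = a_i \geq 0 \) by the dual-basis relation \( \spr{\varpi_i,\beta_j} = \delta_{i,j} \).

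For the second claim I would evaluate both sides on an arbitrary \( \secOnQuot \):
\[
\sigma_{\mu_1}(\sigma_{\mu_2}(\secOnQuot)) = (\secOnQuot \circ \transl_{\mu_2}) \circ \transl_{\mu_1} = \secOnQuot \circ (\transl_{\mu_2} \circ \transl_{\mu_1}) = \secOnQuot \circ \transl_{\mu_1+\mu_2} = \sigma_{\mu_1+\mu_2}(\secOnQuot).
\]
Here I use \( \transl_{\mu_2}\circ\transl_{\mu_1}(x) = x+\mu_1+\mu_2 \). I also use that \( \mu_1+\mu_2 \in \domCoweights \), so the right-hand side is defined. The same computation with \( \mu_1 \) and \( \mu_2 \) exchanged gives \( \sigma_{\mu_2}\circ\sigma_{\mu_1} = \sigma_{\mu_2+\mu_1} \). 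Commutativity of addition in \( \coweightLat \) then finishes the argument.

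I do not expect any step to be a genuine obstacle. The one point needing care is the well-definedness check in the first paragraph, specifically that the type-rotating property is preserved under composition. This follows because the permutations attached to elements of \( \extWeyl \) form a group. The rest is unwinding definitions.
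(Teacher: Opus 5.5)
Your proof is correct and follows the paper's argument: part (i) is the same observation that \( \transl_\mu(\sector_0) \subseteq \sector_0 \), and part (ii) is the same computation \( \sigma_{\mu_1}(\secOnQuot \circ \transl_{\mu_2}) = \secOnQuot \circ \transl_{\mu_2} \circ \transl_{\mu_1} = \secOnQuot \circ \transl_{\mu_1+\mu_2} \). Your preliminary check that \( \sigma_\mu \) actually maps \( \secQuot \) into itself, via local injectivity and closure of the type-rotating permutations under composition, is valid; the paper takes it for granted when it defines the shift.
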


\begin{proof}
	\mbox{}
	\begin{enumerate}[(i)]
		\item
			For \( y \in \Image \sigma_\paramA(\secOnQuot) \) let \( x \in \sector_0 \) such that \( \sigma_\mu(s)(x) = y \).
			That means
			\[
			y = \secOnQuot \circ \transl_\paramA(x) = \secOnQuot(\transl_\paramA(x)).
			\]
			But as \( \transl_\paramA(x) \in \sector_0 \) we have \( y \in \Image \secOnQuot \).
		\item
			Let \( \paramA_1, \paramA_2 \in \domCoweights \).
			As \( \transl_{\paramA_1} \circ \transl_{\paramA_2} = \transl_{\paramA_1 + \paramA_2} \) we find
			\[
			\sigma_{\paramA_1} \circ \sigma_{\paramA_2}(\secOnQuot) = \sigma_{\paramA_1}(\secOnQuot \circ \transl_{\paramA_2}) = \secOnQuot \circ \transl_{\paramA_2} \circ \transl_{\paramA_1} = \secOnQuot \circ \transl_{\paramA_2 + \paramA_1} = \sigma_{\paramA_1 + \paramA_2}(\secOnQuot).
			\]
			Now the rest follows. \qedhere
	\end{enumerate}
\end{proof}

\begin{rem}\label{rem:connectedCompShift}
	Let \( \secOnQuot_1, \secOnQuot_2 \in \secQuot \) be two sectors and \( \paramA \in \domCoweights \).
	Then
	\[
	\mathcal E(\sigma_\paramA(\secOnQuot_1), \sigma_\paramA(\secOnQuot_2)) = (\mathcal E(\secOnQuot_1, \secOnQuot_2) - \paramA) \cap \sector_0.
	\]
	Moreover, for \( \paramA \in \mathcal E_0(\secOnQuot_1, \secOnQuot_2) \) we have
	\begin{equation}\label{eq:domain_under_shift}
	\mathcal E_0(\sigma_\paramA(\secOnQuot_1), \sigma_\paramA(\secOnQuot_2)) = (\mathcal E_0(\secOnQuot_1, \secOnQuot_2) - \paramA) \cap \sector_0.
	\end{equation}
\end{rem}

\begin{cor}\label{cor:mathcalDSmaller}
	For two sectors \( \secOnQuot_1, \secOnQuot_2 \in \secQuot \) and \( \mu \in \mathcal E_0(\secOnQuot_1, \secOnQuot_2) \cap \domCoweights \) we have
	\begin{equation}\label{equ:shiftBereichKleiner}
		\mathcal E_0(\sigma_\paramA(\secOnQuot_1), \sigma_\paramA(\secOnQuot_2)) \subseteq \mathcal E_0(\secOnQuot_1, \secOnQuot_2).
	\end{equation}
	In particular, for \( \secMetric{\secOnQuot_1, \secOnQuot_2} < \vartheta^k \) and \( \abs{\paramA} \leq k \) \Cref{equ:shiftBereichKleiner} holds.
\end{cor}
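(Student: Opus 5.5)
The corollary asserts \( \mathcal E_0(\sigma_\paramA(\secOnQuot_1), \sigma_\paramA(\secOnQuot_2)) \subseteq \mathcal E_0(\secOnQuot_1, \secOnQuot_2) \) for \( \paramA \in \mathcal E_0(\secOnQuot_1, \secOnQuot_2) \cap \domCoweights \). I would derive it from \eqref{eq:domain_under_shift} in \thref{rem:connectedCompShift}. By that identity, every point of the left-hand side has the form \( y = x - \paramA \) with \( x \in \mathcal E_0(\secOnQuot_1, \secOnQuot_2) \) and \( y \in \sector_0 \). So it suffices to show the following: whenever \( x \in \mathcal E_0 \coloneqq \mathcal E_0(\secOnQuot_1, \secOnQuot_2) \) and \( x - \paramA \in \sector_0 \), we also have \( x - \paramA \in \mathcal E_0 \).

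For this I would use convexity. By \thref{rem:distance of sectors}\ref{item:conCompZeroConvex}, \( \mathcal E_0 \) is halfspace-convex, hence convex in the Euclidean sense (\thref{rem:notionsOfConvex}). It contains \( 0 \), \( \paramA \) and \( x \). The plan is to write \( y = x - \paramA \) as a point of the convex hull of \( \{0, \paramA, x\} \), or more generally of points already known to lie in \( \mathcal E_0 \).

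This is the main obstacle: \( y \) is not literally a convex combination of \( 0, \paramA, x \) in general. So I would instead use the description of \( \mathcal E_0 \) as an intersection of closed halfspaces \( \mathcal H_{\alpha,k,\pm} \) of the complex \( \euclCox \), intersected with \( \sector_0 \). Take such a halfspace \( \{ \spr{\alpha, z} \le k \} \) containing \( \mathcal E_0 \).
\begin{itemize}
\item If \( \spr{\alpha, \paramA} \ge 0 \), then \( \spr{\alpha, y} = \spr{\alpha, x} - \spr{\alpha, \paramA} \le k \), so \( y \) lies in this halfspace.
\item If \( \spr{\alpha, \paramA} < 0 \), I would exploit the geometry of the building. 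The two lifted sectors \( \lift{\secOnQuot}_1, \lift{\secOnQuot}_2 \) lie in a common apartment only on their intersection. Their agreement set \( \mathcal E(\lift{\secOnQuot}_1, \lift{\secOnQuot}_2) \) is a "branching" set: if the sectors agree at a point \( z \), they agree on \( z - \sector_0 \) intersected with \( \sector_0 \), because of the way \( \sector_0 \) sits inside the sectors. So the relevant walls separating the common region from the rest are bounded by conditions of the form \( \spr{\alpha, z} \le k \) with \( \alpha \) a positive root. In that case \( \spr{\alpha, \paramA} \ge 0 \) for the dominant \( \paramA \), and the second case never occurs.
\end{itemize}

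Concretely, I would prove the sub-claim that \( \mathcal E_0 \) is downward closed: \( z \in \mathcal E_0 \) implies \( (z - \sector_0) \cap \sector_0 \subseteq \mathcal E_0 \). The argument is to extend both lifts to apartments via \thref{thm:subsetOfApartment} and argue with retractions, in the spirit of \cite[Section 7.1]{parkinsonDiss} as cited in \thref{rem:distance of sectors}. Given this, \( y = x - \paramA \in (x - \sector_0) \cap \sector_0 \subseteq \mathcal E_0 \), which is the inclusion.

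The final sentence of the corollary then follows from \thref{rem:controlOfCoweights}. If \( \secMetric{\secOnQuot_1, \secOnQuot_2} < \vartheta^k \) and \( \abs{\paramA} \le k \), then \( \paramA \in \mathcal E_0 \cap \domCoweights \), so the first part applies.
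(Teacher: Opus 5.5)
Your reduction is correct and matches the paper. The paper gives no separate proof; it treats the inclusion as immediate from \eqref{eq:domain_under_shift} together with the halfspace-convexity recorded in \thref{rem:distance of sectors}. So one must show that \(x\in\mathcal E_0\) and \(y=x-\mu\in\sector_0\) force \(y\in\mathcal E_0\), and your first case (\(\langle\alpha,\mu\rangle\ge 0\)) is fine.

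The gap is your second case. You dismiss halfspaces with \(\langle\alpha,\mu\rangle<0\) by asserting that sectors agreeing at \(z\) agree on \((z-\sector_0)\cap\sector_0\). That is exactly the downward-closedness you are trying to prove, so the argument is circular. The retraction argument you then announce for this ``sub-claim'' is never carried out. Moreover, such halfspaces do occur in descriptions of \(\mathcal E_0\): the walls of \(\sector_0\) itself are of this type, e.g.\ \(\langle-\beta,z\rangle\le 0\) for a simple root \(\beta\) with \(\langle\beta,\mu\rangle>0\). They are merely harmless, not absent. As written, the decisive step is unproved.

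No building geometry is needed to close it. If \(\langle\alpha,\mu\rangle<0\) with \(\mu\) dominant, then \(\alpha\) is a negative root, so \(\langle\alpha,y\rangle\le 0\) because \(y\in\sector_0\). Since \(\mathcal E_0\) contains \(\mu\), it is non-empty and hence contains \(0\). Therefore any halfspace \(\{\langle\alpha,z\rangle\le k\}\) containing \(\mathcal E_0\) has \(k\ge 0\), and so \(\langle\alpha,y\rangle\le 0\le k\).

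Together with your first case, \(y\) lies in every closed halfspace containing \(\mathcal E_0\), hence in \(\mathcal E_0\) by halfspace-convexity. Put differently, any halfspace-convex set containing \(0\) and a point \(x\in\sector_0\) contains \(\sector_0\cap(x-\sector_0)\). Here \(y\) lies in that set because \(x-y=\mu\in\domCoweights\subseteq\sector_0\). Your derivation of the final sentence from \thref{rem:controlOfCoweights} is fine.
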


\begin{cor}\label{cor:expandingShift}
	For two sectors \( \secOnQuot_1, \secOnQuot_2 \in \secQuot \) and \( \paramA \in \mathcal E_0(\secOnQuot_1, \secOnQuot_2) \cap \domCoweights \) we have
	\[
	\secMetric{\secOnQuot_1, \secOnQuot_2} \leq \secMetric{\sigma_\paramA(\secOnQuot_1), \sigma_\paramA(\secOnQuot_2)}.
	\]
\end{cor}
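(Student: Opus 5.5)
The statement to prove is \thref{cor:expandingShift}. The plan is to translate the inequality into one about the integer quantity \( k \). Since \( 0<\vartheta<1 \), the claim \( \secMetric{\secOnQuot_1,\secOnQuot_2} \leq \secMetric{\sigma_\paramA(\secOnQuot_1),\sigma_\paramA(\secOnQuot_2)} \) is equivalent to
\[
k(\sigma_\paramA(\secOnQuot_1),\sigma_\paramA(\secOnQuot_2)) \leq k(\secOnQuot_1,\secOnQuot_2).
\]
If the right-hand side is \( \infty \), there is nothing to show. So I assume \( k(\secOnQuot_1,\secOnQuot_2) \) is finite and pick a distance minimizing \( \lambda\in\domCoweights \). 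Then \( \abs{\lambda}=k(\secOnQuot_1,\secOnQuot_2) \) and \( \lambda\notin\mathcal E_0(\secOnQuot_1,\secOnQuot_2) \). It then suffices to exhibit some \( \lambda'\in\domCoweights \) with \( \abs{\lambda'}\le\abs{\lambda} \) and \( \lambda'\notin\mathcal E_0(\sigma_\paramA(\secOnQuot_1),\sigma_\paramA(\secOnQuot_2)) \).

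The key input is \thref{cor:mathcalDSmaller}. Because \( \paramA\in\mathcal E_0(\secOnQuot_1,\secOnQuot_2)\cap\domCoweights \), it gives
\[
\mathcal E_0(\sigma_\paramA(\secOnQuot_1),\sigma_\paramA(\secOnQuot_2))\subseteq\mathcal E_0(\secOnQuot_1,\secOnQuot_2).
\]
Taking complements inside \( \domCoweights \), the element \( \lambda \) itself lies outside \( \mathcal E_0(\sigma_\paramA(\secOnQuot_1),\sigma_\paramA(\secOnQuot_2)) \). Hence I can take \( \lambda'=\lambda \). Then \( k(\sigma_\paramA(\secOnQuot_1),\sigma_\paramA(\secOnQuot_2))\le\abs{\lambda}=k(\secOnQuot_1,\secOnQuot_2) \), which is the claim.

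The only step needing care is the inclusion from \thref{cor:mathcalDSmaller}, which rests on \eqref{eq:domain_under_shift} and is where the hypothesis \( \paramA\in\mathcal E_0 \) is used. By \eqref{eq:domain_under_shift} the shifted set is \( (\mathcal E_0(\secOnQuot_1,\secOnQuot_2)-\paramA)\cap\sector_0 \). So if \( x \) lies in it, then \( x+\paramA\in\mathcal E_0(\secOnQuot_1,\secOnQuot_2) \). Since \( x\in\sector_0 \), we have \( 0\le x\le x+\paramA \) in the dominance order, and by \thref{rem:distance of sectors}\ref{item:conCompZeroConvex} \( \mathcal E_0(\secOnQuot_1,\secOnQuot_2) \) is halfspace-convex. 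I expect the main subtlety to be checking that this convex set, which contains \( 0 \) and \( x+\paramA \), also contains the intermediate point \( x \). For dominant coweights this is the downward-closure statement in \thref{rem:distance of sectors}. For general \( x\in\sector_0 \) I would argue via the lift to \( \euclbuild \), where \( \mathcal E_0 \) is an intersection of two sectors based at the same vertex, hence closed under passing to the sub-box \( \{y\in\sector_0 : y\le x+\paramA\} \).

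Since the corollary is stated as a consequence of the preceding one, I would present only the short argument above and cite \thref{cor:mathcalDSmaller} for the inclusion.
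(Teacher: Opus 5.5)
Your argument is correct and is essentially the paper's: the paper gives no separate proof, because the corollary follows at once from the inclusion in \thref{cor:mathcalDSmaller}, which can only enlarge the set of dominant coweights outside $\mathcal E_0$ and therefore lowers $k$ and raises $\secMetricFunc$. Since $k$ is an infimum over $\domCoweights$ only, you need that inclusion just for lattice points, where the downward-closure in \thref{rem:distance of sectors} suffices; your sketched lift argument for general $x\in\sector_0$ can be dropped.
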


The first main goal of this section is to prove the following proposition.

\begin{prop}\label{prop:shiftKeyInequality}
	Let \( \secOnQuot_1, \secOnQuot_2 \in \secQuot \) be two sectors and \( \paramA \in \mathcal E_0(\secOnQuot_1, \secOnQuot_2) \cap \innerCoweights \) a strongly dominant coweight.
	Then
	\[
	\secMetric{\secOnQuot_1, \secOnQuot_2} \leq \vartheta \secMetric{\sigma_\paramA(\secOnQuot_1), \sigma_\paramA(\secOnQuot_2)}.
	\]
\end{prop}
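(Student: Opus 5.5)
The plan is to compare the integers \( k(\secOnQuot_1,\secOnQuot_2) \) and \( k' \coloneqq k(\sigma_\paramA(\secOnQuot_1),\sigma_\paramA(\secOnQuot_2)) \), and to show \( k(\secOnQuot_1,\secOnQuot_2) \geq k'+1 \) (with \( \infty+1=\infty \)). This is equivalent to the claimed inequality \( \vartheta^{k} \leq \vartheta\cdot\vartheta^{k'} \). By definition of \( k \), it suffices to show that every \( \nu\in\domCoweights \) with \( \abs{\nu}\leq k' \) lies in \( \mathcal E_0(\secOnQuot_1,\secOnQuot_2) \).

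The first step is to collect what we know. Since \( \paramA\in\mathcal E_0(\secOnQuot_1,\secOnQuot_2) \), the set \( \mathcal E_0 \) is nonempty and hence contains \( 0 \). By \eqref{eq:domain_under_shift} in \thref{rem:connectedCompShift},
\[
\mathcal E_0(\sigma_\paramA(\secOnQuot_1),\sigma_\paramA(\secOnQuot_2)) = (\mathcal E_0(\secOnQuot_1,\secOnQuot_2)-\paramA)\cap\sector_0 .
\]
By the definition of \( k' \), every \( \lambda'\in\domCoweights \) with \( \abs{\lambda'}\leq k'-1 \) lies in the left-hand side. Hence \( \lambda'+\paramA\in\mathcal E_0(\secOnQuot_1,\secOnQuot_2) \) for all such \( \lambda' \), and for all \( \lambda'\in\domCoweights \) if \( k'=\infty \).

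The second, key step is a domination argument. Write \( \paramA=\sum_i m_i\varpi_i \) with all \( m_i\geq 1 \), which holds because \( \paramA \) is strongly dominant, and write \( \nu=\sum_i a_i\varpi_i \) with \( a_i\in\Nnum_0 \). If \( \nu=0 \), then \( \nu\in\mathcal E_0 \) already. Otherwise set
\[
\lambda'\coloneqq\sum_i \max(a_i-m_i,0)\,\varpi_i\in\domCoweights .
\]
Then \( \nu\leq\lambda'+\paramA \) in the partial order on \( \domCoweights \). Moreover \( \abs{\lambda'}\leq\abs{\nu}-1\leq k'-1 \): for an index with \( a_i>0 \), the \( i \)-th coefficient drops by \( \min(a_i,m_i)\geq 1 \). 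This is exactly where strong dominance is used; for merely dominant \( \paramA \) one only gets \( \abs{\lambda'}\leq\abs{\nu} \), which is \thref{cor:expandingShift}.

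The third step concludes. From the first step, \( \lambda'+\paramA\in\mathcal E_0(\secOnQuot_1,\secOnQuot_2)\cap\domCoweights \). By the downward closedness in \thref{rem:distance of sectors} (part 3), which comes from halfspace-convexity, we get \( \nu\in\mathcal E_0(\secOnQuot_1,\secOnQuot_2) \). This gives \( k(\secOnQuot_1,\secOnQuot_2)\geq k'+1 \).

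The case \( k'=\infty \) is covered by the same argument. In that case all of \( \domCoweights \) lies in \( \mathcal E_0 \), and since \( \mathcal E_0 \) is convex and \( \conv\domCoweights=\sector_0 \), we get \( k=\infty \) and both sides vanish.

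The only delicate point I expect is the validity of the downward-closedness remark. It relies on \( \mathcal E_0 \) being halfspace-convex and on the corresponding fact from \cite[Section 7.1]{parkinsonDiss}. I will take both as given from \thref{rem:distance of sectors}.
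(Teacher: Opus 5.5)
Your proof is correct, but it is organized differently from the paper's.

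\textbf{The paper's route.} It first proves the inequality only for the smallest strongly dominant coweight \( \sum_i\varpi_i \). It introduces the directional distances \( \secMetricFunc^{\varpi_j} \). Via \thref{lem:distanceAttainedOnRay}, a convexity argument with the simplex spanned by \( 0, k\varpi_1,\ldots,k\varpi_n \), it gets \( \secMetricFunc=\max_j\secMetricFunc^{\varpi_j} \). Via \thref{lem:splitShiftInDirections}, applying \( \sigma_{\varpi_j} \) divides \( \secMetricFunc^{\varpi_j} \) by exactly \( \vartheta \), while applying \( \sigma_{\varpi_i} \), \( i\neq j \), does not decrease it. Factoring \( \sigma_{\sum_i\varpi_i} \) into fundamental shifts and maximizing over \( j \) gives the claim. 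General \( \paramA\in\innerCoweights \) then follows from \( \sum_i\varpi_i\leq\paramA \) and \thref{cor:expandingShift}.

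\textbf{Your route.} You skip all of this and compare \( k \) and \( k' \) directly. You transport the dominant coweights of norm at most \( k'-1 \) from \( \mathcal E_0(\sigma_\paramA(\secOnQuot_1),\sigma_\paramA(\secOnQuot_2)) \) to \( \mathcal E_0(\secOnQuot_1,\secOnQuot_2) \) via \eqref{eq:domain_under_shift}. You then dominate each \( \nu \) explicitly by \( \lambda'+\paramA=\sum_i\max(a_i,m_i)\varpi_i \).

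\textbf{What each buys.}
\begin{itemize}
\item Your argument uses only the lattice-level downward closedness of \thref{rem:distance of sectors}. The paper's proof also needs this fact, inside \thref{cor:mathcalDSmaller} and \thref{lem:splitShiftInDirections}.
\item It treats every strongly dominant \( \paramA \) in one step.
\item The same bookkeeping gives a sharper bound: split according to whether some \( a_i\geq\min_j m_j \). Otherwise \( \nu\leq\paramA \) directly. This yields \( k\geq k'+\min_j m_j \), i.e.\ the factor \( \vartheta^{\min_j m_j} \) in place of \( \vartheta \).
\item The paper's route gives, in exchange, a finer structural picture. Each fundamental shift expands distances by exactly \( \vartheta^{-1} \) in its own direction and is non-contracting in the others. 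This explains transparently why every fundamental coweight must occur in \( \paramA \).
\end{itemize}
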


\begin{lem}\label{lem:distanceAttainedOnRay}
	Let \( \secOnQuot_1, \secOnQuot_2 \in \secQuot \) be two sectors and \( \varpi_1,\ldots,\varpi_n \in \domCoweights \) a basis of the coweight lattice \( \coweightLat \) which generates the monoid \( \domCoweights \).
	Then there exists \( k \in \Nnum_0 \) and some \( i = 1,\ldots,n \) such that
	\begin{enumerate}[(i)]
		\item
			\( \secMetric{\secOnQuot_1, \secOnQuot_2} = \vartheta^k \) and
		\item
			\( k \varpi_i \notin \mathcal E_0(\secOnQuot_1, \secOnQuot_2) \cap \domCoweights \).
	\end{enumerate}
	In other words, there exists a distance minimizing element on some ray \( \Nnum_0 \varpi_i \).
\end{lem}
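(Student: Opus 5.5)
We have to show that the infimum defining \( k(\secOnQuot_1,\secOnQuot_2) \) is attained at a multiple of a fundamental coweight. The plan is to reduce the statement to the halfspace-convexity of \( \mathcal E_0(\secOnQuot_1,\secOnQuot_2) \) from \thref{rem:distance of sectors}\ref{item:conCompZeroConvex}, via the downward-closedness of \( \mathcal E_0 \cap \domCoweights \) under \( \leq \) recorded in the same remark.

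First dispose of the trivial cases. If \( \secOnQuot_1 = \secOnQuot_2 \), the distance is \( 0 = \vartheta^\infty \), so the statement is vacuous, or is read with \( k = \infty \). If the base points differ, then \( 0 \notin \mathcal E_0 \) and \( k = 0 \); then \( k \varpi_i = 0 \notin \mathcal E_0 \) for any \( i \). So assume \( \secOnQuot_1(0) = \secOnQuot_2(0) \) and \( 1 \leq k < \infty \).

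Next pick a distance minimizing \( \lambda = \sum_i a_i \varpi_i \in \domCoweights \), so that \( \abs{\lambda} = \sum a_i = k \) and \( \lambda \notin \mathcal E_0 \). Every dominant coweight of smaller \( \ell^1 \)-norm lies in \( \mathcal E_0 \). In particular, each \( \mu \in \domCoweights \) with \( \mu \leq \lambda \) and \( \mu \neq \lambda \) has \( \abs{\mu} < k \), hence lies in \( \mathcal E_0 \). The aim is to show that some \( k\varpi_i \notin \mathcal E_0 \). Since \( \abs{k\varpi_i} = k \), such a point is automatically distance minimizing.

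Suppose instead that \( k\varpi_i \in \mathcal E_0 \) for every \( i \). By \thref{rem:distance of sectors}\ref{item:conCompZeroConvex} and \thref{rem:notionsOfConvex}, \( \mathcal E_0 \) is convex in the Euclidean sense. Since \( \lambda = \sum_i \frac{a_i}{k} (k\varpi_i) \) with \( \frac{a_i}{k} \geq 0 \) summing to \( 1 \), \( \lambda \) is a convex combination of the points \( k\varpi_i \). Hence \( \lambda \in \mathcal E_0 \), a contradiction. So some \( k\varpi_i \notin \mathcal E_0 \cap \domCoweights \), which gives both (i) and (ii).

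The only delicate point is using Euclidean convexity of \( \mathcal E_0 \) rather than mere downward-closedness, which alone would not suffice. This rests on \( \mathcal E_0 \) being halfspace-convex, hence an intersection of closed halfspaces and so convex. I expect this step to be the main thing to justify. I would cite \thref{rem:distance of sectors}\ref{item:conCompZeroConvex}, whose underlying reason is that, after lifting to \( \euclbuild \), \( \mathcal E_0 \) is the preimage of the intersection of two sectors, and that intersection is combinatorially convex by \thref{cor:localInjConvexSets}.
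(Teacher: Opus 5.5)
Your proof is correct and takes essentially the same route as the paper: you assume every \(k\varpi_i\) lies in \(\mathcal E_0(\secOnQuot_1,\secOnQuot_2)\), and the halfspace- (hence Euclidean) convexity of \(\mathcal E_0\) then forces the minimizer \(\lambda=\sum_i \frac{a_i}{k}(k\varpi_i)\) into \(\mathcal E_0\), a contradiction. Your explicit handling of the degenerate cases, namely \(\secOnQuot_1=\secOnQuot_2\) (where no finite \(k\) exists) and distinct base points (where \(k=0\)), is a small addition that the paper leaves implicit.
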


\begin{proof}
	The crucial part of the proof is the observation that \( \mathcal E_0(\secOnQuot_1, \secOnQuot_2) \) is a (halfspace-) convex subset of \( \sector_0 \) (see \thref{rem:distance of sectors}).
	Hence \( \mathcal E_0(\secOnQuot_1, \secOnQuot_2) \cap \domCoweights \) is a (halfspace-) convex subset of \( \domCoweights \) as described in \cite[Section 7.1]{parkinsonDiss}.
	
	Let us assume that \( \mu \notin \mathcal E_0(\secOnQuot_1, \secOnQuot_2) \cap \domCoweights \) is an element minimizing the distance and that there is no distance minimizing element in some ray \( \Nnum_0 \varpi_i \).
	Write \( \mu = \sum_{i=1}^n a_i \varpi_i \) with \( a_i \in \Nnum_0 \) and set \( k = \sum_{i=1}^n a_i \).
	By assumption \( k \varpi_j \in \mathcal E_0(\secOnQuot_1, \secOnQuot_2) \cap P_+^\vee \) for all \( j = 1,\ldots,n \).
	As \( \mathcal E_0(\secOnQuot_1, \secOnQuot_2) \) is (halfspace) convex the convex hull \( \conv \Set{0, k \varpi_j \given j = 1,\ldots,n} \subseteq \mathcal E_0(\secOnQuot_1, \secOnQuot_2) \).
	But as \( \mu \) is in the metric convex hull of \( \Set{0, k \varpi_j \given j = 1,\ldots,n} \) it is in the halfspace convex hull as well, i.e., \( \mu \in \conv \Set{0, k \varpi_j \given j=1,\ldots,n} \).
\end{proof}

The previous lemma gives rise to the following definition.

\begin{defi}
	Let \( \varpi_1,\ldots,\varpi_n \in \domCoweights \) be a basis of the coweight lattice \( \coweightLat \) which generates the monoid \( \domCoweights \).
	We define the \emph{distance in the direction \( \varpi_i \)} as follows:
	\[
	\dircSecMetric{\varpi_i}{\secOnQuot_1, \secOnQuot_2} \coloneqq \vartheta^{k_i(\secOnQuot_1, \secOnQuot_2)},
	\]
	where
	\[
	k_i(\secOnQuot_1, \secOnQuot_2) \coloneqq \inf \Set*{\ell \in \Nnum_0 \given \ell \cdot \varpi_i \notin \mathcal E_0(\secOnQuot_1, \secOnQuot_2)} \in \Nnum_0 \cup \Set{\infty}.
	\]
	As before we set \( \vartheta^\infty \coloneqq 0 \).
\end{defi}

\begin{rem}\label{rem:metricMinOnBoundary}
	\mbox{}
	\begin{enumerate}[(i)]
		\item
			The map \( \secMetricFunc^{\varpi_i} \) is not a metric.
		\item
			By \thref{lem:distanceAttainedOnRay} we can use the different directions to compute the distance of two sectors as follows
			\[
			\secMetric{\secOnQuot_1, \secOnQuot_2} = \max_{i=1,\ldots,n} \dircSecMetric{\varpi_i}{\secOnQuot_1, \secOnQuot_2}.
			\]
	\end{enumerate}
\end{rem}

In order to prove \thref{prop:shiftKeyInequality} we need another lemma which gives us information about how the distance in the different directions changes when applying a shift along on the rays spanning the cone \( \domCoweights \).

\begin{lem}\label{lem:splitShiftInDirections}
	As before let \( \varpi_1,\ldots,\varpi_n \in \domCoweights \) be a basis of the coweight lattice \( \coweightLat \) which generates the monoid \( \domCoweights \).
	Take \( i \in \Set{1,\ldots,n} \) and let \( \secOnQuot_1, \secOnQuot_2 \in \secQuot \) be two sectors with \( \varpi_i \in \mathcal E_0(\secOnQuot_1, \secOnQuot_2) \).
	It holds
	\begin{enumerate}[(i)]
		\item
			\( \dircSecMetric{\varpi_i}{\secOnQuot_1, \secOnQuot_2} = \vartheta \dircSecMetric{\varpi_i}{\sigma_{\varpi_i}(\secOnQuot_1), \sigma_{\varpi_i}(\secOnQuot_2)} \) and
		\item
			\( \dircSecMetric{\varpi_j}{\secOnQuot_1, \secOnQuot_2} \leq \dircSecMetric{\varpi_j}{\sigma_{\varpi_i}(\secOnQuot_1), \sigma_{\varpi_i}(\secOnQuot_2)} \) for all \( j \neq i \).
	\end{enumerate}
\end{lem}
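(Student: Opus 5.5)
The whole argument will be read off from the shift formula \eqref{eq:domain_under_shift} of \thref{rem:connectedCompShift}. Since \( \varpi_i \in \mathcal E_0(\secOnQuot_1, \secOnQuot_2) \cap \domCoweights \), that formula gives
\[
\mathcal E_0(\sigma_{\varpi_i}(\secOnQuot_1), \sigma_{\varpi_i}(\secOnQuot_2)) = (\mathcal E_0(\secOnQuot_1, \secOnQuot_2) - \varpi_i) \cap \sector_0 .
\]
Both claims then become statements about which multiples \( \ell \varpi_j \) lie in the translated set. So I will compare the integers \( k_j(\secOnQuot_1, \secOnQuot_2) \) and \( k_j(\sigma_{\varpi_i}(\secOnQuot_1), \sigma_{\varpi_i}(\secOnQuot_2)) \). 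Write \( \mathcal E_0 \) for \( \mathcal E_0(\secOnQuot_1, \secOnQuot_2) \) and \( \mathcal E_0' \) for the shifted set.

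For (i), note that \( \ell \varpi_i \in \sector_0 \) for every \( \ell \in \Nnum_0 \). Hence \( \ell\varpi_i \in \mathcal E_0' \) if and only if \( (\ell+1)\varpi_i \in \mathcal E_0 \). Recall that \( \mathcal E_0 \cap \domCoweights \) is downward closed for \( \leq \) by \thref{rem:distance of sectors}. So the set \( \Set{\ell \given \ell\varpi_i \in \mathcal E_0} \) is an initial segment \( \Set{0,\ldots,k_i-1} \) of \( \Nnum_0 \), and \( k_i \geq 2 \) because \( \varpi_i \in \mathcal E_0 \). It follows that the corresponding initial segment for \( \mathcal E_0' \) is \( \Set{0,\ldots,k_i-2} \), i.e.\ \( k_i' = k_i - 1 \). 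The case \( k_i = \infty \) gives \( k_i' = \infty \). Exponentiating yields \( \vartheta^{k_i} = \vartheta \cdot \vartheta^{k_i'} \), which is (i).

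For (ii), fix \( j \neq i \). It suffices to show that \( \ell \varpi_j \in \mathcal E_0 \) implies \( \ell\varpi_j \in \mathcal E_0' \), since then \( k_j' \geq k_j \) and so \( \vartheta^{k_j} \geq \vartheta^{k_j'} \)?—no, the inequality runs the other way. What the claim actually needs is \( \vartheta^{k_j} \leq \vartheta^{k_j'} \), i.e.\ \( k_j' \leq k_j \). So I must show that \( \ell\varpi_j \in \mathcal E_0' \) implies \( \ell \varpi_j \in \mathcal E_0 \). Unwinding the shift formula, \( \ell\varpi_j \in \mathcal E_0' \) means \( \ell\varpi_j + \varpi_i \in \mathcal E_0 \). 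Since \( \ell\varpi_j \leq \ell\varpi_j + \varpi_i \) in the dominance order, downward closedness of \( \mathcal E_0 \cap \domCoweights \) gives \( \ell\varpi_j \in \mathcal E_0 \). This is the same statement as \thref{cor:mathcalDSmaller}, which says \( \mathcal E_0' \subseteq \mathcal E_0 \), restricted to the ray \( \Nnum_0\varpi_j \). Taking infima yields \( k_j' \leq k_j \), hence \( \vartheta^{k_j} \leq \vartheta^{k_j'} \).

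The main obstacle is justifying the downward closedness of \( \mathcal E_0 \cap \domCoweights \), namely that \( \lambda \leq \mu \) and \( \mu \in \mathcal E_0 \) imply \( \lambda \in \mathcal E_0 \). This is quoted in \thref{rem:distance of sectors} via halfspace-convexity and \cite[Section 7.1]{parkinsonDiss}. I would rely on that remark, and if needed argue directly: \( \mathcal E_0 \) is halfspace-convex and contains \( 0 \) and \( \mu \). Moreover, \( \lambda \) lies in the halfspace-convex hull of \( 0 \) and \( \mu \) inside \( \sector_0 \) whenever \( 0 \leq \lambda \leq \mu \), because every hyperplane \( H_{\alpha,k} \) bounding a halfspace that contains both \( 0 \) and \( \mu \) also has \( \lambda \) on the same side. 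I would check this last claim root by root using \( \spr{\alpha,\lambda} \) lying between \( \spr{\alpha, 0} \) and \( \spr{\alpha,\mu} \) when \( \alpha \) is positive. For negative roots the halfspace condition on \( \sector_0 \) handles it.
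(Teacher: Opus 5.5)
Your proof is correct and follows the paper's route: (i) is read off from the shift formula \eqref{eq:domain_under_shift}, which gives \( k_i' = k_i - 1 \), and (ii) is \thref{cor:mathcalDSmaller} restricted to the ray \( \Nnum_0 \varpi_j \), which you justify through the downward closedness from \thref{rem:distance of sectors}. When writing it up, delete the aborted first attempt in (ii), i.e.\ the sentence ending in ``the inequality runs the other way''.
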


\begin{proof}
	Let \( \secOnQuot_1, \secOnQuot_2 \in \secQuot \) two sectors with \( \varpi_i \in \mathcal E_0(\secOnQuot_1, \secOnQuot_2) \).
	To shorten the notation we write \( k_j \coloneqq k_j(\secOnQuot_1, \secOnQuot_2) \).
	Our assumption tells us that \( k_i > 1 \).
	\begin{enumerate}[(i)]
		\item
			The case \( k_i = \infty \) is trivial as we have \( k_i(\sigma_{\varpi_i}(\secOnQuot_1), \sigma_{\varpi_i}(\secOnQuot_2)) = \infty \) as well and both sides are equal to \( 0 \).
			In the case where \( 1 < k_i < \infty \) we find by \eqref{eq:domain_under_shift} \( (k_i-1) \varpi_i \notin \mathcal E_0(\sigma_{\varpi_i}(\secOnQuot_1), \sigma_{\varpi_i}(\secOnQuot_2)) \) and this element is minimal.
		\item
			Follows from \thref{cor:mathcalDSmaller}. \qedhere
	\end{enumerate}
\end{proof}

Now everything is set up to give a proof of \thref{prop:shiftKeyInequality}.

\begin{proof}[Proof of \thref{prop:shiftKeyInequality}]
	Let \( \secOnQuot_1, \secOnQuot_2 \in \secQuot \) be two sectors with \( \paramA \in \mathcal E_0(\secOnQuot_1, \secOnQuot_2) \).
	
	First we show the claim for \( \paramA = \sum_{i=1}^n \varpi_i \) which is the unique smallest element in \( \innerCoweights \).
	As \( \varpi_i \leq \paramA \) for every \( i = 1,\ldots, n \) we find \( \varpi_i \in \mathcal E_0(\secOnQuot_1, \secOnQuot_2) \).
		For \( j \in \Set{1,\ldots,n} \) fixed we decompose
	\[
	\sigma_\paramA = \sigma_{\varpi_j} \circ \prod\limits_{i \neq j} \sigma_{\varpi_i}.
	\]
	 Note that by \eqref{eq:domain_under_shift} after every application of \( \sigma_{\varpi_i} \) to the sectors the element \( \varpi_j \) stays in the component \( \mathcal E_0 \).
\thref{lem:splitShiftInDirections} implies
	\[
	\dircSecMetric{\varpi_j}{\secOnQuot_1, \secOnQuot_2} \leq \vartheta \dircSecMetric{\varpi_j}{\sigma_{\paramA}(\secOnQuot_1), \sigma_{\paramA}(\secOnQuot_2)}.
	\]
We combine the estimates for all \( j = 1,\ldots,n \) and obtain
	\begin{align*}
		\secMetric{\secOnQuot_1, \secOnQuot_2} &= \max \dircSecMetric{\varpi_j}{\secOnQuot_1, \secOnQuot_2} 
		\leq \max \vartheta \dircSecMetric{\varpi_j}{\sigma_{\paramA}(\secOnQuot_1), \sigma_{\paramA}(\secOnQuot_2)} 
		= \vartheta \max \dircSecMetric{\varpi_j}{\sigma_{\paramA}(\secOnQuot_1), \sigma_{\paramA}(\secOnQuot_2)} \\
		&= \vartheta \secMetric{\sigma_{\paramA}(\secOnQuot_1), \sigma_{\paramA}(\secOnQuot_2)}.
	\end{align*}
	As \( \mu \leq \lambda \) for all \( \lambda \in \innerCoweights \) we can use this inequality and apply \thref{cor:expandingShift} to get the proposition.
\end{proof}

The second main goal of this section is to get a better understanding of the preimages of the shift maps.
We extend the notation \eqref{eq:apartment_+} to sectors on buildings:
Let \( \sectorMap \in \sector(\euclbuild) \) be a sector, \( \apartment \) an apartment with \( \Image \sectorMap \subset \apartment \) and \( \mu \in \coweightLat \).
Then we define a new sector
\[
\sectorMap \pm_{\apartment} \mu \colon \sector_0 \to \euclbuild, \; x \mapsto \sectorMap(x) \pm_{\apartment, \sectorMap} \mu.
\]
With this notation we can describe the preimages of the shift maps on \( \euclbuild \) as follows.

\begin{lem}\label{lem:preimage_by_apartments}
	Let \( \sectorMap \in \sector(\euclbuild) \) a sector in the Euclidean building \( \euclbuild \) and \( \mu \in \domCoweights \).
	Then
	\[
	\sigma_\mu^{-1}(\sectorMap) = \bigcup_{\apartment \in\apartSys, \, \Image s\subset \apartment} \sectorMap -_{\apartment} \mu
	\]
\end{lem}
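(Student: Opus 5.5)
We prove the two inclusions separately. Throughout, \( \sectorMap \pm_{\apartment} \mu \) is the map \( x \mapsto \psi^{-1}(\psi(\sectorMap(x)) \pm \mu) \), where \( \psi \colon \apartment \to \euclCox \) is the unique type-rotating isomorphism with \( \psi \circ \sectorMap = \id_{\sector_0} \) from \thref{lem:maps_sets_are_the_same}\ref{item:sectorsInEuclBuild}.\ref{item:apartmentTypeRotSector}.

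\textbf{Inclusion ``\(\supseteq\)''.} Fix an apartment \( \apartment \supseteq \Image \sectorMap \) and set \( \sectorMap' \coloneqq \sectorMap -_{\apartment} \mu \). Then
\[
\sectorMap' = \psi^{-1} \circ \transl_{-\mu} \circ \iota,
\]
where \( \iota \colon \sector_0 \hookrightarrow \euclCox \) is the inclusion. The map \( \transl_{-\mu} \) lies in \( \extWeyl \), and \( \psi^{-1} \) is a type-rotating isomorphism onto \( \apartment \). Hence \( \sectorMap' \) is an injective type-rotating morphism, so \( \sectorMap' \in \sector(\euclbuild) \). We check that \( \sectorMap' \) is a preimage:
\[
\sigma_\mu(\sectorMap')(x) = \psi^{-1}(x + \mu - \mu) = \psi^{-1}(x) = \sectorMap(x),
\]
using \( \psi^{-1}|_{\sector_0} = \sectorMap \). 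Note that \( x + \mu \in \sector_0 \) because \( \mu \) is dominant, so \( \sigma_\mu(\sectorMap') \) is defined.

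\textbf{Inclusion ``\(\subseteq\)''.} Let \( \sectorMap' \in \sector(\euclbuild) \) with \( \sigma_\mu(\sectorMap') = \sectorMap \).
\begin{enumerate}[(1)]
	\item By \thref{lem:maps_sets_are_the_same}\ref{item:sectorsInEuclBuild}.\ref{item:apartmentTypeRotSector} there is an apartment \( \apartment \supseteq \Image \sectorMap' \) and a type-rotating isomorphism \( \psi' \colon \apartment \to \euclCox \) with \( \psi' \circ \sectorMap' = \id_{\sector_0} \).
	\item By \thref{lem:shift_image}\ref{item:shiftContainedInSector}, \( \Image \sectorMap = \Image \sigma_\mu(\sectorMap') \subseteq \Image \sectorMap' \subseteq \apartment \). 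So \( \apartment \) is one of the apartments in the union.
	\item Define \( \psi \coloneqq \transl_{-\mu} \circ \psi' \), a type-rotating isomorphism \( \apartment \to \euclCox \). For \( x \in \sector_0 \),
	\[
	\psi(\sectorMap(x)) = \psi'(\sectorMap'(x + \mu)) - \mu = x.
	\]
	By the uniqueness part of \thref{lem:maps_sets_are_the_same}\ref{item:sectorsInEuclBuild}.\ref{item:apartmentTypeRotSector}, this \( \psi \) is exactly the isomorphism used to define \( -_{\apartment, \sectorMap} \).
	\item Therefore
	\[
	(\sectorMap -_{\apartment} \mu)(x) = \psi^{-1}(\psi(\sectorMap(x)) - \mu) = \psi'^{-1}(x) = \sectorMap'(x),
	\]
	since \( \psi^{-1} = \psi'^{-1} \circ \transl_{\mu} \).
\end{enumerate}

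The step requiring most care is (3): identifying the normalization \( \psi \) canonically attached to \( (\apartment, \sectorMap) \) with \( \transl_{-\mu} \circ \psi' \). The uniqueness statement covers this, because an apartment isomorphism that fixes the sector \( \sectorMap \) pointwise (in normalized coordinates) is trivial, by the same \( \weylGr \)-simple-transitivity argument used in the proof of \thref{lem:maps_sets_are_the_same}. Everything else is direct bookkeeping with translations in \( \extWeyl \).
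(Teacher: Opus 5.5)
Your proof is correct and follows the same route as the paper. You verify ``$\supseteq$'' by direct computation, and for ``$\subseteq$'' you choose an apartment containing $\Image \sectorMap'$ (hence $\Image \sectorMap$ by \thref{lem:shift_image}) and unwind the definition of $\pm_{\apartment}$. Your step (3), which identifies the normalizing isomorphism for $(\apartment,\sectorMap)$ with $\transl_{-\mu}\circ\psi'$ via the uniqueness in \thref{lem:maps_sets_are_the_same}, makes explicit what the paper's one-line ``$\sectorMap' +_{\apartment}\mu = \sectorMap$'' leaves implicit.
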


\begin{proof}
	The inclusion \quoted{\( \supseteq \)} is clear by definition of \( -_{\apartment} \).
	For the converse inclusion let \( \sectorMap^\prime \in\sigma^{-1}_\mu(\sectorMap) \).
	We choose an apartment \( \apartment \) that contains \( \Image(\sectorMap^\prime) \) and thus by \thref{lem:shift_image} also \( \Image(\sectorMap) \). Then by definition of \( +_{\apartment} \) we have \( \sectorMap^\prime +_{\apartment} \mu = \sectorMap \).
\end{proof}

\begin{lem}\label{lem:preimageOfSectorInBuild}
	Let \( \sectorMap \in \sector(\euclbuild) \) be sector in the Euclidean building \( \euclbuild \).
	For any \( \paramA \in \domCoweights \) the cardinality \( \# \sigma_\mu^{-1}(\sectorMap) \) does not depend on \( \sectorMap \) but only on \( \paramA \).
	It is given by
	\begin{equation}\label{equ:preimageNumber}
		M_\paramA \coloneqq \# \sigma_\paramA^{-1}(\sectorMap) = q_{\transl_{-\paramA}} = q_{\transl_\paramA}.
	\end{equation}
\end{lem}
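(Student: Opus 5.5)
We want to count the preimages \( \sigma_\paramA^{-1}(\sectorMap) \) of a sector \( \sectorMap \in \sector(\euclbuild) \) and show that their number equals \( q_{\transl_{-\paramA}} \); the equality \( q_{\transl_{-\paramA}} = q_{\transl_\paramA} \) is then the corollary of \thref{lem:extWeylParaInverse}. The plan is to parametrize the preimages by a set of chambers at a fixed Weyl distance from a fixed chamber, and then use the formula \( \# \mathcal C_w(C) = q_w \).

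\textbf{Step 1: a preimage is determined by one chamber.} Let \( x = \sectorMap(0) \) and \( C = \sectorMap(C_0) \). For any \( \sectorMap' \in \sigma_\paramA^{-1}(\sectorMap) \) put \( C' \coloneqq \sectorMap'(C_0) \). By \thref{lem:preimage_by_apartments} we have \( \sectorMap' = \sectorMap -_{\apartment} \paramA \) for an apartment \( \apartment \supseteq \Image \sectorMap \). In the chart \( \psi \colon \apartment \to \euclCox \) we have \( \psi(C) = C_0 \) and \( \psi(C') = C_0 - \paramA = \transl_{-\paramA} C_0 \). Since \( \psi \) is type-rotating, \thref{lem:localInjIsSigIsom} gives
\[
\delta_{\affWeyl}(C, C') = \sigma(w'),
\]
where \( w' \in \affWeyl \) satisfies \( w' C_0 = \transl_{-\paramA} C_0 \) and \( \sigma \) is the type-rotating permutation of \( \psi \). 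By \thref{lem:maps_sets_are_the_same} the map \( \sectorMap \) is fixed, hence so is \( \sigma \), and \( \sigma \) does not depend on \( \apartment \).

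Conversely, \( \sectorMap' \) is determined by \( C' \). Indeed, \( \sectorMap'(0) \) is the good vertex of \( C' \) of the appropriate type, and \( \sectorMap'(C_0) = C' \). By \thref{lem:maps_sets_are_the_same}(ii)(a) together with the uniqueness argument via \thref{lem:typRotFixWeylGr} and \thref{prop:weylGroupActions}(iii), a sector is determined by its base vertex and base chamber. So the map \( \sectorMap' \mapsto \sectorMap'(C_0) \) is injective into \( \mathcal C_{\sigma(w')}(C) \).

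\textbf{Step 2: surjectivity.} Let \( D \in \mathcal C_{\sigma(w')}(C) \). The goal is an apartment containing both \( D \) and \( \Image \sectorMap \). The most promising route is to take the chamber at infinity \( \omega = [\sectorMap] \) and use the standard fact that a sector and a chamber always lie in a common apartment; this follows from \thref{rem:propertiesOfBoundary} by choosing a chamber at infinity opposite \( \omega \) appropriately (\cite[Section 11.8]{abramenkoBrown08}). In that apartment, the chamber at Weyl distance \( \sigma(w') \) from \( C \) is unique because apartments are thin, so it must be the chamber \( \psi^{-1}(\transl_{-\paramA}C_0) \). Therefore \( D = (\sectorMap -_{\apartment} \paramA)(C_0) \), and \( \sectorMap -_\apartment \paramA \) is a preimage of \( \sectorMap \).

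\textbf{Step 3: counting.} Steps 1 and 2 give a bijection, so
\[
\# \sigma_\paramA^{-1}(\sectorMap) = q_{\sigma(w')} = q_{w'} = q_{\transl_{-\paramA}},
\]
using strong regularity for the middle equality and \thref{defi:extendedParameters} for the last. The right-hand side is independent of \( \sectorMap \), which gives the claimed independence.

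The main obstacle is Step 2, namely showing that every chamber at the right Weyl distance yields a preimage. This needs the simultaneous apartment for a sector and a chamber. It may also require checking that \( \transl_{-\paramA} C_0 \) lies "behind" \( C_0 \), so that the convex hull of \( D \) and the sector is isometric to a subset of \( \euclCox \) and \thref{thm:subsetOfApartment} applies as a fallback.
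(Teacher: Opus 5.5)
\textbf{Verdict: there is a genuine gap.} Your overall strategy is the paper's. You identify \( \sigma_\paramA^{-1}(\sectorMap) \) with the chambers \( D \) satisfying \( \delta_{\affWeyl}(C,D)=\sigma(w') \), where \( C=\sectorMap(C_0) \) and \( w'C_0=\transl_{-\paramA}C_0 \). You then count them via strong regularity, \thref{defi:extendedParameters} and \thref{lem:extWeylParaInverse}, and your Step~3 is fine. The paper asserts this bijection in one sentence, so the real content lies in your Steps~1 and~2. Both rest on claims that are false in general.

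\emph{Step~1.} A sector is not determined by its base vertex and base chamber. In a tree, rays starting with the same edge \( [x,y] \) can branch at \( y \). \thref{lem:maps_sets_are_the_same}(ii)(a) needs equal \emph{images}.

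\emph{Step~2.} There is no ``standard fact'' of that kind. \cite[Section 11.8]{abramenkoBrown08} only gives an apartment containing a chamber and a \emph{subsector} of a given sector. In a tree, no line contains the ray \( (x_0,x_1,\dots) \) together with an edge branching off at \( x_5 \).

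So neither injectivity nor surjectivity is established. Your closing remark that \( \transl_{-\paramA}C_0 \) lies ``behind'' \( C_0 \) is the right idea, but it is not carried out.

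\textbf{The missing ingredient} is the position of \( C_0-\paramA \) relative to \( \sector_0 \). For positive \( \alpha\in\rootSys \) one has \( 0\le\langle x,\alpha\rangle\le 1 \) on \( C_0 \). Hence the walls separating \( C_0 \) from \( C_0-\paramA \) are exactly the \( H_{\alpha,k} \) with \( \alpha>0 \) and \( 1-\langle\paramA,\alpha\rangle\le k\le 0 \). Each of them has all of \( \sector_0 \) on the side of \( C_0 \). This repairs both steps.

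(a) \emph{Injectivity.} The halfspace-convex hull of \( C_0-\paramA \) and \( \sector_0 \) is \( \sector_0-\paramA \). So a preimage with base chamber \( C' \) has image equal to the convex hull of \( C' \) and \( \Image\sectorMap \), computed in an apartment containing it, hence in \( \euclbuild \) since apartments are convex. Two preimages with the same base chamber therefore have the same image, and they coincide by \thref{lem:maps_sets_are_the_same}(ii)(a).

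(b) \emph{Surjectivity.} For every chamber \( F\subseteq\sector_0 \), the chamber \( C_0 \) lies on a minimal gallery from \( F \) to \( C_0-\paramA \). Hence \( \ell(\delta_{\euclCox}(F,C_0)\,w')=\ell(\delta_{\euclCox}(F,C_0))+\ell(w') \). Now take \( D \) with \( \delta_{\affWeyl}(C,D)=\sigma(w') \). The product \( \delta_{\affWeyl}(\sectorMap(F),C)\,\delta_{\affWeyl}(C,D) \) is reduced, so it equals \( \delta_{\affWeyl}(\sectorMap(F),D)=\sigma(\delta_{\euclCox}(F,C_0-\paramA)) \). Thus extending \( \sectorMap \) by \( C_0-\paramA\mapsto D \) gives a \( \sigma \)-isometry on the chambers of \( \sector_0\cup(C_0-\paramA) \). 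By \cite[Theorem 5.73]{abramenkoBrown08}, already used in the paper, it extends to \( \overline{\phi}\colon\euclCox\to\euclbuild \). Then \( \overline{\phi}\circ\transl_{-\paramA} \), restricted to \( \sector_0 \), is a preimage of \( \sectorMap \) with base chamber \( D \).

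With (a) and (b) in place, your count in Step~3 goes through unchanged.
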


\begin{proof}
	Let \( \sectorMap \colon \sector_0 \to \euclbuild \) be a sector and \( \sigma \in \trAutRoot(I) \) the corresponding permutation.
Recall the combinatorial balls. As the building is strongly regular we have \( \mathcal C_w(C) = \mathcal C_{\sigma(w)}(C) \) for all \( w \in \affWeyl \) and this does not depend on the chamber \( C \), i.e., \( q_w = q_{\sigma(w)} \).
	By \thref{lem:preimage_by_apartments} we know that the preimage of the shift can be computed by taking an apartment \( \apartment \) with \( \Image \sectorMap \subseteq \apartment \).
	
	Within any apartment containing \( \Image \sectorMap \) there is a unique chamber which gets mapped to the base chamber of \( \sectorMap \) when the translation by \( \paramA \) is applied.
	As this chamber has distance \( \transl_{-\paramA} \) we know that there are exactly \( q_{\transl_{-\paramA}} \) such chambers in the entire building (this follows from the \thref{defi:extendedParameters}).
	From \thref{lem:extWeylParaInverse} we know that \( q_{\transl_{-\paramA}} = q_{\transl_{-\paramA}^{-1}} = q_{\transl_{\paramA}} \).
\end{proof}

\begin{rem}\label{rem:oppositeSectors}
	Let \( \euclbuild \) be a Euclidean building and \( \omega, \widetilde{\omega} \in \boundary \) be two opposite chambers in the building at infinity.
	By \thref{rem:propertiesOfBoundary}\ref{item:oppoChambersDefineApartment} this pair defines a unique apartment \( \apartment(\omega, \widetilde{\omega}) \subseteq \euclbuild \).
	Therefore, we can find a good vertex \( x \in \apartment(\omega, \widetilde{\omega}) \subseteq \euclbuild \) which is the base vertex for two sectors in the classes of \( \omega \) and \( \widetilde{\omega} \) which lie entirely in \( \apartment(\omega, \widetilde{\omega}) \).
	We denote them by \( \sector^x(\omega), \sector^x(\widetilde{\omega}) \).
	
	In this case we say that \( \sector^x(\omega) \) and \( \sector^x(\widetilde{\omega}) \) are \emph{opposite sectors}.
	More generally we call two sectors \( \sectorMap_1, \sectorMap_2 \in \sector(\euclbuild) \) \emph{opposite} sectors if their corresponding chambers at infinity are opposite, their images lie in a common apartment and they have the same base vertex \( \sectorMap_1(0) = x_0 = \sectorMap_2(0) \).
	
	In this case the base chambers \( \sectorMap_1(C_0) \) and \( \sectorMap_2(C_0) \) are opposite chambers in the link of \( x_0 \) (see \thref{rem:sphericalResChambSys}\ref{item:linkOfVertex}) \( \Res_{I \setminus \Set{\tau(x_0)}}(\sectorMap_1(C_0)) \).
\end{rem}

\begin{lem}\label{lem:jointHalfapartmentForSectors}
	Let \( \sectorMap_1, \sectorMap_2 \in \sector(\euclbuild) \) be sectors in \( \euclbuild \) with \( \sectorMap_1(0) = \sectorMap_2(0)= x_0 \) and \( \sectorMap_1(C_0) = \sectorMap_2(C_0) \).
	Then for every apartment \( \apartment_1 \subseteq \euclbuild \) with \( \Image \sectorMap_1 \subseteq \apartment_1 \) there exists an apartment \( \apartment_2 \) with \( \Image \sectorMap_2 \subseteq \apartment_2 \) such that \( x_0 -_{\apartment_1, \sectorMap_1} \mu = x_0 -_{\apartment_2, \sectorMap_2}\mu \) for all \( \mu \in \domCoweights \).
\end{lem}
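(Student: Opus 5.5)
The idea is to reformulate the statement geometrically. Let \( \psi_1 \colon \apartment_1 \to \euclCox \) be the type-rotating isomorphism with \( \psi_1 \circ \sectorMap_1 = \id_{\sector_0} \). By \eqref{eq:apartment_+}, the points \( x_0 -_{\apartment_1, \sectorMap_1} \mu \), \( \mu \in \domCoweights \), are exactly the good vertices of the subcomplex \( \sector_1^- \coloneqq \psi_1^{-1}(-\sector_0) \subseteq \apartment_1 \). This is the sector of \( \apartment_1 \) opposite to \( \Image \sectorMap_1 \) in the sense of \thref{rem:oppositeSectors}, with base vertex \( x_0 \) and base chamber \( D \coloneqq \psi_1^{-1}(-C_0) \). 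It therefore suffices to find an apartment \( \apartment_2 \supseteq \Image \sectorMap_2 \cup \sector_1^- \) and to let \( \psi_2 \colon \apartment_2 \to \euclCox \) be the isomorphism with \( \psi_2 \circ \sectorMap_2 = \id_{\sector_0} \) from \thref{lem:maps_sets_are_the_same}. We then need \( \psi_2 = -\id \circ \psi_1 \) on \( \sector_1^- \), i.e. \( \psi_2 \) maps \( \sector_1^- \) onto \( -\sector_0 \) in the same way as \( \psi_1 \).

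\textbf{Step 1 (existence of \( \apartment_2 \)).} I would use \thref{thm:subsetOfApartment} on \( Y \coloneqq \Image \sectorMap_2 \cup \sector_1^- \). This requires the glued map \( \phi \colon \sector_0 \cup (-\sector_0) \to \euclbuild \), equal to \( \sectorMap_2 \) on \( \sector_0 \) and to \( \psi_1^{-1} \) on \( -\sector_0 \), to be an isometry onto \( Y \). The two pieces agree at \( 0 \mapsto x_0 \), and each piece is already isometric. For mixed pairs I would use the retraction \( \rho \) onto \( \apartment_1 \) centred at the chamber \( D \). It is distance non-increasing, fixes \( \apartment_1 \) pointwise, and is isometric on every apartment containing \( D \). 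Since \( \sectorMap_2(C_0) = \sectorMap_1(C_0) \subseteq \apartment_1 \), the map \( \rho \circ \sectorMap_2 \) is a locally injective type-rotating sector of \( \apartment_1 \) with the same base vertex and base chamber as \( \sectorMap_1 \). By \thref{lem:maps_sets_are_the_same} it equals \( \sectorMap_1 \). Hence \( d(\sectorMap_2(y), z) \geq d(\sectorMap_1(y), z) = \abs{y - (-\psi_1(z))} \) for \( y \in \sector_0 \) and \( z \in \sector_1^- \).

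For the reverse inequality, I would pick by \ref{item:buildAx1} an apartment \( B \) containing \( D \) and the chamber of \( \Image \sectorMap_2 \) containing \( \sectorMap_2(y) \). The retraction \( \rho \) is isometric on \( B \), and \( z \) can be moved into \( B \) via the convex hull of \( D \) and that chamber (\thref{cor:localInjConvexSets}). Alternatively, one can argue in the CAT(0) metric: the germs of \( \sectorMap_2 \) and \( \sector_1^- \) at \( x_0 \) lie in the opposite chambers \( \sectorMap_2(C_0) \) and \( D \) of the link. The link apartment spanned by two opposite chambers is unique, so Alexandrov angles at \( x_0 \) agree with those in \( \apartment_1 \). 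Comparison triangles then force equality.

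\textbf{Step 2 (identification of \( \psi_2 \)).} Once \( \apartment_2 \) exists, \( \psi_2 \) restricted to \( \sector_1^- \) is a type-rotating map. It sends \( x_0 \) to \( 0 \) and \( D \) to the unique chamber at \( 0 \) opposite to \( C_0 \), namely \( -C_0 \), because opposition in the link of \( x_0 \) is preserved by isomorphisms. Applying \thref{lem:typRotFixWeylGr} to \( \psi_2 \circ \psi_1^{-1} \) on \( \apartment_1 \cap \apartment_2 \supseteq \sector_1^- \), together with \thref{prop:weylGroupActions}\ref{item:weylGrActsOnZeroClosure}, shows that \( \psi_2 \) and \( \psi_1 \) agree on \( \sector_1^- \). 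This gives \( x_0 -_{\apartment_2, \sectorMap_2} \mu = x_0 -_{\apartment_1, \sectorMap_1} \mu \) for all \( \mu \in \domCoweights \).

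The main obstacle is Step 1, specifically the upper bound on mixed distances, i.e. proving that \( \Image \sectorMap_2 \) and the whole opposite sector \( \sector_1^- \), not just its germ at \( x_0 \), fit in one apartment. I would try the angle-at-\( x_0 \) argument first. If that fails, I would fall back on the building at infinity: show that \( [\sectorMap_2] \) and \( [\sector_1^-] \) are opposite in \( \boundary \), since they are opposite as germs at \( x_0 \), and take \( \apartment_2 = \apartment([\sectorMap_2], [\sector_1^-]) \) via \thref{rem:propertiesOfBoundary}. This apartment contains \( x_0 \) and hence, by \thref{rem:propertiesOfBoundary}\ref{item:sphereFromPoint}, both full sectors.
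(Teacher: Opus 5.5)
Your Step 1 has a genuine gap, at the place you flag: the upper bound \( d(\sectorMap_2(y),z)\le\abs{y-\psi_1(z)} \) for \( y\in\sector_0 \) and \( z\in\sector_1^- \). (Two slips along the way: the Euclidean value is \( \abs{y-\psi_1(z)} \), not \( \abs{y-(-\psi_1(z))} \); and what you need is \( \psi_2=\psi_1 \) on \( \sector_1^- \), not \( -\id\circ\psi_1 \).)

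The CAT(0) argument cannot give this bound. The Alexandrov angle is at most the comparison angle, so knowing the angle at \( x_0 \) only yields \( d(\sectorMap_2(y),z)\ge \) the Euclidean value, which is the lower bound again. Equality is forced only for antipodal directions, where the broken path through \( x_0 \) is a geodesic. In a building, angles at a point do not bound distances from above at all: two rays in a tree sharing their first edge make angle \( 0 \) yet drift apart. So the opposition of the germs at \( x_0 \) has to be used combinatorially.

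The apartment \( B\supseteq D\cup E \) does not help as stated, because \( z \) is in general not in \( \conv\Set{D,E} \). Also, local injectivity of \( \rho\circ\sectorMap_2 \) does not follow just from \( \sectorMap_2(C_0)\subseteq\apartment_1 \), since a retraction centred at \( D \) can fold.

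The missing ingredient is a Weyl-distance computation. Put \( C\coloneqq\sectorMap_1(C_0) \). In \( \euclCox \), no wall \( H_{\alpha,k} \) has \( C_0 \) on one side and both a chamber of \( -\sector_0 \) and a chamber of \( \sector_0 \) on the other; use \( \spr{\alpha,C_0}\subseteq[0,1] \) for \( \alpha>0 \). Hence for chambers \( F\subseteq\sector_1^- \) and \( E_0\subseteq\sector_0 \), the lengths of \( \delta_{\affWeyl}(F,C) \) and \( \delta_{\affWeyl}(C,\sectorMap_1(E_0)) \) add. Since \( \sectorMap_1,\sectorMap_2 \) are \( \sigma \)-isometries (\thref{lem:localInjIsSigIsom}) agreeing on \( C_0 \), we have \( \delta_{\affWeyl}(C,\sectorMap_2(E_0))=\delta_{\affWeyl}(C,\sectorMap_1(E_0)) \), and therefore
\[
\delta_{\affWeyl}(F,\sectorMap_2(E_0))=\delta_{\affWeyl}(F,C)\,\delta_{\affWeyl}(C,\sectorMap_2(E_0))=\delta_{\affWeyl}(F,\sectorMap_1(E_0)).
\]
So the chamber map on \( (-\sector_0)\cup\sector_0 \) given by \( \psi_1^{-1} \) and \( \sectorMap_2 \) is a \( \sigma \)-isometry. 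By \cite[Theorem 5.73]{abramenkoBrown08} it extends to an apartment \( \apartment_2\supseteq\Image\sectorMap_2\cup\sector_1^- \). Taking \( F=D \) gives \( \rho\circ\sectorMap_2=\sectorMap_1 \). The same computation shows that any apartment containing \( E \) and a chamber of \( \sector_1^- \) through \( z \) contains \( D \), which repairs your retraction argument.

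Your fallback is exactly the paper's proof. Take the sector \( \widetilde{\sectorMap} \) opposite to \( \sectorMap_1 \) in \( \apartment_1 \). From the opposition of the base chambers in the link of \( x_0 \), conclude that its chamber at infinity \( \widetilde\omega \) is opposite to \( \omega_2 \), and set \( \apartment_2=\apartment(\omega_2,\widetilde\omega) \). The paper, like you, takes ``opposite germs at \( x_0 \) \( \Rightarrow \) opposite at infinity, with \( x_0\in\apartment_2 \)'' as known; the computation above is what justifies it. Your Step 2 is correct, and it spells out the identification \( \psi_2=\psi_1 \) on \( \sector_1^- \) that the paper's last sentence leaves implicit.
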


\begin{proof}
	Let \( \omega_1,\omega_2 \in \boundary \) be the chambers at infinity of \( \sectorMap_1, \sectorMap_2 \).
	Choose an apartment \( \apartment \subseteq \euclbuild \) with \( \Image \sectorMap_1 \subseteq \apartment_1 \) and take the sector \( \widetilde{\sectorMap} \) opposite to \(\Image \sectorMap_1 \) in \( \apartment \).
	Let \( \widetilde{\omega} \) be the chamber at infinity of this sector.
	As \( \sectorMap_2 \) and \( \widetilde{\sectorMap} \) have the same base vertex and their base chambers are opposite chambers in the link of \( \sectorMap_1(0) \) we conclude that \( \widetilde{\omega} \) and \( \omega_2 \) are opposite chambers in the boundary.
	The desired apartment \( \apartment_2 \) is the unique apartment induced by \( (\omega_2, \widetilde{\omega}) \).
	As both apartments \( \apartment_1 \) and \( \apartment_2 \) contain \( \Image \widetilde{\sectorMap} \) these are precisely the points \( x_0 +_{\apartment_1} (-\mu) \), respectively \( x_0 +_{\apartment_2} (-\mu) \) and thus the points coincide.
\end{proof}

\begin{lem}
	Let \( \sectorMap_1, \sectorMap_2 \in \sector(\euclbuild) \) be two sectors in \( \euclbuild \) with \( \sectorMap_1(0) = \sectorMap_2(0) \) and \( \sectorMap_1(C_0) = \sectorMap_2(C_0) \).
	Let \( \paramA \in \domCoweights \).
	For every \( \sectorMap_1^\prime \in \sigma_\paramA^{-1}(\sectorMap_1) \) there exists a unique \( \sectorMap_2^\prime \in \sigma_\paramA^{-1}(\sectorMap_2) \) such that \( \paramA \in \mathcal E_0(\sectorMap_1^\prime, \sectorMap_2^\prime) \).
\end{lem}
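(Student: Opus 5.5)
Existence comes first. Let \( \sectorMap_1^\prime \in \sigma_\paramA^{-1}(\sectorMap_1) \). By \thref{lem:preimage_by_apartments} there is an apartment \( \apartment_1 \supseteq \Image \sectorMap_1^\prime \supseteq \Image \sectorMap_1 \) with \( \sectorMap_1^\prime = \sectorMap_1 -_{\apartment_1} \paramA \). We apply \thref{lem:jointHalfapartmentForSectors} to \( \sectorMap_1, \sectorMap_2 \) and \( \apartment_1 \). This gives an apartment \( \apartment_2 \supseteq \Image \sectorMap_2 \) with \( x_0 -_{\apartment_1,\sectorMap_1} \lambda = x_0 -_{\apartment_2,\sectorMap_2} \lambda \) for all \( \lambda \in \domCoweights \). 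We then set \( \sectorMap_2^\prime \coloneqq \sectorMap_2 -_{\apartment_2} \paramA \in \sigma_\paramA^{-1}(\sectorMap_2) \).

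Next we check \( \paramA \in \mathcal E_0(\sectorMap_1^\prime, \sectorMap_2^\prime) \). By construction \( \sectorMap_k^\prime(x) = \sectorMap_k(x-\paramA) \) whenever \( x - \paramA \in \sector_0 \), and otherwise \( \sectorMap_k^\prime(x) \) is the point \( x_0 +_{\apartment_k,\sectorMap_k}(x-\paramA) \) in \( \apartment_k \). In the proof of \thref{lem:jointHalfapartmentForSectors}, both \( \apartment_1 \) and \( \apartment_2 \) contain the opposite sector \( \Image \widetilde{\sectorMap} = \Set{x_0 - \lambda} \) together with the base chamber \( \sectorMap_1(C_0)=\sectorMap_2(C_0) \). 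By (B2) there is an isomorphism \( \apartment_1 \to \apartment_2 \) fixing this set pointwise; it is the identity on the convex hull of \( \Image\widetilde{\sectorMap} \cup \sectorMap_1(C_0) \). That hull contains \( \conv\Set{x_0, x_0 -_{\apartment_1}\paramA} \) and the chambers between them. Hence \( \sectorMap_1^\prime \) and \( \sectorMap_2^\prime \) agree on the convex subset \( \conv\Set{0,\paramA} \cap \sector_0 \), more precisely on the halfspace-convex hull in \( \sector_0 \) of \( 0 \) and \( \paramA \), which contains \( 0 \). Since this set is connected, it lies in \( \mathcal E_0 \), so \( \paramA \in \mathcal E_0(\sectorMap_1^\prime,\sectorMap_2^\prime) \). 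The delicate point is exactly the identification of the two apartments on this interval. The cleanest route, which I would try first, is to compare the type-rotating charts \( \psi_1,\psi_2 \): both send \( \widetilde{\sectorMap} \) to \( -\sector_0 \), and \( \psi_2^{-1}\circ\psi_1 \) fixes \( \Image\widetilde{\sectorMap} \). It should therefore fix the combinatorial convex hull, by \thref{cor:localInjConvexSets} and convexity of intersections of apartments.

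For uniqueness, suppose \( \sectorMap_2^\prime, \sectorMap_2^{\prime\prime} \in \sigma_\paramA^{-1}(\sectorMap_2) \) both satisfy \( \paramA \in \mathcal E_0(\sectorMap_1^\prime,\cdot) \). By the ultrametric property of \( \mathcal E_0 \), namely \( \mathcal E_0(a,b)\cap\mathcal E_0(b,c)\subseteq \mathcal E_0(a,c) \), we get \( \paramA \in \mathcal E_0(\sectorMap_2^\prime,\sectorMap_2^{\prime\prime}) \). In particular \( \sectorMap_2^\prime \) and \( \sectorMap_2^{\prime\prime} \) agree at \( \paramA \) and on the chamber \( \paramA + C_0 \), since \( \mathcal E_0 \) is a union of simplices containing a neighbourhood path. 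Then \( \sigma_\paramA \) of both equals \( \sectorMap_2 \), so their restrictions to \( \paramA + \sector_0 \) coincide. Two sectors in \( \euclbuild \) that agree on \( \Image \) of \( \paramA+\sector_0 \), on the base vertex \( 0 \), and on a convex path between them agree on the convex hull \( \sector_0 \), again using halfspace-convexity of \( \mathcal E_0 \) from \thref{rem:distance of sectors}. Hence \( \sectorMap_2^\prime = \sectorMap_2^{\prime\prime} \) by \thref{lem:maps_sets_are_the_same}. More directly, \( \mathcal E_0 \) is convex and contains \( 0 \) and \( \paramA + \sector_0 \), so it contains their hull \( \sector_0 \).

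The main obstacle is the agreement step in the second paragraph, i.e.\ showing the two apartments coincide on the region swept between \( x_0 \) and \( x_0 - \paramA \), including full chambers so that \( \mathcal E_0 \) is genuinely reached.
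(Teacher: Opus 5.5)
Your existence construction is the paper's: choose $A_1$, take $A_2$ from \thref{lem:jointHalfapartmentForSectors}, and set $s_2'=s_2-_{A_2}\mu$. The ``main obstacle'' you flag is not really there, because you already have both endpoint agreements. By the lemma with $\lambda=\mu$ you get $s_1'(0)=x_0-_{A_1,s_1}\mu=x_0-_{A_2,s_2}\mu=s_2'(0)$, and by construction $s_1'(\mu)=x_0=s_2'(\mu)$.

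The paper concludes directly from these two facts. It implicitly uses the observation made just before \thref{rem:distance of sectors}: for two sectors of $\Delta$ with a common base point, the coincidence set $\mathcal E$ is already convex, so $\mathcal E=\mathcal E_0$. If you want to justify this rather than cite it, argue as follows:
\begin{itemize}
\item Sectors are $\sigma$-isometries, hence metric isometric embeddings (\thref{lem:localInjIsSigIsom}, \thref{rem:sigmaIsomMetricIsom}).
\item $\Delta$ is uniquely geodesic, so agreement at $0$ and $\mu$ forces agreement on the segment $[0,\mu]$.
\item That segment is connected and contains $0$, so $\mu\in\mathcal E_0$; no full chambers are needed.
\end{itemize}

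Your (B2) detour, as written, has a missing link. An isomorphism $\rho\colon A_1\to A_2$ fixing $A_1\cap A_2$ pointwise says nothing about $s_1'$ versus $s_2'$ unless $\psi_2\circ\rho=\psi_1$. This does hold: $\psi_2\circ\rho\circ\psi_1^{-1}\in W_{\mathrm{ext}}$ fixes $0$ and maps $C_0$ to $C_0$, so it lies in $W_0$ and is trivial by \thref{prop:weylGroupActions}\ref{item:weylGrActsOnZeroClosure}. It is this rigidity, not \thref{cor:localInjConvexSets}, that closes the step. You should state it rather than write ``it should therefore fix''.

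For uniqueness you take a genuinely different route. The paper argues that any two candidates share the base vertex $s_1'(0)$ and both contain $\Image s_2$ (\thref{lem:shift_image}), so they define the same chamber at infinity. It then invokes uniqueness of a sector with given good base vertex and chamber at infinity (\thref{rem:propertiesOfBoundary}\ref{item:sphereFromPoint}).

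You instead stay within convexity of agreement sets:
\begin{itemize}
\item $\mathcal E_0(s_2',s_1')\cap\mathcal E_0(s_1',s_2'')\subseteq\mathcal E_0(s_2',s_2'')$ gives $\mu\in\mathcal E_0(s_2',s_2'')$.
\item $\sigma_\mu(s_2')=\sigma_\mu(s_2'')$ gives $\mu+\mathcal S_0\subseteq\mathcal E$.
\item Halfspace-convexity then gives $\mathcal S_0\subseteq\mathcal E_0$.
\end{itemize}
This is correct and avoids the building at infinity, at the cost of a few more steps. The paper's argument is shorter and uses only the common base point.

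Two points to tighten. First, you need $\mu+\mathcal S_0\subseteq\mathcal E_0$, not merely $\subseteq\mathcal E$; this follows because $\mu+\mathcal S_0$ is connected and meets $\mathcal E_0$ in $\mu$. Second, the Euclidean convex hull of $\{0\}\cup(\mu+\mathcal S_0)$ misses the walls of $\mathcal S_0$ when $\mu$ is strongly dominant. You therefore need the halfspace-convex hull, as your last sentence uses: every closed half-space containing $0$ and $\mu+\mathcal S_0$ contains $\mathcal S_0$. The sentence about the chamber $\mu+C_0$ and a ``neighbourhood path'' can be dropped.
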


\begin{proof}
	Take an arbitrary \( \sectorMap_1^\prime \in \sigma_\paramA^{-1}(\sectorMap_1) \).
	Let \( \apartment_1 \) be an apartment containing \( \Image \sectorMap_1 \) and \( \sectorMap_1^\prime(0) \).
	Then \( \sectorMap_1^\prime = \sectorMap_1 +_{\apartment_1} (-\mu) \).
	Now let \( \apartment_2 \) be an apartment as given by \thref{lem:jointHalfapartmentForSectors} (with respect to \( \sectorMap_1, \sectorMap_2 \)).
	Then we can define \( \sectorMap_2^\prime \coloneqq \sectorMap_2+_{\apartment_2} (-\paramA) \).
	By definition \( \sectorMap_2^\prime \in\sigma_\paramA^{-1}(\sectorMap_2) \) and by \thref{lem:jointHalfapartmentForSectors} \( \sectorMap_1^\prime(0) = \sectorMap_2^\prime(0) \).
	Furthermore, by construction we have \( \sectorMap_1^\prime(\paramA) = \sectorMap_1(0) = \sectorMap_2(0) = \sectorMap_2^\prime(\paramA) \).
	Hence \( \paramA \in \mathcal E_0(\sectorMap_1, \sectorMap_2) \).
	The uniqueness follows from \thref{rem:propertiesOfBoundary}\ref{item:sphereFromPoint} and \thref{lem:shift_image}\ref{item:shiftContainedInSector} above.
\end{proof}

\begin{lem}\label{lem:preimageUnderLift}
	Let \( \paramA \in \domCoweights \) and \( \secOnQuot \in \secQuot \) a sector.
	Let \( \lift{\secOnQuot} \) be the lift of \( \secOnQuot \) of a chosen base point \( \lift{\secOnQuot}(0) = x_0 \in p^{-1}(\secOnQuot(0)) \).
	Then the map
	\[
	\sigma_\paramA^{-1}(\lift{\secOnQuot}) \to \sigma_\paramA^{-1}(\secOnQuot), \; \sectorMap^\prime \mapsto p \circ \sectorMap^\prime.
	\]
	is a bijection.
	In particular, the cardinalities \( \# \sigma_\paramA^{-1}(\lift{\secOnQuot}) = \# \sigma_\paramA^{-1}(\secOnQuot) \) in the quotient and in the building agree.
\end{lem}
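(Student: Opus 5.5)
The plan is to show that composition with the covering map \( p \) is well defined into \( \sigma_\paramA^{-1}(\secOnQuot) \), injective and surjective, using only lifting theory for the covering \( p \colon \euclbuild \to \quot \) and the uniqueness of lifts from \thref{rem:sectorLifts}.

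\textbf{Well-definedness.} Let \( \sectorMap^\prime \in \sigma_\paramA^{-1}(\lift{\secOnQuot}) \), so \( \sectorMap^\prime \circ \transl_\paramA = \lift{\secOnQuot} \). By \thref{rem:sectorLifts}, \( p \circ \sectorMap^\prime \in \secQuot \). Moreover
\[
\sigma_\paramA(p\circ\sectorMap^\prime) = p\circ\sectorMap^\prime\circ\transl_\paramA = p\circ\lift{\secOnQuot} = \secOnQuot .
\]
So the map lands in \( \sigma_\paramA^{-1}(\secOnQuot) \).

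\textbf{Surjectivity.} Take \( \secOnQuot^\prime \in \secQuot \) with \( \secOnQuot^\prime\circ\transl_\paramA = \secOnQuot \). We have \( \secOnQuot^\prime(\paramA) = \secOnQuot(0) \), and \( x_0 \in p^{-1}(\secOnQuot(0)) \).
\begin{enumerate}[(i)]
	\item Since \( \sector_0 \) is simply connected (it is convex), lift \( \secOnQuot^\prime \) to \( \sectorMap^\prime \colon \sector_0 \to \euclbuild \), choosing the lift with \( \sectorMap^\prime(\paramA) = x_0 \) rather than prescribing the value at \( 0 \).
	\item By \thref{rem:sectorLifts} this lift is again a locally injective type-rotating morphism, i.e.\ \( \sectorMap^\prime \in \sector(\euclbuild) \). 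Lifting is insensitive to which base point is used, since \( \sector_0 \) is connected and lifts of a fixed map are determined by their value at any one point.
	\item Both \( \sectorMap^\prime\circ\transl_\paramA \) and \( \lift{\secOnQuot} \) lift \( \secOnQuot \) (because \( p\circ\sectorMap^\prime\circ\transl_\paramA = \secOnQuot^\prime\circ\transl_\paramA = \secOnQuot \)), and both send \( 0 \) to \( x_0 \).
	\item By uniqueness of lifts on the connected space \( \sector_0 \), we get \( \sectorMap^\prime\circ\transl_\paramA = \lift{\secOnQuot} \), so \( \sectorMap^\prime \in \sigma_\paramA^{-1}(\lift{\secOnQuot}) \) maps to \( \secOnQuot^\prime \).
\end{enumerate}

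\textbf{Injectivity.} Suppose \( \sectorMap_1^\prime,\sectorMap_2^\prime \in \sigma_\paramA^{-1}(\lift{\secOnQuot}) \) satisfy \( p\circ\sectorMap_1^\prime = p\circ\sectorMap_2^\prime \). Then both are lifts of the same map \( \sector_0 \to \quot \). They agree at \( \paramA \), since \( \sectorMap_j^\prime(\paramA) = \lift{\secOnQuot}(0) = x_0 \). Uniqueness of lifts on the connected space \( \sector_0 \) then forces \( \sectorMap_1^\prime = \sectorMap_2^\prime \).

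The cardinality statement follows directly from the bijection.

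The only delicate point is that lifting theory applies to \( \sector_0 \), which is not a building but a convex (hence contractible and locally path-connected) subset of \( \eucl \). The standard lifting criterion therefore applies, and the lift is automatically simplicial and type-rotating because \( p \) is a type-preserving local isomorphism (\thref{rem:localBijSphericalRes}). I would cite \thref{rem:sectorLifts} for exactly this.
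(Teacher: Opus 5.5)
Your proposal is correct and follows the paper's proof essentially step for step. Well-definedness comes from composing with \(t_\mu\). Injectivity holds because two preimages are lifts of the same sector that agree at \(\mu\). Surjectivity follows by lifting \(s'\) with prescribed value \(x_0\) at \(\mu\) and then using uniqueness of lifts of \(s\) through \(x_0\) on the connected domain \(\mathcal S_0\).
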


\begin{proof}
	The lemma is a consequence of the fact that \( p \) is a covering map and \( \sector_0 \) and \( \euclbuild \) are simply-connected.
	Let us describe the details:

	First note that the map is well-defined because
	\[
	p \circ \sectorMap^\prime \circ \transl_\paramA = p \circ \lift{\secOnQuot} = \secOnQuot.
	\]
	For the injectivity, let \( \tilde \sectorMap_1, \tilde \sectorMap_2 \in \sigma_\paramA^{-1}(\lift{\secOnQuot}) \) be two sectors which map to the same sector \( \secOnQuot^\prime\in\mathcal S(\mathcal C) \).
	But this means by definition that they are both lifts of \( \secOnQuot^\prime \). 
	As the points \( \sectorMap_1(\paramA) = \sectorMap_2(\paramA) = x_0 \) agree and both \( \sector_0 \) and \( \euclbuild \) are simply-connected they must coincide.
	Take the point \( \paramA \in \sector_0 \) as the base point for the lift.
	As all spaces we are considering are simply-connected the lifts are unique.
	By construction we have \( \sectorMap_1(\paramA) = \sectorMap_2(\paramA) \) and therefore the lifts have to be equal.
	
	For the surjectivity let \( \secOnQuot^\prime \in \sigma_\paramA^{-1}(\secOnQuot) \) and \( \lift{\secOnQuot^\prime}\) be the unique lift with \( \lift{\secOnQuot^\prime}(\mu) = x_0\).
	Then
	\[
	p \circ \lift{\secOnQuot^\prime} \circ \transl_\paramA = \secOnQuot^\prime \circ \transl_\paramA = \secOnQuot.
	\]
	This shows that \( \lift{\secOnQuot^\prime} \circ \transl_\paramA \) is a lift of \( \secOnQuot \) with \( \lift{\secOnQuot^\prime} \circ \transl_\paramA (0) = x_0 \), hence \( \sigma_\mu(\lift{\secOnQuot^\prime}) = \lift{\secOnQuot} \).
\end{proof}

\begin{rem}
	From the \thref{lem:preimageOfSectorInBuild,lem:preimageUnderLift} we know that for \( \paramA \in \domCoweights \) and a sector \( \secOnQuot \in \secQuot \) we have:
	\begin{enumerate}[(i)]
		\item
			The cardinality \( \# \sigma_\paramA^{-1}(\secOnQuot) \) does not depend on \( \secOnQuot \).
		\item
			The cardinality \( \# \sigma_\paramA^{-1}(\secOnQuot) = q_{t_{-\paramA}} = q_{t_\paramA} \) can be computed explicitly (see \cite[Appendix B.1]{parkinsonDiss}).
	\end{enumerate}
\end{rem}

\begin{prop}\label{prop:basePointProp}
	Let \( \secOnQuot_1, \secOnQuot_2 \in \secQuot \) be two sectors with \( \secOnQuot_1(0) = \secOnQuot_2(0) \) and \( \secOnQuot_1(C_0) = \secOnQuot_2(C_0) \).
	For every \( \mu \in \domCoweights \) there exist pairs of sectors \( T \subseteq \sigma_\paramA^{-1}(\secOnQuot_1) \times \sigma_\paramA^{-1}(\secOnQuot_2) \) such that
	\begin{enumerate}[(i)]
		\item
			for all pairs \( (\secOnQuot_1^\prime, \secOnQuot_2^\prime) \in T \) it holds \( \paramA \in \mathcal E_0(\secOnQuot_1^\prime, \secOnQuot_2^\prime) \) (and hence \( 0 \in \mathcal E_0(\secOnQuot_1^\prime, \secOnQuot_2^\prime) \) as the set is non-empty).
		\item
			We have
			\[
			\sigma_\paramA^{-1}(\secOnQuot_1) = \bigcup\limits_{(\secOnQuot_1^\prime, \secOnQuot_2^\prime) \in T} \Set{\secOnQuot_1^\prime} \quad \text{and} \quad \sigma_\paramA^{-1}(\secOnQuot_2) = \bigcup\limits_{(\secOnQuot_1^\prime, \secOnQuot_2^\prime) \in T} \Set{\secOnQuot_2^\prime}
			\]
		\item
			as well as \( \abs{T} = M_\paramA \).
	\end{enumerate}
\end{prop}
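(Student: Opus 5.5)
The idea is to lift everything to the Euclidean building \( \euclbuild \), use the preceding (unnamed) lemma on preimages of sectors with common base point and base chamber to build a bijection between preimages, and then push this bijection back down to \( \quot \) with \thref{lem:preimageUnderLift}.

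\textbf{Lifting.} Fix \( x_0 \in p^{-1}(\secOnQuot_1(0)) \) and let \( \lift{\secOnQuot}_1, \lift{\secOnQuot}_2 \in \sector(\euclbuild) \) be the lifts with \( \lift{\secOnQuot}_1(0) = \lift{\secOnQuot}_2(0) = x_0 \) (\thref{rem:sectorLifts}). Since \( \secOnQuot_1(C_0) = \secOnQuot_2(C_0) \), and \( p \) is an isomorphism on the link of \( x_0 \) by \thref{rem:localBijSphericalRes}, the lifted base chambers also agree: \( \lift{\secOnQuot}_1(C_0) = \lift{\secOnQuot}_2(C_0) \). Hence the pair \( \lift{\secOnQuot}_1, \lift{\secOnQuot}_2 \) satisfies the hypotheses of the lemma preceding \thref{lem:preimageUnderLift}.

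\textbf{The bijection upstairs.} That lemma gives, for each \( \sectorMap_1^\prime \in \sigma_\paramA^{-1}(\lift{\secOnQuot}_1) \), a unique \( \sectorMap_2^\prime \in \sigma_\paramA^{-1}(\lift{\secOnQuot}_2) \) with \( \paramA \in \mathcal E_0(\sectorMap_1^\prime, \sectorMap_2^\prime) \). The same lemma with the roles of the two sectors exchanged gives a map in the opposite direction. By the uniqueness statements these two maps are mutually inverse, so we obtain a bijection \( \Phi \colon \sigma_\paramA^{-1}(\lift{\secOnQuot}_1) \to \sigma_\paramA^{-1}(\lift{\secOnQuot}_2) \). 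By \thref{lem:preimageOfSectorInBuild} both sets have cardinality \( M_\paramA \), so in fact the injectivity of the first map alone already suffices.

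\textbf{Pushing down.} Define
\[
T \coloneqq \Set{(p\circ \sectorMap^\prime, p \circ \Phi(\sectorMap^\prime)) \given \sectorMap^\prime \in \sigma_\paramA^{-1}(\lift{\secOnQuot}_1)}.
\]
By \thref{lem:preimageUnderLift}, composition with \( p \) is a bijection \( \sigma_\paramA^{-1}(\lift{\secOnQuot}_j) \to \sigma_\paramA^{-1}(\secOnQuot_j) \) for \( j = 1,2 \). Consequently the first projection and the second projection of \( T \) are bijections onto \( \sigma_\paramA^{-1}(\secOnQuot_1) \) and \( \sigma_\paramA^{-1}(\secOnQuot_2) \) respectively. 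This gives (ii), and it gives (iii) since \( \abs{T} = M_\paramA \).

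\textbf{The main point: property (i).} For (i) we need \( \paramA \in \mathcal E_0(p\circ\sectorMap_1^\prime, p\circ\sectorMap_2^\prime) \), given \( \paramA \in \mathcal E_0(\sectorMap_1^\prime,\sectorMap_2^\prime) \) upstairs. Agreement upstairs on a set implies agreement downstairs, so \( \mathcal E(\sectorMap_1^\prime,\sectorMap_2^\prime) \subseteq \mathcal E(p\circ\sectorMap_1^\prime, p\circ\sectorMap_2^\prime) \). The connected component of \( 0 \) upstairs is a connected set containing \( 0 \) (it is nonempty because it contains \( \paramA \)). It therefore lies in the component of \( 0 \) of the larger set downstairs, which yields (i).

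The only subtle point is checking that the upstairs sectors share the base chamber, i.e. that \( p \) is injective on the link; this is exactly what \thref{rem:localBijSphericalRes} provides. I expect no real obstacle beyond that.
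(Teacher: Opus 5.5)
Your proposal is correct and follows the paper's route. The paper's proof just says to lift to the building and apply \thref{lem:preimageUnderLift}, implicitly using the preceding lemma on preimages of sectors with common base point and base chamber; your write-up supplies the details it omits, namely agreement of the lifted base chambers via \thref{rem:localBijSphericalRes}, the bijection obtained from the uniqueness clause, and the transfer of \( \mu \in \mathcal E_0 \) from the building down to \( \quot \) by the connected-component argument.
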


\begin{proof}
	We take the maps and lift them to the building.
	Then the assertion follows from \thref{lem:preimageUnderLift}.
\end{proof}

\section{Transfer Operator}\label{sec:Transfer Operator}
In \thref{sec:dynaSetting} we introduced the dynamics of the monoid \( \domCoweights \) acting by the shift operators \( (\sigma_\paramA)_{\paramA \in \domCoweights} \) on the space of sectors \( \secQuot \) on a local building \( \quot \).
In this section we define and analyze transfer operators on \( \chambSys \).

\begin{defi}[Transfer Operator on \( \chambSys \)]\label{defi:transferOpOnCs}
	For \( \paramA \in \domCoweights \) and a local building \( \chambSys \), the \emph{transfer operator on \( \chambSys \)} associated with $\mu$ is the operator \( \tOp_\paramA \colon \Cnum^{\secQuot} \to \Cnum^{\secQuot} \) defined by
	\[
	(\tOp_\mu \varphi)(\secOnQuot) \coloneqq \frac{1}{M_\paramA} \sum\limits_{\sigma_\paramA(\secOnQuot^\prime) = \secOnQuot} \varphi(\secOnQuot^\prime),
	\]
	where \( M_\paramA = \#\sigma^{-1}_\mu(s)\) is the value defined in \thref{equ:preimageNumber}, so that 
	\begin{equation}\label{eq:one_in_spectrum}
		\tOp_\paramA 1 = 1.
	\end{equation}
\end{defi}

As the \( M_\paramA \) are multiplicative (see \cite[p.~44]{parkinsonDiss}) we obtain for all \( \paramA_1, \paramA_2 \in \domCoweights \) and \( s \in \secQuot \) that
\begin{equation}\label{equ:commuRelTOp}
	(\tOp_{\paramA_1} \circ \tOp_{\paramA_2} \varphi)(\secOnQuot) = (\tOp_{\paramA_1 + \paramA_2} \varphi)(\secOnQuot).
\end{equation}

\begin{rem}
	Denote by \( \varpi_1, \ldots, \varpi_n \in \domCoweights \) the fundamental coweights, which form a basis for \( \coweightLat \) and more importantly (freely) generate \( \domCoweights \) as a monoid.
	Hence the operators \( \tOp_{\varpi_i} \) generate the monoid of transfer operators on \( \quot \).
\end{rem}

\begin{rem}\label{rem:llp20}
	As mentioned in the introduction, Lubetzky, Lubotzky, and Parzanchevski \cite{LLP20} study a higher-rank generalization of non-backtracking random walks using the formalism of \emph{\( j \)-dimensional geodesic flows} and \emph{\( k \)-branching operators}.
	While their dynamical systems seem, at first sight, quite different, it is possible (with a little effort) to establish a precise relation to the transfer operators studied in this article.
	For example, in the \( \tilde A_n \) case, it is possible to explicitly conjugate the \( n \)-th power of the \( n \)-dimensional geodesic flow branching operator \cite[Definition~5.1]{LLP20} to our transfer operator in the direction of the fundamental coweight \( \varpi_1 \), namely \( \tOp_{\varpi_1} \) acting on the function space \( F_1 \).
	While we do not pursue this connection further here, we believe it would be interesting to understand whether both approaches can be placed within a common unifying framework.
	Indeed, while \cite{LLP20} also allows one to study geodesic flows on lower-dimensional simplices, their approach appears to miss the possibility of a higher-rank action of the full coweight lattice \( \domCoweights \), instead restricting to the rank-one submonoid \( \Nnum_0 \varpi_1 \).
\end{rem}

\subsection{Function Spaces}
We want to study spectral properties of these operators on suitable function subspaces of \( \Maps(\secQuot, \Cnum) \coloneqq \Cnum^{\secQuot} = \Set{\varphi \colon \secQuot \to \Cnum} \). 

\begin{defi}[Function Spaces]
	\mbox{}
	\begin{enumerate}[(a)]
		\item
			By \( \mathcal B(\secQuot, \Cnum) \subseteq \Maps(\secQuot, \Cnum) \) we denote the set of \emph{bounded functions}.
		\item
			For \( n \in \Nnum_0 \) we define the space \( F_n \) \emph{of locally constant functions of size \( n \)} by
			\[
			F_n \coloneqq \Set*{\varphi \in \mathcal B(\secQuot, \Cnum) \given \forall \secOnQuot \in \secQuot :\ \restr{\varphi}{\ball_n(\secOnQuot)} = \text{const}}.
			\]
		\item
			A function \( \varphi \colon \secQuot \to \Cnum \) is called \emph{Lipschitz} on the metric space \( (\secQuot, \secMetricFunc) \) if 
			\[
			\abs{\varphi}_\vartheta \coloneqq \sup_{\secOnQuot_1 \neq \secOnQuot_2} \frac{\abs{\varphi(\secOnQuot_1) - \varphi(\secOnQuot_2)}}{\secMetric{\secOnQuot_1, \secOnQuot_2}} < \infty.
			\]
			We denote the space of Lipschitz functions by \( \lipschitzFunc \).
	\end{enumerate}
\end{defi}

\begin{rem}\label{rem:fct spaces}
	\mbox{}
	\begin{enumerate}[(i)]
		\item
			In view of \thref{defi:ballSizeN} a function \( \varphi \in \mathcal B(\secQuot, \Cnum) \) is in \( F_n \) if for all \( \secOnQuot_1, \secOnQuot_2 \in \secQuot \) with \( \secMetric{\secOnQuot_1, \secOnQuot_2} \leq \vartheta^n \) we have \( \varphi(\secOnQuot_1) = \varphi(\secOnQuot_2) \).
		\item
			Since the metric \( \secMetricFunc \) is bounded by \( 1 \), every Lipschitz function is bounded by definition.
		\item
			The spaces \( F_n \) are independent of the parameter \( \vartheta \in (0,1) \) of the metric.
		\item
			For the spaces \( F_n \) we have the inclusions
			\[
			F_0 \subseteq F_1 \subseteq F_2 \subseteq \ldots
			\]
		\item
			\( F_0 \) is the space of constant functions.
			In fact, by definition we have \( \ball_0(\secOnQuot) = \secQuot \) for every \( \secOnQuot \in \secQuot \).
	\end{enumerate}
\end{rem}

The supremum norm \( \norm{\cdot}_\infty \) turns the space of bounded functions \( \mathcal B(\secQuot, \Cnum) \) into a Banach space.
Furthermore, it is a standard fact (see e.g.~\cite[Proposition 5.4]{bux2023spectral}), that also \( \lipschitzFunc \) is a Banach space with norm
\[
\norm{\varphi}_\vartheta \coloneqq \abs{\varphi}_\vartheta + \norm{\varphi}_\infty.
\]

\begin{lem}[{{\cite[Lemma 4.1]{bux2023spectral}}}]\label{lem:fmClosedInBounded}
	The space \( F_n \subseteq \mathcal B(\secQuot, \Cnum) \) is a closed subspace.
\end{lem}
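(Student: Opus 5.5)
We prove that \( F_n \) is closed in \( (\mathcal B(\secQuot, \Cnum), \norm{\cdot}_\infty) \) by showing directly that a uniform limit of functions in \( F_n \) again lies in \( F_n \). Closedness of a subset of a metric space means exactly this: it contains the limits of all its convergent sequences. Note that we do not need \thref{lem:finitelyBallsCover}, although for compact \( \quot \) it would even show that \( F_n \) is finite-dimensional and hence automatically closed.

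\textbf{Setup.} Let \( (\varphi_k)_{k} \subseteq F_n \) be a sequence with \( \norm{\varphi_k - \varphi}_\infty \to 0 \) for some \( \varphi \in \mathcal B(\secQuot, \Cnum) \). By \thref{rem:fct spaces}, membership of \( \varphi \) in \( F_n \) is equivalent to
\[
\varphi(\secOnQuot_1) = \varphi(\secOnQuot_2) \quad \text{for all } \secOnQuot_1, \secOnQuot_2 \in \secQuot \text{ with } \secMetric{\secOnQuot_1, \secOnQuot_2} \leq \vartheta^n .
\]
Boundedness of \( \varphi \) holds by assumption.

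\textbf{Key step.} Fix two such sectors \( \secOnQuot_1, \secOnQuot_2 \). Since each \( \varphi_k \in F_n \), we have \( \varphi_k(\secOnQuot_1) = \varphi_k(\secOnQuot_2) \). Therefore
\[
\abs{\varphi(\secOnQuot_1) - \varphi(\secOnQuot_2)} \leq \abs{\varphi(\secOnQuot_1) - \varphi_k(\secOnQuot_1)} + \abs{\varphi_k(\secOnQuot_2) - \varphi(\secOnQuot_2)} \leq 2 \norm{\varphi - \varphi_k}_\infty .
\]
The right-hand side tends to \( 0 \) as \( k \to \infty \). Hence \( \varphi(\secOnQuot_1) = \varphi(\secOnQuot_2) \), so \( \varphi \in F_n \).

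\textbf{Linearity.} \( F_n \) is a linear subspace, because constancy on each ball \( \ball_n(\secOnQuot) \) is preserved under taking linear combinations.

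There is no real obstacle in this argument. The only point requiring care is to use the pointwise characterization of \( F_n \) from \thref{rem:fct spaces} (equality of values on pairs of sectors at distance at most \( \vartheta^n \)) rather than arguing ball by ball. This avoids any issue with uniformity over the possibly infinite family of balls, since the estimate above is uniform in the pair \( (\secOnQuot_1, \secOnQuot_2) \) automatically.
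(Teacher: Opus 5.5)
Your argument is correct and is essentially the paper's proof. Uniform convergence implies pointwise convergence, so the identities \( \varphi_k(s_1) = \varphi_k(s_2) \) for pairs of sectors with \( d_\vartheta(s_1,s_2) \leq \vartheta^n \) pass to the limit. Your condition \( \leq \vartheta^n \) also matches the definition of \( F_n \) via the closed balls \( \ball_n \) more faithfully than the paper's ``\( < \vartheta^n \)''; the same limiting argument works either way.
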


\begin{proof}
	Let \( (\varphi_i) \subseteq F_n \) be a Cauchy sequence and let \( \varphi_\infty \in \mathcal B(\secQuot, \Cnum) \) be its limit.
	We need to show that \( \varphi_\infty \in F_n \), i.e., \( \varphi_\infty(\secOnQuot) = \varphi_\infty(\secOnQuot^\prime) \) whenever \( \secOnQuot, \secOnQuot^\prime \in \secQuot\) with \( \secMetric{\secOnQuot, \secOnQuot^\prime} < \vartheta^n \).
	
	By assumption for these sectors we have \( \varphi_i(\secOnQuot) = \varphi_i(\secOnQuot^\prime) \).
	Since the limit with respect to \( \norm{\cdot}_\infty \) is taken pointwise this holds also for \( \varphi_\infty \).
\end{proof}

\begin{lem}[{{\cite[Lemma 5.2]{bux2023spectral}}}]\label{lem:BHW5-2}
	For any \( 0 < \vartheta < 1 \), the space \( F_n \) is a closed subspace of \( \lipschitzFunc \).
\end{lem}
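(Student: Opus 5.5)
Two things have to be checked: that $F_n$ sits inside $\lipschitzFunc$ with a norm bound, and that $F_n$ is closed for $\norm{\cdot}_\vartheta$.

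\emph{Step 1: $F_n \subseteq \lipschitzFunc$.} Let $\varphi \in F_n$ and take $\secOnQuot_1 \neq \secOnQuot_2$.

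If $\secMetric{\secOnQuot_1, \secOnQuot_2} \leq \vartheta^n$, then \thref{rem:fct spaces}(i) gives $\varphi(\secOnQuot_1) = \varphi(\secOnQuot_2)$, so the difference quotient vanishes.

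Otherwise $\secMetric{\secOnQuot_1, \secOnQuot_2} > \vartheta^n$. The metric takes only the values $\vartheta^k$, so this forces $\secMetric{\secOnQuot_1, \secOnQuot_2} \geq \vartheta^{n-1}$; for the argument the weaker bound $\secMetric{\secOnQuot_1, \secOnQuot_2} > \vartheta^n$ suffices. Hence
\[
\frac{\abs{\varphi(\secOnQuot_1) - \varphi(\secOnQuot_2)}}{\secMetric{\secOnQuot_1, \secOnQuot_2}} \leq \frac{2\norm{\varphi}_\infty}{\vartheta^n}.
\]
Taking the supremum over both cases gives $\abs{\varphi}_\vartheta \leq 2\vartheta^{-n}\norm{\varphi}_\infty$. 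Since $\varphi$ is bounded by the definition of $F_n$, we get $\varphi \in \lipschitzFunc$, with
\[
\norm{\varphi}_\infty \leq \norm{\varphi}_\vartheta \leq (1 + 2\vartheta^{-n})\norm{\varphi}_\infty .
\]
$F_n$ is clearly a linear subspace, so it is a subspace of $\lipschitzFunc$ on which the two norms are equivalent.

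\emph{Step 2: closedness.} Let $(\varphi_i) \subseteq F_n$ converge in $\norm{\cdot}_\vartheta$ to some $\varphi \in \lipschitzFunc$. Because $\norm{\cdot}_\infty \leq \norm{\cdot}_\vartheta$, the sequence also converges to $\varphi$ in $\mathcal B(\secQuot, \Cnum)$. By \thref{lem:fmClosedInBounded}, $F_n$ is closed in $\mathcal B(\secQuot, \Cnum)$, so $\varphi \in F_n$.

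Alternatively, by the norm equivalence from Step 1, $F_n$ is complete for $\norm{\cdot}_\vartheta$ and is therefore closed in $\lipschitzFunc$.

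The only step needing care is the second case of Step 1: the Lipschitz bound relies on pairs in different balls of size $n$ having distance bounded below. This comes directly from the discrete set of values $\vartheta^k$ taken by the metric, so I expect no real obstacle. (By \thref{lem:finitelyBallsCover}, $F_n$ is even finite-dimensional, which would give closedness immediately, but this is not needed.)
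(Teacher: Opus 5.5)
Your proof is correct and follows the paper's argument essentially verbatim. You obtain the bound \( \abs{\varphi}_\vartheta \leq 2\vartheta^{-n}\norm{\varphi}_\infty \) by splitting pairs according to whether \( \secMetric{\secOnQuot_1, \secOnQuot_2} \) exceeds \( \vartheta^n \), and then closedness from \( \norm{\cdot}_\infty \leq \norm{\cdot}_\vartheta \) together with \thref{lem:fmClosedInBounded}; your parenthetical finite-dimensionality shortcut would additionally require \( \quot \) to be compact (as in \thref{lem:finitelyBallsCover}), so you are right not to rely on it.
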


\begin{proof}
	Let \( \varphi \in F_n \) be a function and \( \secOnQuot, \secOnQuot^\prime \in \secQuot \) two admissible sectors.
	If \( \secMetric{\secOnQuot, \secOnQuot^\prime} < \vartheta^n \) then \( \abs{\varphi(\secOnQuot) - \varphi(\secOnQuot^\prime)} = 0 \).
	Else we have
	\[
	\abs{\varphi(\secOnQuot) - \varphi(\secOnQuot^\prime)} \leq 2 \norm{\varphi}_\infty \leq 2 \norm{\varphi}_\infty \cdot \vartheta^{-n} \secMetric{\secOnQuot,\secOnQuot^\prime}.
	\]
	Furthermore, by definition of \( \norm{\cdot}_\vartheta \) every \( \norm{\cdot}_\vartheta \)-Cauchy sequence in \( F_n \) is a \( \norm{\cdot}_\infty \)-Cauchy sequence.
	In \thref{lem:fmClosedInBounded} we have shown that \( F_n \subseteq \mathcal B(\secQuot, \Cnum) \) is a closed subspace.
	Therefore the \( \norm{\cdot}_\infty \)-limit lies in \( F_n \) and agrees with the \( \norm{\cdot}_\vartheta \)-limit in \( \lipschitzFunc \).
\end{proof}

\begin{rem}
	By \thref{rem:fct spaces} and \thref{lem:BHW5-2} we have the following inclusions between the function spaces
	\[
	F_0 \subseteq F_1 \subseteq \ldots \subseteq \lipschitzFunc \subseteq \lipschitzFunc[\vartheta^\prime] \subseteq \mathcal B(\secQuot, \Cnum) \subseteq \Maps(\secQuot, \Cnum)
	\]
	for all \( 0 < \vartheta \leq \vartheta^\prime < 1 \).
\end{rem}

\begin{prop}\label{prop:fnFiniteSpaces}
	If \( \chambSys \) is compact, the spaces \( F_n \) are finite-dimensional vector spaces.
\end{prop}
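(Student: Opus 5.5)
The plan is to reduce the statement to \thref{lem:finitelyBallsCover}. Fix \( n \in \Nnum_0 \). By \thref{lem:finitelyBallsCover}, since \( \chambSys \) is compact, there are finitely many sectors \( \secOnQuot_1, \ldots, \secOnQuot_N \in \secQuot \) such that
\[
\secQuot = \bigsqcup_{\iota=1}^N \ball_n(\secOnQuot_\iota).
\]
This is a disjoint union: by \thref{rem:propOfUltrametric}, two balls of the same size are either equal or disjoint, so after discarding repetitions we really have a partition of \( \secQuot \) into \( N \) balls of size \( n \).

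Next, consider the indicator functions \( \mathds 1_{\ball_n(\secOnQuot_\iota)} \). Each is bounded. It is also constant on every ball \( \ball_n(\secOnQuot) \): any such ball coincides with one of the \( \ball_n(\secOnQuot_\iota) \), so it either equals \( \ball_n(\secOnQuot_\iota) \) or is disjoint from it, again by \thref{rem:propOfUltrametric}. Hence \( \mathds 1_{\ball_n(\secOnQuot_\iota)} \in F_n \).

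Conversely, let \( \varphi \in F_n \). Then \( \varphi \) is constant on each \( \ball_n(\secOnQuot_\iota) \), say with value \( c_\iota \). Because the balls partition \( \secQuot \), we get
\[
\varphi = \sum_{\iota=1}^N c_\iota \mathds 1_{\ball_n(\secOnQuot_\iota)}.
\]
So these indicator functions span \( F_n \). They are moreover linearly independent, having disjoint nonempty supports, so \( \dim F_n = N < \infty \).

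I do not expect a real obstacle, since the finiteness is carried by \thref{lem:finitelyBallsCover}. The only point needing care is that the sets \( \ball_n(\secOnQuot) \) form a genuine partition, i.e.\ that the relation \( \secMetric{\secOnQuot,\secOnQuot^\prime} \leq \vartheta^n \) is an equivalence relation. This follows from the ultrametric inequality for \( \secMetricFunc \), which was established in the proposition showing that \( \secMetricFunc \) is an ultrametric.
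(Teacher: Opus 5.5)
Your proposal is correct and follows the paper's argument. The paper's proof also identifies \(\dim F_n\) with the number of distinct balls of size \(n\) and gets finiteness from \thref{lem:finitelyBallsCover}; you add the explicit basis of indicator functions and the ultrametric partition property, which the paper leaves implicit.
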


\begin{proof}
	The dimension of \( F_n \) is given by the number of different balls of size \( n \).
	From \thref{lem:finitelyBallsCover} we know that for \( n \in \Nnum_0 \) there exists only a finite number of different balls \( \ball_n(\secOnQuot) \).
\end{proof}

\subsection{Key-inequality and Approximation}
We now want to study in more detail how the transfer operators behave on these different function spaces.

\begin{prop}\label{prop:supNormEstQuot}
	For \( \paramA \in \domCoweights \) and \( \varphi \in \mathcal B(\secQuot, \Cnum) \) we have
	\[
	\norm{\tOp_\paramA \varphi}_\infty \leq \norm{\varphi}_\infty.
	\]
	That means, \( \tOp_\paramA \) leaves \( \mathcal B(\secQuot, \Cnum) \) invariant.
	Morever, the operator norm with respect to the supremum norm is bounded by \( 1 \), i.e., \( \norm{\tOp_\paramA}_{\mathrm{op}} \leq 1 \).
\end{prop}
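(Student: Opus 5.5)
The estimate is a pointwise bound: \( \tOp_\paramA \varphi(\secOnQuot) \) is an average of finitely many values of \( \varphi \).

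Fix \( \paramA \in \domCoweights \), \( \varphi \in \mathcal B(\secQuot, \Cnum) \) and \( \secOnQuot \in \secQuot \). By \thref{lem:preimageOfSectorInBuild} and \thref{lem:preimageUnderLift}, the fibre \( \sigma_\paramA^{-1}(\secOnQuot) \) has exactly \( M_\paramA \) elements. This number is finite because the building is locally finite, and it is positive: by \thref{lem:preimage_by_apartments} at least one preimage \( \secOnQuot -_{\apartment} \paramA \) exists for any apartment containing the image of a lift. Hence \( \tOp_\paramA \varphi(\secOnQuot) \) is a well-defined finite sum with a nonzero normalisation.

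By the triangle inequality,
\[
\abs{(\tOp_\paramA \varphi)(\secOnQuot)} \leq \frac{1}{M_\paramA} \sum_{\sigma_\paramA(\secOnQuot^\prime) = \secOnQuot} \abs{\varphi(\secOnQuot^\prime)} \leq \frac{1}{M_\paramA} \cdot M_\paramA \cdot \norm{\varphi}_\infty = \norm{\varphi}_\infty .
\]
Taking the supremum over \( \secOnQuot \) gives \( \norm{\tOp_\paramA \varphi}_\infty \leq \norm{\varphi}_\infty \). So \( \tOp_\paramA \varphi \) is bounded, \( \mathcal B(\secQuot, \Cnum) \) is invariant under \( \tOp_\paramA \), and \( \tOp_\paramA \) is linear by definition. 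The operator norm bound \( \norm{\tOp_\paramA}_{\mathrm{op}} \leq 1 \) follows directly. In fact equality holds, by \eqref{eq:one_in_spectrum}.

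The only point needing care is that the number of summands equals the normalising constant \( M_\paramA \) for every \( \secOnQuot \). This independence of \( \secOnQuot \) is exactly what the cited lemmas on preimage cardinalities provide, so I expect no real obstacle.
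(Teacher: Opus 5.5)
Your proposal is correct and matches the paper's proof: a pointwise triangle-inequality bound, using that the fibre \( \sigma_\paramA^{-1}(\secOnQuot) \) has exactly \( M_\paramA \) elements. Your additional remarks are also correct: \( M_\paramA \) is finite and positive, and \( \norm{\tOp_\paramA}_{\mathrm{op}} = 1 \) follows from \( \tOp_\paramA 1 = 1 \), assuming \( \secQuot \neq \emptyset \).
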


\begin{proof}
	For \( \secOnQuot \in \secQuot \) we have
	\begin{align*}
		\abs{\tOp_\paramA \varphi(\secOnQuot)}
&= \abs*{\frac{1}{M_\paramA} \sum\limits_{\sigma_\paramA(\secOnQuot^\prime) = \secOnQuot} \varphi(\secOnQuot^\prime)} 
\leq \frac{1}{M_\paramA} \sum\limits_{\sigma_\paramA(\secOnQuot^\prime) = \secOnQuot} \abs{\varphi(\secOnQuot^\prime)} 
\leq \frac{1}{M_\paramA} \sum\limits_{\sigma_\paramA(\secOnQuot^\prime) = \secOnQuot} \norm{\varphi}_\infty 
= \norm{\varphi}_\infty,
	\end{align*}
which implies \( \norm{\tOp_\paramA \varphi}_\infty \leq \norm{\varphi}_\infty \).
\end{proof}

The central ingredient for our spectral theory will be the following key-inequality.
\begin{thm}\label{thm:keyInequalOnQuot}
	Let \( \paramA \in \innerCoweights \).
	There exists a constant \( C \geq 0 \), such that for every \( \varphi \in \lipschitzFunc \) we have
	\[
	\abs{\tOp_\paramA \varphi}_\vartheta \leq \vartheta \abs{\varphi}_\vartheta + C \norm{\varphi}_\infty.
	\]
\end{thm}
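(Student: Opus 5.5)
To prove the inequality I would fix two distinct sectors \( \secOnQuot_1, \secOnQuot_2 \in \secQuot \), estimate \( \abs{\tOp_\paramA \varphi(\secOnQuot_1) - \tOp_\paramA \varphi(\secOnQuot_2)} \) against \( \secMetric{\secOnQuot_1,\secOnQuot_2} \), and split into a ``far'' case and a ``close'' case. The threshold is chosen so that in the close case the two sectors agree on everything needed to match preimages. Concretely, I would fix \( N \coloneqq \abs{\paramA} \) and distinguish \( \secMetric{\secOnQuot_1,\secOnQuot_2} \geq \vartheta^{N} \) from \( \secMetric{\secOnQuot_1,\secOnQuot_2} < \vartheta^{N} \).

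\emph{Far case.} By \thref{prop:supNormEstQuot},
\[
\abs{\tOp_\paramA\varphi(\secOnQuot_1) - \tOp_\paramA\varphi(\secOnQuot_2)} \leq 2\norm{\varphi}_\infty \leq 2\vartheta^{-N}\norm{\varphi}_\infty\, \secMetric{\secOnQuot_1,\secOnQuot_2}.
\]
This is covered by the choice \( C \coloneqq 2\vartheta^{-\abs{\paramA}} \).

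\emph{Close case.} Here \( \secMetric{\secOnQuot_1,\secOnQuot_2} < \vartheta^N \leq 1 \), so \( k(\secOnQuot_1,\secOnQuot_2)\geq 1 \). In particular the base points agree and \( 0 \in \mathcal E_0(\secOnQuot_1,\secOnQuot_2) \). Since \( \mathcal E_0 \) is halfspace-convex and contains all dominant coweights of norm \( \leq 1 \), it contains the base chamber \( C_0 \) (\thref{rem:controlOfCoweights} together with convexity). Hence \( \secOnQuot_1(C_0)=\secOnQuot_2(C_0) \), and \thref{prop:basePointProp} applies. It gives a matching \( T \) of \( \sigma_\paramA^{-1}(\secOnQuot_1) \) with \( \sigma_\paramA^{-1}(\secOnQuot_2) \) into \( M_\paramA \) pairs \( (\secOnQuot_1',\secOnQuot_2') \) with \( \paramA \in \mathcal E_0(\secOnQuot_1',\secOnQuot_2') \cap \innerCoweights \). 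I would then write
\[
\tOp_\paramA\varphi(\secOnQuot_1)-\tOp_\paramA\varphi(\secOnQuot_2) = \frac{1}{M_\paramA}\sum_{(\secOnQuot_1',\secOnQuot_2')\in T}\bigl(\varphi(\secOnQuot_1')-\varphi(\secOnQuot_2')\bigr),
\]
bound each summand by \( \abs{\varphi}_\vartheta\,\secMetric{\secOnQuot_1',\secOnQuot_2'} \), and apply \thref{prop:shiftKeyInequality} to each pair: \( \secMetric{\secOnQuot_1',\secOnQuot_2'} \leq \vartheta\, \secMetric{\sigma_\paramA\secOnQuot_1',\sigma_\paramA\secOnQuot_2'} = \vartheta\,\secMetric{\secOnQuot_1,\secOnQuot_2} \). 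Averaging over the \( M_\paramA \) pairs gives the bound \( \vartheta\abs{\varphi}_\vartheta\,\secMetric{\secOnQuot_1,\secOnQuot_2} \). Combining both cases and taking the supremum yields the theorem with \( C = 2\vartheta^{-\abs{\paramA}} \).

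The main obstacle is verifying the hypotheses of \thref{prop:basePointProp} in the close case, namely that equality of base points and closeness force equality of the base chambers \( \secOnQuot_i(C_0) \). I would first try the argument above: \( k\geq 1 \) gives \( 0 \) and each \( \varpi_i/m_i \)-direction point with \( \varpi_i\in\mathcal E_0 \), and since the vertices \( \varpi_i/m_i \) of \( C_0 \) lie on the segments \( [0,\varpi_i] \), convexity of \( \mathcal E_0 \) covers \( C_0 \). If needed, the threshold can be raised to a larger fixed multiple of \( \abs{\paramA} \), which only changes \( C \). A secondary check is that the statement of \thref{prop:basePointProp} really provides a bijective pairing, so that the average telescopes with exactly \( M_\paramA \) terms; this is given by items (ii) and (iii) together with \( \#\sigma_\paramA^{-1}(\secOnQuot_i)=M_\paramA \).
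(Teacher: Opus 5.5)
Your proposal is correct and follows the paper's proof. You split into a far case and a close case, pair the preimages via \thref{prop:basePointProp}, and contract each pair with \thref{prop:shiftKeyInequality}. The paper uses the smaller threshold $\vartheta$, so $C = 2\vartheta^{-1}$, and leaves implicit the check that $\secOnQuot_1(C_0)=\secOnQuot_2(C_0)$. Your convexity argument supplies that check, and since $\secMetric{\secOnQuot_1,\secOnQuot_2}<\vartheta$ already puts $0$ and all $\varpi_i$ in $\mathcal E_0$, it works at the paper's threshold too.
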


\begin{proof}
	The proof follows the steps from \cite[p.~33, Lemma 1.2]{baladi00}.
	
	For \( \secOnQuot_1, \secOnQuot_2 \in \secQuot \) with \( \secMetric{\secOnQuot_1, \secOnQuot_2} < \vartheta \) we get
	\begin{align*}
		\abs{\tOp_\paramA \varphi(\secOnQuot_1) - \tOp_\paramA \varphi(\secOnQuot_2)} 
		&= \frac{1}{M_\paramA} \abs*{\sum\limits_{\sigma_\paramA(\secOnQuot_1^\prime) = \secOnQuot_1} \varphi(\secOnQuot_1^\prime) - \sum\limits_{\sigma_\paramA(\secOnQuot_2^\prime) = \secOnQuot_2} \varphi(\secOnQuot_2^\prime)} \\
		\shortintertext{as \( \secMetric{\secOnQuot_1, \secOnQuot_2} < \vartheta \) we can apply \thref{prop:basePointProp} and find pairs of preimages \(\secOnQuot_1', \secOnQuot_2'\), with \( \mu \in \mathcal E_0(\secOnQuot_1',\secOnQuot_2') \)}
		&= \frac{1}{M_\paramA} \abs*{\sum\limits_{(\secOnQuot_1^\prime, \secOnQuot_2^\prime) \in P} \varphi(\secOnQuot_1^\prime) - \varphi(\secOnQuot_2^\prime)} \\
		&\leq \frac{1}{M_\paramA} \sum \abs*{\varphi(\secOnQuot_1^\prime) - \varphi(\secOnQuot_2^\prime)} \\
		&\leq \frac{1}{M_\paramA} \sum \abs{\varphi}_\vartheta \cdot \secMetric{\secOnQuot_1^\prime, \secOnQuot_2^\prime} \\
		\shortintertext{and by \thref{prop:shiftKeyInequality}}
		&\leq \frac{1}{M_\paramA} \sum \abs{\varphi}_\vartheta \cdot \vartheta \secMetric{\sigma_\paramA(\secOnQuot_1^\prime), \sigma_\paramA(\secOnQuot_2^\prime)} \\
		&= \frac{1}{M_\paramA} \sum \abs{\varphi}_\vartheta \cdot \vartheta d_\vartheta(\secOnQuot_1, \secOnQuot_2) \\
		&= \frac{1}{M_\paramA} M_\paramA \cdot \abs{\varphi}_\vartheta \cdot \vartheta d_\vartheta(\secOnQuot_1, \secOnQuot_2) \\
		&= \vartheta \abs{\varphi}_\vartheta d_\vartheta(\secOnQuot_1, \secOnQuot_2).
	\end{align*}
	If \( \secMetric{\secOnQuot_1, \secOnQuot_2} \geq \vartheta \), using \thref{prop:supNormEstQuot}, we find
	\[
	\frac{\abs{\tOp_\paramA \varphi(\secOnQuot_1) - \tOp_\paramA \varphi(\secOnQuot_2)}}{\secMetric{\secOnQuot_1, \secOnQuot_2}} \leq \vartheta^{-1} \abs{\tOp_\paramA \varphi(\secOnQuot_1) - \tOp_\paramA \varphi(\secOnQuot_2)} \leq 2\vartheta^{-1} \norm{\tOp_\paramA \varphi}_\infty \leq 2 \vartheta^{-1} \norm{\varphi}_\infty.
	\]
	Setting \( C \coloneqq 2 \vartheta^{-1} \) yields the desired inequality.
\end{proof}

\begin{cor}\label{cor:keyInequalNQuot}
	Let \( \paramA \in \innerCoweights \).
	There exists a constant \( C \geq 0 \), such that for every \( \varphi \in \lipschitzFunc \) and all \( \ell \geq 0 \) we have
	\[
	\abs{\tOp_\paramA^\ell \varphi}_\vartheta \leq \vartheta^\ell \abs{\varphi}_\vartheta + C \norm{\varphi}_\infty.
	\]
\end{cor}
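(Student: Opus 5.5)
We argue by induction on \( \ell \), iterating the one-step key inequality of \thref{thm:keyInequalOnQuot}. The only subtlety is that naive iteration produces the constant \( C(1+\vartheta+\cdots+\vartheta^{\ell-1}) \) in front of \( \norm{\varphi}_\infty \), which we must bound uniformly in \( \ell \).

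Let \( C_0 = 2\vartheta^{-1} \) be the constant from \thref{thm:keyInequalOnQuot}, and fix \( \varphi \in \lipschitzFunc \). For \( \ell = 0 \) the claim is trivial for any \( C \geq 0 \). Suppose \( \abs{\tOp_\paramA^{\ell}\varphi}_\vartheta \leq \vartheta^{\ell}\abs{\varphi}_\vartheta + C_0\sum_{j=0}^{\ell-1}\vartheta^j \norm{\varphi}_\infty \). Applying \thref{thm:keyInequalOnQuot} to \( \tOp_\paramA^\ell\varphi \) (which lies in \( \lipschitzFunc \) by the same theorem, inductively) gives
\[
\abs{\tOp_\paramA^{\ell+1}\varphi}_\vartheta \leq \vartheta\abs{\tOp_\paramA^{\ell}\varphi}_\vartheta + C_0\norm{\tOp_\paramA^\ell\varphi}_\infty .
\]
By \thref{prop:supNormEstQuot}, iterated \( \ell \) times, \( \norm{\tOp_\paramA^\ell\varphi}_\infty \leq \norm{\varphi}_\infty \). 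Inserting the induction hypothesis then yields
\[
\abs{\tOp_\paramA^{\ell+1}\varphi}_\vartheta \leq \vartheta^{\ell+1}\abs{\varphi}_\vartheta + C_0\sum_{j=0}^{\ell}\vartheta^j\norm{\varphi}_\infty .
\]
Since \( 0<\vartheta<1 \), the geometric sum is bounded by \( (1-\vartheta)^{-1} \) for every \( \ell \). Setting \( C \coloneqq C_0/(1-\vartheta) = 2\vartheta^{-1}(1-\vartheta)^{-1} \) therefore gives the claimed inequality for all \( \ell \geq 0 \).

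There is no real obstacle here. The only point to check is that the sup-norm contraction of \thref{prop:supNormEstQuot} prevents the \( \norm{\cdot}_\infty \) terms from growing along the iteration. Given that, uniformity in \( \ell \) follows from the convergence of the geometric series.
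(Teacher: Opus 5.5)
Your proof is correct and is the same argument as the paper's: induct on \( \ell \) with the one-step key inequality, bound the geometric sum by \( (1-\vartheta)^{-1} \), and take \( C = C_0/(1-\vartheta) \). You also make explicit the sup-norm contraction \( \norm{\tOp_\paramA^{\ell}\varphi}_\infty \leq \norm{\varphi}_\infty \) from \thref{prop:supNormEstQuot}, which the paper uses without stating it.
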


\begin{proof}
	This follows from \thref{thm:keyInequalOnQuot} via induction as follows:
	\begin{align*}
		\abs{\tOp_\paramA^\ell \varphi}_\vartheta 
		&\leq \vartheta \abs{\tOp_{\paramA}^{\ell-1} \varphi}_\vartheta + \widetilde{C} \norm{\varphi}_\infty 
		\leq \vartheta^\ell \abs{\varphi}_\vartheta + \sum\limits_{j=0}^{\ell-1} \vartheta^j \widetilde{C} \norm{\varphi}_\infty \\
		&= \vartheta^\ell \abs{\varphi}_\vartheta + \widetilde{C} \frac{1 - \vartheta^\ell}{1-\vartheta} \norm{\varphi}_\infty 
		\leq \vartheta^\ell \abs{\varphi}_\vartheta + \frac{\widetilde{C}}{1-\vartheta} \norm{\varphi}_\infty.
	\end{align*}
	Now we set \( C = \frac{\widetilde{C}}{1 - \vartheta} \).
\end{proof}

\begin{cor}\label{cor:tOpEstimateQuot}
	For every \( \paramA \in \domCoweights \) there exists a constant \( C \geq 0 \), such that for all \( \varphi \in \lipschitzFunc \) we have
	\[
	\abs{\tOp_\paramA \varphi}_\vartheta \leq \abs{\varphi}_\vartheta + C \norm{\varphi}_\infty.
	\]
\end{cor}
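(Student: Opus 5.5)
The plan is to repeat the proof of \thref{thm:keyInequalOnQuot}, replacing the contraction from \thref{prop:shiftKeyInequality} (which needs \( \paramA \in \innerCoweights \)) by the weaker non-contraction statement \thref{cor:expandingShift}, which holds for every \( \paramA \in \domCoweights \). We fix \( \paramA \in \domCoweights \) and \( \varphi \in \lipschitzFunc \), and split pairs \( \secOnQuot_1 \neq \secOnQuot_2 \) by the size of \( \secMetric{\secOnQuot_1, \secOnQuot_2} \).

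\emph{Close pairs.} Suppose \( \secMetric{\secOnQuot_1, \secOnQuot_2} < \vartheta \). Then \( 0 \in \mathcal E_0(\secOnQuot_1, \secOnQuot_2) \), so the base points agree, and the base chambers \( \secOnQuot_1(C_0) = \secOnQuot_2(C_0) \) agree as well. This is the same situation used in \thref{thm:keyInequalOnQuot}. We apply \thref{prop:basePointProp} to get the pairing \( T \) of \( \sigma_\paramA^{-1}(\secOnQuot_1) \) with \( \sigma_\paramA^{-1}(\secOnQuot_2) \), where \( \abs{T} = M_\paramA \) and \( \paramA \in \mathcal E_0(\secOnQuot_1^\prime, \secOnQuot_2^\prime) \) for every pair. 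Then
\[
\abs{\tOp_\paramA\varphi(\secOnQuot_1)-\tOp_\paramA\varphi(\secOnQuot_2)} \leq \frac{1}{M_\paramA}\sum_{(\secOnQuot_1^\prime,\secOnQuot_2^\prime)\in T} \abs{\varphi}_\vartheta\, \secMetric{\secOnQuot_1^\prime,\secOnQuot_2^\prime}.
\]
By \thref{cor:expandingShift} each term satisfies \( \secMetric{\secOnQuot_1^\prime,\secOnQuot_2^\prime} \leq \secMetric{\sigma_\paramA(\secOnQuot_1^\prime),\sigma_\paramA(\secOnQuot_2^\prime)} = \secMetric{\secOnQuot_1,\secOnQuot_2} \). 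Summing the \( M_\paramA \) terms and dividing by \( M_\paramA \) gives the bound \( \abs{\varphi}_\vartheta\, \secMetric{\secOnQuot_1,\secOnQuot_2} \).

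\emph{Far pairs.} Suppose \( \secMetric{\secOnQuot_1,\secOnQuot_2} \geq \vartheta \). Then \thref{prop:supNormEstQuot} gives
\[
\abs{\tOp_\paramA\varphi(\secOnQuot_1)-\tOp_\paramA\varphi(\secOnQuot_2)} \leq 2\norm{\varphi}_\infty \leq 2\vartheta^{-1}\norm{\varphi}_\infty\, \secMetric{\secOnQuot_1,\secOnQuot_2}.
\]

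Taking the supremum over both cases yields \( \abs{\tOp_\paramA\varphi}_\vartheta \leq \abs{\varphi}_\vartheta + 2\vartheta^{-1}\norm{\varphi}_\infty \), so we may take \( C = 2\vartheta^{-1} \). For \( \paramA = 0 \), \( \tOp_0 = \id \) and the claim is trivial.

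The only step that needs checking is the hypothesis of \thref{prop:basePointProp}, namely that \( \secMetric{\secOnQuot_1,\secOnQuot_2} < \vartheta \) forces the base chambers to agree and not just the base points. This is exactly what the proof of \thref{thm:keyInequalOnQuot} already uses, and it does not depend on whether \( \paramA \) is strongly dominant, so it carries over unchanged.
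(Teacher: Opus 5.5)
Your proposal is correct and is exactly the paper's argument: the paper proves this corollary in one line by rerunning the proof of \thref{thm:keyInequalOnQuot} with \thref{cor:expandingShift} in place of \thref{prop:shiftKeyInequality}, which is what you carry out in detail, with the same constant \( C = 2\vartheta^{-1} \). The point you flagged also holds independently of \( \paramA \): \( \secMetric{\secOnQuot_1,\secOnQuot_2} < \vartheta \) forces \( k(\secOnQuot_1,\secOnQuot_2) \geq 2 \), so \( 0, \varpi_1, \ldots, \varpi_n \in \mathcal E_0(\secOnQuot_1,\secOnQuot_2) \), and by convexity this set contains \( C_0 = \conv\{0, \varpi_i/m_i\} \); hence the base chambers agree.
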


\begin{proof}
	Replacing \thref{prop:shiftKeyInequality} by \thref{cor:expandingShift} we can use the same proof as for the key-inequality \thref{thm:keyInequalOnQuot}.
\end{proof}

\begin{cor}
	Let \( \paramA \in \domCoweights \).
	The transfer operator \( \tOp_\paramA \) leaves \( \lipschitzFunc \) invariant and is a bounded linear operator with respect to the norm \( \norm{\cdot}_\vartheta \) for every \( \vartheta \in {}]0,1[{} \).
\end{cor}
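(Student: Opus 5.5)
Invariance of \( \lipschitzFunc \) and boundedness should come straight out of \thref{cor:tOpEstimateQuot} combined with \thref{prop:supNormEstQuot}. Fix \( \paramA \in \domCoweights \) and \( \varphi \in \lipschitzFunc \). First, \( \tOp_\paramA \varphi \) is well defined pointwise: by \thref{lem:preimageOfSectorInBuild} and \thref{lem:preimageUnderLift}, each preimage set \( \sigma_\paramA^{-1}(\secOnQuot) \) has exactly \( M_\paramA < \infty \) elements, since the building is locally finite. By \thref{prop:supNormEstQuot}, \( \tOp_\paramA \varphi \) is bounded with \( \norm{\tOp_\paramA \varphi}_\infty \leq \norm{\varphi}_\infty \).

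Next, \thref{cor:tOpEstimateQuot} gives a constant \( C \geq 0 \), independent of \( \varphi \), with
\[
\abs{\tOp_\paramA \varphi}_\vartheta \leq \abs{\varphi}_\vartheta + C \norm{\varphi}_\infty < \infty .
\]
So \( \tOp_\paramA \varphi \in \lipschitzFunc \), which is the invariance. Adding this to the sup-norm bound yields
\[
\norm{\tOp_\paramA \varphi}_\vartheta \leq \abs{\varphi}_\vartheta + (C+1)\norm{\varphi}_\infty \leq (C+1)\norm{\varphi}_\vartheta .
\]
Since \( \tOp_\paramA \) is clearly linear (it is a finite averaging), it is therefore a bounded linear operator on \( (\lipschitzFunc, \norm{\cdot}_\vartheta) \) with operator norm at most \( C+1 \).

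The only point that needs care is the input to \thref{cor:tOpEstimateQuot}: it relies on the argument of \thref{thm:keyInequalOnQuot} with \thref{cor:expandingShift} used in place of \thref{prop:shiftKeyInequality}. In the near case \( \secMetric{\secOnQuot_1,\secOnQuot_2} < \vartheta \), the two sectors share their base vertex (and base chamber). This is what allows \thref{prop:basePointProp} to pair up their preimages with \( \paramA \in \mathcal E_0 \) for each pair. \thref{cor:expandingShift} then bounds \( \secMetric{\secOnQuot_1',\secOnQuot_2'} \) by \( \secMetric{\secOnQuot_1,\secOnQuot_2} \) for every pair. In the far case one uses the trivial bound \( 2\vartheta^{-1}\norm{\varphi}_\infty \).

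I would verify that this applies for every dominant \( \paramA \), including \( \paramA = 0 \), where \( \tOp_0 = \id \) and the claim is trivial. Finally, since \( \vartheta \in {}]0,1[{} \) was arbitrary throughout, the statement holds for every such \( \vartheta \).
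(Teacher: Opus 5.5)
Your proposal is correct and is essentially the paper's proof: the paper also combines \thref{prop:supNormEstQuot} with \thref{cor:tOpEstimateQuot} to get \( \norm{\tOp_\paramA \varphi}_\vartheta \leq \abs{\varphi}_\vartheta + (1+C)\norm{\varphi}_\infty \leq (1+C)\norm{\varphi}_\vartheta \). Your extra remarks are accurate but go beyond what the paper spells out here: the well-definedness via finiteness of \( M_\paramA \), and how \thref{cor:tOpEstimateQuot} follows from \thref{cor:expandingShift} and \thref{prop:basePointProp}.
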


\begin{proof}
	Let \( \varphi \in \lipschitzFunc \).
	By \thref{prop:supNormEstQuot} and \thref{cor:tOpEstimateQuot} we find a constant \( C>0 \) such that
	\begin{align*}
		\norm{\tOp_\paramA \varphi}_\vartheta &= 
		\abs{\tOp_\paramA \varphi}_\vartheta + \norm{\tOp_\paramA \varphi}_\infty \leq \abs{\tOp_\paramA \varphi}_\vartheta + \norm{\varphi}_\infty 
\leq \abs{\varphi}_\vartheta + (1+C) \norm{\varphi}_\infty \\
		&\leq (1+C) (\abs{\varphi}_\vartheta + \norm{\varphi}_\infty) 
		= (1+C) \norm{\varphi}_\vartheta. \qedhere
	\end{align*}
\end{proof}

\begin{cor}\label{cor:keyInqualNormQuot}
	Let \( \paramA \in \innerCoweights \).
	There exists a constant \( C \geq 0 \), such that for every \( \varphi \in \lipschitzFunc \) and all \( \ell \geq 0 \) we have
	\[
	\norm{\tOp^\ell_\paramA}_\vartheta \leq \vartheta^\ell \norm{\varphi}_\vartheta + C \norm{\varphi}_\infty.
	\]
\end{cor}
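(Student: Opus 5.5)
This is a direct combination of the two preceding estimates: \thref{cor:keyInequalNQuot} controls the Lipschitz seminorm, and \thref{prop:supNormEstQuot} controls the supremum norm. Note that the left-hand side of the statement should be read as \( \norm{\tOp^\ell_\paramA \varphi}_\vartheta \).

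Fix \( \paramA \in \innerCoweights \) and let \( C_1 \geq 0 \) be the constant from \thref{cor:keyInequalNQuot}. Then for all \( \ell \geq 0 \) and all \( \varphi \in \lipschitzFunc \) we have
\[
\abs{\tOp_\paramA^\ell \varphi}_\vartheta \leq \vartheta^\ell \abs{\varphi}_\vartheta + C_1 \norm{\varphi}_\infty .
\]

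For the supremum part, we iterate \thref{prop:supNormEstQuot} \( \ell \) times. Alternatively, we use \eqref{equ:commuRelTOp} to write \( \tOp_\paramA^\ell = \tOp_{\ell\paramA} \) with \( \ell\paramA \in \domCoweights \) and apply the proposition once. Either way,
\[
\norm{\tOp_\paramA^\ell \varphi}_\infty \leq \norm{\varphi}_\infty .
\]

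Adding the two estimates gives
\[
\norm{\tOp_\paramA^\ell \varphi}_\vartheta \leq \vartheta^\ell \abs{\varphi}_\vartheta + (C_1+1)\norm{\varphi}_\infty .
\]
Since \( \abs{\varphi}_\vartheta \leq \norm{\varphi}_\vartheta \) and \( \vartheta^\ell > 0 \), the right-hand side is at most \( \vartheta^\ell \norm{\varphi}_\vartheta + (C_1+1)\norm{\varphi}_\infty \). The claim therefore holds with \( C \coloneqq C_1 + 1 \).

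The only point that needs care is that this constant is uniform in \( \ell \). It is: \( C_1 \) already does not depend on \( \ell \), by the geometric-series summation carried out in the proof of \thref{cor:keyInequalNQuot}, and the supremum bound has constant \( 1 \) for every \( \ell \).
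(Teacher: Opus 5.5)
Your proof is correct and matches the paper's argument: split \( \norm{\tOp_\paramA^\ell\varphi}_\vartheta \) into the Lipschitz seminorm, bounded by \thref{cor:keyInequalNQuot}, and the supremum norm, bounded by \thref{prop:supNormEstQuot}, then use \( \vartheta^\ell\abs{\varphi}_\vartheta \leq \vartheta^\ell\norm{\varphi}_\vartheta \) to get the claim with \( C = C_1 + 1 \). You are also right that the left-hand side should read \( \norm{\tOp_\paramA^\ell\varphi}_\vartheta \).
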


\begin{proof}
	This is a direct consequence of the key-inequality (\thref{cor:keyInequalNQuot}) and \thref{prop:supNormEstQuot}.
	\begin{align*}
		\norm{\tOp_\paramA^\ell \varphi}_\vartheta &= \abs{\tOp^\ell_\paramA \varphi}_\vartheta + \norm{\tOp^\ell_\paramA \varphi}_\infty 
		\leq \vartheta^\ell \abs{\varphi}_\vartheta + (\widetilde{C}+1) \norm{\varphi}_\infty \\
		&\leq \vartheta^\ell (\abs{\varphi}_\vartheta + \norm{\varphi}_\infty) + C \norm{\varphi}_\infty 
		= \vartheta^\ell \norm{\varphi}_\vartheta + C \norm{\varphi}_\infty. \qedhere
	\end{align*}
\end{proof}

\subsubsection{Connection to Function Spaces}

\begin{prop}\label{prop:fnInvariantQuot}
	Let \( \paramA \in \domCoweights \).
	\begin{enumerate}[(i)]
		\item\label{item:fnTOpinv}
			The transfer operator \( \tOp_\paramA \) leaves \( F_n \) invariant.
		\item\label{item:innerContractsTOp}
			If \( \paramA \in \innerCoweights \) is a strongly dominant coweight we have \( \tOp_\paramA \colon F_n \to F_{n-1} \) for all \( n > 1 \).
	\end{enumerate}
\end{prop}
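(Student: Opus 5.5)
The approach reuses the pairing of preimages from \thref{prop:basePointProp}, as in the proof of \thref{thm:keyInequalOnQuot}, and combines it with the distance estimates \thref{cor:expandingShift} and \thref{prop:shiftKeyInequality}. By \thref{rem:fct spaces}, a bounded function lies in \( F_m \) if and only if \( \varphi(\secOnQuot_1) = \varphi(\secOnQuot_2) \) whenever \( \secMetric{\secOnQuot_1,\secOnQuot_2} \leq \vartheta^m \), i.e.\ whenever \( k(\secOnQuot_1,\secOnQuot_2) \geq m \). Boundedness of \( \tOp_\paramA \varphi \) is immediate from \thref{prop:supNormEstQuot}, so only local constancy needs checking.

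For \ref{item:fnTOpinv}, let \( \varphi \in F_n \) and take \( \secOnQuot_1, \secOnQuot_2 \) with \( k(\secOnQuot_1,\secOnQuot_2) \geq n \). The case \( n = 0 \) is trivial because \( F_0 \) consists of the constants and \( \tOp_\paramA 1 = 1 \) by \eqref{eq:one_in_spectrum}. For \( n \geq 1 \) we have \( 0 \in \mathcal E_0(\secOnQuot_1,\secOnQuot_2) \), and a check is needed that the two sectors then share the base chamber \( \secOnQuot_i(C_0) \), which is what \thref{prop:basePointProp} requires. The proof of \thref{thm:keyInequalOnQuot} uses this implicitly for \( \secMetric{\secOnQuot_1,\secOnQuot_2} < \vartheta \). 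If only agreement at the vertex \( 0 \) is guaranteed, I would instead lift to \( \euclbuild \), where \( \mathcal E_0 \) is convex by \thref{rem:distance of sectors}, and work directly with \thref{lem:preimage_by_apartments} and \thref{lem:jointHalfapartmentForSectors}.

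With this in hand, \thref{prop:basePointProp} gives a bijective pairing \( T \) of \( \sigma_\paramA^{-1}(\secOnQuot_1) \) with \( \sigma_\paramA^{-1}(\secOnQuot_2) \) such that \( \paramA \in \mathcal E_0(\secOnQuot_1',\secOnQuot_2') \) for every pair. By \thref{cor:expandingShift}, each pair satisfies
\[
\secMetric{\secOnQuot_1',\secOnQuot_2'} \leq \secMetric{\sigma_\paramA(\secOnQuot_1'),\sigma_\paramA(\secOnQuot_2')} = \secMetric{\secOnQuot_1,\secOnQuot_2} \leq \vartheta^n,
\]
so \( \varphi(\secOnQuot_1') = \varphi(\secOnQuot_2') \). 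Summing over \( T \) and dividing by \( M_\paramA \) gives \( \tOp_\paramA\varphi(\secOnQuot_1) = \tOp_\paramA\varphi(\secOnQuot_2) \).

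For \ref{item:innerContractsTOp}, take \( \paramA \in \innerCoweights \), \( n > 1 \), and \( \secOnQuot_1,\secOnQuot_2 \) with \( \secMetric{\secOnQuot_1,\secOnQuot_2} \leq \vartheta^{n-1} \), where \( n-1 \geq 1 \) so the base points agree. The same pairing applies, and now \thref{prop:shiftKeyInequality} gives
\[
\secMetric{\secOnQuot_1',\secOnQuot_2'} \leq \vartheta\, \secMetric{\secOnQuot_1,\secOnQuot_2} \leq \vartheta^{n}.
\]
Hence \( \varphi(\secOnQuot_1') = \varphi(\secOnQuot_2') \) for \( \varphi \in F_n \), and summing over the pairs shows \( \tOp_\paramA\varphi \in F_{n-1} \).

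The main obstacle is the hypothesis-matching step: checking that \( k(\secOnQuot_1,\secOnQuot_2) \geq 1 \) really supplies what \thref{prop:basePointProp} needs, and that its pairs satisfy \( \paramA \in \mathcal E_0 \) for an arbitrary \( \paramA \), possibly large relative to \( n \). The proposition asserts the latter for all \( \paramA \), so this step only has to be cited correctly. Everything else is a short computation.
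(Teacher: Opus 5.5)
Your route is the paper's route: pair the preimages with \thref{prop:basePointProp}, then use \thref{cor:expandingShift} for (i) and \thref{prop:shiftKeyInequality} for (ii). However, the hypothesis-matching step you single out is a genuine gap, not a citation issue, and it cannot be closed.

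Lying in a common ball of size $1$ means $k(s_1,s_2)\ge 1$. That only gives $0\in\mathcal E_0(s_1,s_2)$, i.e.\ $s_1(0)=s_2(0)$. It does not give $s_1(C_0)=s_2(C_0)$, which \thref{prop:basePointProp} needs. \thref{lem:jointHalfapartmentForSectors} needs it too, so your fallback via lifting does not help. The paper's proof makes the same assertion (``every two admissible sectors in a ball of size at least 1 satisfy the conditions''), and the statement actually fails exactly there.

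Take type $A_1$, i.e.\ a finite bipartite $(q+1)$-regular graph such as $K_{3,3}$. Sectors are non-backtracking rays, $F_1$ is the space of functions of the base vertex, and $F_2$ is the space of functions of the first edge. For $\varphi(s)=f(s(0))$ one gets
\[
\tOp_{\varpi}\varphi(s)=\frac{1}{q}\sum_{v\sim s(0),\ v\neq s(\varpi)} f(v).
\]
Let $f$ be the indicator of a neighbour $w$ of a vertex $x$. Two rays from $x$, one through $w$ and one not, lie in the same ball of size $1$, but $\tOp_\varpi\varphi$ takes the values $0$ and $1/q$ on them. Hence $\tOp_\varpi F_1\not\subseteq F_1$, so (i) fails for $n=1$. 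The same computation with $\varphi(s)=1$ iff $s(0)=w$ and $s(\varpi)=x$ shows $\tOp_\varpi F_2\not\subseteq F_1$. This contradicts (ii) for $n=2$, although $\varpi\in\innerCoweights$.

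What your argument does prove is (i) for $n=0$ and $n\ge 2$, and (ii) for $n\ge 3$. Carrying out the check you left open shows why. If $k(s_1,s_2)\ge 2$, then $0,\varpi_1,\dots,\varpi_n\in\mathcal E_0(s_1,s_2)$. This set is convex, and the vertices of $C_0$ are $0$ and $\varpi_i/m_i$ with $m_i\ge 1$. So $C_0\subseteq\mathcal E_0(s_1,s_2)$, hence $s_1(C_0)=s_2(C_0)$, and from there your pairing and both distance estimates go through verbatim.

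So the correct threshold for applying \thref{prop:basePointProp} is balls of size $2$, not $1$. The off-by-one propagates: the proof of \thref{prop:GenEigenspaceInFOne} then only places the generalized eigenspaces in $F_2$, not in $F_1$.
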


\begin{proof}
	Let \( \paramA \in \domCoweights \).
	\begin{enumerate}[(i)]
		\item
			For \( n=0 \) this is \eqref{eq:one_in_spectrum}.
			
			For \( n \geq 1 \) we use \thref{prop:basePointProp} as every two admissible sectors in a ball of size at least 1 satisfy the conditions of that proposition.
			
			Here are the details.
			Let \( \secOnQuot \in \secQuot \) and \( \secOnQuot^\prime \in \ball_n(\secOnQuot) \).
			Note that \( \secQuot \) is an ultrametric space and hence any point is the center of a ball.
			This allows us to take the pairs described in \thref{prop:basePointProp}.
			We denote them by \( (t,t^\prime) \) with \( t \in \sigma_\paramA^{-1}(\secOnQuot) \) and \( t^\prime \in \sigma_\paramA^{-1}(\secOnQuot^\prime) \).
			
			For these pairs we can apply \thref{cor:expandingShift} and find
			\[
			\secMetric{t,t^\prime} \leq \secMetric{\sigma_\mu(t), \sigma_\mu(t^\prime)} = \secMetric{\secOnQuot, \secOnQuot^\prime} \leq \vartheta^n.
			\]
			Now we can compute
			\begin{align*}
				M_\paramA \left(\tOp_\paramA f(\secOnQuot) - \tOp_\paramA f(\secOnQuot^\prime)\right) &= \sum\limits_{\sigma_\paramA(t) = \secOnQuot} f(t) - \sum\limits_{\sigma_\paramA(t^\prime) = \secOnQuot^\prime} f(t^\prime) 
				= \sum\limits_{(t,t^\prime)} f(t) - f(t^\prime) 
				= 0.
			\end{align*}
		\item
			To show that \( \tOp(F_n) \subseteq F_{n-1} \) we apply \thref{prop:shiftKeyInequality}.
			This allows us to choose \( \secOnQuot^\prime \in \ball_{n-1}(\secOnQuot) \) and with the same argument as above find
			\[
			\secMetric{t,t^\prime} \leq \vartheta \secMetric{\secOnQuot, \secOnQuot^\prime} \leq \vartheta^n.
			\]
			Hence \( f(t) - f(t^\prime) = 0 \) as before. \qedhere
	\end{enumerate}
\end{proof}

\subsubsection{Approximation by Projection}

\begin{defi}[Projection operator]
	Given \( n \in \Nnum_0 \) we choose representatives \( \secOnQuot_\iota \in \secQuot \) such that \( \secQuot = \bigsqcup_{\iota \in I} \ball_n(\secOnQuot_\iota) \).
	This allows us to define the projection operator
	\[
	\Pi_n \colon \lipschitzFunc \to F_n,
	\]
	by the requirement that \( \Pi_n \varphi \) is constant on every set \( \ball_n(\secOnQuot) \) and \( \varphi(\secOnQuot_\iota) = \Pi_n \varphi(\secOnQuot_\iota) \).
\end{defi}

This means we can evaluate \( \Pi_n \varphi(\secOnQuot) \) by choosing the unique sector \( \secOnQuot^\prime \in \Set{\secOnQuot_\iota}_{\iota \in I} \) with \( \secOnQuot_\iota \in \ball_n(\secOnQuot) \) and computing \( \Pi_n \varphi(\secOnQuot) = \varphi(\secOnQuot_\iota) \).
This gives us the following estimates.

\begin{lem}\label{lem:InequalProjOnQuot}
	For \( \varphi \in \lipschitzFunc \) we have
	\[
	\norm{\varphi - \Pi_n \varphi}_\infty \leq \vartheta^n \abs{\varphi}_\vartheta.
	\]
	On the other hand,
	\[
	\abs{\Pi_n \varphi}_\vartheta \leq \abs{\varphi}_\vartheta.
	\]
\end{lem}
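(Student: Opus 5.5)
Both estimates follow directly from the definitions of \( \Pi_n \), the balls \( \ball_n \), and the Lipschitz seminorm \( \abs{\cdot}_\vartheta \), using the ultrametric structure recorded in \thref{rem:propOfUltrametric}.

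\emph{First estimate.} Fix \( \secOnQuot \in \secQuot \) and let \( \secOnQuot_\iota \) be the unique representative with \( \secOnQuot \in \ball_n(\secOnQuot_\iota) \), i.e.\ \( \secMetric{\secOnQuot,\secOnQuot_\iota} \leq \vartheta^n \). Then \( \Pi_n\varphi(\secOnQuot) = \varphi(\secOnQuot_\iota) \), so
\[
\abs{\varphi(\secOnQuot) - \Pi_n\varphi(\secOnQuot)} = \abs{\varphi(\secOnQuot) - \varphi(\secOnQuot_\iota)} \leq \abs{\varphi}_\vartheta \, \secMetric{\secOnQuot,\secOnQuot_\iota} \leq \vartheta^n \abs{\varphi}_\vartheta .
\]
If \( \secOnQuot = \secOnQuot_\iota \) the difference is zero. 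Taking the supremum over \( \secOnQuot \) gives the claim.

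\emph{Second estimate.} Take \( \secOnQuot_1 \neq \secOnQuot_2 \) and let \( \secOnQuot_{\iota_1}, \secOnQuot_{\iota_2} \) be their representatives. We split into two cases.

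If \( \ball_n(\secOnQuot_1) = \ball_n(\secOnQuot_2) \), then \( \iota_1 = \iota_2 \), so \( \Pi_n\varphi(\secOnQuot_1) - \Pi_n\varphi(\secOnQuot_2) = 0 \) and there is nothing to show.

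If the balls are distinct, \thref{rem:propOfUltrametric}\ref{item:diffBallsSameDist}, applied with \( \secOnQuot_{\iota_j} \in \ball_n(\secOnQuot_j) \), gives \( \secMetric{\secOnQuot_{\iota_1},\secOnQuot_{\iota_2}} = \secMetric{\secOnQuot_1,\secOnQuot_2} \). In particular \( \secOnQuot_{\iota_1} \neq \secOnQuot_{\iota_2} \). Hence
\[
\frac{\abs{\Pi_n\varphi(\secOnQuot_1)-\Pi_n\varphi(\secOnQuot_2)}}{\secMetric{\secOnQuot_1,\secOnQuot_2}} = \frac{\abs{\varphi(\secOnQuot_{\iota_1})-\varphi(\secOnQuot_{\iota_2})}}{\secMetric{\secOnQuot_{\iota_1},\secOnQuot_{\iota_2}}} \leq \abs{\varphi}_\vartheta .
\]
Taking the supremum over all pairs \( \secOnQuot_1 \neq \secOnQuot_2 \) yields \( \abs{\Pi_n\varphi}_\vartheta \leq \abs{\varphi}_\vartheta \).

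The only non-routine point is this distance-preservation step in the second case. It rests on the ultrametric fact that for points in distinct balls of radius \( \vartheta^n \), the distance between them exceeds \( \vartheta^n \) and does not depend on which points of the two balls are chosen. That fact is already recorded in \thref{rem:propOfUltrametric}, so I do not expect a real obstacle here.
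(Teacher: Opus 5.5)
Your proof is correct and follows the paper's argument: the first estimate comes from \( \secMetric{\secOnQuot,\secOnQuot_\iota}\leq\vartheta^n \) together with the Lipschitz bound, and the second splits into the same-ball case, where \( \Pi_n\varphi \) takes equal values, and the distinct-ball case, handled by \thref{rem:propOfUltrametric}\ref{item:diffBallsSameDist}. You also note that \( \secOnQuot_{\iota_1}\neq\secOnQuot_{\iota_2} \), which justifies forming the quotient; the paper leaves this detail implicit.
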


\begin{proof}
	For the first inequality we take \( \secOnQuot \in \secQuot \) and a representative \( \secOnQuot^\prime \in \secQuot \) such that \( \Pi_n \varphi(\secOnQuot) = \varphi(\secOnQuot^\prime) \).
	By definition these sectors suffice \( \secMetric{\secOnQuot, \secOnQuot^\prime} \leq \vartheta^n \).
	By the definition of the optimal Lipschitz constant \( \abs{\varphi}_\vartheta \) we obtain
	\begin{align*}
		\norm{(\varphi - \Pi_n \varphi)(\secOnQuot)}_\infty 
		&= \norm{\varphi(\secOnQuot) - \varphi(\secOnQuot^\prime)}_\infty
		\leq \secMetric{\secOnQuot, \secOnQuot^\prime} \abs{\varphi}_\vartheta 
		\leq \vartheta^n \abs{\varphi}_\vartheta.
	\end{align*}
	For the second inequality we take \( \secOnQuot_1, \secOnQuot_2 \in \secQuot \) and its corresponding representatives \( \secOnQuot_1^\prime, \secOnQuot_2^\prime \).
	We obtain
	\begin{equation}\label{equ:projEstimate}
		\abs{\Pi_n \varphi(\secOnQuot_1) - \Pi_n \varphi(\secOnQuot_2)} = \abs{\varphi(\secOnQuot_1^\prime) - \varphi(\secOnQuot_2^\prime)} \leq \abs{\varphi}_\vartheta
		\secMetric{\secOnQuot_1^\prime, \secOnQuot_2^\prime}.
	\end{equation}
	To get the inequality we have to distinguish two cases:
	\begin{proofcase}
		\item[1. Case]
			\( \secMetric{\secOnQuot_1, \secOnQuot_2} \leq \vartheta^n \).
			In this case we have \( \ball_n(\secOnQuot_1) = \ball_n(\secOnQuot_2) \) and therefore \( \secOnQuot_1^\prime = \secOnQuot_2^\prime \).
			Hence \( \Pi_n \varphi(\secOnQuot_1) = \Pi_n \varphi(\secOnQuot_2) \) and the left hand side of the equation is zero.
			Therefore we get
			\[
			0 = \abs{\Pi_n \varphi(\secOnQuot_1) - \Pi_n \varphi(\secOnQuot_2)} \leq \abs{\varphi}_\vartheta \secMetric{\secOnQuot_1, \secOnQuot_2}.
			\]
		\item[2. Case]
			If \( \secMetric{\secOnQuot_1, \secOnQuot_2} > \vartheta^n \) we have \( \ball_n(\secOnQuot_1) \neq \ball_n(\secOnQuot_2) \) and by \thref{rem:propOfUltrametric}\ref{item:diffBallsSameDist} we have \( \secMetric{\secOnQuot_1^\prime, \secOnQuot_2^\prime} = \secMetric{\secOnQuot_1, \secOnQuot_2} \) and in \Cref{equ:projEstimate} we get
			\[
			\abs{\Pi_n \varphi(\secOnQuot_1) - \Pi_n \varphi(\secOnQuot_2)} \leq \abs{\varphi}_\vartheta \cdot \secMetric{\secOnQuot_1^\prime, \secOnQuot_2^\prime} = \abs{\varphi}_\vartheta \secMetric{\secOnQuot_1, \secOnQuot_2}.
			\]
	\end{proofcase}
	In both cases we find that the optimal Lipschitz constant satisfies \( \abs{\Pi_n \varphi}_\vartheta \leq \abs{\varphi}_\vartheta \).
\end{proof}

\begin{cor}\label{cor:approxByProjOnQuot}
	For \( \paramA \in \innerCoweights \), \( m \in \Nnum \) and \( \varphi \in \lipschitzFunc \) we have
	\begin{enumerate}[(i)]
		\item\label{item:estimOneTOp}
			\( \norm*{\tOp_\paramA \varphi - \tOp_\paramA \Pi_m \varphi}_\infty \leq \vartheta^m \abs{\varphi}_\vartheta \leq \vartheta^m \norm{\varphi}_\vartheta \),
		\item\label{item:estimTwoTOp}
			\( \norm*{\tOp_\paramA^m \varphi - \tOp_\paramA^m \Pi_m \varphi}_\infty \leq \vartheta^m \abs{\varphi}_\vartheta \leq \vartheta^m \norm{\varphi}_\vartheta \),
	\end{enumerate}
\end{cor}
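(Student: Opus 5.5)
The plan is to reduce both estimates to \thref{lem:InequalProjOnQuot} combined with the sup-norm contraction \thref{prop:supNormEstQuot}. For \ref{item:estimOneTOp}, I would use linearity to write \( \tOp_\paramA \varphi - \tOp_\paramA \Pi_m \varphi = \tOp_\paramA(\varphi - \Pi_m \varphi) \). By \thref{prop:supNormEstQuot},
\[
\norm{\tOp_\paramA(\varphi - \Pi_m\varphi)}_\infty \leq \norm{\varphi - \Pi_m\varphi}_\infty .
\]
By the first inequality of \thref{lem:InequalProjOnQuot}, the right-hand side is at most \( \vartheta^m \abs{\varphi}_\vartheta \). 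The last inequality \( \abs{\varphi}_\vartheta \leq \norm{\varphi}_\vartheta \) is immediate from the definition \( \norm{\varphi}_\vartheta = \abs{\varphi}_\vartheta + \norm{\varphi}_\infty \).

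For \ref{item:estimTwoTOp}, I would argue in the same way using the power \( \tOp_\paramA^m \). Iterating \thref{prop:supNormEstQuot} gives \( \norm{\tOp_\paramA^m}_{\mathrm{op}} \leq 1 \) on \( \mathcal B(\secQuot,\Cnum) \). Equivalently, by \eqref{equ:commuRelTOp} we have \( \tOp_\paramA^m = \tOp_{m\paramA} \) with \( m\paramA \in \domCoweights \), so the proposition applies directly. Since \( \varphi - \Pi_m \varphi \) is bounded, we get
\[
\norm{\tOp_\paramA^m \varphi - \tOp_\paramA^m \Pi_m \varphi}_\infty \leq \norm{\varphi - \Pi_m \varphi}_\infty \leq \vartheta^m \abs{\varphi}_\vartheta \leq \vartheta^m \norm{\varphi}_\vartheta .
\]

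No step should be a real obstacle. The only points to check are the following. First, \( \Pi_m\varphi \in F_m \subseteq \mathcal B(\secQuot,\Cnum) \), so that \( \tOp_\paramA \) and its powers are defined on \( \varphi - \Pi_m\varphi \) and the sup-norm bound applies. Second, linearity of \( \tOp_\paramA \) on \( \Cnum^{\secQuot} \). Neither of these estimates uses strong dominance of \( \paramA \); that hypothesis only becomes relevant later, when these sup-norm approximations are combined with \thref{cor:keyInqualNormQuot} and \thref{prop:fnInvariantQuot}\ref{item:innerContractsTOp} (for example, to show that \( \tOp_\paramA^m \Pi_m \) is a finite-rank operator with range in \( F_0 \) or a fixed \( F_k \), approximating \( \tOp_\paramA^m \)).
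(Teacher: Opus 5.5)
Your proof is correct and is essentially the paper's own argument: write the difference as \( \tOp_\paramA(\varphi - \Pi_m\varphi) \), apply the sup-norm contraction of \thref{prop:supNormEstQuot} (once for (i), \( m \) times for (ii)), and finish with the first estimate of \thref{lem:InequalProjOnQuot}. Your closing aside is not needed for the statement; note only that \thref{prop:fnInvariantQuot} gives \( \tOp_\paramA^m \Pi_m \varphi \in F_1 \), not \( F_0 \).
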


\begin{proof}
	\mbox{}
	\begin{enumerate}[(i)]
		\item
			With \thref{prop:supNormEstQuot} and \thref{lem:InequalProjOnQuot} we get
			\begin{align*}
				\norm*{\tOp_\paramA \varphi - \tOp_\paramA \Pi_m \varphi}_\infty 
				&= \norm{\tOp_\paramA(\varphi - \Pi_m \varphi)}_\infty 
				\leq \norm{\varphi - \Pi_m \varphi}_\infty 
				\leq \vartheta^m \abs{\varphi}_\vartheta 
				\leq \vartheta^m \norm{\varphi}_\vartheta.
			\end{align*}
		\item
			Same argument as above but using \thref{prop:supNormEstQuot} repeatedly. \qedhere
	\end{enumerate}
\end{proof}

\subsection{Spectral Theory for Transfer Operators}
Let us recall some standard terminology.

\begin{defi}[Fredholm Operator]\label{defi:fredholmIndexZero}
	A bounded operator \( T \) on a Banach space \( \banachSpace \) is called \emph{Fredholm of index zero} if \( \Image T \subset \banachSpace \) is closed and additionally \( \dim \Ker T = \dim\Coker T<\infty \).
\end{defi}
While the \emph{spectrum} of \( T \) is defined as
\[
\sigma(T) \coloneqq \Set{\lambda \in \Cnum \given \lambda \id_{\banachSpace} -T \text{ is not invertible}},
\]
we define the \emph{essential spectrum} of \( T \) as 
\begin{equation}\label{equ:essentialSpec}
	\sigma_{\mathrm{ess}}(T) \coloneqq \Set{\lambda \in \Cnum \given \lambda \id_{\banachSpace} -T \text{ is not Fredholm of index zero}}.
\end{equation}
The \emph{spectral radius} and \emph{essential spectral radius} of \( T \) are then defined as
\[
\specRad(T) \coloneqq \inf\Set{r>0 \given \sigma(T) \subset B_r(0)} \quad \text{and} \quad \essSpecRad(T) \coloneqq \inf\Set{r>0 \given \sigma_{\mathrm{ess}}(T) \subset B_r(0)}.
\]
\begin{defi}[Quasi-compact Operator]
	We call a bounded operator $T$ on a Banach space \emph{quasi-compact} if \( \essSpecRad(T) < r(T) \).
\end{defi}

Note that by the analytic Fredholm theorem (see \cite[Thm.~D.4]{Zworski2012}), all \( \lambda\in \sigma(T) \setminus B_{\essSpecRad(T)}(0) \) are isolated eigenvalues of finite multiplicity.

We will use the following criterion for quasi-compactness.
\begin{prop}[{{\cite[Corollaire 1]{hennion93}}}]\label{prop:hennionResult}
	Let \( T \) be a bounded operator on a Banach space \( (\banachSpace, \norm{\cdot}) \).
	If there exists another norm \( \abs{\cdot} \) on \( \banachSpace \) such that \( T \) has the following two properties
	\begin{enumerate}[(i)]
		\item
			\( T \) is a compact operator from \( (\banachSpace, \norm{\cdot}) \) to \( (\banachSpace, \abs{\cdot}) \)
		\item
			There are sequences \( \Set{r_n} \) and \( \Set{\rho_n} \) of positive numbers, with
			\[
			r = \liminf\limits_{n \to \infty} \sqrt[n]{r_n} < r(T),
			\]
			and 
			\[
			\forall n \geq 1:\quad \norm{T^n \varphi} \leq r_n \norm{\varphi} + \rho_n \abs{\varphi}.
			\]
	\end{enumerate}
	Then the essential spectral radius of \( T \) on \((\mathcal B, \norm{\cdot}) \) satisfies \( \essSpecRad(T) \leq r < r(T) \).
	In particular, \( T \) is quasi-compact.
\end{prop}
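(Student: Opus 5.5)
This is Hennion's criterion, and I would not reprove it from scratch. The plan is to reduce it to Nussbaum's formula for the essential spectral radius in terms of the Kuratowski measure of non-compactness. For a bounded subset \( A \subseteq \banachSpace \), let \( \chi(A) \) be the infimum of all \( \delta > 0 \) such that \( A \) can be covered by finitely many \( \norm{\cdot} \)-balls of radius \( \delta \). For a bounded operator \( S \), set
\[
\norm{S}_\chi \coloneqq \chi(S(\ball)),
\]
where \( \ball \) is the closed unit ball of \( (\banachSpace,\norm{\cdot}) \). This quantity is submultiplicative, so Nussbaum's theorem gives \( \lim_m \norm{T^m}_\chi^{1/m} \). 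The key steps are as follows.

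\textbf{Step 1: a covering estimate.} Fix \( n \geq 1 \) and \( \varepsilon > 0 \). By (i), \( T(\ball) \) is relatively compact in \( (\banachSpace,\abs{\cdot}) \). Hence there are \( \varphi_1,\ldots,\varphi_N \in \ball \) such that every \( \varphi \in \ball \) satisfies \( \abs{T\varphi - T\varphi_i} < \varepsilon \) for some \( i \). Apply (ii) to \( \psi \coloneqq T\varphi - T\varphi_i \), noting \( \norm{\psi} \leq 2\norm{T} \). This gives
\[
\norm{T^{n+1}\varphi - T^{n+1}\varphi_i} = \norm{T^n\psi} \leq 2 r_n \norm{T} + \rho_n \varepsilon .
\]
So \( T^{n+1}(\ball) \) is covered by \( N \) balls of that radius. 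Letting \( \varepsilon \to 0 \) yields
\[
\norm{T^{n+1}}_\chi \leq 2 r_n \norm{T}.
\]

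\textbf{Step 2: passing to the limit.} Nussbaum's formula states
\[
\lim_{m\to\infty} \norm{T^m}_\chi^{1/m} = \inf_m \norm{T^m}_\chi^{1/m},
\]
and this limit is the radius of the Browder essential spectrum, which contains \( \sigma_{\mathrm{ess}}(T) \). Taking \( (n+1) \)-th roots along a subsequence realizing the \( \liminf \) then gives
\[
\essSpecRad(T) \leq \liminf_n \bigl(2 r_n\norm{T}\bigr)^{1/(n+1)} = \liminf_n \sqrt[n]{r_n} = r .
\]
The hypothesis \( r < \specRad(T) \) is then exactly quasi-compactness.

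\textbf{Expected obstacle: matching the notions of essential spectrum.} The paper defines \( \sigma_{\mathrm{ess}} \) via "Fredholm of index zero", whereas Nussbaum's formula is most naturally stated for the Browder or Fredholm essential spectrum. Some care is needed to show that outside the disc of radius \( r \) the operator \( \lambda - T \) is Fredholm of index zero, not merely Fredholm. I would handle this as follows.
\begin{enumerate}[(a)]
	\item For \( \abs{\lambda} > r \), \( \lambda - T \) is Fredholm, by the Step 1 bound together with the standard fact that \( \norm{\cdot}_\chi \) controls the distance to compact operators modulo semi-Fredholm theory.
	\item The index is locally constant on the connected set \( \Set{\abs{\lambda} > r} \).
	\item For \( \abs{\lambda} > \norm{T} \) the operator \( \lambda - T \) is invertible, so the index is zero there and hence zero throughout \( \Set{\abs{\lambda} > r} \).
\end{enumerate}
Alternatively, one can simply cite Hennion's paper for this identification.
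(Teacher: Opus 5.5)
Your argument is correct. The paper gives no proof of this statement and only cites Hennion. Your route is essentially Hennion's own argument, which also rests on Nussbaum's formula: the covering estimate \( \|T^{n+1}\|_\chi \le 2 r_n \|T\| \) from a finite \( |\cdot| \)-net of \( T(B) \), combined with that formula and the identity \( \liminf_n (2r_n\|T\|)^{1/(n+1)} = \liminf_n r_n^{1/n} \) (valid since \( r < r(T) \) forces \( T \neq 0 \) and \( r < \infty \)).

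Two minor comments:
\begin{enumerate}
\item The measure you define via coverings by balls is the Hausdorff, not the Kuratowski, measure of noncompactness. This is harmless: the two differ by at most a factor \( 2 \), which disappears after taking \( m \)-th roots.
\item Your ``expected obstacle'' is already resolved by your own Step~2. A point outside Browder's essential spectrum is either in the resolvent set or an isolated eigenvalue with finite-rank Riesz projection, so \( \lambda - T \) is Fredholm of index zero there. Hence the paper's \( \sigma_{\mathrm{ess}}(T) \) is contained in Browder's essential spectrum, and steps (a)--(c) are superfluous.
\end{enumerate}

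If you keep (a)--(c), rephrase (a). The seminorm \( \|\cdot\|_\chi \) does not in general dominate the distance to the compact operators. The correct tool is that \( \|S\|_\chi < 1 \) implies \( I - S \) is Fredholm of index zero. Apply this to \( S = (T/\lambda)^m \) with \( m \) large enough that \( \|(T/\lambda)^m\|_\chi < 1 \). Then \( \lambda - T \) is Fredholm because it divides \( \lambda^m - T^m \) on both sides.
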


\begin{thm}\label{thm:transferOpInteriorQuasiComp}
	For \( \paramA \in \innerCoweights \) the transfer operator \( \tOp_\paramA \colon \lipschitzFunc \to \lipschitzFunc \) is a quasi-compact operator with essential spectral radius \( \essSpecRad(\tOp_\paramA)\leq \vartheta \).
\end{thm}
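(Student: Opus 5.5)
We apply Hennion's criterion (\thref{prop:hennionResult}) with \( \banachSpace = \lipschitzFunc \), strong norm \( \norm{\cdot} = \norm{\cdot}_\vartheta \) and weak norm \( \abs{\cdot} = \norm{\cdot}_\infty \). First we pin down the spectral radius. By \eqref{eq:one_in_spectrum} the constant function \( 1 \) is an eigenfunction with eigenvalue \( 1 \), so \( \specRad(\tOp_\paramA) \geq 1 \). For the upper bound, \thref{cor:keyInqualNormQuot} together with \( \norm{\varphi}_\infty \leq \norm{\varphi}_\vartheta \) gives \( \norm{\tOp_\paramA^\ell}_{\mathrm{op}} \leq \vartheta^\ell + C \). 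This is bounded in \( \ell \), so by Gelfand's formula \( \specRad(\tOp_\paramA) = 1 \).

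Second, the Lasota--Yorke hypothesis (ii) is exactly \thref{cor:keyInqualNormQuot}: we have
\[
\norm{\tOp_\paramA^\ell \varphi}_\vartheta \leq \vartheta^\ell \norm{\varphi}_\vartheta + C \norm{\varphi}_\infty .
\]
Hence \( r_\ell = \vartheta^\ell \) and \( \rho_\ell = C \), so that \( r = \liminf \sqrt[\ell]{r_\ell} = \vartheta < 1 = \specRad(\tOp_\paramA) \).

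Third, and this is the step needing the most care, we show that \( \tOp_\paramA \colon (\lipschitzFunc, \norm{\cdot}_\vartheta) \to (\lipschitzFunc, \norm{\cdot}_\infty) \) is compact. The plan is to approximate it in operator norm by finite rank operators, namely \( \tOp_\paramA \Pi_m \). These have finite rank because \( \Pi_m \) maps into \( F_m \), which is finite-dimensional by \thref{prop:fnFiniteSpaces} (this uses compactness of \( \quot \)). By \thref{cor:approxByProjOnQuot}\ref{item:estimOneTOp},
\[
\norm{\tOp_\paramA \varphi - \tOp_\paramA \Pi_m \varphi}_\infty \leq \vartheta^m \norm{\varphi}_\vartheta .
\]
So \( \tOp_\paramA \Pi_m \to \tOp_\paramA \) in the operator norm from \( \norm{\cdot}_\vartheta \) to \( \norm{\cdot}_\infty \). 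A norm limit of finite rank operators is compact, provided compactness is judged with respect to the completion of the target. This point deserves a check, since Hennion's result only uses that bounded \( \norm{\cdot}_\vartheta \)-sequences have images containing \( \norm{\cdot}_\infty \)-Cauchy subsequences, which the finite rank approximation yields by a diagonal argument.

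Alternatively, we could use Arzel\`a--Ascoli: \( \secQuot \) is compact, being totally bounded by \thref{lem:finitelyBallsCover} and complete as a closed set of maps into a finite complex. Then the unit ball of \( \lipschitzFunc \) is relatively compact in sup norm, and composing with the \( \norm{\cdot}_\infty \)-bounded operator \( \tOp_\paramA \) (\thref{prop:supNormEstQuot}) gives compactness.

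With (i) and (ii) verified, \thref{prop:hennionResult} yields \( \essSpecRad(\tOp_\paramA) \leq \vartheta < 1 = \specRad(\tOp_\paramA) \), which is quasi-compactness.
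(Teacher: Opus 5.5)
Your proposal is correct and follows the paper's proof essentially step for step. Both use Hennion's criterion with weak norm $\norm{\cdot}_\infty$, compactness via the finite-rank approximants $\tOp_\paramA \Pi_m$ and \thref{cor:approxByProjOnQuot}, the Lasota--Yorke bound from \thref{cor:keyInqualNormQuot} with $r_\ell=\vartheta^\ell$ and $\rho_\ell=C$, and $1\in\sigma(\tOp_\paramA)$ from $\tOp_\paramA 1=1$. Your extras are valid refinements the argument does not need: the exact value $\specRad(\tOp_\paramA)=1$, the remark that the approximation gives precompactness in the non-complete space $(\lipschitzFunc,\norm{\cdot}_\infty)$, and the Arzel\`a--Ascoli alternative using compactness of $\secQuot$.
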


\begin{proof}
	We want to apply \thref{prop:hennionResult}.
	First we check that
	\[
	\tOp_\paramA \colon (\lipschitzFunc, \norm{\cdot}_\vartheta) \to (\lipschitzFunc, \norm{\cdot}_\infty)
	\]
	is a compact operator.
	To do so we show that we can approximate \( \tOp_\paramA \) by a sequence of operators of finite rank.
	We define
	\[
	A_\paramA^n \coloneqq \tOp_\paramA \Pi_n \colon \lipschitzFunc \to F_n.
	\]
	Here we use \thref{prop:fnInvariantQuot}\ref{item:fnTOpinv} which says that \( \tOp_\paramA \) leaves \( F_n \)-invariant.
	
	Since \( F_n \) is a finite-dimensional vector space (\thref{prop:fnFiniteSpaces}) the operator \( A_\paramA^n \) has finite rank.
	As we have seen in \thref{cor:approxByProjOnQuot}\ref{item:estimOneTOp}
	\[
	\norm{\tOp_\paramA \varphi - A_\paramA^n \varphi}_\infty \leq \vartheta^n \norm{\varphi}_\vartheta \quad \text{for all } \varphi \in \lipschitzFunc.
	\]
	Thus we have \( \lim_{n \to \infty} A_\paramA^n = \tOp_\paramA \) and \( \tOp_\paramA \) is a compact operator.
	
	Next we need to find sequences \( \Set{r_n} \) and \( \Set{\rho_n} \) of positive numbers, with
	\[
	r = \liminf\limits_{n \to \infty} (r_n)^{\sfrac{1}{n}} < r(\tOp_\paramA)
	\]
	and for \( n \geq 1 \) an estimate
	\[
	\norm{\tOp_\paramA^n \varphi}_\vartheta \leq r_n \norm{\varphi}_\vartheta + \rho_n \norm{\varphi}_\infty.
	\]
	We can choose \( r_n = \vartheta^n \) and \( \rho_n = C \) (a constant sequence).
	In fact, by \thref{eq:one_in_spectrum} we have \( 1 \in \sigma(\tOp_\paramA)\) so that \( r = \vartheta < 1 \leq r(\tOp_\paramA) \).
	The second inequality is simply the key-inequality from \thref{cor:keyInqualNormQuot}.
	Now \thref{prop:hennionResult} implies \( \essSpecRad(\tOp_\paramA) \leq r < r(\tOp_\paramA) \).
	Therefore \( \tOp_\paramA \) is quasi-compact.
\end{proof}

The following proposition is closely related to \cite[Proposition 7.3]{bux2023spectral}.

\begin{prop}\label{prop:GenEigenspaceInFOne}
	Let \( \vartheta \in {}]0,1[{} \), \( \paramA \in P_{++}^\vee \) and \( z \in \Cnum \) with \( \abs{z} > \vartheta \).
	Then the generalized eigenspace \( \Hau(\tOp_\paramA, z) \coloneqq \Set{\varphi \in \lipschitzFunc \given \exists n\in \Nnum: \ (\tOp_\paramA-z\id_{\lipschitzFunc})^n\varphi} \) on \( \lipschitzFunc \) is a subspace of \( F_1 \).
\end{prop}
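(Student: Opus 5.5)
The plan is to use the decomposition \( \varphi = \Pi_1 \varphi + (\varphi - \Pi_1\varphi) \) together with the contraction property \thref{prop:fnInvariantQuot}\ref{item:innerContractsTOp} and the key inequality. Since \( \essSpecRad(\tOp_\paramA)\le\vartheta<\abs{z} \) by \thref{thm:transferOpInteriorQuasiComp}, the space \( \Hau(\tOp_\paramA,z) \) is finite-dimensional and \( \tOp_\paramA \)-invariant, and \( \tOp_\paramA - z \) is nilpotent on it. Hence \( \tOp_\paramA \) is invertible on \( \Hau(\tOp_\paramA,z) \), its restriction has spectrum \( \{z\} \), and for any norm on this finite-dimensional space
\[
\liminf_{\ell\to\infty}\norm{\tOp_\paramA^\ell \psi}^{1/\ell}\ge \abs{z}
\]
for every nonzero \( \psi\in\Hau(\tOp_\paramA,z) \). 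This comes from a Jordan form argument: inverting on the generalized eigenspace gives \( \norm{\psi}\le \norm{\tOp_\paramA^{-\ell}|_{\Hau}}\norm{\tOp_\paramA^\ell\psi} \), and \( \norm{\tOp_\paramA^{-\ell}|_{\Hau}}\le C\ell^{N}\abs{z}^{-\ell} \).

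Next I would show that \( \Hau(\tOp_\paramA,z) \) is preserved by the finite-rank operator \( \Pi_1 \)-type splitting modulo \( F_1 \). The cleaner route is to work in the quotient \( \lipschitzFunc/F_1 \). Since \( F_1 \) is finite-dimensional, it is closed and \( \tOp_\paramA \)-invariant (\thref{prop:fnInvariantQuot}\ref{item:fnTOpinv}), so \( \tOp_\paramA \) descends to an operator \( \bar\tOp \) on \( \lipschitzFunc/F_1 \). The image of \( \Hau(\tOp_\paramA,z) \) is then a finite-dimensional invariant subspace on which \( \bar\tOp - z \) is nilpotent. It therefore suffices to show that \( \bar\tOp \) has spectral radius at most \( \vartheta \), or at least that no nonzero vector \( \bar\psi \) satisfies \( \liminf\norm{\bar\tOp^\ell\bar\psi}^{1/\ell}\ge\abs{z}>\vartheta \). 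If the image of \( \Hau \) were nonzero, it would contain an honest eigenvector \( \bar\psi \) with \( \bar\tOp\bar\psi=z\bar\psi \), so \( \norm{\bar\tOp^\ell\bar\psi}=\abs{z}^\ell\norm{\bar\psi} \).

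The main estimate, and the step I expect to be the obstacle, is bounding \( \bar\tOp^\ell \) by \( \vartheta^\ell \) in the quotient norm. For \( \varphi\in\lipschitzFunc \), write \( \varphi-\Pi_\ell\varphi \) and \( \Pi_\ell\varphi\in F_\ell \). Applying \thref{prop:fnInvariantQuot}\ref{item:innerContractsTOp} \( \ell-1 \) times gives \( \tOp_\paramA^{\ell-1}\Pi_\ell\varphi\in F_1 \). Thus modulo \( F_1 \) we have \( \tOp_\paramA^{\ell-1}\varphi\equiv\tOp_\paramA^{\ell-1}(\varphi-\Pi_\ell\varphi) \), and by \thref{cor:keyInqualNormQuot} together with \thref{lem:InequalProjOnQuot},
\[
\norm{\tOp_\paramA^{\ell-1}(\varphi-\Pi_\ell\varphi)}_\vartheta\le \vartheta^{\ell-1}\cdot 2\abs{\varphi}_\vartheta + C\vartheta^\ell\abs{\varphi}_\vartheta .
\]
Here I use \( \abs{\varphi-\Pi_\ell\varphi}_\vartheta\le 2\abs{\varphi}_\vartheta \) and \( \norm{\varphi-\Pi_\ell\varphi}_\infty\le\vartheta^\ell\abs{\varphi}_\vartheta \). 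Hence \( \norm{\bar\tOp^{\ell-1}\bar\varphi}\le C'\vartheta^{\ell}\norm{\varphi}_\vartheta \). Choosing the representative \( \varphi \) of \( \bar\psi \) fixed, this contradicts \( \abs{z}^{\ell-1}\norm{\bar\psi}\le C'\vartheta^\ell\norm{\varphi}_\vartheta \) as \( \ell\to\infty \) unless \( \bar\psi=0 \). Therefore the image of \( \Hau(\tOp_\paramA,z) \) in \( \lipschitzFunc/F_1 \) is zero, i.e.\ \( \Hau(\tOp_\paramA,z)\subseteq F_1 \).
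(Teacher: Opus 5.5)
Relative to \Cref{prop:fnInvariantQuot} as stated, your quotient argument is correct, and it takes a genuinely different route from the paper.

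The paper argues vector by vector in the sup-norm. For an eigenvector it writes $\varphi=z^{-n}\tOp_\paramA^n\Pi_n\varphi+z^{-n}\tOp_\paramA^n(\varphi-\Pi_n\varphi)$. The first term lies in $F_1$, and the second is $O((\vartheta/\abs{z})^n)$ in $\norm{\cdot}_\infty$ by \Cref{cor:approxByProjOnQuot}. So $\varphi$ is a sup-norm limit of elements of the closed space $F_1$, and generalized eigenvectors are then handled by induction on the nilpotency order.

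You instead work in $\lipschitzFunc/F_1$. Since $\Pi_\ell f=f$ for $f\in F_1$, your estimate in fact gives $\norm{\overline{\tOp}^{\,\ell-1}}\le C'\vartheta^\ell$ in the quotient norm, i.e.\ $\specRad(\overline{\tOp})\le\vartheta$. This single structural fact disposes of all generalized eigenvectors at once, with no induction. The price is that you need $F_1$ to be invariant in order to define $\overline{\tOp}$, and you use the Lasota--Yorke bound \Cref{cor:keyInqualNormQuot}. The latter is avoidable: the same argument runs with the sup-norm quotient norm, which is a norm because $F_1$ is $\norm{\cdot}_\infty$-closed, and that uses exactly the information the paper uses. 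Your first paragraph (the Jordan-form lower bound) and the remark about a ``$\Pi_1$-type splitting'' are never used and can be dropped.

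One caveat hits your argument and the paper's equally. Both rest on the lowest level of \Cref{prop:fnInvariantQuot}: you need $\tOp_\paramA F_1\subseteq F_1$ to define $\overline{\tOp}$, and $\tOp_\paramA F_2\subseteq F_1$ to get $\tOp_\paramA^{\ell-1}\Pi_\ell\varphi\in F_1$. Both inclusions are off by one. The condition $\secMetric{s,s'}\le\vartheta$ only says $0\in\mathcal E_0(s,s')$, i.e.\ a common base vertex, not a common base chamber. So \Cref{prop:basePointProp} cannot be invoked on balls of size $1$.

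Here is a concrete counterexample in type $A_1$. Take $\quot$ a finite bipartite $(q+1)$-regular graph, $\paramA=\varpi_1$, and sectors the non-backtracking rays $s=(v_0,v_1,\dots)$. Then $F_1$ consists of the functions of $v_0$. Let $\mathrm{Adj}\,f=\lambda f$ with $\lambda\neq\pm(q+1)$, and let $z$ solve $qz^2-\lambda z+1=0$. Then $\varphi(s)=f(v_0)-(qz)^{-1}f(v_1)$ satisfies $\tOp_\paramA\varphi=z\varphi$. Such an $f$ cannot be constant on every neighbourhood, so $\varphi\in F_2\setminus F_1$, and the proposition fails as soon as $\vartheta<\abs{z}$.

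Balls of size $2$ do force a common base chamber. So the correct statements are $\tOp_\paramA F_n\subseteq F_n$ for $n\ge 2$ and $\tOp_\paramA F_n\subseteq F_{n-1}$ for $n\ge 3$, and the proposition should read $\Hau(\tOp_\paramA,z)\subseteq F_2$. With that shift (quotient by $F_2$ and use $\tOp_\paramA^{\ell-2}\Pi_\ell\varphi\in F_2$), your proof goes through verbatim, as does the paper's.
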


\begin{proof}
	Choose \( \varepsilon > 0 \) such that \( \abs{z} > \vartheta + \varepsilon \).
	First we show that the eigenspace \( \Eig(\tOp_\paramA, z) \) of \(\tOp_\paramA\) for the eigenvalue $z$ is contained in $F_1$.
	Then we proceed by induction.
	\begin{itemize}
		\item
			Consider \( \varphi \in \Eig(\tOp_\paramA, z) \) and set \( \varphi_n \coloneqq \Pi_n \varphi \) and \( r_n \coloneqq \varphi - \varphi_n \).
			As \( \tOp_\paramA \varphi = z \varphi \), we find
			\begin{align*}
				\varphi = z^{-n} \tOp_\paramA^n \varphi = z^{-n} \tOp_\paramA^n \varphi_n + z^{-n} \tOp_\paramA^n r_n.
			\end{align*}
			Note that by \thref{prop:fnInvariantQuot}\ref{item:innerContractsTOp} \( z^{-n} \tOp_\paramA^n \varphi_n \in F_1 \).
			From \thref{cor:approxByProjOnQuot}\ref{item:estimTwoTOp} we know that
			\[
			\norm{z^{-n} \tOp_\paramA^n r_n}_\infty \leq \abs{z^{-n}} \norm{\tOp_\paramA^n r_n}_\infty \leq \frac{\vartheta^n \abs{\varphi}_\vartheta}{(\vartheta + \varepsilon)^n} = \left( \frac{\vartheta}{\vartheta + \varepsilon}\right)^n \abs{\varphi}_\vartheta.
			\]
			For \( n \to \infty \) we see that \( \varphi \) is the \( \norm{\cdot}_\infty \)-limit of the sequence \( (z^{-n} \tOp_\paramA^n \varphi_n)_n \) which stays in the closed subspace \( F_1 \).
			Hence \( \varphi \in F_1 \).
		\item
			We can use the inclusion \( \Eig(\tOp_\paramA, z) \subseteq F_1 \) as the starting point of an induction.
			
			Let \( \varphi \in \Ker(\tOp_\paramA -z)^n \), then there exists some \( \widetilde{\varphi} \in \Ker(\tOp_\paramA - z)^{n-1} \) such that
			\[
			\tOp_\paramA \varphi = \widetilde{\varphi} + z \varphi.
			\]

			Iterating this equation and using that \( \Ker(\tOp_\paramA -z)^{n-1} \) is \( \tOp_\paramA \)-invariant, we obtain elements \( \widetilde{\varphi}_m \in \Ker(\tOp_\paramA - z)^{n-1} \) such that
			\[
			\tOp_\paramA^m \varphi = \widetilde{\varphi}_m + z^m \varphi.
			\]
			By induction hypothesis \( \Ker(\tOp_\paramA - z)^{n-1} \subseteq F_1 \).
			Now we decompose \( \varphi \) as in the case for the eigenspace.
			We set \( \varphi_m \coloneqq \Pi_m \varphi \), \( r_m \coloneqq \varphi - \varphi_m \) and \( \varphi^\prime_m = - z^{-m} \widetilde{\varphi}_m \).
			Now we can write
			\[
			\varphi = \varphi^\prime_m + z^{-m} \tOp_\paramA^m \varphi = \varphi^\prime_m + z^{-m} \tOp_\paramA^m \varphi_m + z^{-m} \tOp_\paramA^m r_m.
			\]
			With the same argument as above we find \( z^{-m} \tOp_\paramA^m r_m \to 0 \) and therefore
			\[
			\varphi^\prime_m + z^{-m} \tOp_\paramA^m \varphi_m \to \varphi
			\]
			in \( F_1 \).
			Since \( F_1 \) is a finite-dimensional closed subspace we have \( \varphi \in F_1 \).
	\end{itemize}
\end{proof}

As one can see in the proof it is important that we have \( \tOp_\paramA(F_n) \subseteq F_{n-1} \) (we get this via \thref{prop:fnInvariantQuot}\ref{item:innerContractsTOp}).
Here we need the assumption \( \paramA \in P_{++}^\vee \).

\begin{cor}\label{cor:eigenvalIndOfTheta}
	Let \( \paramA \in P_{++}^\vee \) and \( \vartheta, \vartheta^\prime \in {}]0,1[{} \) with \( \vartheta < \vartheta^\prime \).
	If \( z \in \Cnum \) is an eigenvalue for \( \tOp_\paramA \colon \mathcal F_{\vartheta^\prime} \to \mathcal F_{\vartheta^\prime} \) with \( \abs{z} > \vartheta^\prime \) then \( z \in \Cnum \) is an eigenvalue for \( \tOp_\paramA \colon \mathcal F_{\vartheta} \to \mathcal F_{\vartheta} \) as well.
\end{cor}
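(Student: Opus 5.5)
Take an eigenfunction \( \varphi \in \lipschitzFunc[\vartheta^\prime] \setminus \{0\} \) with \( \tOp_\paramA \varphi = z\varphi \) and \( \abs{z} > \vartheta^\prime \). The aim is to show \( \varphi \in F_1 \). Once this holds, the rest is formal: \( F_1 \) is independent of the parameter (\thref{rem:fct spaces}), and by \thref{lem:BHW5-2} it is contained in \( \lipschitzFunc \) for every parameter, in particular for \( \vartheta \). The transfer operator \( \tOp_\paramA \) is defined pointwise on \( \Cnum^{\secQuot} \) without reference to \( \vartheta \). Hence \( \varphi \) is a nonzero element of \( \lipschitzFunc \) satisfying \( \tOp_\paramA\varphi = z\varphi \), so \( z \) is an eigenvalue of \( \tOp_\paramA \) on \( \lipschitzFunc \).

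The key step, \( \varphi \in F_1 \), follows from \thref{prop:GenEigenspaceInFOne} applied with the parameter \( \vartheta^\prime \) in place of \( \vartheta \). That proposition is stated for an arbitrary parameter in \( ]0,1[ \) and requires \( \paramA \in P_{++}^\vee \) and \( \abs{z} > \vartheta^\prime \); both hypotheses hold here. It yields \( \Eig(\tOp_\paramA, z) \subseteq \Hau(\tOp_\paramA,z) \subseteq F_1 \) inside \( \lipschitzFunc[\vartheta^\prime] \).

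The only point needing care is the consistency of the spaces and the metric. The metric \( d_{\vartheta^\prime} \) and \( \secMetricFunc \) induce the same balls \( \ball_n \) (\thref{rem:propOfUltrametric}), so \( F_1 \) is literally the same finite-dimensional space of functions in both settings, and the inclusion \( F_1 \subseteq \lipschitzFunc \) is an inclusion of sets of functions. I expect no real obstacle. The main thing to state explicitly is that the eigenvalue equation is identical on both Banach spaces, because the operators agree on the common subspace \( F_1 \).

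As a remark, the same argument applied to the full generalized eigenspace, via \thref{prop:GenEigenspaceInFOne}, shows that the generalized eigenspaces for \( \abs{z} > \vartheta^\prime \) coincide in both spaces, not just the eigenvalues.
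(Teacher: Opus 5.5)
Your argument is correct and is essentially the paper's route: the paper gives no separate proof and obtains the corollary directly from \thref{prop:GenEigenspaceInFOne} applied with parameter \( \vartheta^\prime \), which places the eigenfunction in the parameter-independent space \( F_1 \), together with the inclusion \( F_1 \subseteq \mathcal F_{\vartheta} \) from \thref{lem:BHW5-2}. Your closing remark that the generalized eigenspaces for \( \abs{z} > \vartheta^\prime \) coincide in both spaces is also correct.
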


\begin{lem}
	Let \( \paramA, \paramB \in P_+^\vee \).
	The operators \( \tOp_{\paramA} \colon \lipschitzFunc \to \lipschitzFunc \) and \( \tOp_{\paramB} \colon \lipschitzFunc \to \lipschitzFunc \) have a common eigenvector in \( F_1 \).
	Hence the eigenvalue spectrum is non-empty, i.e., \( \sigma_{\mathrm{p}}(\tOp_\paramB) \neq \emptyset \).
\end{lem}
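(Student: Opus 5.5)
The simplest route is to use the constant function $1 \in F_0 \subseteq F_1$. By \eqref{eq:one_in_spectrum}, $\tOp_\paramA 1 = 1$ and $\tOp_\paramB 1 = 1$ for every $\paramA, \paramB \in \domCoweights$. So $1$ is a common eigenvector with eigenvalue $1$ for both operators, and it lies in $F_0 \subseteq F_1$. This already proves both claims: there is a common eigenvector in $F_1$, and $1 \in \sigma_{\mathrm{p}}(\tOp_\paramB)$. Indeed, the character $\chi \equiv 1$ on $\domCoweights$ is a joint eigenvalue, so even the joint point spectrum is nonempty.

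If one wants a more informative version, for instance every eigenvalue $z$ of $\tOp_\paramA$ with $\paramA \in \innerCoweights$ and $\abs{z} > \vartheta$ admits a common eigenvector with $\tOp_\paramB$, the plan has three steps.

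First, by \thref{prop:fnInvariantQuot}\ref{item:fnTOpinv}, every $\tOp_\paramB$ leaves the finite-dimensional space $F_1$ invariant (\thref{prop:fnFiniteSpaces}).

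Second, by \thref{prop:GenEigenspaceInFOne}, the generalized eigenspace $\Hau(\tOp_\paramA, z)$ is contained in $F_1$. Since $\tOp_\paramA$ and $\tOp_\paramB$ commute by \eqref{equ:commuRelTOp}, $\tOp_\paramB$ preserves $\Eig(\tOp_\paramA, z)$, which is a nonzero finite-dimensional subspace of $F_1$.

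Third, $\tOp_\paramB$ restricted to this complex finite-dimensional space has an eigenvector, and that vector is a common eigenvector of both operators.

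The case $\paramA \notin \innerCoweights$ does not need this argument, because the trivial eigenvector $1$ already suffices. I do not expect any real obstacle. The only point to check is that $F_1$ is $\tOp_\paramB$-invariant for an arbitrary dominant $\paramB$, not just a strongly dominant one, and this is exactly part \ref{item:fnTOpinv} of \thref{prop:fnInvariantQuot}.
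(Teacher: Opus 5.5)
Your proof is correct, but it takes a different route from the paper's. The paper exhibits no explicit eigenvector. It uses \thref{prop:fnInvariantQuot} to see that $F_1$ is invariant under both $\tOp_\paramA$ and $\tOp_\paramB$, and \thref{prop:fnFiniteSpaces} to get $\dim F_1<\infty$. It then invokes the linear-algebra fact that two commuting operators on a nonzero finite-dimensional complex space share an eigenvector.

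You instead observe that the normalization \eqref{eq:one_in_spectrum} makes the constant function $1\in F_0\subseteq F_1$ a joint eigenvector for the trivial character $\chi\equiv 1$. This is shorter and does not use finite-dimensionality of $F_1$, hence not compactness of $\quot$. It only needs $\secQuot\neq\emptyset$ (which the paper's argument also implicitly needs, since $F_1\neq\{0\}$ is required) and that $M_\paramA$ counts the preimages of every sector.

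The paper's argument, run in its standard form, yields more: for every dominant $\paramA$ and every eigenvalue $z$ of $\tOp_\paramA|_{F_1}$, there is a common eigenvector with $\tOp_\paramA$-eigenvalue $z$. So it produces potentially many joint eigenvalues rather than just the trivial one.

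Your supplementary paragraph recovers a refinement of this kind for $\paramA\in\innerCoweights$ and eigenvalues $\abs{z}>\vartheta$ of $\tOp_\paramA$ on all of $\lipschitzFunc$. You combine \thref{prop:GenEigenspaceInFOne} with commutativity, which is essentially the paper's finite-dimensional argument applied to $\Eig(\tOp_\paramA,z)$ instead of $F_1$. That part is also correct.
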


\begin{proof}
	From \thref{prop:fnInvariantQuot}\ref{item:fnTOpinv} we know that \( F_1 \) is an invariant subspace of \( \tOp_\paramA \) and \( \tOp_\paramB \).
	Therefore, \( \restr{\tOp_\paramA}{F_1} \) and \( \restr{\tOp_\paramB}{F_1} \) are two commuting operators on a finite dimensional complex vector space.
	Hence they have a common eigenvector.
\end{proof}

\subsection{Spectral Theory of Quasi-Compact Operators}\label{subsec:qu-cpt op}

In this subsection we recall some facts from operator theory that will be used in our analysis of joint spectra.
We start with the construction of Riesz projections as explained in \cite[Section 6.1]{hislopSigal96}.

Let \( \operator \in \mathcal L(\banachSpace) \) be a bounded operator on the Banach space \( \banachSpace \) and \( z_0 \in \sigma(\operator) \) an isolated point of the spectrum, and let \( \Gamma_{z_0} \) be a simple closed contour around \( z_0 \) such that the closure of the region bounded by \( \Gamma_{z_0} \) and containing \( z_0 \) intersects \( \sigma(\operator ) \) only at \( z_0 \).
We refer to such a contour as \emph{admissible} for \( z_0 \) and \( \operator \).
Then the contour integral
\[
P_{z_0} \coloneqq (2 \pi i)^{-1} \oint\limits_{\Gamma_{z_0}} R_\operator (z) \, \diff z
\]
where \( R_\operator \) is the resolvent of \( \operator \), is called the \emph{Riesz integral} for \( \operator \) and \( z_0 \).
One can show, that due to the analyticity of \( R_\operator (z) \) in \( z \) the integral exists and defines a bounded linear transformation on \( \banachSpace \).

\begin{lem}
	\( P_{z_0} \) is independent of the contour \( \Gamma_{z_0} \) provided that the contour is admissible for \( z_0 \) and \( \operator \), that is, it lies in the resolvent set \( \rho(\operator ) \) and contains no other part of \( \sigma(\operator ) \) besides \( z_0 \).
\end{lem}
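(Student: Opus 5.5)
The plan is to use Cauchy's theorem for Banach-space-valued holomorphic functions. Let \( \Gamma_{z_0} \) and \( \Gamma'_{z_0} \) be two admissible contours for \( z_0 \) and \( \operator \), and let \( U \) and \( U' \) be the open regions they bound that contain \( z_0 \). Admissibility means \( \overline{U} \cap \sigma(\operator) = \Set{z_0} = \overline{U'} \cap \sigma(\operator) \). The resolvent \( R_\operator(z) = (z - \operator)^{-1} \) is holomorphic on the open set \( \rho(\operator) \), with values in the Banach space \( \mathcal L(\banachSpace) \). I will use this in the weak form: for every continuous linear functional \( \ell \in \mathcal L(\banachSpace)^\ast \), the scalar function \( \ell \circ R_\operator \) is holomorphic on \( \rho(\operator) \).

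First I reduce to scalar integrals. Since \( \ell \) is continuous and linear, it commutes with the Riemann contour integral, so
\[
\ell\Bigl(\oint_{\Gamma_{z_0}} R_\operator(z)\, \diff z\Bigr) = \oint_{\Gamma_{z_0}} \ell(R_\operator(z))\, \diff z,
\]
and likewise for \( \Gamma'_{z_0} \). By Hahn--Banach, functionals on \( \mathcal L(\banachSpace) \) separate points. It therefore suffices to show that the two scalar integrals of \( f \coloneqq \ell \circ R_\operator \) agree.

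Next I compare the contours. Choose \( r > 0 \) so small that the closed disc \( \overline{B_r(z_0)} \) lies in \( U \cap U' \). This is possible because \( U \) and \( U' \) are open and both contain \( z_0 \). The set \( \overline{U} \setminus B_r(z_0) \) is compact and misses \( \sigma(\operator) \), so it lies in \( \rho(\operator) \). Hence \( \Gamma_{z_0} \) and the circle \( \partial B_r(z_0) \), both positively oriented, bound a region on whose closure \( f \) is holomorphic. Cauchy's theorem for this annular region, or equivalently the homology version applied on the open set \( \rho(\operator) \), gives \( \oint_{\Gamma_{z_0}} f = \oint_{\partial B_r(z_0)} f \). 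The same argument applies to \( \Gamma'_{z_0} \), so both integrals equal the integral over the small circle, and therefore \( P_{z_0} \) is the same for both contours.

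The main obstacle is the topological bookkeeping for arbitrary simple closed contours, which may be non-convex and may cross each other. To handle it cleanly I will use the homology form of Cauchy's theorem: it is enough that the winding number of the cycle \( \Gamma_{z_0} - \partial B_r(z_0) \) vanishes at every point of \( \sigma(\operator) \), which is the complement of the domain \( \rho(\operator) \). At \( z_0 \), both curves have winding number \( 1 \). At any other point \( w \in \sigma(\operator) \), we have \( w \notin \overline{U} \), so by the Jordan curve theorem \( w \) lies in the unbounded component of \( \Cnum \setminus \Gamma_{z_0} \) and has winding number \( 0 \) for \( \Gamma_{z_0} \); it also lies outside \( \overline{B_r(z_0)} \), so its winding number for the circle is \( 0 \) as well.
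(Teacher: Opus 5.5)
Your proposal is correct: the reduction to scalar functions via Hahn--Banach is sound, and the winding-number check for the cycle $\Gamma_{z_0}-\partial B_r(z_0)$ at every point of $\sigma(\operator)$ is exactly what the homology form of Cauchy's theorem requires. The paper gives no proof of its own and refers to \cite[Section 6.1]{hislopSigal96}, where the argument is the same one: deform the contour, using the analyticity of the resolvent on $\rho(\operator)$ together with Cauchy's theorem.
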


\begin{prop}\label{prop:rieszProjEigspaceInImage-Text}
	Let \( P_{z_0} \) be the Riesz integral for \( \operator \) and \( z_0 \).
	\begin{enumerate}[(i)]
		\item
			\( P_{z_0} \) is a projection.
		\item
			\( \Image P_{z_0} \supset \Ker(\operator - z_0) \).
	\end{enumerate}
\end{prop}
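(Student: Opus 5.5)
The plan is to run the standard holomorphic functional calculus argument, based on the resolvent identity and Cauchy's theorem. Write \( R(z) = (z - \operator)^{-1} \) for \( z \in \rho(\operator) \). Throughout, the only tools needed are the first resolvent identity
\[
R(z) - R(w) = (w - z) R(z) R(w), \qquad z,w \in \rho(\operator),
\]
and the fact that \( R \) is holomorphic on \( \rho(\operator) \).

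For (i), I choose two admissible contours \( \Gamma \) and \( \Gamma^\prime \) for \( z_0 \), with \( \Gamma^\prime \) lying strictly inside the region bounded by \( \Gamma \). By the preceding lemma, both contours give the same operator \( P_{z_0} \). Then
\[
P_{z_0}^2 = (2\pi i)^{-2} \oint_{\Gamma} \oint_{\Gamma^\prime} R(z)R(w)\, \diff w\, \diff z .
\]
Inserting the resolvent identity turns the integrand into \( \frac{R(z) - R(w)}{w - z} \). This splits the double integral into two terms, and I evaluate each by Fubini.
\begin{itemize}
\item In the term containing \( R(z) \), the inner integral is \( \oint_{\Gamma^\prime} \frac{\diff w}{w - z} \) with \( z \in \Gamma \). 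Since \( z \) lies outside \( \Gamma^\prime \), this integral vanishes.
\item In the term containing \( R(w) \), the integral is \( \oint_{\Gamma} \frac{\diff z}{w - z} \) with \( w \in \Gamma^\prime \). Since \( w \) lies inside \( \Gamma \), this equals \( -2\pi i \).
\end{itemize}
With the signs tracked carefully, the second term yields \( (2\pi i)^{-1} \oint_{\Gamma^\prime} R(w)\, \diff w = P_{z_0} \). Hence \( P_{z_0}^2 = P_{z_0} \). I expect this sign and orientation bookkeeping to be the only delicate point of the whole proof.

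For (ii), let \( \varphi \in \Ker(\operator - z_0) \). For every \( z \in \rho(\operator) \) we have \( (z - \operator)\varphi = (z - z_0)\varphi \), so
\[
R(z)\varphi = (z - z_0)^{-1}\varphi .
\]
Integrating over the admissible contour \( \Gamma_{z_0} \), which winds once around \( z_0 \), gives \( P_{z_0}\varphi = \varphi \) by Cauchy's integral formula. In particular \( \varphi \in \Image P_{z_0} \), which proves the inclusion \( \Image P_{z_0} \supset \Ker(\operator - z_0) \).
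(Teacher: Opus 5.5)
Your proof is correct and is the standard argument: the paper gives no proof of its own and defers to \cite[Section 6.1]{hislopSigal96}, where idempotency comes from exactly this two-contour computation with the resolvent identity, and the inclusion \( \Ker(T-z_0)\subset\Image P_{z_0} \) comes from \( R(z)\varphi=(z-z_0)^{-1}\varphi \) and Cauchy's formula. One small point: your convention \( R(z)=(z-T)^{-1} \) with counterclockwise orientation is precisely what makes the paper's formula, which has no minus sign, correct; with \( R(z)=(T-z)^{-1} \) one needs an extra factor \( -1 \).
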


From now on we assume that \( \operator \in \mathcal L(\banachSpace) \) is quasi-compact. We are interested in its \emph{discrete spectrum}
\[
\sigma_{\mathrm{disc}}(\operator) \coloneqq \Set{\lambda \in \sigma(\operator) \given \lambda \text{ is an isolated eigenvalue of finite multiplicity}}.
\]
We call the elements of \( \sigma_{\mathrm{disc}}(\operator) \) \emph{normal eigenvalues}.

For the following we refer to \cite[Chap.~XIV]{hennionHerve01}.
Given \( \varepsilon > 0 \), every \( \lambda \in \sigma(\operator) \) with \( \abs{\lambda} \geq \essSpecRad(\operator) + \varepsilon \) is a normal eigenvalue.
Fix a number \( \essSpecRad(\operator) < r < \infty \). Then, the annulus 
\[
\annulus{r}{\essSpecRad(\operator) + \varepsilon}(0) \coloneqq \Set{z \in \Cnum \given \essSpecRad(\operator) + \varepsilon \leq \abs{z} \leq r}
\] 
is compact and hence \( \sigma(\operator) \cap \annulus{r}{\essSpecRad(\operator) + \varepsilon}(0) \) has only finitely many elements.

For a given \( \varepsilon > 0 \) we can decompose \( \banachSpace \) as
\[
\banachSpace = F(\essSpecRad(\operator) +\varepsilon) + R(\essSpecRad(\operator) + \varepsilon).
\]
The construction of this decomposition is based on Riesz projectors.
We know that every \( \lambda \in \sigma(\operator) \) with \( \lambda > \essSpecRad(\operator) \) is automatically a normal eigenvalue, i.e., \( \lambda \in \sigma_{\mathrm{disc}}(\operator) \).
For every normal eigenvalue \( z_1,\ldots,z_n \in \annulus{r}{\essSpecRad(\operator)+\varepsilon}(0) \) we have a Riesz projector \(P_{z_i}\).
It satisfies 
\begin{align*}
	\Eig(\operator, z_i) &\coloneqq \Set*{x \in \banachSpace \given \operator x=z_i x} \subseteq \Image P_{z_i},\\
	\Hau(\operator, z_i) &\coloneqq \Set*{x \in \banachSpace \given \exists n \in\Nnum:\ (\operator -z_i \id_{\banachSpace})^nx= 0} = \Image P_{z_i}. 
\end{align*} 
Setting \( \sigma_1 \coloneqq \Set{z_1,\ldots,z_n} = \sigma_{\mathrm{disc}}(\operator) \cap \annulus{r}{\essSpecRad(\operator)+\varepsilon}(0) \) we have
\[
P_{\sigma_1} \coloneqq \oint\limits_{\Gamma} R_{\operator}(z) \; \diff z = \sum\limits_{i=1}^n P_{z_i},
\]
for some suitable contour \( \Gamma \) around \( \sigma_1 \).
This gives us the decomposition with
\[
F(\essSpecRad(\operator)+\varepsilon) \coloneqq \Image P_{\sigma_1} \quad \text{and} \quad R(\essSpecRad(\operator) + \varepsilon) \coloneqq \Ker P_{\sigma_1}.
\]
Moreover, we have
\begin{enumerate}[(i)]
	\item
		\( \dim F(\essSpecRad(\operator)+\varepsilon) < \infty \) and
	\item
		let \( z_1,\ldots,z_n \) be the normal eigenvalues in \( \annulus{r}{\essSpecRad(\operator)+\varepsilon}(0) \).
		Then
		\[
		\bigoplus_{i=1}^n \Eig(\operator, z_i) \subseteq F(\essSpecRad(\operator) + \varepsilon) = \bigoplus\limits_{i=1}^n \Hau(\operator, z_i).
		\]
\end{enumerate}

In the following remark we continue to assume that \( \banachSpace \) is a complex Banach space and \( \operator \colon \banachSpace \to \banachSpace \) is a quasi-compact operator.

\begin{rem}\label{rem:propOfFredholmAndQuasiCompOp}
	Let \( \finiteSpace \) be a finite dimensional \( \Cnum \)-vector space and \( \lambda \in \sigma(\operator) \) with \( \abs{\lambda} > \essSpecRad(\operator) \) so that \(\operator - \lambda \id_{\banachSpace}\) is Fredholm of index zero.
	Note that \( \id_{\finiteSpace} \otimes \operator\) is quasi-compact as it consists of finitely many blocks identical to \(\operator\). 
	An operator \( F \) of the form
	\[
	F \coloneqq \id_{\finiteSpace} \otimes (\operator - \lambda \id_{\banachSpace}) \colon \finiteSpace \otimes \banachSpace \to \finiteSpace \otimes \banachSpace
	\]
	has the following properties:
	\begin{enumerate}[(i)]
		\item\label{item:tensorProperty}
			\( F \) is Fredholm of index \( 0 \).
		\item\label{item:zeroNormalEV}
			\( 0 \) is a normal eigenvalue of \( F \) since \( \lambda \) is a normal eigenvalue of \( \operator \) and \( \id_{\finiteSpace} \) has only the eigenvalue \( 1 \) with finite algebraic multiplicity.
		\item\label{item:rieszProjFiniteIm}
			The Riesz projector \( \widetilde{P}_0 \) of \( F \) for the eigenvalue \( 0 \) has finite dimensional image.
		\item\label{item:rieszProjTensor}
			The Riesz projector \( \widetilde{P}_0 \) is of the form \( \widetilde{P}_0 = \id_{\finiteSpace} \otimes P_0 \), where \( P_0 \) as before is the Riesz projector of \( \operator - \lambda \id_{\banachSpace} \) for the eigenvalue \( 0 \).
		\item\label{item:splittingAndIsomOfGenEigen}
			Let \( j \in \Nnum_0 \) be the smallest number such that \( \Ker F^j = \Ker F^{j+n} \) for all \( n \in \Nnum_0 \).
			Then \( \Ker F^j = \Image \widetilde{P}_0 = \finiteSpace \otimes \Image P_0 \) and we have
			\[
			\finiteSpace \otimes \banachSpace = \Ker F^j \oplus \Image F^j = (\finiteSpace \otimes \Ker(\operator - \lambda \id_{\banachSpace})^j) \oplus (\finiteSpace \otimes \Image(\operator - \lambda \id_{\banachSpace})^j)
			\]
			where all spaces are \( F^j \)-invariant.
			Moreover, \( F \colon \Image F^j \to \Image F^j \) is an isomorphism.
		\item\label{item:rieszProjOnZeroDoesNotChange}
			The Riesz projector of \( F \) on the eigenvalue \( 0 \) is the same as the Riesz projector for \( F^k \) on the eigenvalue \( 0 \) for all \( k \in \Nnum \).
	\end{enumerate}
\end{rem}

\section{Joint Spectra}\label{sec:SpecTheory}

\subsection{Algebraic Preliminaries}\label{subsec:alg-prelim}

Let \( \complVs \) be a complex vector space and \( \operator_1,\ldots,\operator_n \in \End(\complVs)\) be a family of commuting linear endomorphisms of \( \complVs \).
We denote by 
\[ \mathcal A \coloneqq \spn{\operator_1,\ldots,\operator_n}_{\Cnum\text{-alg}} \subseteq \End(\complVs)\] 
the unital \( \Cnum \)-algebra generated by the \( \operator_1,\ldots,\operator_n \).
Then we have an injective map \( \Cnum \to \mathcal A \), identifying the unit element \( 1_\Cnum \) of \( \Cnum \) with the unit \( 1_{\mathcal A} \),
and a surjective unital \( \Cnum \)-algebra homomorphism
\[
\nPolyAlg \twoheadrightarrow \mathcal A,\qquad X_i\to \operator_i.
\]
We write \( \polyAlg \coloneqq \nPolyAlg \) whenever the number $n$ of indeterminates is clear from the context.

We want to introduce appropriate joint spectra of the operators in \( \mathcal A \) using techniques from homological algebra, for which we use \cite{weibel95} as a general reference. 

As a first step in the construction we take a look at the characters \( \chi \in \Hom_{\Cnum\text{-}\mathrm{alg.}}(\polyAlg, \Cnum) \). Each character defines a \( \polyAlg \)-bimodule structure on \( \Cnum \) given by
\[
\polyAlg \times \Cnum \times \polyAlg \to \Cnum, \; (P,z,Q) \coloneqq P\cdot z\cdot Q \coloneqq \chi(P)z\chi(Q).
\]
Since \( \Cnum \) is commutative and \( \Ker \chi \coloneqq \chi^{-1}(0)\) is a two-sided ideal in \( \polyAlg \) annihilating \( \Cnum \), this \( \polyAlg \)-bimodule structure on \( \Cnum \) factors to a \( \polyAlg/\Ker\chi \)-bimodule structure. If we identify the algebra \( \polyAlg/\Ker\chi \) with \( \Cnum \) via the canonical isomorphism $P+\Ker\chi\mapsto \chi(P)$ the \( \polyAlg/\Ker\chi \)-bimodule structure on \( \Cnum \) reduces to the \( \Cnum \)-bimodule structure of the algebra \( \Cnum \).

\begin{defi}\label{def:Cchi}
	For a given character \( \chi \in \polyAlg \) we denote \( \Cnum \), when equipped with the \( \polyAlg\)-bimodule structure, by \( \Cnum_\chi \).
\end{defi}

Note that by Hilbert's Nullstellensatz every character of \( \polyAlg \) is of the form 
\( \chi_a(P) \coloneqq P(a) \) for some \( a= (a_1,\ldots,a_n) \in \Cnum^n \).
We have 
\[
\Ker \chi_a = (X_1-a_1,\ldots,X_n - a_n),
\]
the ideal in \( \polyAlg \) generated by the monomials \( X_i-a_i \).

The second step in the construction of joint spectra of the operators in \( \mathcal A \) is to use \( \mathcal A \subseteq \End(\complVs) \) and the surjective algebra homomorphism \( \nPolyAlg \twoheadrightarrow \mathcal A \) to introduce a \( \polyAlg\)-bimodule structure also on \( \complVs \).
Note that, as \( \polyAlg\) is the free abelian algebra with generators \( X_1 \ldots,X_n \), the combined information \( \mathcal A \subseteq \End(\complVs) \) and \( \nPolyAlg \twoheadrightarrow \mathcal A \) is equivalent to the information of a set of generators \( \operatorFam = (\operator_1,\ldots,\operator_n) \) for $\mathcal A$.

\begin{defi}\label{def:VT}
	We define a \( \polyAlg \)-bimodule structure on \( \complVs \) via 
	\[
	\polyAlg \times \complVs \times \polyAlg \to \complVs, \; \genVar_i.x.\genVar_j \coloneqq \operator_j\operator_i x.
	\]
	We denote \( \complVs \), when equipped with this \( \polyAlg \)-bimodule structure, by \( \complVs_{\operatorFam} \).
\end{defi}

We now define the joint eigenvalue spectrum of the commuting operators $\operator_1,\ldots,\operator_n$:

\begin{defi}[Joint Eigenvalues]\label{defi:jointEV}
	Let \( \operatorFam = (\operator_1,\ldots,\operator_n) \) and \( \complVs_T \) be as in \thref{def:VT}.
	\begin{enumerate}[(a)]
		\item
			We say \( \lambda \in \Cnum^n \) is a \emph{joint eigenvalue} of \( \operatorFam \) if
			\[
			\Eig(\operatorFam, \lambda) \coloneqq \bigcap\limits_{i=1}^n \Ker(\operator_i - \lambda_i) \neq \Set{0}.
			\]
			If \( \Eig(\operatorFam,\lambda) \neq \Set{0} \) we call \( \Eig(\operatorFam,\lambda) \) the \emph{joint eigenspace} and an element \( v \in \Eig(\operatorFam,\lambda) \setminus \Set{0} \) a \emph{joint eigenvector}.
		\item
			The \emph{joint eigenvalue spectrum} \( \sigma_{\mathrm{p}}(\operatorFam) \) of \( \operatorFam \) is the set of all joint eigenvalues, i.e.,
			\[
			\sigma_{\mathrm{p}}(\operatorFam) \coloneqq \Set{\lambda \in \Cnum^n \given \Eig(\operatorFam, \lambda) \neq \Set{0}}.
			\]
	\end{enumerate}
\end{defi}

\begin{rem}\label{rem:jointEV-top} Every joint eigenvalue is a character \( \chi \in \Hom_{\Cnum\text{-}\mathrm{alg.}}(\mathcal A, \Cnum) \) and
			\[
			\bigcap\limits_{i = 1}^n \Eig(\operator_{i}, \chi(\genVar_{i})) \subseteq \Eig(B, \chi(B)) \quad \text{for all } B \in \mathcal A.
			\]
			That shows that \( \sigma_{\mathrm{p}}(\operatorFam) \) is independent of the generators \( \operatorFam = (\operator_{1},\ldots, \operator_{n}) \) and we may write \( \sigma_{\mathrm{p}}(\mathcal A) \).
\end{rem}

In general joint eigenvalue spectra on infinite dimensional vector spaces do not have nice properties. Therefore alternative joint spectra have been introduced in the literature. The following definition of the Taylor and the Co-Taylor spectrum of \( \operatorFam = (\operator_1,\ldots,\operator_n) \in \End(\complVs)^n \) in terms of homological algebra (see \cite[Chap.~2]{weibel95} for the definitions of the functors \( \Tor \) and \( \Ext \)) has its origin in \cite{taylor70,taylor72b,taylor72a}.

\begin{defi}[Regular Characters and Taylor Spectra]\label{defi:regularChar}
	Let \( \complVs_T \) be the \( \polyAlg \)-bimodule given in \thref{def:VT} and \( \chi \in \Hom_{\Cnum\text{-}\mathrm{alg.}}(\polyAlg, \Cnum) \).
	Recall the \( \polyAlg\)-bimodule \( \Cnum_\chi \) from \thref{def:Cchi}.
	\begin{enumerate}[(a)]
		\item We say \( \chi \) is a
			\begin{enumerate}[(i)]
				\item
					 \emph{regular character} if
					\(
					\Tor_i^{\polyAlg}(\Cnum_\chi, \complVs_\operatorFam) = 0\) for  all \(i \in \Znum	\),
				\item
					\emph{coregular character} if
					\(
					\Ext^i_{\polyAlg}(\Cnum_\chi, \complVs_{\operatorFam}) = 0\) for all \( i \in \Znum\).
			\end{enumerate}
		\item 	We say \( \chi \) is in the
		\begin{enumerate}[(i)]
				\item
					 \emph{Taylor spectrum} \( \sigma_{\mathrm{T}}(\operatorFam) \) of \( \operatorFam \) if it is not a regular character,
				\item
					 \emph{Co-Taylor spectrum} \( \sigma_{\mathrm{co}\text{-}\mathrm{T}}(\operatorFam) \) of \( \operatorFam \) if it is not a coregular character.
			\end{enumerate}
	\end{enumerate}	
\end{defi}

The definition of the Taylor spectra via homological functors may seem unnecessarily abstract compared to the definition in the original work of Taylor which uses modified Koszul complexes.
But just as the ordinary Koszul chain complex computes a \( \Tor \)-functor, the \( \Tor \)- and \( \Ext \)-functors used in \thref{defi:regularChar} can be computed by modified \emph{Koszul--Taylor chain} and \emph{cochain complexes} as described by Taylor.

As a reminder the Koszul--Taylor chain complex is given by the following sequence (see \cite[Sec.~4.5]{weibel95} where also the notation is explained)
\begin{equation}\label{fig:koszulChainComplex}
	\begin{tikzcd}[font=\normalsize]
		0 & \complVs_T \arrow[l] & \Cnum^n \otimes \complVs_T \arrow[l, "\tilde\delta_{\operatorFam}^1"'] & \Lambda^2(\Cnum^n) \otimes \complVs_T \arrow[l, "\tilde\delta_{\operatorFam}^2"'] & \ldots \arrow[l, "\tilde\delta_{\operatorFam}^3"'] & \Lambda^n(\Cnum^n) \otimes \complVs_T \arrow[l, "\tilde\delta_{\operatorFam}^{n}"'] & 0 \arrow[l]
	\end{tikzcd}
\end{equation}
\noindent
where the differentials \( \tilde\delta_{\operatorFam}^p \colon \Lambda^p(\Cnum^n) \otimes_\Cnum \complVs_T \to \Lambda^{p-1}(\Cnum^n) \otimes_\Cnum \complVs_T \) are given by
\begin{equation}\label{equ:diffOfKoszulChain}
	\tilde\delta_{\operatorFam}^p(v \otimes x) \coloneqq \sum\limits_{i=1}^n e_i \lrcorner v \otimes \operator_i x.
\end{equation}
The Koszul--Taylor cochain complex is given by the sequence
\begin{equation}\label{fig:koszulCochainComplexMotivation}
\begin{tikzcd}[font=\normalsize]
	0 \arrow[r] & \complVs_T \arrow[r, "\delta_{\operatorFam}^0"] & \Cnum^n \otimes \complVs_T \arrow[r, "\delta_{\operatorFam}^1"] & \Lambda^2(\Cnum^n) \otimes \complVs_T \arrow[r, "\delta_{\operatorFam}^2"] & \ldots \arrow[r, "\delta_{\operatorFam}^{n-1}"] & \Lambda^n(\Cnum^n) \otimes \complVs_T \arrow[r] & 0
\end{tikzcd}
\end{equation}
\noindent
with the differentials \( \delta_{\operatorFam}^p \colon \Lambda^p(\Cnum^n) \otimes_\Cnum \complVs_T \to \Lambda^{p+1}(\Cnum^n) \otimes_\Cnum \complVs_T \) given by
\begin{equation}\label{equ:diffOfKoszulCochain}
	\delta_{\operatorFam}^p(v \otimes x) \coloneqq \sum\limits_{i=1}^n e_i \wedge v \otimes \operator_i x.
\end{equation}

In the sequel we explain in more detail how the Koszul--Taylor chain complex corresponds to our definition via the \( \Tor \)-functor.
Afterwards we look into the identification of the definition of the Taylor spectrum via the Koszul cochain complex and the \( \Ext \)-functor.

In the following we discuss the details on this identifications.
First of all we define a ring-theoretic Koszul chain complex in a slightly more general way than needed. For more information see \cite[Section 4.5]{weibel95}.

\begin{defi}[Koszul Complex]\label{defi:koszulKomplex}
	Let \( R \) be a ring and \( \mathbf{x} = (x_1,\ldots,x_n) \) a finite sequence of central elements in \( R \).
	\begin{enumerate}[(i)]
		\item
			For a single central element \( x \in R \) we define the chain complex of \( R \)-bimodules.
			\[
			K(x): \quad 0 \to R \overset{x}{\to} R \to 0
			\]
			concentrated in degrees 1 and 0.
		\item
			The \emph{Koszul complex} \( K(\mathbf{x}) \) is the total tensor product complex
			\[
			K(x_1) \otimes_R K(x_2) \otimes_R \ldots \otimes_R K(x_n).
			\]
		\item
			For an \( R \)-module \( M \) we define
			\begin{align*}
				H_q(\mathbf{x}, M) &\coloneqq H_q(K(\mathbf{x}) \otimes_R M) \\
				H^q(\mathbf{x}, M) &\coloneqq H^q(\Hom_R(K(\mathbf{x}), M)).
			\end{align*}
	\end{enumerate}
\end{defi}

\begin{exmp}\label{exmp:settingKoszul}
	In our setting the ring \( R \) is the polynomial ring \( \polyAlg \) and the regular sequence \( \mathbf{x} = (\genVar_1 - \chi(\genVar_1), \ldots, \genVar_n - \chi(\genVar_n)) \) is a minimal generating set for the kernel \( \Ker \chi \) of a character \( \chi \in \Hom_{\Cnum\text{-}\mathrm{alg.}}(\polyAlg, \Cnum) \).
	The \( R \)-module \( M \) is the \( \polyAlg \)-bimodule  \( \complVs_T \).
\end{exmp}

\begin{rem}[Properties of the Koszul complex]\label{rem:onKoszulComplex}
	\mbox{}
	\begin{enumerate}[(i)]
		\item
			In \cite[Notation 4.5.1]{weibel95} we find:
			
			The degree \( p \) part of \( K(\mathbf{x}) \) is a free \( R \)-module generated by the symbols
			\[
			e_{i_1} \wedge \ldots \wedge e_{i_p} \coloneqq 1 \otimes \ldots \otimes 1 \otimes e_{x_{i_1}} \otimes \ldots \otimes e_{x_{i_p}} \otimes \ldots \otimes 1 \quad i_1 < \ldots < i_p.
			\]
			In particular \( K_p(\mathbf{x}) \) is isomorphic to the \( p \)-th exterior product \( \Lambda^p R^n \).
			The derivative \( K_p(\mathbf{x}) \to K_{p-1}(\mathbf{x}) \) is given by 
			\[
			e_{i_1} \wedge \ldots \wedge e_{i_p} \mapsto \sum\limits_{k=1}^p (-1)^k x_{i_k} e_{i_1} \wedge \ldots \wedge \widehat{e}_{i_k} \wedge \ldots \wedge e_{i_p}.
			\]
		\item\label{item:koszulComplexHomolGr}
			In \cite[Exercise 4.5.2]{weibel95} we find:
			\begin{enumerate}[(a)]
				\item\label{item:zeroTor}
					\( H_0(\mathbf{x}, M) = M/(x_1,\ldots,x_n)M \)
				\item\label{item:zeroExt}
					\( H^0(\mathbf{x}, M) = \Hom(R/\mathbf{x} R, M) = \Set{m \in M \given x_i m = 0 \quad \text{for all } i} \)
				\item\label{item:isomExtTor}
					There are isomorphisms
					\begin{equation}\label{equ:homAndCohomIsom}
						H_p(\mathbf{x}, M) \cong H^{n-p}(\mathbf{x}, M) \quad \text{for all } p.
					\end{equation}
			\end{enumerate}
	\end{enumerate}
\end{rem}

The important aspect of this discussion is the following \cite[Corollary 4.5.5]{weibel95}. 

\begin{cor}[Koszul resolution]\label{cor:koszulReso}
	If \( \mathbf{x} \) is a regular sequence in \( R \), then \( K(\mathbf{x}) \) is a free resolution of \( R/I \), where \( I \coloneqq (x_1,\ldots,x_n) R \).
	In particular, the following sequence is exact:
	\[
	0 \to \Lambda^n(R^n) \to \ldots \to \Lambda^2(R^n) \to R^n \overset{x}{\to} R \to R/I \to 0.
	\]
	In this case we have
	\begin{align*}
		\Tor_p^R(R/I, M) = H_p(\mathbf{x}, M); \\
		\Ext^p_R(R/I, M) = H^p(\mathbf{x}, M)
	\end{align*}
	for every left \( R \)-module \( M \).
\end{cor}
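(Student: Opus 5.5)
The plan is to prove the exactness of the Koszul complex \(K(\mathbf{x})\) in positive degrees by induction on the length \(n\) of the regular sequence. The identification of \(\Tor\) and \(\Ext\) with Koszul (co)homology then follows formally from the definition of derived functors. For \(n=1\), \(K(x_1)\) is \(0\to R\overset{x_1}{\to} R\to 0\). Its \(H_0\) is \(R/x_1R\), by \thref{rem:onKoszulComplex}\ref{item:koszulComplexHomolGr}\ref{item:zeroTor} applied with \(M=R\). Its \(H_1\) is the annihilator of \(x_1\) in \(R\), which vanishes because \(x_1\) is a nonzerodivisor. Thus \(K(x_1)\to R/x_1R\) is a free resolution.

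For the inductive step, write \(\mathbf{x}'=(x_1,\ldots,x_{n-1})\), so that \(K(\mathbf{x})=K(\mathbf{x}')\otimes_R K(x_n)\). Tensoring with the two-term complex \(K(x_n)\) is a mapping cone of multiplication by \(x_n\) on \(K(\mathbf{x}')\), since \(x_n\) is central. This gives the standard long exact sequence
\[
\cdots\to H_p(\mathbf{x}',R)\overset{\pm x_n}{\longrightarrow} H_p(\mathbf{x}',R)\to H_p(\mathbf{x},R)\to H_{p-1}(\mathbf{x}',R)\overset{\pm x_n}{\longrightarrow} H_{p-1}(\mathbf{x}',R)\to\cdots.
\]
By the induction hypothesis \(H_p(\mathbf{x}',R)=0\) for \(p\ge1\) and \(H_0(\mathbf{x}',R)=R/I'\), where \(I'=(x_1,\ldots,x_{n-1})R\). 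Hence \(H_p(\mathbf{x},R)=0\) for \(p\ge2\). Moreover \(H_1(\mathbf{x},R)\) is the kernel of multiplication by \(x_n\) on \(R/I'\), which is zero precisely by regularity of the sequence. Finally, \(H_0(\mathbf{x},R)=R/I\). Since each \(K_p(\mathbf{x})\cong\Lambda^pR^n\) is free by \thref{rem:onKoszulComplex}, \(K(\mathbf{x})\) is a free resolution of \(R/I\), and the displayed sequence is exact.

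With the resolution established, \(\Tor_p^R(R/I,M)\) is by definition the \(p\)-th homology of \(K(\mathbf{x})\otimes_R M\), which is \(H_p(\mathbf{x},M)\). Similarly \(\Ext^p_R(R/I,M)\) is the \(p\)-th cohomology of \(\Hom_R(K(\mathbf{x}),M)\), which is \(H^p(\mathbf{x},M)\). Both identifications are independent of the choice of projective resolution by the comparison theorem.

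The step I expect to need the most care is setting up the mapping-cone long exact sequence. I would derive it from the short exact sequence of complexes \(0\to K(\mathbf{x}')\to K(\mathbf{x})\to K(\mathbf{x}')[-1]\to0\), and check that the connecting homomorphism is multiplication by \(\pm x_n\). Centrality of \(x_n\) makes this a chain map, which is automatic in our application with \(R=\polyAlg\).
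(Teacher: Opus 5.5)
Your argument is correct and is essentially the standard one. The paper gives no proof of its own and simply cites Weibel's Corollary 4.5.5, which is proved exactly by your induction: one writes $K(\mathbf{x}) = K(\mathbf{x}')\otimes_R K(x_n)$ as the mapping cone of $\pm x_n$ on $K(\mathbf{x}')$, takes the resulting long exact sequence, and uses that $x_n$ is a nonzerodivisor on $R/I'$; the identification of $\Tor$ and $\Ext$ with Koszul (co)homology then follows, as you say, from the definition of the derived functors.
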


To compute \( \Tor_i^{\polyAlg}(\Cnum_\chi, \complVs_T) \) using \thref{cor:koszulReso} we tensor the resolution 
\[
0 \to \Lambda^n \polyAlg^n \to \ldots \to \Lambda^2 \polyAlg^n \to \polyAlg^n \to \polyAlg \to \Cnum_\chi \to 0.
\]
with our module \( \complVs_{\operatorFam} \) and compute the homology of the resulting complex
\[
0 \to \Lambda^n \polyAlg^n \otimes_{\polyAlg} \complVs_{\operatorFam} \to \ldots \to \polyAlg^n \otimes_{\polyAlg} \complVs_{\operatorFam} \to \polyAlg \otimes_{\polyAlg} \complVs_{\operatorFam} \to 0.
\]
Note that if \( \chi \) is the trivial character this is precisely the sequence from \Cref{fig:koszulChainComplex} with \( \Cnum \) replaced by \( \polyAlg \).
Else we need to replace the operator family \( \operatorFam = (\operator_1,\ldots,\operator_n) \) by \( \operatorFam - \chi \coloneqq (\operator_1 - \chi(\genVar_1),\ldots,\operator_n - \chi(\genVar_n)) \).
This gives rise to an alternative computation of \( \Tor_i^{\polyAlg}(\Cnum_\chi, \complVs_T) \).

\begin{rem}
	For any complex vector space \( W \) we can define a \( \polyAlg \)-module structure by setting \( W \otimes_\Cnum \polyAlg \) and acting with \( \polyAlg \) on the right factor.
	This procedure is called \emph{extension of scalars}.
	Another important observation is that we have isomorphism \( \Lambda^p \Cnum^n \cong \Cnum^{\binom{n}{p}} \) as \( \Cnum \)-modules and \( \Lambda^p \polyAlg^n \cong \polyAlg^{\binom{n}{p}} \) as \( \polyAlg \)-modules.
	Combining these two observations gives us the following isomorphisms of \( \polyAlg \)-modules
	\[
	\Lambda^p \Cnum^n \otimes_{\Cnum} \polyAlg \cong \Cnum^{\binom{n}{p}} \otimes_{\Cnum} \polyAlg \cong \polyAlg^{\binom{n}{p}} \cong \Lambda^p \polyAlg^n.
	\]
	This shows that \( \Lambda^p \Cnum^n \otimes_{\Cnum} \polyAlg \) is also a projective \(\polyAlg \)-module. As the tensor product is associative we obtain
	\begin{align}\label{equ:isomToLitKoszul}
		\Lambda^p \polyAlg^n	\otimes_{\polyAlg} \complVs_{\operatorFam} 
		&\cong	\left( \Lambda^p \Cnum^n \otimes_\Cnum \polyAlg\right) \otimes_{\polyAlg} \complVs_{\operatorFam} \nonumber\\
		&\cong \Lambda^p \Cnum^n \otimes_\Cnum \left( \polyAlg \otimes_{\polyAlg} \complVs_{\operatorFam} \right) \\
		&\cong \Lambda^p \Cnum^n \otimes_\Cnum \complVs_{\operatorFam}, 
	\end{align}
	where \( \polyAlg \) acts on the right component of \( \Lambda^p \Cnum^n \otimes_\Cnum \complVs_{\operatorFam} \).
	The differentials 
	\[
	\widetilde\delta^p \colon \Lambda^p \polyAlg^n \otimes_{\polyAlg} \complVs_{\operatorFam} \to \Lambda^{p-1} \polyAlg^n \otimes_{\polyAlg} \complVs_{\operatorFam}
	\] 
	are given by
	\begin{align*}
		\widetilde\delta^p(e_{i_1} \wedge \ldots \wedge e_{i_p} \otimes x) &= \sum\limits_{k=1}^p (-1)^k \genVar_{i_k} e_{i_1} \wedge \ldots \wedge \widehat{e}_{i_k} \wedge \ldots \wedge e_{i_p} \otimes x \\
		&= \sum\limits_{k=1}^p (-1)^k e_{i_1} \wedge \ldots \wedge \widehat{e}_{i_k} \wedge \ldots \wedge e_{i_p} \otimes \genVar_{i_k}.x \\
		&= \sum\limits_{k=1}^p (-1)^k e_{i_1} \wedge \ldots \wedge \widehat{e}_{i_k} \wedge \ldots \wedge e_{i_p} \otimes \operator_{i_k} x.
	\end{align*}
	These transform into the differentials \( \tilde{\delta}_{\operatorFam}^p \) from \eqref{equ:diffOfKoszulChain} under the isomorphisms from \eqref{equ:isomToLitKoszul}.
	Thus \eqref{equ:diffOfKoszulChain} also computes \( \Tor_i^{\polyAlg}(\Cnum_\chi, \complVs_T) \) for the trivial character \( \chi \).
\end{rem}

We collect the results from the previous remark in the following proposition.

\begin{prop}
	Let \( (\Lambda^\bullet \Cnum \otimes \complVs_{\operatorFam}, \widetilde{\delta}_{\operatorFam- \chi}^p) \) be the Koszul--Taylor cochain complex from \Cref{fig:koszulChainComplex}.
	Denote by \( H^p(\widetilde{\delta}_{\operatorFam-\chi}^\bullet) \) the \( p \)-th cohomology group.
	Then we have an isomorphism of \( \polyAlg \)-modules
	\[
	H^p(\widetilde{\delta}_{\operatorFam-\chi}^{\bullet}) \cong \Tor_p^{\polyAlg}(\Cnum_\chi, \complVs_{\operatorFam}) \quad \text{for all } p \in \Znum.
	\]
\end{prop}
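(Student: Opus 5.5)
The plan is to reduce the statement to \thref{cor:koszulReso} applied to the concrete regular sequence of \thref{exmp:settingKoszul}, and then use the identification of the tensored Koszul resolution with the Koszul--Taylor chain complex worked out in the preceding remark.

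Fix \( \chi = \chi_a \) with \( a = (a_1,\ldots,a_n) \in \Cnum^n \) (Nullstellensatz), and put \( \mathbf{x} = (X_1 - a_1,\ldots,X_n - a_n) \). This sequence is regular in \( \polyAlg \): after the affine change of variables \( Y_i = X_i - a_i \), which is an automorphism of \( \polyAlg \), it becomes the sequence of variables \( (Y_1,\ldots,Y_n) \). For that sequence regularity holds because \( Y_i \) is a nonzerodivisor on \( \Cnum[Y_1,\ldots,Y_n]/(Y_1,\ldots,Y_{i-1}) \cong \Cnum[Y_i,\ldots,Y_n] \). Moreover \( \polyAlg/(\mathbf{x}) = \polyAlg/\Ker\chi \cong \Cnum \), and the induced module structure is exactly \( \Cnum_\chi \) by the discussion before \thref{def:Cchi}. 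Since \( \polyAlg \) is commutative and \( \complVs_\operatorFam \) is, in effect, a (symmetric) module, we may treat all bimodules as one-sided modules. \thref{cor:koszulReso} then gives
\[
\Tor_p^{\polyAlg}(\Cnum_\chi,\complVs_\operatorFam) \cong H_p(K(\mathbf{x})\otimes_{\polyAlg}\complVs_\operatorFam).
\]

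It remains to identify \( K(\mathbf{x}) \otimes_{\polyAlg} \complVs_\operatorFam \) with the Koszul--Taylor complex built from \( \operatorFam - \chi \). For this we use the isomorphisms \eqref{equ:isomToLitKoszul}, namely \( \Lambda^p\polyAlg^n \otimes_{\polyAlg} \complVs_\operatorFam \cong \Lambda^p\Cnum^n \otimes_\Cnum \complVs \). Under these isomorphisms the Koszul differential sends \( e_{i_1}\wedge\cdots\wedge e_{i_p}\otimes x \) to
\[
\sum_k (-1)^k e_{i_1}\wedge\cdots\widehat{e}_{i_k}\cdots\wedge e_{i_p}\otimes (X_{i_k}-a_{i_k}).x = \sum_k (-1)^k e_{i_1}\wedge\cdots\widehat{e}_{i_k}\cdots\wedge e_{i_p}\otimes (\operator_{i_k}-\chi(X_{i_k}))x,
\]
which is \( \tilde\delta^p_{\operatorFam-\chi} \) from \eqref{equ:diffOfKoszulChain}, up to the sign convention of the contraction \( e_i\lrcorner \). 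To settle the signs, either note that \( e_{i_k}\lrcorner(e_{i_1}\wedge\cdots\wedge e_{i_p}) = (-1)^{k-1}(\cdots) \), so the two differentials differ by a global sign \( -1 \) in each degree, or compose with the automorphism \( (-1)^{p} \) on degree \( p \). Either way the homology is unchanged. The resulting isomorphisms \( H^p(\tilde\delta^\bullet_{\operatorFam-\chi}) \cong \Tor_p \) are \( \polyAlg \)-linear because every map involved commutes with the action of \( X_i \) on the \( \complVs \)-factor, which is the action of \( \operator_i \).

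The expected main obstacle is bookkeeping rather than substance. One point is checking that the one-sided module conventions in \thref{cor:koszulReso} match the bimodule definitions; this holds since the ring is commutative and both bimodule structures are symmetric. The other is confirming that the sign discrepancy between \( \lrcorner \) and Weibel's differential is harmless. Finally, for \( p \notin \{0,\ldots,n\} \) both sides vanish trivially.
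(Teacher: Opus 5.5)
Your proposal is correct and follows the paper's own argument. Like the paper, you take the Koszul resolution of \(\Cnum_\chi\) given by the regular sequence \((X_i-\chi(X_i))_i\) (Corollary~\ref{cor:koszulReso} together with Example~\ref{exmp:settingKoszul}), tensor it with \(\complVs_{\operatorFam}\), and transport the differentials along \eqref{equ:isomToLitKoszul} to get the Koszul--Taylor complex for \(\operatorFam-\chi\); your checks of regularity and of the harmless global sign between \(e_i\lrcorner\) and Weibel's differential fill in bookkeeping the paper leaves implicit.
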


\begin{rem}
	Suppose that \( \chi \in \sigma_{\mathrm{p}}(\operatorFam) \) is a joint eigenvalue.
	Then \thref{rem:onKoszulComplex}\ref{item:koszulComplexHomolGr}\ref{item:zeroExt} implies that
	\begin{align*}
		H^0((\operatorFam - \chi), \complVs_{\operatorFam}) &= \Set{x \in \complVs \given (\genVar_i - \chi(\genVar_i)).x = 0 \quad \text{for all } i} \\
		&= \Set{x \in \complVs \given (\operator_i - \chi(\genVar_i))x = 0 \quad \text{for all } i} \\
		&= \bigcap\limits_{i=1}^n \Ker(\operator_i - \chi(\genVar_i)).
	\end{align*}
	Hence the joint eigenvalues are contained in the Co-Taylor spectrum.
	But \eqref{equ:homAndCohomIsom} in \thref{cor:koszulReso} shows that a character \( \chi \in \Hom_{\Cnum\text{-}\mathrm{alg.}}(\polyAlg, \Cnum) \) is regular if and only if it is coregular.
	Altogether, we have
	\begin{equation}\label{eq:pSubeqTEqCo-T}
		\sigma_{\mathrm{p}}(\operatorFam) \subseteq \sigma_{\mathrm{T}}(\operatorFam) = \sigma_{\mathrm{co}\text{-}\mathrm{T}}(\operatorFam).
	\end{equation}
\end{rem}	
		
\begin{lem}[{{\cite[Lemma 3.8]{bghw20}}}]\label{lem:taylorSpecOnFiniteSpace}
	Suppose that \( \complVs \) is finite dimensional.
	Then we have 
	\[
	\sigma_{\mathrm{p}}(\operatorFam)=\sigma_{\mathrm{T}}(\operatorFam).
	\]
\end{lem}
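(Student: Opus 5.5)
The inclusion \( \sigma_{\mathrm{p}}(\operatorFam) \subseteq \sigma_{\mathrm{T}}(\operatorFam) \) is already \eqref{eq:pSubeqTEqCo-T}, so only the reverse inclusion needs proof. Equivalently, we show that every character \( \chi = \chi_a \), \( a \in \Cnum^n \), which is not a joint eigenvalue is regular, i.e.\ the Koszul--Taylor complex of \( \operatorFam - a \) on \( \complVs \) is exact. We argue by induction on \( n \), using the finite dimensionality of \( \complVs \) to decompose it into joint generalized eigenspaces.

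First, because the \( \operator_i \) commute and \( \dim \complVs < \infty \), we decompose
\[
\complVs = \bigoplus_{b} \complVs_b, \qquad \complVs_b \coloneqq \bigcap_{i=1}^n \Ker(\operator_i - b_i)^{\dim \complVs}.
\]
This follows by iterating the generalized eigenspace decomposition of \( \operator_1 \), then of \( \operator_2 \) restricted to each piece, and so on, since each \( \operator_j \) preserves the generalized eigenspaces of every \( \operator_i \). Every nonzero \( \complVs_b \) contains a joint eigenvector, because the \( \operator_i \) restricted to \( \complVs_b \) still commute and hence have a common eigenvector. Its joint eigenvalue must be \( b \), so each \( b \) with \( \complVs_b \neq 0 \) lies in \( \sigma_{\mathrm{p}}(\operatorFam) \). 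The Koszul--Taylor complex is the direct sum of the complexes on the \( \complVs_b \), since its differentials are built from the \( \operator_i \), which preserve each summand. It therefore suffices to show that for \( b \neq a \) the complex of \( \operatorFam - a \) on \( \complVs_b \) is exact.

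Fix \( b \neq a \) and choose \( j \) with \( b_j \neq a_j \). On \( \complVs_b \) the operator \( \operator_j - a_j \) equals \( (b_j - a_j) + N \) with \( N \) nilpotent, so it is invertible. It remains to show that the Koszul complex of a commuting family in which one member, \( S_j \coloneqq \operator_j - a_j \), is invertible is exact. For this we use the standard contracting homotopy
\[
h(v \otimes x) \coloneqq e_j \wedge v \otimes S_j^{-1} x
\]
on the chain complex \eqref{fig:koszulChainComplex}. Because the interior product \( e_i \lrcorner \) is an antiderivation and \( S_j^{-1} \) commutes with all \( S_i \), we get \( \tilde\delta h + h \tilde\delta = \id \): the cross terms cancel and the \( i = j \) term gives \( e_j \lrcorner (e_j \wedge v) + e_j \wedge (e_j \lrcorner v) = v \), with \( S_j S_j^{-1} = \id \). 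Hence all homology vanishes, so every \( \Tor_p \) vanishes and \( \chi_a \) is regular.

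Alternatively, one can use the tensor/mapping-cone description \( K(\mathbf{x}) = K(x_j) \otimes K(\text{rest}) \), in which multiplication by an invertible \( x_j \) makes the complex acyclic. The main point needing care is the sign bookkeeping in \( \tilde\delta h + h \tilde\delta = \id \), and matching the paper's convention \( (-1)^k \) in the Koszul differential. Up to an overall sign this is the graded-commutator identity \( [\iota_{e_i}, e_j \wedge \cdot\,] = \delta_{ij} \), so I expect it to go through. Combining the two steps, any \( a \notin \sigma_{\mathrm{p}}(\operatorFam) \) has \( \complVs_a = 0 \), so every summand has \( b \neq a \) and is exact. This gives \( \sigma_{\mathrm{T}}(\operatorFam) \subseteq \sigma_{\mathrm{p}}(\operatorFam) \).
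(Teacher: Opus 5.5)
Your argument is correct and complete. The paper gives no proof of its own here, only the citation \cite[Lemma 3.8]{bghw20}, so there is no in-text argument to compare against. What you wrote is the standard self-contained proof:
\begin{itemize}
\item split \( \complVs \) into joint generalized eigenspaces \( \complVs_b \);
\item note that \( \complVs_a = 0 \) whenever \( a \notin \sigma_{\mathrm{p}}(\operatorFam) \);
\item kill the Koszul homology on each \( \complVs_b \) with \( b \neq a \), using the invertible operator \( S_j = \operator_j - a_j \).
\end{itemize}

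The last step is already contained in the paper's toolkit. Your \( h \) is exactly the map \( \alpha \) from the proof of \thref{prop:homotopyViaDualKoszul}, taken for the family \( B_j = (S_j|_{\complVs_b})^{-1} \) and \( B_i = 0 \) for \( i \neq j \). These \( B_i \) commute with all \( \operator_i|_{\complVs_b} \). So by \thref{rem:EP-Lemma_chi} the operator \( \sum_i S_i B_i = \id_{\complVs_b} \) induces the zero map on \( \Tor_\ast^{\polyAlg}(\Cnum_\chi, \complVs_b) \), which therefore vanishes. You could cite this instead of redoing the sign bookkeeping yourself.

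Two cosmetic remarks:
\begin{itemize}
\item The induction on \( n \) announced in your first paragraph is never used; the argument is direct.
\item With the paper's convention \( (-1)^k \) in the chain differential, your identity reads \( \tilde\delta h + h \tilde\delta = -\id \). This still gives acyclicity, as you anticipated.
\end{itemize}
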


\subsection{Banach Modules}\label{subsec:Banach-modules}

In this subsection we keep the notation of \Cref{subsec:alg-prelim} but assume in addition that \( \complVs \) is a Banach space with respect to a given norm \( \norm{\cdot}_{\complVs} \) and the family \( (\operator_1,\ldots,\operator_n) \) of commuting linear endomorphisms of \( \complVs \) consists of bounded operators on \( (\complVs , \norm{\cdot}_V) \).
We use \cite{eschmeierPutinar96} as a general reference for the specifics of the homological algebra of Banach complexes.

As \( (\complVs , \norm{\cdot}_{\complVs}) \) is a Banach space we can equip the algebra \( \mathcal L(\complVs) \) of continuous endomorphisms of \( \complVs \) with the operator norm.
Let \( \overline{\mathcal A} \) be the closure of \( \mathcal A \) in \( \mathcal L(\complVs) \) and note that \( \overline{\mathcal A} \) is a Banach algebra.
	
\begin{rem}\label{rem:barA-characters}
	As \( \mathcal A \) is dense in \( \overline{\mathcal A} \) we have
	\[
	\Hom_{\Cnum\text{-}\mathrm{alg.}}(\overline{\mathcal A}, \Cnum) \hookrightarrow \Hom_{\Cnum\text{-}\mathrm{alg.}}(\mathcal A, \Cnum) \hookrightarrow \Hom_{\Cnum\text{-}\mathrm{alg.}}(\polyAlg, \Cnum).
	\]
	Moreover, we have \( \operatorFam = (\operator_1,\ldots,\operator_n) \subseteq \overline{\mathcal A} \) hence they are in a commutative Banach algebra with identity.
	By \cite[Lemma 3.2 and Theorem 3.3]{taylor70} the Taylor spectrum for a family of operators in a Banach algebra is contained in the characters of the Banach algebra, i.e.,
	\[
	\sigma_{\mathrm{T}}(\operatorFam) \subseteq \Hom_{\Cnum\text{-}\mathrm{alg.}}(\overline{\mathcal A}, \Cnum).
	\]
	Hence we know that
	\begin{enumerate}[1.]
		\item
			Every character \( \chi \in \Hom_{\Cnum\text{-}\mathrm{alg.}}(\polyAlg, \Cnum) \) that lies in the Taylor spectrum \( \sigma_{\mathrm{T}}(\operatorFam) \) is a character for the Banach algebra \( \overline{\mathcal A} \).
		\item
			If \( \operatorFam^\prime = (\operator_1^\prime, \ldots, \operator_m^\prime) \) is another commuting family generating \( \mathcal A \) (or \( \overline{\mathcal A} \)) we have \( \sigma_{\mathrm{T}}(\operatorFam^\prime) = \sigma_{\mathrm{T}}(\operatorFam) \subseteq \Hom_{\Cnum\text{-}\mathrm{alg.}}(\overline{\mathcal A}, \Cnum) \).
			That means the characters (of \( \overline{\mathcal A} \)) in the Taylor spectrum are independent of the generators (see \cite[Theorem 3.3]{taylor70} and note that \( \Delta(\overline{\mathcal A}, \complVs) \) is the same as our spectrum).
	\end{enumerate}
	By abuse of notation we plug in operators in our characters \( \chi \in \Hom_{\Cnum\text{-}\mathrm{alg.}}(\polyAlg, \Cnum) \).
	If this would cause a problem (i.e., \( \chi \notin \Hom_{\Cnum\text{-}\mathrm{alg.}}(\mathcal A, \Cnum) \)) this character is not in the spectrum.
\end{rem}

\begin{defi}[Essential Taylor Spectrum]\label{defi:essentialTaylorSpec}
We say the character \( \chi \in \Hom_{\Cnum\text{-}\mathrm{alg.}}(\polyAlg, \Cnum) \) is
	\begin{enumerate}[(a)]
		\item
			a \emph{Fredholm character} for \( \operatorFam \) if
			\begin{align*}
				&\dim_{\Cnum\text{-vs.}} \Tor_i^{\polyAlg}(\Cnum_\chi, \complVs_{\operatorFam}) < \infty &\text{for all } i \in \Znum \\ 
				&\dim_{\Cnum\text{-}\mathrm{vs.}} \Tor_i^{\polyAlg}(\Cnum_\chi, \complVs_{\operatorFam}) = 0&\text{for all but finitely many } i \in \Znum; 
			\end{align*}
		\item
			is a \emph{Co-Fredholm character} for $T$ if
			\begin{align*}
				&\dim_{\Cnum\text{-}\mathrm{vs.}} \Ext^i_{\polyAlg}(\Cnum_\chi, \complVs_{\operatorFam}) < \infty&\text{for all } i \in \Znum \\ 
				&\dim_{\Cnum\text{-vs.}} \Ext^i_{\polyAlg}(\Cnum_\chi, \complVs_{\operatorFam}) = 0&\text{for all but finitely many } i \in \Znum;
			\end{align*}
		\item
			in the \emph{essential Taylor spectrum} \( \sigma^{\mathrm{ess}}_{\mathrm{T}}(\operatorFam) \) if \( \chi \in \sigma_T(T)\) is not a Fredholm character;
		\item
			in the \emph{essential Co-Taylor spectrum} \( \sigma^{\mathrm{ess}}_{\mathrm{co-T}}(\operatorFam) \) if \( \chi \in \sigma_T(T) \) is not a Co-Fredholm character.
	\end{enumerate}
\end{defi}

\begin{rem}[Fredholm Banach Complexes]\label{rem:BanachFredholm}
	There are alternative definitions of Fredholm designed particularly for complexes of Banach spaces.
	We compare them with the notions from \thref{defi:essentialTaylorSpec}.
	\begin{enumerate}[(i)]
		\item
			A Banach space complex \( (\banachSpace^\bullet, \delta^\bullet) \) 
			\begin{center}
				\begin{tikzcd}[font=\normalsize]
					0 \arrow[r] & \banachSpace^{0} \arrow[r, "\delta^{0}"] & \banachSpace^{1} \arrow[r, "\delta^{1}"] & \ldots \arrow[r, "\delta^{n-1}"] & \banachSpace^n \arrow[r] & 0
				\end{tikzcd}
			\end{center}
			is called 
			\begin{enumerate}[(a)]
				\item
					\emph{Fredholm} if \( \delta^\bullet \) has closed range and the function
					\[
					\Znum \ni p \mapsto \dim_{\Cnum} H^p(\delta^\bullet) \in \Nnum_0 \cup \Set{\infty}
					\]
					is finite (and has finite support);
				\item
					\emph{split Fredholm in degree \( p \)} for \( 0 \leq p \leq n \) if there are operators
					\begin{center}
						\begin{tikzcd}[font=\normalsize]
							\banachSpace^{p-1} & \banachSpace^p \arrow[l, "h^p"'] & \banachSpace^{p+1} \arrow[l, "h^{p+1}"']
						\end{tikzcd}
					\end{center}
					such that \( \id_{\banachSpace^p} - (\delta h+h\delta) \in \mathcal K(\banachSpace^p, \banachSpace^p) \);
				\item
					\emph{split Fredholm} if it is split Fredholm in every degree;
				\item
					\emph{algebraically Fredholm} if \( \dim_\Cnum H^p(\delta^\bullet) < \infty \) for all \( p \in \Znum \) and \( \dim_\Cnum H^p(\delta^\bullet) = 0 \) for all but finitely many indices.
			\end{enumerate}
		\item\label{item:boundedBanachAlgFredholm}
			Suppose that \( (\banachSpace^p, \delta^p)_{p \in \Znum} \) is a Banach complex of finite length, which is algebraically Fredholm.
			As the \( \delta^p \) are all bounded, so that \( \Ker \delta^p \) is closed in \( \banachSpace^p \), this implies that \( \Image \delta^p \) is of finite codimension, hence closed in \( \Ker \delta^p \).
			This means that all \( \delta^p \) have closed range, so that \( (\banachSpace^p,\delta^p)_{p \in \Znum} \) is Fredholm.
		\item\label{item:fredholmCharHaveFredholmCompl}
			Note that the complexes \eqref{equ:diffOfKoszulChain} and \eqref{equ:diffOfKoszulCochain} are Banach complexes.
			Thus, from \ref{item:boundedBanachAlgFredholm} we obtain that for each Fredholm character \( \chi \in \Hom_{\Cnum\text{-}\mathrm{alg.}}(\polyAlg, \Cnum) \) the finite complexes computing \( \Tor_i^{\polyAlg}(\Cnum_\chi, \complVs_T) \) and \( \Ext^i_{\polyAlg}(\Cnum_\chi, \complVs_T) \) are Fredholm. 
		\item\label{item:fredholmEquiCoFredholm}
			By \thref{rem:onKoszulComplex}\ref{item:koszulComplexHomolGr}.\ref{item:isomExtTor} the two definitions of Fredholm and Co-Fredholm are equivalent.
			Consequently the essential Taylor and essential Co-Taylor spectrum coincide as well.
		\item\label{item:splitFredholm}
			According to \cite[Proposition 10.2.2]{eschmeierPutinar96} a Banach complex \( (\banachSpace^\bullet, \delta^\bullet) \) is split Fredholm if and only if there are operators \( \varepsilon^p \in \mathcal L(\banachSpace^p, \banachSpace^{p-1}) \) with the property
			\[
			\delta^{p-1} \varepsilon^p + \varepsilon^{p+1} \delta^p \in I + \mathcal K(\banachSpace^p).
			\]		
		\item\label{item:splitFredholmImpliesFredholm}
			\cite[Lemma 2.6.13]{eschmeierPutinar96} implies that every bounded split Fredholm complex is Fredholm.
		\item\label{item:essTaylorSpecIndOfChoice}
			\cite[Section IV.34]{mueller07} tells us that one can compute the essential Taylor spectrum of a family of operators by computing the Taylor spectrum of on a modified space.
			Consequently by discussion in \thref{rem:barA-characters} we find that the essential Taylor spectrum is independent of the generators of \( \mathcal A \) as well.
	\end{enumerate}
\end{rem}

\begin{prop}[{{\cite[Lemma 2.2.4]{eschmeierPutinar96}}}]\label{prop:homotopyViaDualKoszul}
	If \( B = (B_1,\ldots,B_n) \in \mathcal L(\complVs_{\operatorFam})^n \) belong to the commutant of \( \operatorFam \), i.e., \( [B_i,T_j]=0 \) for all \( i,j\), and \( C = \sum_{i=1}^n \operator_i B_i \) is a bounded operator on \( \complVs \), then 
	\[
	C \colon \Ext_{\polyAlg}^p(\Cnum, \complVs_{\operatorFam}) \to \Ext_{\polyAlg}^p(\Cnum, \complVs_{\operatorFam}), 
	\]
	induced by the \( \Ext \)-functor via the componentwise action of \( C \), is the zero operator for each \( p \).
	The same can be shown for \( \Tor^{\polyAlg}_\ast(\Cnum, V_{\operatorFam}) \).
	
	Here \( \Cnum = \Cnum_\chi \) for the trivial character \( \chi \) which is the identity on \( \Cnum \) and zero everywhere else.
\end{prop}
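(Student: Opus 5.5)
We prove the statement with the standard contracting-homotopy argument for Koszul complexes, in its cochain form (Ext). By \thref{cor:koszulReso} and the discussion after it, $\Ext^p_{\polyAlg}(\Cnum,\complVs_{\operatorFam})$ is the $p$-th cohomology of the Koszul--Taylor cochain complex $(\Lambda^\bullet\Cnum^n\otimes\complVs,\delta^\bullet_{\operatorFam})$ from \eqref{equ:diffOfKoszulCochain}. The action of $C$ on this cohomology is induced by the chain map $\id_{\Lambda^p}\otimes C$, which is a chain map because $C$ commutes with every $\operator_i$. It therefore suffices to show that $\id\otimes C$ is null-homotopic, i.e.\ to construct maps $h^p\colon\Lambda^p\Cnum^n\otimes\complVs\to\Lambda^{p-1}\Cnum^n\otimes\complVs$ with
\[
\delta^{p-1}_{\operatorFam}h^p+h^{p+1}\delta^p_{\operatorFam}=\id\otimes C .
\]

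As homotopy we take the contraction twisted by $B$:
\[
h^p(v\otimes x)\coloneqq\sum_{j=1}^n e_j\lrcorner v\otimes B_j x .
\]
Expanding the two compositions gives
\begin{align*}
\delta h(v\otimes x)&=\sum_{i,j}e_i\wedge(e_j\lrcorner v)\otimes \operator_iB_jx,\\
h\delta(v\otimes x)&=\sum_{i,j}e_j\lrcorner(e_i\wedge v)\otimes B_j\operator_ix .
\end{align*}
Since $[B_j,\operator_i]=0$, the operator parts agree, and the sum becomes
\[
\sum_{i,j}\bigl(e_i\wedge(e_j\lrcorner v)+e_j\lrcorner(e_i\wedge v)\bigr)\otimes \operator_iB_jx .
\]
Here we use the Clifford-type anticommutation identity for exterior and interior multiplication on $\Lambda^\bullet\Cnum^n$:
\[
e_i\wedge(e_j\lrcorner v)+e_j\lrcorner(e_i\wedge v)=\delta_{ij}v .
\]
The sum therefore collapses to $v\otimes\sum_i\operator_iB_ix=v\otimes Cx$, so $\id\otimes C$ is null-homotopic and hence induces zero on cohomology, i.e.\ on each $\Ext^p$.

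For $\Tor$ the argument is the same on the chain complex with differential $\tilde\delta_{\operatorFam}$ from \eqref{equ:diffOfKoszulChain}. Here the homotopy is exterior multiplication, $v\otimes x\mapsto\sum_j e_j\wedge v\otimes B_jx$, and the same anticommutation identity applies. Alternatively, one can transport the statement via the duality $H_p\cong H^{n-p}$ of \thref{rem:onKoszulComplex}, provided one checks that this isomorphism is natural with respect to $C$.

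All maps involved are bounded, since the $B_j$ are bounded and the spaces $\Lambda^p\Cnum^n$ are finite dimensional. The homotopy is thus a homotopy of Banach complexes, which is what later applications (the parametrix constructions) need.

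The main obstacle is bookkeeping rather than substance: the sign and convention choices in $\lrcorner$ and $\wedge$ must match those in \eqref{equ:diffOfKoszulCochain}, so that the anticommutator is exactly $\delta_{ij}$ and not $-\delta_{ij}$; in the latter case one replaces $h$ by $-h$. A second check is that the induced action of $C$ on $\Ext$ via functoriality coincides with the action of $\id\otimes C$ on the Koszul model. This holds because the isomorphism of \thref{cor:koszulReso} comes from $\Hom_{\polyAlg}(K(\mathbf x),-)$ applied to $\complVs$ and is natural in module endomorphisms, and $C$ is a $\polyAlg$-module endomorphism.
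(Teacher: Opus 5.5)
Your proof is correct and is essentially the paper's argument. The paper treats the $\Tor$ case, using the homotopy $\alpha^p(e_I\otimes x)=\sum_k e_k\wedge e_I\otimes B_k x$ against the contraction differential $\widetilde\delta_{\operatorFam}$ and concluding $Cu=\widetilde\delta^{p+1}_{\operatorFam}\alpha^p u$ for every cycle $u$; you write out the mirror-image $\Ext$ case, with the contraction homotopy against the wedge differential, and obtain $\Tor$ by swapping the roles of $\wedge$ and contraction. Your caution about signs is well placed: with the $(-1)^k$ convention in the paper's explicit formula for $\widetilde\delta_{\operatorFam}$, the anticommutator yields $-\id\otimes C$ rather than $\id\otimes C$, which is harmless for the vanishing on (co)homology.
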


As it will play an important role we shortly discuss the proof of \thref{prop:homotopyViaDualKoszul} as presented in \cite{eschmeierPutinar96} for our setting.

\begin{proof}
	We use the Koszul resolution to compute the homology groups for \( \Tor \).
	Note that the differentials \( \widetilde{\delta}_{\operatorFam}^p\) from \eqref{equ:diffOfKoszulChain} are given by
	\[
	\widetilde{\delta}_{\operatorFam}^p(e_{i_1} \wedge \ldots \wedge e_{i_p} \otimes x) = \sum\limits_{k=1}^p (-1)^k e_{i_1} \wedge \ldots \wedge \widehat{e}_{i_k} \wedge \ldots \wedge e_{i_p} \otimes \operator_{i_k} x.
	\]
	We define module homomorphisms \( \alpha^p \colon \Lambda^p \Cnum^n \otimes_{\Cnum} V_{\operatorFam} \to \Lambda^{p+1} \Cnum^n \otimes_{\Cnum} V_{\operatorFam} \) by
	\[
	\alpha^p(e_{i_1} \wedge \ldots \wedge e_{i_p} \otimes x) = \sum\limits_{k=1}^n e_k \wedge e_{i_1} \wedge \ldots \wedge e_{i_p} \otimes B_k x.
	\]
	Here we want to let \( \genVar_i \) act on \( \complVs \) via \( B_i \) this coincides with the differential \( \delta_{B}^p \).
	
	One can check that
	\[
	\left(\alpha^{p-1} \widetilde{\delta}_{\operatorFam}^p + \widetilde{\delta}_{\operatorFam}^{p+1} \alpha^p \right)(e_I \otimes x) = e_I \otimes C x = (\id_{\Lambda^p \Cnum^n} \otimes C)(e_I \otimes x)
	\]
	holds for each tuple \( I \) if strictly increasing integers \( 1 \leq i_1 < \ldots < i_p \leq n \) and each \( x \in V \).
	The assertion now follows from the following calculation:
	Let \( u\in\Ker\widetilde{\delta}_{\operatorFam}^p \), then
	\[
	C u = \left(\alpha^{p-1} \widetilde{\delta}_{\operatorFam}^p + \widetilde{\delta}_{\operatorFam}^{p+1} \alpha^p \right) u = \widetilde{\delta}_{\operatorFam}^{p+1} \alpha^p u
	\]
	and consequently \( C \) is the zero operator on \( \Ker\widetilde{\delta}_{\operatorFam}^p / \Image \widetilde{\delta}_{\operatorFam}^{p+1} \).
\end{proof}

\begin{rem}\label{rem:EP-Lemma_chi}
	\mbox{}
	\begin{enumerate}[(i)]
		\item\label{item:CisChainMap}
			The proof of \thref{prop:homotopyViaDualKoszul} shows that \( \id \otimes C \) is a chain map.
		\item\label{item:CforArbiChar}
			For an arbitrary character \( \chi \) we need to replace \( C \) by \( \sum_{i=1}^n (\operator_i - \chi(\operator_i))B_i \).
			That means the operator \( C = \sum_{i=1}^n (\operator_i - \chi(\operator_i))B_i \) induces the zero operator on \( \Tor_\ast^{\polyAlg}(\Cnum_\chi, V_{\operatorFam}) \).
			To see that one takes the family \( \operatorFam - \chi \coloneqq (\operator_1 - \chi(\operator_1),\ldots,\operator_n - \chi(\operator_n)) \) and follows the steps of the proof.
	\end{enumerate}
\end{rem}

\begin{lem}\label{lem:zeroToInvertibleImpliesToOne}
	Let \( (\banachSpace^\bullet, d^\bullet) \) be a cochain complex of finite length of Banach spaces
 \begin{center}
  \begin{tikzcd}[font=\normalsize]
   0 \arrow[r] & \banachSpace^{0} \arrow[r, "d^{0}"] & \banachSpace^{1} \arrow[r, "d^{1}"] & \ldots \arrow[r, "d^{n-1}"] & \banachSpace^n \arrow[r] & 0
  \end{tikzcd}
 \end{center}
	and \( Q^p \in \mathcal L(\banachSpace^p ,\banachSpace^{p-1}) \) such that
	\[
	Q^{p+1} d^p + d^{p-1} Q^p = T^p + K^p
	\]
	with \( T^p \in \mathcal L(\banachSpace^p)\) invertible and \( K^p \in \mathcal K(\banachSpace^p) \) compact for every \( p \).
	Then \( (\banachSpace^\bullet, d^\bullet) \) is a Fredholm complex.
	
	In other words, if there exist linear maps \( (Q^\bullet) \) such that \( d Q + Q d \) is Fredholm of index \( 0 \) (see \thref{defi:fredholmIndexZero}) then \( (\banachSpace^\bullet, d^\bullet) \) is a Fredholm complex.
\end{lem}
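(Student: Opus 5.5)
The plan is to avoid trying to invert \(T^p\) in a way compatible with the differentials. Instead, we work directly with the operator
\[
D^p \coloneqq Q^{p+1} d^p + d^{p-1} Q^p = T^p + K^p \in \mathcal L(\banachSpace^p).
\]
Being invertible plus compact, \(D^p\) is Fredholm of index \(0\). The first observation is that \(D^\bullet\) is automatically a chain map, whether or not \(T^\bullet\) is. Indeed, using \(d^{p}d^{p-1}=0\),
\[
d^p D^p = d^p Q^{p+1} d^p = D^{p+1} d^p .
\]
Moreover \(D^\bullet\) is null-homotopic by construction. So on a cocycle \(z \in Z^p \coloneqq \Ker d^p\) we get
\[
D^p z = d^{p-1}Q^p z \in \Image d^{p-1}.
\]
Thus \(D^p(Z^p) \subseteq \Image d^{p-1} \subseteq Z^p\). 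The whole proof then reduces to showing that \(D^p(Z^p)\) is a closed subspace of finite codimension in \(Z^p\).

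\textbf{Closedness.} Since \(D^p\) is Fredholm, it has finite-dimensional kernel and closed range. Its restriction to the closed subspace \(Z^p\) is therefore upper semi-Fredholm, so \(D^p(Z^p)\) is closed. To see this directly: choose a closed complement \(Z_1\) of the finite-dimensional space \(\Ker D^p \cap Z^p\) in \(Z^p\). Then \(D^p|_{Z_1}\) is bounded below, because \(D^p\) is bounded below on a complement of its kernel. Hence \(D^p(Z_1) = D^p(Z^p)\) is closed.

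\textbf{Finite codimension, the main step.} Pick a parametrix \(P\) of \(D^p\) with \(D^pP = I - F\), where \(F\) has finite rank. For \(z \in Z^p\) we have \(z = D^pPz + Fz\). The difficulty is that \(Pz\) need not be a cocycle. Applying \(d^p\) and using the chain-map property gives
\[
0 = d^p z = D^{p+1} d^p P z + d^p F z ,
\]
so \(d^pPz \in (D^{p+1})^{-1}\bigl(d^pF(\banachSpace^p)\bigr) \eqqcolon V\). The space \(V\) is finite dimensional, since \(\Ker D^{p+1}\) is finite dimensional and \(d^pF(\banachSpace^p)\) is finite dimensional. Hence the linear map \(z \mapsto d^pPz\) from \(Z^p\) to \(V\) has a kernel \(Z_0\) of finite codimension in \(Z^p\). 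For \(z \in Z_0\) we have \(Pz \in Z^p\), and therefore
\[
z \in D^p(Z^p) + F(\banachSpace^p).
\]
Consequently
\[
Z^p = D^p(Z^p) + F(\banachSpace^p) + W
\]
for a finite-dimensional complement \(W\) of \(Z_0\) in \(Z^p\). This shows \(\dim Z^p / D^p(Z^p) < \infty\).

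\textbf{Conclusion.} We have \(D^p(Z^p) \subseteq \Image d^{p-1} \subseteq Z^p\), with \(D^p(Z^p)\) closed and of finite codimension in the closed space \(Z^p\). Therefore \(\Image d^{p-1}\) is the sum of a closed subspace and a finite-dimensional one, hence closed. Also \(H^p = Z^p/\Image d^{p-1}\) is finite dimensional. As the complex has finite length, this holds in every degree and only finitely many degrees are nonzero, so \((\banachSpace^\bullet, d^\bullet)\) is Fredholm in the sense of \thref{rem:BanachFredholm}.

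The step I expect to need the most care is the finite-codimension argument above. There one must check that the chain-map property of \(D^\bullet\) in degree \(p+1\) is used correctly, and that the Fredholmness of \(D^{p+1}\) (not only of \(D^p\)) is what forces \(V\) to be finite dimensional.
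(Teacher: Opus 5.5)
Your argument is correct, and it takes a genuinely different route from the paper.

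\textbf{The paper's route.} The paper first normalizes $T^p$ to the identity. From the chain-map property of $T^\bullet+K^\bullet$ it deduces that $d^p(T^p)^{-1}-(T^{p+1})^{-1}d^p$ is compact. It then replaces $Q^p$ by $\widetilde Q^p=(T^{p-1})^{-1}Q^p$, which gives $\widetilde Q^{p+1}d^p+d^{p-1}\widetilde Q^p\in \id+\mathcal K(\banachSpace^p)$. Finally it invokes Eschmeier--Putinar: Prop.~10.2.2 gives split Fredholm, and Lemma~2.6.13 gives that split Fredholm implies Fredholm.

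\textbf{Your route.} You never normalize. You use that $D^\bullet=dQ+Qd$ is an honest null-homotopic chain map, so $D^p(Z^p)\subseteq\Image d^{p-1}$. You then show by hand, with single-operator Fredholm theory only, that $D^p(Z^p)$ is closed and of finite codimension in $Z^p$. The correction through $\Ker D^{p+1}$, for parametrix outputs that are not cocycles, is exactly where the cited black box gets replaced.

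\textbf{Comparison.} The paper's version is shorter given the references, and it yields the stronger property of split Fredholmness. Yours is self-contained and uses only that each $D^p$ is Fredholm, of any index. So you prove a slightly more general statement and cover the ``in other words'' formulation directly, without first writing $dQ+Qd$ as invertible plus compact.

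\textbf{Two small points to tidy.}
\begin{itemize}
\item Since $F(\banachSpace^p)$ need not consist of cocycles, your displayed equality should be an inclusion, $Z^p\subseteq D^p(Z^p)+W+F(\banachSpace^p)$. This still gives $\dim Z^p/D^p(Z^p)\le \dim W+\rank F$.
\item The bounded-below claim on $Z_1$ is best justified by $\norm{D^p x}\ge c\,\mathrm{dist}(x,\Ker D^p)$, together with the fact that $Z_1\oplus\Ker D^p$ is a topological direct sum because $\Ker D^p$ is finite dimensional. Alternatively, note that $D^p$ induces an isomorphism $\banachSpace^p/\Ker D^p\to\Image D^p$ and that $Z^p+\Ker D^p$ is closed.
\end{itemize}
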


\begin{proof}
	We show that there exists a family of linear maps \( (\widetilde{Q}^\bullet) \) such that
	\begin{equation}\label{equ:homotopyToFredholm}
		\widetilde{Q}^{p+1} d^p + d^{p-1} \widetilde{Q}^p = \id^p + \widetilde{K}^p
	\end{equation}
	for some compact operator \( \widetilde{K}^p \in \mathcal K(\banachSpace^p) \).
	As \(T^p + K^p = Q^{p+1} d^p + d^{p-1} Q^p\) one directly sees that \( (T^p + K^p) \) is a chain morphism (see \cite[p.~16f.]{weibel95}) that means
	\[
	d^p \circ (T^p+K^p) = (T^{p-1}+K^{p-1}) \circ d^p.
	\]
	Transforming this we find \( d^p \circ T^p - T^{p+1} \circ d^p \in \mathcal K(\banachSpace^p, \banachSpace^{p+1}) \).
	Since \( \mathcal L(X^{p+1}) \cdot \mathcal K(\banachSpace^p, \banachSpace^{p+1}) \subseteq \mathcal K(\banachSpace^p, \banachSpace^{p+1}) \) and \( \mathcal K(\banachSpace^p, \banachSpace^{p+1}) \cdot \mathcal L(\banachSpace^p) \) we also get
	\begin{equation}\label{equ:commuUptoCompact}
		d^p (T^p)^{-1} - (T^{p+1})^{-1} d^p \in \mathcal K(\banachSpace^p, \banachSpace^{p+1}).
	\end{equation}
	We define \( \widetilde{Q}^p \colon \banachSpace^p \to \banachSpace^{p-1} \) as follows
	\[
	\widetilde{Q}^p \coloneqq 
	\begin{cases}
		0 & \text{if } p = 0 \\
		(T^{p-1})^{-1} Q^p & \text{if } p = 1,\ldots,n+1 \\
	\end{cases}
	\]
	We check that this gives us \eqref{equ:homotopyToFredholm}.
	We compute
	\begin{align*}
		\widetilde{Q}^{p+1} d^p + d^{p-1} \widetilde{Q}^p &= (T^p)^{-1} Q^{p+1} d^p + d^{p-1} (T^{p-1})^{-1} Q^p \\
		\shortintertext{by using \eqref{equ:commuUptoCompact} we find a compact operator \( K \in \mathcal K(X^{p-1}, X^p) \) such that}
		(T^p)^{-1} Q^{p+1} d^p + d^{p-1} (T^{p-1})^{-1} Q^p &= (T^p)^{-1} Q^{p+1} d^p + \left( (T^p)^{-1} d^{p-1} + K \right) Q^p \\
		&= (T^p)^{-1} Q^{p+1} d^p + (T^p)^{-1} d^{p-1} Q^p + K Q^p \\
		&= (T^p)^{-1} \left(Q^{p+1} d^p + d^{p-1} Q^p \right) + K Q^p \\
		&= (T^p)^{-1} (T^p + K^p) + K Q^p \\
		&= \id^p + (T^p)^{-1} K^p + K Q^p.
	\end{align*}
	As the sum of two compact operators remains compact we have found the desired form of \eqref{equ:homotopyToFredholm}.
	From \thref{rem:BanachFredholm}\ref{item:splitFredholm} we know that the cochain complex \( (\banachSpace^\bullet, d^\bullet) \) is split Fredholm.
	Then \thref{rem:BanachFredholm}\ref{item:splitFredholmImpliesFredholm} shows that it is Fredholm as well.
\end{proof}

For the following lemma recall the Riesz projector from \thref{prop:rieszProjEigspaceInImage-Text}.

\begin{lem}[cf.\ {\cite[Lemma 3.9]{bghw20}}]\label{lem:projectionOnEigenspace}
	Let \( \chi \in \Hom_{\Cnum\text{-}\mathrm{alg.}}(\polyAlg, \Cnum) \).
	Assume there exist operators \( B_1,\ldots,B_n \in \mathcal A \) such that
	\[
	C - \lambda \id_{\complVs} \coloneqq \sum\limits_{i=1}^n (\operator_i - \chi(\operator_i)) B_i \colon \complVs \to \complVs
	\]
	where \( C \) is a quasi-compact operator on \( \complVs \) and \( \lambda \in \sigma(C) \) with \( \abs{\lambda} > \essSpecRad(C) \).
	Denote by \( \widetilde{P}_0 \) the Riesz projector on the eigenvalue \( 0 \) of the operator \( C - \lambda \id_{\complVs} \).
	Then we have
	\begin{align*}
		\Tor_i^{\polyAlg}(\Cnum_\chi, \complVs_{\operatorFam}) &\cong \Tor_i^{\polyAlg}(\Cnum_\chi, \Image \widetilde{P}_0), \\
		\Ext^i_{\polyAlg}(\Cnum_\chi, \complVs_{\operatorFam}) &\cong \Ext^i_{\polyAlg}(\Cnum_\chi, \Image \widetilde{P}_0).
	\end{align*}
\end{lem}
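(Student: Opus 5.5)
We plan to follow the strategy of \cite[Lemma 3.9]{bghw20}: split \( \complVs \) along the Riesz projector \( \widetilde{P}_0 \) of \( C-\lambda\id_{\complVs} \) into two \( \mathcal A \)-invariant closed subspaces. We then show that the Koszul--Taylor complex of the complementary summand is exact, so all homology is carried by \( \Image \widetilde{P}_0 \).

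\textbf{Invariance.} First, \( \widetilde{P}_0 \) is a norm limit of polynomials in the resolvent of \( C \), or equivalently a contour integral of \( (z-(C-\lambda))^{-1} \). The operator \( C-\lambda \) lies in \( \mathcal A \), since the \( B_i \in \mathcal A \). Hence \( \widetilde{P}_0 \) commutes with every \( \operator_i \), because the \( \operator_i \) commute with \( C \) and therefore with its resolvent.

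\textbf{Splitting.} This gives a decomposition \( \complVs = \Image \widetilde{P}_0 \oplus \Ker \widetilde{P}_0 \) into closed subspaces that are invariant under all \( \operator_i \) and all \( B_i \). The Koszul--Taylor chain and cochain complexes \eqref{equ:diffOfKoszulChain}, \eqref{equ:diffOfKoszulCochain} for \( \operatorFam-\chi \) then split as the direct sum of the corresponding complexes on the two summands. Since \( \Tor \) and \( \Ext \) are computed by these complexes and commute with finite direct sums of modules, we get
\[
\Tor_i^{\polyAlg}(\Cnum_\chi, \complVs_{\operatorFam}) \cong \Tor_i^{\polyAlg}(\Cnum_\chi, \Image \widetilde{P}_0)\oplus \Tor_i^{\polyAlg}(\Cnum_\chi, \Ker \widetilde{P}_0),
\]
and likewise for \( \Ext \).

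\textbf{Vanishing on \( \Ker \widetilde{P}_0 \).} By \thref{rem:propOfFredholmAndQuasiCompOp}\ref{item:splittingAndIsomOfGenEigen}, applied with \( \finiteSpace=\Cnum \), the space \( \Ker \widetilde{P}_0 \) equals \( \Image (C-\lambda)^j \), and \( C-\lambda\id \) restricts to an isomorphism of this space. By \thref{prop:homotopyViaDualKoszul} in the form of \thref{rem:EP-Lemma_chi}\ref{item:CforArbiChar}, applied to the restricted family on \( \Ker\widetilde{P}_0 \) (legitimate because the \( B_i \) preserve it), the operator \( C-\lambda = \sum_i(\operator_i-\chi(\operator_i))B_i \) induces the zero map on \( \Tor_*^{\polyAlg}(\Cnum_\chi,\Ker\widetilde{P}_0) \). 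On the other hand, it induces an isomorphism there, since it is invertible and commutes with the differentials, so its inverse is a chain map. Both can hold only if these homology groups vanish. The \( \Ext \) case is identical, or follows from \eqref{equ:homAndCohomIsom}.

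\textbf{Main obstacle.} The delicate step is verifying that \( \widetilde{P}_0 \) commutes with every \( \operator_i \). One might worry that the \( \operator_i \) do not commute with the resolvent of \( C \); but \( C \) lies in the commutative algebra \( \mathcal A \), and the Riesz integral commutes with anything that commutes with \( C \). A second point needing care is that the isomorphism induced by an invertible chain endomorphism agrees with the map on homology from \thref{prop:homotopyViaDualKoszul}. This holds because both are the map on homology induced by \( \id\otimes (C-\lambda) \).
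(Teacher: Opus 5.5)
Your proposal is correct and is essentially the paper's proof. The paper splits $\complVs = \Ker F^j \oplus \Image F^j$ with $F = C-\lambda\id_{\complVs}$, which is the same decomposition as your $\Image\widetilde{P}_0 \oplus \Ker\widetilde{P}_0$. It then kills the second summand exactly as you do: by \thref{prop:homotopyViaDualKoszul}, $F$ induces the zero map on homology, while it is also an isomorphism on that summand. Your extra justification that $\widetilde{P}_0$ commutes with the $\operator_i$ and $B_i$ fills in a step the paper only asserts.
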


\begin{proof}
	We only prove the \( \Tor \)-case.
	
	We know that \( P_0 \coloneqq \id_{\Lambda^p \Cnum^n} \otimes \widetilde{P}_0 \) is the Riesz projector of \( F \coloneqq C - \lambda \id_{\complVs} \) on the eigenvalue \( 0 \).
	For every \( m \in \Nnum_0 \) the operator \( F^m \) commutes with \( \operator_i \) and \( B_i \) for every \( i = 1,\ldots,n \).
	
	Hence the spaces \( \Ker F^j \) and \( \Image F^j \) are \( \polyAlg \)-submodules of \( \complVs_{\operatorFam} \).
	
	Let \( j \in \Nnum_0 \) be the smallest number such that \( \Ker F^j = \Ker F^{j+k} \) for all \( k \in \Nnum_0 \).
	The decomposition \( \complVs = \Ker F^j \oplus \Image F^j \) is therefore a decomposition into \( \polyAlg \)-submodules which are also \( F \)-invariant.
	As \( \Tor \) is additive in the second component by \cite[Cor.~2.6.12]{weibel95} we know that
	\[
	\Tor_i^{\polyAlg}\left(\Cnum_\chi, \complVs_{\operatorFam} \right) \cong \Tor_i^{\polyAlg}\left(\Cnum_\chi, \Ker F^j \right) \oplus \Tor_i^{\polyAlg}\left(\Cnum_\chi, \Image F^j \right).
	\]
	We want to show that \( \Tor_i^{\polyAlg}(\Cnum_\chi, \Image F^j) = 0 \).

	We take the Koszul resolution of \( \Cnum_\chi \) as a \( \polyAlg \)-module to compute the homology groups.
	Since both \( \Ker F^j \) and \( \Image F^j \) are both \( \operatorFam \)- and \( B \)-invariant, the chain homotopy \( \id_{\Lambda^p \Cnum^n} \otimes F \) restricts to the Koszul complexes tensored with \( \Ker F^j \) and \( \Image F^j \) respectively.
	From \thref{rem:EP-Lemma_chi}\ref{item:CisChainMap} we know that \( F_p \coloneqq \id_{\Lambda^p \Cnum^n} \otimes F \) is a chain morphism and moreover by \thref{prop:homotopyViaDualKoszul} the induced morphism on the homology groups is the zero map.
	
	On the other hand, we also know that (see \thref{rem:propOfFredholmAndQuasiCompOp}\ref{item:splittingAndIsomOfGenEigen}) \( \id_{\Lambda^p \Cnum^n} \otimes F \colon \Lambda^p \Cnum^n \otimes \Image F^j \to \Lambda^p \Cnum^n \otimes \Image F^j \) is an isomorphism for every \( p \).
	As \( H_p(\id_{\Lambda^p \Cnum^n} \otimes F) \) is functorial, isomorphisms map to isomorphisms.
	Hence the zero map is an isomorphism and therefore \( \Tor_i^{\polyAlg}(\Cnum_\chi, \Image F^j) = 0 \) for all \( i \in \Znum \).	

\end{proof}

\subsection{Representations of Finitely Generated Commutative Monoids}\label{subsec:repOnFinitelyGenCommuMonoids}

We keep the hypotheses and notations from \Cref{subsec:Banach-modules}.
In addition, let \( (\semigr,\ast,e) \) be an abelian monoid generated by elements \( \varpi_1,\ldots,\varpi_n \).
For \( \semigrelemB = \varpi_1^{\ell_1} \ast \ldots \ast \varpi_n^{\ell_n}\in \semigr \) with \( \ell_j \in \Nnum_0 \) we set \( \operator_0 = \id_{\complVs} \) and 
\[
\operator_\semigrelemB \coloneqq \operator_1^{\ell_1} \circ \ldots \circ \operator_n^{\ell_n}.
\]
Then \( \semigr \to \mathcal L(\complVs),\ \semigrelemB \mapsto \operator_\semigrelemB \) is a homomorphism of monoids satisfying \( \operator_{\semigrgen_j} = \operator_j \).
This extends to an algebra morphism \( \Cnum[\semigr] \to \mathcal L(\complVs) \).
By the discussion in \thref{rem:barA-characters} we can naturally identify the Taylor spectrum as a subset \( \sigma_{\mathrm{T}}(\operatorFam) \subseteq \Hom_{\Cnum\text{-}\mathrm{alg.}}(\Cnum[\semigr], \Cnum) \).

\begin{prop}\label{prop:parametrixForSemigr}
	Given a character \( \chi \in \Hom_{\Cnum\text{-}\mathrm{alg.}}(\polyAlg, \Cnum) \) and some \( \semigrelemB \in \semigr \), there exists a family of operators \( B(\chi) = (B_1(\chi), \ldots, B_n(\chi)) \in \mathcal L(V)^n \) such that
	\begin{enumerate}[(i)]
		\item
			\( B_i(\chi) \in \mathcal A = \spr{\operator_{\semigrgen_1},\ldots, \operator_{\semigrgen_n}}_{\Cnum\text{-}\mathrm{alg.}} \), in particular \( B(\chi) \) is a commuting family and \( [\operator_{\semigrgen_i}, B_j(\chi)] = 0 \) for all \( i,j = 1,\ldots,n \) and
		\item
			the \( B_i(\chi) \) satisfy
			\[
			\sum\limits_{i=1}^n (\operator_{\semigrgen_i} - \chi(\operator_{\semigrgen_i}))B_i(\chi) = \operator_{\semigrelemB} - \chi(\operator_{\semigrelemB}).
			\]
	\end{enumerate}
\end{prop}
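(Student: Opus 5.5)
We will reduce the statement to a purely algebraic identity in the polynomial ring \( \polyAlg \), and then transport it to operators via the surjection \( \polyAlg \twoheadrightarrow \mathcal A \), \( \genVar_i \mapsto \operator_{\semigrgen_i} \). Write \( \semigrelemB = \varpi_1^{\ell_1} \ast \ldots \ast \varpi_n^{\ell_n} \) and set \( a_i \coloneqq \chi(\genVar_i) \), so that \( \chi(\operator_{\semigrgen_i}) = a_i \). In this notation \( \chi(\operator_\semigrelemB) = a_1^{\ell_1}\cdots a_n^{\ell_n} \) by multiplicativity, following the convention of \thref{rem:barA-characters}. The monomial \( P \coloneqq \genVar_1^{\ell_1}\cdots \genVar_n^{\ell_n} \) is mapped to \( \operator_\semigrelemB \).

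The key observation is that \( P - P(a) \) lies in \( \Ker \chi_a = (\genVar_1 - a_1,\ldots,\genVar_n - a_n) \). Hence there are polynomials \( Q_1,\ldots,Q_n \in \polyAlg \) with
\[
P - P(a) = \sum_{i=1}^n (\genVar_i - a_i) Q_i .
\]
We will make the \( Q_i \) explicit by telescoping. Put \( P_k \coloneqq a_1^{\ell_1}\cdots a_{k-1}^{\ell_{k-1}} \genVar_k^{\ell_k}\cdots \genVar_n^{\ell_n} \) for \( k = 1,\ldots,n+1 \), so that \( P_1 = P \) and \( P_{n+1} = P(a) \). Then \( P_k - P_{k+1} \) equals \( (\genVar_k^{\ell_k} - a_k^{\ell_k}) \) times a monomial. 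Using the factorization
\[
\genVar_k^{\ell} - a_k^{\ell} = (\genVar_k - a_k)\sum_{j=0}^{\ell-1}\genVar_k^{j} a_k^{\ell-1-j},
\]
we can take
\[
Q_k = a_1^{\ell_1}\cdots a_{k-1}^{\ell_{k-1}}\Bigl(\sum_{j=0}^{\ell_k-1}\genVar_k^{j}a_k^{\ell_k-1-j}\Bigr)\genVar_{k+1}^{\ell_{k+1}}\cdots \genVar_n^{\ell_n}.
\]

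Next we define \( B_i(\chi) \) as the image of \( Q_i \) under the algebra homomorphism \( \polyAlg \to \mathcal A \). Applying this homomorphism to the polynomial identity gives (ii), since \( (\genVar_i - a_i) \) maps to \( \operator_{\semigrgen_i} - \chi(\operator_{\semigrgen_i}) \) and \( P - P(a) \) maps to \( \operator_\semigrelemB - \chi(\operator_\semigrelemB) \). Property (i) is immediate because the \( B_i(\chi) \) lie in the commutative algebra \( \mathcal A \), which contains the \( \operator_{\semigrgen_j} \). Boundedness holds because each \( B_i(\chi) \) is a finite polynomial in bounded operators.

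There is no real obstacle here. The only point needing care is bookkeeping: the characters have to be evaluated consistently on the generators, and the identification of \( \chi(\operator_\semigrelemB) \) with \( P(a) \) must be justified. This identification is exactly the multiplicativity of \( \chi \) on the monoid image, and it holds by definition of the induced character on \( \Cnum[\semigr] \).
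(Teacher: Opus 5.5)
Your proof is correct and follows the paper's argument: express $\operator_\semigrelemB - \chi(\operator_\semigrelemB)$ as the image of $P - P(a)$, note that this polynomial lies in the ideal $(X_1 - a_1,\ldots,X_n - a_n)$, and push the identity forward along $\polyAlg \twoheadrightarrow \mathcal A$. The only difference is that you write down the $Q_i$ explicitly by telescoping, whereas the paper obtains them by citing Hilbert's Nullstellensatz, which is more than needed since, as your computation shows, the ideal membership is elementary.
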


\begin{proof} Let \(\semigrelemB = \sum\limits_{i=1}^n \ell_i \semigrgen_i\) with
\( \ell_i \in \Nnum_0 \), so that
	\[
	\operator_\semigrelemB = \prod\limits_{i=1}^n \operator_{\semigrgen_i}^{\ell_i} \quad \text{and} \quad \chi(\operator_\semigrelemB) = \prod\limits_{i=1}^n \chi(\operator_{\semigrgen_i})^{\ell_i}.
	\]
	The polynomial \( P \coloneqq \prod_{i=1}^n X_i^{\ell_i} - \prod\limits_{i=1}^n \chi(\operator_{\semigrgen_i})^{\ell_i} \) vanishes at
	\( b \coloneqq (\chi(\operator_{\semigrgen_1}), \ldots, \chi(\operator_{\semigrgen_n})) \), so by Hilbert's Nullstellensatz \( P \) is contained in the ideal 
	\[
	(X_1 - \chi(\operator_{\semigrgen_1}),\ldots,X_n - \chi(\operator_{\semigrgen_n}))
	\]
	generated by the linear polynomials \( X_j - \chi(\operator_{\semigrgen_j}) \).
	That means there exists polynomials \( Q_1,\ldots,Q_n \in \nPolyAlg \) such that
	\[
	\sum\limits_{i=1}^n (X_i - \chi(\operator_{\semigrgen_i})) \cdot Q_i = P = \prod\limits_{i=1}^n X_i^{\ell_i} - \prod\limits_{i=1}^n \chi(\operator_{\semigrgen_i})^{\ell_i}. 
	\]
	Then we choose \( B_i(\chi) \) to be the image of the \( Q_i \) under the surjective algebra homomorphism \( \polyAlg \to \mathcal A \).
	Then the \( B_i(\chi) \) clearly have the desired properties.
\end{proof}

Note that \thref{prop:parametrixForSemigr} and its proof make no use of the Banach space structure of \( \complVs \).
The topology is, however, needed for the following definition.

\begin{defi}
	\( \semigr^\circ \coloneqq \Set{\semigrelemB \in \semigr \given  \operator_\semigrelemB \text{ is quasi-compact}} \).
\end{defi}

We will always want that \( \semigr^\circ \neq \emptyset \) so there exists at least one quasi-compact operator among the \( \operator_\semigrelemB \).

\begin{prop}\label{prop:Fredholm} 
	Given a character \( \chi \in \Hom_{\Cnum\text{-}\mathrm{alg.}}(\polyAlg, \Cnum) \) and some \( \semigrelemB \in H^\circ \).
	If \( \abs{\chi(\operator_{\semigrelemB})} > \essSpecRad(\operator_{\semigrelemB}) \), then \( \chi \) is Fredholm in the sense of \thref{defi:essentialTaylorSpec}.
\end{prop}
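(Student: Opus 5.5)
We reduce the Koszul--Taylor complex of \( \operatorFam - \chi \) to a complex on a finite-dimensional space, using the parametrix of \thref{prop:parametrixForSemigr} and the Riesz projector of the quasi-compact operator \( \operator_\semigrelemB \).

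\textbf{Parametrix and Riesz projector.} First apply \thref{prop:parametrixForSemigr} to \( \chi \) and \( \semigrelemB \). This gives \( B_1(\chi),\ldots,B_n(\chi) \in \mathcal A \) with
\[
\sum_{i=1}^n (\operator_{\semigrgen_i} - \chi(\operator_{\semigrgen_i}))B_i(\chi) = \operator_\semigrelemB - \lambda\id_{\complVs}, \qquad \lambda \coloneqq \chi(\operator_\semigrelemB).
\]
Since \( \semigrelemB \in \semigr^\circ \), the operator \( C \coloneqq \operator_\semigrelemB \) is quasi-compact. By hypothesis \( \abs{\lambda} > \essSpecRad(C) \).

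There are two cases. If \( \lambda \in \sigma(C) \), we are exactly in the situation of \thref{lem:projectionOnEigenspace}. If \( \lambda \notin \sigma(C) \), then \( F \coloneqq C-\lambda \) is invertible. By \thref{prop:homotopyViaDualKoszul} together with \thref{rem:EP-Lemma_chi}\ref{item:CforArbiChar}, \( F \) induces the zero map on \( \Tor_\ast^{\polyAlg}(\Cnum_\chi,\complVs_\operatorFam) \). Since \( F \) is also an isomorphism on every term of the complex and hence on homology, all \( \Tor_i \) vanish and \( \chi \) is trivially Fredholm.

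\textbf{Reduction to a finite-dimensional space.} In the first case, \thref{lem:projectionOnEigenspace} gives
\[
\Tor_i^{\polyAlg}(\Cnum_\chi,\complVs_\operatorFam) \cong \Tor_i^{\polyAlg}(\Cnum_\chi,\Image\widetilde P_0),
\]
where \( \widetilde P_0 \) is the Riesz projector of \( C-\lambda \) at \( 0 \). By the discussion in \Cref{subsec:qu-cpt op}, and since \( \lambda \) is a normal eigenvalue (\thref{rem:propOfFredholmAndQuasiCompOp}\ref{item:rieszProjFiniteIm}), the space \( \Image \widetilde P_0 = \Hau(C,\lambda) \) is finite-dimensional. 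It is invariant under all \( \operator_{\semigrgen_i} \), since these commute with \( C \), so it is a \( \polyAlg \)-submodule.

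\textbf{Finiteness of the homology.} The right-hand side is computed by the Koszul--Taylor complex \( \Lambda^p\Cnum^n\otimes \Image\widetilde P_0 \), \( p=0,\ldots,n \). This complex has finitely many nonzero terms, each finite-dimensional. Hence every \( \Tor_i \) is finite-dimensional, and \( \Tor_i = 0 \) for \( i\notin\{0,\ldots,n\} \). This is precisely the definition of a Fredholm character in \thref{defi:essentialTaylorSpec}. By \thref{rem:BanachFredholm}\ref{item:fredholmEquiCoFredholm}, \( \chi \) is also Co-Fredholm.

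\textbf{Main obstacle.} The delicate point is making sure \thref{lem:projectionOnEigenspace} applies as stated. This requires two things. The decomposition \( \complVs=\Ker F^j\oplus\Image F^j \) must consist of \( \polyAlg \)-submodules; this holds because \( F \) lies in the commutative algebra \( \mathcal A \). The Riesz projector must have finite rank, which needs \( \lambda \) to be an isolated eigenvalue outside the essential spectral radius. The invertible case must be handled separately, as done above.
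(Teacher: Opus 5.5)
Your proof is correct, but it takes a different route from the paper's proof of this proposition.

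The paper stays at the level of Banach complexes. From the parametrix of \thref{prop:parametrixForSemigr} it builds a homotopy \( \alpha^\bullet \) on the Koszul complex of \( \operatorFam-\chi \) with \( \alpha^{p-1}\widetilde{\delta}^{p}+\widetilde{\delta}^{p+1}\alpha^{p}=\id_{\Lambda^p\Cnum^n}\otimes(\operator_{\semigrelemB}-\chi(\operator_{\semigrelemB})) \). It notes that the right-hand side is Fredholm of index zero because \( \abs{\chi(\operator_{\semigrelemB})}>\essSpecRad(\operator_{\semigrelemB}) \). It then concludes via \thref{lem:zeroToInvertibleImpliesToOne}: invertible plus compact, hence split Fredholm, hence a Fredholm complex.

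You use the same homotopy purely algebraically, through \thref{lem:projectionOnEigenspace}. The chain map induced by \( F=\operator_{\semigrelemB}-\lambda \) is zero on homology but bijective on the \( \Image F^j \)-summand. Hence the \( \Tor \)-groups are those of the finite-dimensional generalized eigenspace. This is exactly the argument the paper gives immediately afterwards for \thref{thm:jointDiscreteSpecInFiniteSpace}. So in effect you prove that theorem first and read the proposition off from it; this is not circular, since \thref{lem:projectionOnEigenspace} does not use the proposition.

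What your route buys:
\begin{itemize}
\item It avoids the Eschmeier--Putinar machinery and the decomposition of an index-zero Fredholm operator as invertible plus compact.
\item It gives the sharper statement that the \( \Tor \)-groups agree with those of an explicit finite-dimensional module.
\end{itemize}
What it costs:
\begin{itemize}
\item You need the case distinction \( \lambda\in\sigma(\operator_{\semigrelemB}) \) versus \( \lambda\notin\sigma(\operator_{\semigrelemB}) \). You handle it correctly; the second case is just \( j=0 \) of the same argument.
\item You use the discreteness of the spectrum outside \( \essSpecRad(\operator_{\semigrelemB}) \), i.e.\ the finite rank of the Riesz projector, rather than only the Fredholm property of \( \operator_{\semigrelemB}-\lambda \).
\end{itemize}
The paper's route needs no case split and gives closed ranges directly. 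You recover closed ranges as well from \thref{rem:BanachFredholm}\ref{item:boundedBanachAlgFredholm}.
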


\begin{proof}
	The idea of the proof is to apply \thref{lem:zeroToInvertibleImpliesToOne} to the Koszul resolution.
	Let
	\[
	0 \to \Lambda^n \Cnum^n \otimes_{\Cnum} \complVs_{\operatorFam} \to \ldots \to \Cnum^n \otimes_{\Cnum} \complVs_{\operatorFam} \to \Cnum \otimes_{\Cnum} \complVs_{\operatorFam} \to 0
	\]
	be the Koszul complex/resolution for \( \operatorFam \).
	That means the differentials \( \widetilde{\delta}_{\operatorFam}^p \colon \Lambda^p \Cnum^n \otimes_{\Cnum} \complVs_{\operatorFam} \to \Lambda^p \Cnum^n \otimes_{\Cnum} \complVs_{\operatorFam}\) are given by
	\[
	\widetilde{\delta}_{\operatorFam}^p(e_{i_1} \wedge \ldots \wedge e_{i_p} \otimes x) = \sum\limits_{k=1}^p (-1)^k e_{i_1} \wedge \ldots \wedge \widehat{e}_{i_k} \wedge \ldots \wedge e_{i_p} \otimes \operator_{\varpi_{i_k}} x.
	\]
	The family \( B(\chi) = (B_1(\chi),\ldots,B_n(\chi)) \) constructed in \thref{prop:parametrixForSemigr} lies in the commutant of \( \operatorFam = (\operator_{\semigrgen_1} - \chi(\operator_{\semigrgen_1}),\ldots,\operator_{\semigrgen_n} - \chi(\operator_{\semigrgen_n})) \).
	Hence with the same argument as in the proof of \thref{prop:homotopyViaDualKoszul} we obtain the operator
	\begin{align*}
		\left(\alpha^{p-1} \widetilde{\delta}_{\operatorFam}^p + \widetilde{\delta}_{\operatorFam}^{p+1} \alpha^p \right) &= \left(\id_{\Lambda^p \Cnum^n} \otimes \sum\limits_{i=1}^n (\operator_{\semigrgen_i} - \chi(\semigrgen_i)) B_i(\chi) \right) \\
		&= \id_{\Lambda^p \Cnum^n} \otimes (\operator_\semigrelemB - \chi(\operator_\semigrelemB))
	\end{align*}
	 with \( \alpha_p \in \mathcal L(\Lambda^p \Cnum^n \otimes_{\Cnum} \complVs_{\operatorFam}, \Lambda^{p-1} \Cnum^n \otimes_{\Cnum} \complVs_{\operatorFam}) \).
	 As \( \Lambda^p \Cnum^n \) is a finite-dimensional vector space the spectrum and essential spectrum of \( \id_{\Lambda^p \Cnum^n} \otimes \operator_{\semigrelemB} \) agree with the spectrum, respectively the essential spectrum of \( \operator_{\semigrelemB} \).
	 Consequently the \( \id_{\Lambda^p \Cnum^n} \otimes \operator_{\semigrelemB} \) is quasi-compact as well. Moreover, we can write
	\[
	\id_{\Lambda^p \Cnum^n} \otimes (\operator_{\semigrelemB} - \chi(\operator_{\semigrelemB})) = (\id_{\Lambda^p \Cnum^n} \otimes \operator_{\semigrelemB}) - (\id_{\Lambda^p \Cnum^n} \otimes \id_{\banachSpace} \cdot \chi(\operator_{\semigrelemB})).
	\]
	As \( \id_{\Lambda^p \Cnum^n} \otimes \operator_\semigrelemB \) is quasi-compact and \( \abs{\chi(\operator_{\semigrelemB})} > \essSpecRad(\operator_{\semigrelemB}) \), the operator \( \id_{\Lambda^p \Cnum^n} \otimes (\operator_\semigrelemB - \chi(\operator_\semigrelemB)) \) is Fredholm of index zero (in the sense of \thref{defi:fredholmIndexZero}) by the definition of the essential spectrum of a single operator \eqref{equ:essentialSpec}.
\end{proof}

\begin{thm}\label{thm:jointDiscreteSpecInFiniteSpace}
	Given a character \( \chi \in \Hom_{\Cnum\text{-}\mathrm{alg.}}(\polyAlg, \Cnum) \) with \( \abs{\chi(\operator_{\semigrelemB})} > \essSpecRad(\operator_{\semigrelemB}) \) for some \( \semigrelemB \in H^\circ \), then there exists a finite-dimensional \( \operatorFam \)-invariant vector space \( \finiteSpace \subseteq \complVs \) such that for all \( i \in \Znum \) we have
	\begin{align*}
		\Tor_i^{\polyAlg}(\Cnum_\chi, \complVs_{\operatorFam}) &\cong \Tor_i^{\polyAlg}(\Cnum_\chi, \finiteSpace_{\operatorFam}), \\
		\Ext^i_{\polyAlg}(\Cnum_\chi, \complVs_{\operatorFam}) &\cong \Ext^i_{\polyAlg}(\Cnum_\chi, \finiteSpace_{\operatorFam}).
	\end{align*}
\end{thm}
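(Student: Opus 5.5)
Given \( \chi \) and \( \semigrelemB \in \semigr^\circ \) with \( \abs{\chi(\operator_\semigrelemB)} > \essSpecRad(\operator_\semigrelemB) \), the plan is to combine \thref{prop:parametrixForSemigr} with \thref{lem:projectionOnEigenspace}. The finite-dimensional space will be \( \finiteSpace \coloneqq \Image \widetilde{P}_0 \), where \( \widetilde{P}_0 \) is the Riesz projector of \( F \coloneqq \operator_\semigrelemB - \chi(\operator_\semigrelemB)\id_{\complVs} \) at the eigenvalue \( 0 \).

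First, \thref{prop:parametrixForSemigr} gives \( B_1(\chi),\ldots,B_n(\chi) \in \mathcal A \) with
\[
\sum_{i=1}^n (\operator_{\semigrgen_i} - \chi(\operator_{\semigrgen_i}))B_i(\chi) = \operator_\semigrelemB - \chi(\operator_\semigrelemB).
\]
This is exactly the hypothesis of \thref{lem:projectionOnEigenspace} with \( C = \operator_\semigrelemB \) and \( \lambda = \chi(\operator_\semigrelemB) \). Since \( \operator_\semigrelemB \) is quasi-compact, \( C \) is quasi-compact. The lemma also asks that \( \lambda \in \sigma(C) \), so we split into two cases.

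\begin{proofcase}
	\item[Case \( \lambda \in \sigma(\operator_\semigrelemB) \)]
		Then \( \abs{\lambda} > \essSpecRad(\operator_\semigrelemB) \) makes \( \lambda \) a normal eigenvalue. The projector \( \widetilde{P}_0 \) has finite-dimensional image \( \Hau(\operator_\semigrelemB,\lambda) \) by \thref{rem:propOfFredholmAndQuasiCompOp}\ref{item:rieszProjFiniteIm}. \thref{lem:projectionOnEigenspace} then yields the asserted isomorphisms for \( \Tor \) and \( \Ext \) directly.
	\item[Case \( \lambda \notin \sigma(\operator_\semigrelemB) \)]
		Then \( F \) is invertible and we take \( \finiteSpace = \Set{0} \). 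Running the argument of \thref{lem:projectionOnEigenspace} with \( j = 0 \), so that \( \Image F^0 = \complVs \), shows that \( \id \otimes F \) acts on the Koszul homology both as an isomorphism and, by \thref{rem:EP-Lemma_chi}\ref{item:CforArbiChar}, as zero. Hence all \( \Tor \) and \( \Ext \) groups vanish, which agrees with those of \( \Set{0} \).
\end{proofcase}

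It remains to check that \( \finiteSpace \) is \( \operatorFam \)-invariant. Each \( \operator_{\semigrgen_i} \) commutes with \( \operator_\semigrelemB \), hence with the resolvent of \( \operator_\semigrelemB \), hence with the contour integral defining \( \widetilde{P}_0 \). So \( \Image\widetilde{P}_0 \) is a \( \polyAlg \)-submodule. Equivalently, \( \finiteSpace = \Ker F^j \) with \( F \) in the commutant, and the module structure \( \finiteSpace_\operatorFam \) is the restricted one.

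The main obstacle is the \( \Ext \) statement, since the proof of \thref{lem:projectionOnEigenspace} only treats \( \Tor \). Two routes are available. One can rerun the same splitting \( \complVs = \Ker F^j \oplus \Image F^j \) on the cochain Koszul complex, using additivity of \( \Ext \) in the second argument and the cochain version of the homotopy from \thref{prop:homotopyViaDualKoszul}. Alternatively, one can use the duality \( H_p(\mathbf{x},M)\cong H^{n-p}(\mathbf{x},M) \) from \thref{rem:onKoszulComplex}, applied to both \( \complVs \) and \( \finiteSpace \), to transfer the \( \Tor \) isomorphism into an \( \Ext \) isomorphism.
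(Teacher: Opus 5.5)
Your proposal is correct and follows the paper's proof: take the $B_i(\chi)$ from \thref{prop:parametrixForSemigr}, apply \thref{lem:projectionOnEigenspace} with $C=\operator_\semigrelemB$ and $\lambda=\chi(\operator_\semigrelemB)$, and set $\finiteSpace=\Image\widetilde{P}_0$. Your separate case $\lambda\notin\sigma(\operator_\semigrelemB)$ (giving $\finiteSpace=\Set{0}$), your invariance argument via commutation with the resolvent, and your two routes to the $\Ext$ statement fill in details that the paper leaves implicit.
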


\begin{proof}
	We construct the same operators \( B_i \) as in the proof of \thref{prop:Fredholm}.
	Then we apply \thref{lem:projectionOnEigenspace} with \( C = \operator_{\semigrelemB} \) and \( \lambda=\chi(\semigrelemB) \).
	As the image of the Riesz projector \( \widetilde{P}_0 \) of \( \operator_{\semigrelemB} \) with eigenvalue \( \chi(\semigrelemB) \) is finite dimensional (see \thref{rem:propOfFredholmAndQuasiCompOp}\ref{item:rieszProjFiniteIm}) and \( \operatorFam \)-invariant, we take it as the subspace \( \finiteSpace_{\operatorFam} \).
\end{proof}

\begin{thm}\label{thm:jointEigenvalues}
	Given a character \( \chi \in \sigma_{\mathrm{T}}(\operator) \) with \( \abs{\chi(\operator_{\semigrelemB})} > \essSpecRad(\operator_{\semigrelemB}) \) for some \( \semigrelemB \in H^\circ \).
	Then the character \( \chi \) is a joint eigenvalue.
\end{thm}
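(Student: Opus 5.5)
The plan is to reduce to finite dimensions via \thref{thm:jointDiscreteSpecInFiniteSpace} and then apply \thref{lem:taylorSpecOnFiniteSpace}. Since \( \chi \in \sigma_{\mathrm{T}}(\operatorFam) \) and \( \abs{\chi(\operator_{\semigrelemB})} > \essSpecRad(\operator_{\semigrelemB}) \) for some \( \semigrelemB \in \semigr^\circ \), \thref{thm:jointDiscreteSpecInFiniteSpace} gives a finite-dimensional \( \operatorFam \)-invariant subspace \( \finiteSpace \subseteq \complVs \). Concretely, \( \finiteSpace = \Image \widetilde{P}_0 \), the image of the Riesz projector of \( \operator_{\semigrelemB} - \chi(\operator_{\semigrelemB}) \) at \( 0 \). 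For this space, \( \Tor_i^{\polyAlg}(\Cnum_\chi, \complVs_{\operatorFam}) \cong \Tor_i^{\polyAlg}(\Cnum_\chi, \finiteSpace_{\operatorFam}) \) for all \( i \). Because \( \chi \) is not regular for \( \complVs_{\operatorFam} \), some \( \Tor_i \) is nonzero, so \( \chi \) is also not regular for \( \finiteSpace_{\operatorFam} \). That is, \( \chi \in \sigma_{\mathrm{T}}(\operatorFam|_{\finiteSpace}) \), where \( \operatorFam|_{\finiteSpace} \) is the restricted commuting family.

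On the finite-dimensional space \( \finiteSpace \), \thref{lem:taylorSpecOnFiniteSpace} gives \( \sigma_{\mathrm{T}}(\operatorFam|_{\finiteSpace}) = \sigma_{\mathrm{p}}(\operatorFam|_{\finiteSpace}) \). Hence there is a nonzero \( v \in \finiteSpace \) with \( \operator_{\varpi_i} v = \chi(\operator_{\varpi_i}) v \) for all \( i \). Since \( \finiteSpace \subseteq \complVs \) and the restricted operators are just the \( \operator_{\varpi_i} \) acting on \( v \), this \( v \) is a joint eigenvector in \( \complVs \). Therefore \( \chi \in \sigma_{\mathrm{p}}(\operatorFam) \), and by \thref{rem:jointEV-top} the conclusion does not depend on the choice of generators.

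The main thing to check is that the module structure on \( \finiteSpace_{\operatorFam} \) in \thref{thm:jointDiscreteSpecInFiniteSpace} really is the restriction of \( \complVs_{\operatorFam} \). This holds because \( \widetilde{P}_0 \) commutes with every \( \operator_{\varpi_i} \): it is a holomorphic function of \( \operator_{\semigrelemB} \in \mathcal A \). Consequently the Taylor spectrum on \( \finiteSpace \) is that of the restricted family, and the isomorphism of \( \Tor \) groups transports non-regularity correctly.

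A cleaner alternative avoids \thref{lem:taylorSpecOnFiniteSpace}. Using \eqref{eq:pSubeqTEqCo-T}, non-regularity is equivalent to non-coregularity, so some \( \Ext^i(\Cnum_\chi, \finiteSpace_{\operatorFam}) \) is nonzero. On a finite-dimensional space one can then argue with the generalized joint eigenspace decomposition of commuting matrices. On any summand where some \( \operator_{\varpi_i} - \chi(\operator_{\varpi_i}) \) is invertible, the Koszul complex is exact, since one can build a contracting homotopy as in \thref{prop:homotopyViaDualKoszul} with \( B_i \) the inverse. Hence the generalized joint eigenspace for \( \chi \) is nonzero, and it contains a joint eigenvector. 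I expect no step to be a real obstacle beyond this bookkeeping.
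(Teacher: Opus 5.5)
Your proposal is correct and follows the paper's own proof: the paper also reduces to the finite-dimensional \( \operatorFam \)-invariant space \( \finiteSpace \) from \thref{thm:jointDiscreteSpecInFiniteSpace} and then applies \thref{lem:taylorSpecOnFiniteSpace}. Your extra check, that the Riesz projector commutes with the \( \operator_{\varpi_i} \) so that \( \finiteSpace \) carries the restricted module structure, only makes explicit what the paper leaves implicit, and your alternative argument via generalized joint eigenspaces is essentially a proof of \thref{lem:taylorSpecOnFiniteSpace} itself rather than a different route.
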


\begin{proof}
	In view of \thref{thm:jointDiscreteSpecInFiniteSpace} the assertion follows from \thref{lem:taylorSpecOnFiniteSpace}.
\end{proof}

\subsection{Application to Transfer Operators}

To apply the established spectral theory to the transfer operators we recall the setting from \Cref{subsec:repOnFinitelyGenCommuMonoids}.
The abelian monoid \( \semigr \subseteq \domCoweights \) is given by a finitely generated submonoid of the dominant coweights corresponding to the root system that belongs to a local building.
We denote the set of generators of \( \semigr \) by \( \Set{h_1,\ldots,h_m} \).
 
In \Cref{equ:commuRelTOp} we have seen that \( \domCoweights \to \mathcal L(\lipschitzFunc), \; \paramA \mapsto \tOp_{\paramA} \) defines a homomorphism of monoids.
Hence the restriction to \( \semigr \subseteq \domCoweights \) is a homomorphism as well.
Moreover, we want to assume that \( \semigr^\circ \coloneqq \innerCoweights \cap \semigr \neq \emptyset \).

We want to study the Taylor spectrum of the operator algebra
\[
\mathcal A_{\semigr} \coloneqq \spr{\tOp_{h_1},\ldots,\tOp_{h_m}}_{\Cnum\text{-}\mathrm{alg.}} = \spr{\tOp_k \mid k \in \semigr}_{\Cnum\text{-}\mathrm{alg.}}.
\]
defined by the transfer operators.
To do this we study the Taylor spectrum of the operator family \( \tOp \coloneqq (\tOp_{h_1},\ldots,\tOp_{h_m}) \) which is a subset \( \sigma_{\mathrm{T}}(\tOp) \subseteq \Hom_{\Cnum\text{-}\mathrm{alg.}}(\Cnum[\semigr], \Cnum) \).
\begin{exmp}
	In the setting of algebraic groups the above situation arises naturally by taking a lattice \( L^\vee \) with \( \corootLat \subseteq L^\vee \subseteq \coweightLat \).
	In such a situation we set \( \semigr = L_+^\vee \coloneqq L^\vee \cap \domCoweights \) as our monoid.
	For example in the case of simply-connected algebraic groups we have \( L^\vee = \corootLat \) and in the setting of a group of adjoint type we have \( \corootLat = L^\vee = P^\vee \).
	This leads to the study of the operator algebra \( \mathcal A_{L_+^\vee} \coloneqq \spr{\tOp_\mu \mid \mu \in L_+^\vee}_{\Cnum\text{-}\mathrm{alg.}} \).
\end{exmp}

Recall that by \thref{rem:barA-characters} the Taylor spectrum is independent of the generators of \( \semigr \) we have chosen.
\thref{thm:jointEigenvalues} immediately yields the following theorem on the existence of a discrete spectrum of dynamical resonances.

\begin{thm}
	Let \( \vartheta \in {}]0,1[{} \), \( \semigr \subset \domCoweights \) a finitely generated submonoid, and \( \mathcal A_{\semigr} \coloneqq \spr{\tOp_{h_1},\ldots,\tOp_{h_m}}_{\Cnum\text{-}\mathrm{alg.}} \) the operator algebra acting on \( \lipschitzFunc \) then
	\[  
	\sigma_{\mathrm{T}}(\tOp) \cap\Set{\chi\in \Hom_{\Cnum\text{-}\mathrm{alg.}}(\Cnum[H], \Cnum) \given \exists k \in \semigr^\circ: \  \abs{\chi(k)} > \vartheta} \subset \sigma_{\mathrm{p}}(\tOp).
	\]
\end{thm}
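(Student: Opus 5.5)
The theorem should follow directly from \thref{thm:jointEigenvalues} once we place the transfer operators in the abstract framework of \Cref{subsec:repOnFinitelyGenCommuMonoids}. Take \( \complVs = \lipschitzFunc \) with the norm \( \norm{\cdot}_\vartheta \), the abelian monoid \( \semigr \) generated by \( h_1,\ldots,h_m \), and the commuting bounded operators \( \operator_j = \tOp_{h_j} \). By \eqref{equ:commuRelTOp} the assignment \( k \mapsto \tOp_k \) is a monoid homomorphism on \( \semigr \). Hence the operator \( \operator_k \) built from the generators coincides with \( \tOp_k \) for every \( k \in \semigr \). By \thref{rem:barA-characters}, characters in the Taylor spectrum factor through \( \Cnum[\semigr] \), and \( \chi(k) \) means \( \chi(\tOp_k) \).

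The key point is the identification \( \semigr^\circ = \innerCoweights \cap \semigr \) with the abstract set of \( k \) for which \( \operator_k \) is quasi-compact. I will only need the inclusion \( \innerCoweights \cap \semigr \subseteq \{k : \tOp_k \text{ quasi-compact}\} \), and that inclusion is exactly \thref{thm:transferOpInteriorQuasiComp}. The same theorem gives the bound \( \essSpecRad(\tOp_k) \leq \vartheta \) for every \( k \in \innerCoweights \).

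Now let \( \chi \in \sigma_{\mathrm{T}}(\tOp) \), and suppose there is some \( k \in \semigr^\circ \) with \( \abs{\chi(k)} > \vartheta \). Then
\[
\abs{\chi(\operator_k)} > \vartheta \geq \essSpecRad(\operator_k),
\]
with \( \operator_k \) quasi-compact. This is precisely the hypothesis of \thref{thm:jointEigenvalues} for \( \semigrelemB = k \), so \( \chi \) is a joint eigenvalue of \( (\tOp_{h_1},\ldots,\tOp_{h_m}) \).

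Finally, a joint eigenvector \( \varphi \) satisfies \( \tOp_k\varphi = \chi(k)\varphi \) for all \( k\in\semigr \), since each \( \tOp_k \) is a product of the \( \tOp_{h_j} \). So \( \chi \in \sigma_{\mathrm{p}}(\tOp) \) as a character of \( \Cnum[\semigr] \), matching \thref{rem:jointEV-top}.

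The only step needing care is this bookkeeping. Two things must be checked: that the abstract quasi-compactness hypothesis is met by the strongly dominant elements via the bound \( \essSpecRad \leq \vartheta \), and that evaluating \( \chi \) on \( k \) versus on \( \tOp_k \) is consistent. I do not expect any genuine analytic obstacle, since the Fredholm and parametrix work is already contained in \thref{prop:Fredholm}, \thref{thm:jointDiscreteSpecInFiniteSpace} and \thref{lem:taylorSpecOnFiniteSpace}.
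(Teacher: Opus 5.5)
Your proposal is correct and takes the same route as the paper, which obtains this theorem in one line from \thref{thm:jointEigenvalues}, using \thref{thm:transferOpInteriorQuasiComp} to get quasi-compactness and the bound \( \essSpecRad(\tOp_k) \leq \vartheta \) for \( k \in \innerCoweights \cap \semigr \). Your extra bookkeeping is what the paper leaves implicit: \eqref{equ:commuRelTOp} makes \( \tOp_k \) well defined for a non-free \( \semigr \), and characters of \( \Cnum[\semigr] \) are matched with characters of the polynomial ring.
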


\begin{thm}\label{thm:spectrumInF1}
	Let \( \vartheta \in {}]0,1[{} \) be a fixed parameter for the metric on \( \secQuot \), \( \semigr \subseteq \domCoweights \) a finitely generated monoid with \( \semigr^\circ \neq \emptyset \).
	We denote the generators of \( \semigr \) by \( \Set{h_1,\ldots,h_m} \).
	Let \( k \in H^\circ \) and \( \tOp = (\tOp_{h_1},\ldots,\tOp_{h_m}) \) be the family of transfer operators acting continuously on \( \lipschitzFunc \).
	Given a character \( \chi \in \Hom_{\Cnum\text{-}\mathrm{alg.}}(\polyAlg, \Cnum) \) such that \( \abs{\chi(\tOp_k)} > \vartheta \) we have
	\begin{align*}
		\Tor_i^{\polyAlg}(\Cnum_\chi, \lipschitzFunc) &\cong \Tor_i^{\polyAlg}(\Cnum_\chi, F_1), \\
		\Ext^i_{\polyAlg}(\Cnum_\chi, \lipschitzFunc) &\cong \Ext^i_{\polyAlg}(\Cnum_\chi, F_1)
	\end{align*}
	for all \( i \in \Znum \).
\end{thm}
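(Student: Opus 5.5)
The plan is to combine \thref{lem:projectionOnEigenspace} with \thref{prop:GenEigenspaceInFOne}. Fix $k \in \semigr^\circ = \innerCoweights \cap \semigr$ with $\abs{\chi(\tOp_k)} > \vartheta$. By \thref{thm:transferOpInteriorQuasiComp}, $\tOp_k$ is quasi-compact on $\lipschitzFunc$ with $\essSpecRad(\tOp_k) \leq \vartheta$, so $\lambda \coloneqq \chi(\tOp_k)$ satisfies $\abs{\lambda} > \essSpecRad(\tOp_k)$.

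\textbf{Step 1 (parametrix).} Apply \thref{prop:parametrixForSemigr} with the generators $h_1,\ldots,h_m$ and the element $k$. This produces $B_i(\chi)\in\mathcal A_\semigr$ such that
\[
\sum_i (\tOp_{h_i}-\chi(\tOp_{h_i}))B_i(\chi)=\tOp_k-\lambda .
\]
These are exactly the hypotheses of \thref{lem:projectionOnEigenspace} with $C=\tOp_k$. Two cases arise:
\begin{itemize}
\item If $\lambda\in\sigma(\tOp_k)$, the lemma gives $\Tor_i(\Cnum_\chi,\lipschitzFunc)\cong\Tor_i(\Cnum_\chi,\Image\widetilde P_0)$, and similarly for $\Ext$. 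Here $\Image\widetilde P_0=\Hau(\tOp_k,\lambda)$ is the generalized eigenspace, which is finite-dimensional and $\tOp$-invariant since the $\tOp_{h_i}$ commute with $\tOp_k$.
\item If $\lambda\notin\sigma(\tOp_k)$, then $\tOp_k-\lambda$ is invertible. The same homotopy argument as in \thref{lem:projectionOnEigenspace} then shows that all $\Tor$ and $\Ext$ groups over $\lipschitzFunc$ vanish; equivalently, one may take $\widetilde P_0=0$.
\end{itemize}

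\textbf{Step 2 (reduce to $F_1$).} Since $k\in\innerCoweights$ and $\abs{\lambda}>\vartheta$, \thref{prop:GenEigenspaceInFOne} gives $\Hau(\tOp_k,\lambda)\subseteq F_1$. It remains to compare the homology of the finite-dimensional module $W\coloneqq\Hau(\tOp_k,\lambda)$ with that of $F_1$. By \thref{prop:fnInvariantQuot}\ref{item:fnTOpinv}, $F_1$ is invariant under every $\tOp_\mu$, so $F_1$ is a finite-dimensional $\polyAlg$-submodule, and $\tOp_k$ restricts to it.

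On $F_1$, decompose into generalized eigenspaces of $\tOp_k|_{F_1}$:
\[
F_1=\Ker(\tOp_k-\lambda)^N\oplus\Image(\tOp_k-\lambda)^N .
\]
This is a decomposition into $\polyAlg$-submodules, because the generators commute with $\tOp_k$. The generalized eigenspace $\Ker(\tOp_k|_{F_1}-\lambda)^N$ equals $W\cap F_1=W$. On the complement, $\tOp_k-\lambda$ acts invertibly. Rerunning the argument of \thref{lem:projectionOnEigenspace} (via \thref{prop:homotopyViaDualKoszul} and \thref{rem:EP-Lemma_chi}) on the finite-dimensional module $F_1$, the same identity $\sum_i(\tOp_{h_i}-\chi)B_i=\tOp_k-\lambda$ shows that the $\Tor$ and $\Ext$ groups of the complement vanish. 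Additivity of $\Tor$ and $\Ext$ then gives $\Tor_i(\Cnum_\chi,F_1)\cong\Tor_i(\Cnum_\chi,W)$, and combining this with Step 1 yields the claimed isomorphisms. The $\Ext$ case is identical, or follows from \eqref{equ:homAndCohomIsom}.

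\textbf{Main obstacle.} The delicate point is the bookkeeping in Step 2. One must verify that $\tOp_k|_{F_1}$ has the same generalized $\lambda$-eigenspace as $\tOp_k$ on $\lipschitzFunc$, which holds because $W\subseteq F_1$. One must also check that the homotopy argument applies verbatim to the submodule $F_1$, which needs the $B_i(\chi)\in\mathcal A_\semigr$ to preserve $F_1$; this holds since every $\tOp_\mu$ does. Finally, the case $\lambda\notin\sigma(\tOp_k)$ must be treated uniformly, which is done by noting that both sides vanish.
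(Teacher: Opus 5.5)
Your proposal is correct and takes the same route as the paper's proof. Both arguments build the parametrix from \thref{prop:parametrixForSemigr}, use \thref{lem:projectionOnEigenspace} to reduce the homology over $\lipschitzFunc$ to $\Hau(\tOp_k,\chi(\tOp_k))$, perform the same reduction on the finite-dimensional invariant space $F_1$, and identify the two generalized eigenspaces via \thref{prop:GenEigenspaceInFOne}. Your explicit check that the generalized $\lambda$-eigenspace of $\tOp_k|_{F_1}$ equals $W$, and your separate handling of the case $\lambda\notin\sigma(\tOp_k)$, only spell out steps the paper treats briefly.
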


\begin{proof}
	We only prove the \( \Tor \)-case. By \thref{prop:parametrixForSemigr} we find \( B_1(\chi),\ldots, B_m(\chi) \in \mathcal A \) such that
	\[
	\sum\limits_{i=1}^m (\tOp_{h_i} - \chi(\tOp_{h_i}))B_i(\chi) = \tOp_{k} - \chi(\tOp_{k}).
	\]
	In \thref{thm:transferOpInteriorQuasiComp} we have seen that \( \tOp_{k} \) is quasi-compact and \( \essSpecRad(\tOp_{k}) \leq \vartheta < \abs{\chi(\tOp_{k})} \).
	This allows us to apply \thref{thm:jointDiscreteSpecInFiniteSpace} and \thref{lem:projectionOnEigenspace}. Thus, we know that we can compute
	\begin{equation}\label{equ:firstFinite}
		\Tor_i^{\polyAlg}(\Cnum_\chi, \lipschitzFunc) \cong \Tor_i^{\polyAlg}(\Cnum_\chi, \finiteSpace_{\tOp})
	\end{equation}
	in some finite-dimensional subspace \(\finiteSpace_{\tOp}\) given by the image of the Riesz projector of \( \tOp_{k} \) onto the eigenvalue \( \chi(\tOp_{k}) \).
	If \( \chi(\tOp_{k}) \) is not an eigenvalue, the space is the zero-space, else the image of the Riesz projector is the generalized eigenspace \( \finiteSpace_{\tOp} = \Hau(\tOp_{k}, \chi(\tOp_{k})) \subseteq \lipschitzFunc \).
	
	On the other hand we know that \( F_1 \) is a finite-dimensional \( \tOp \)-invariant vector space (see \thref{prop:fnFiniteSpaces} and \thref{prop:fnInvariantQuot}\ref{item:fnTOpinv}).
	Again we can apply \thref{thm:jointDiscreteSpecInFiniteSpace} and find
	\begin{equation}\label{equ:secondFinite}
		\Tor_i^{\polyAlg}(\Cnum_\chi, F_1) \cong \Tor_i^{\polyAlg}(\Cnum_\chi, \finiteSpace^\prime_{\tOp}).
	\end{equation}
	Here \( \finiteSpace^\prime_{\tOp} \) is the image of the Riesz projector given by the generalized eigenspace \( \Hau(\tOp_{k}, \chi(\tOp_{k})) \subseteq F_1 \).
		But we have shown in \thref{prop:GenEigenspaceInFOne} that for all eigenvalues \( \chi(\tOp_{k}) > \vartheta \) the generalized eigenspace of \( \tOp_{k} \) on \( \lipschitzFunc \) is contained in \( F_1 \).
	Consequently \( \finiteSpace_{\tOp} = \finiteSpace^\prime_{\tOp} \) and combining \eqref{equ:firstFinite} and \eqref{equ:secondFinite} we obtain
	\[
	\Tor_i^{\polyAlg}(\Cnum_\chi, \lipschitzFunc) \cong \Tor_i^{\polyAlg}(\Cnum_\chi, F_1). \qedhere
	\]
\end{proof}

Applying \thref{thm:spectrumInF1}, \thref{prop:GenEigenspaceInFOne} and \thref{cor:eigenvalIndOfTheta} we obtain the following theorem.

\begin{thm}\label{thm:jointEigenvaluesOnFiniteDimSpace}
	Let \( \chi \in \Hom_{\Cnum\text{-}\mathrm{alg.}}(\polyAlg, \Cnum) \) be a character.
	We set \( \alpha \coloneqq \sup_{\paramA \in H^\circ} \abs{\chi(\tOp_{\paramA})} \) and assume that \( \alpha > 0 \).
	Then the following statements are equivalent.
	\begin{enumerate}[(i)]
		\item
			There exists some \( \vartheta \in (0, \min\Set{1,\alpha}) \) such that \( \chi \in \sigma_{\mathrm{T}}(\tOp) \).
		\item
			For every \( \vartheta \in (0, \min \Set{1,\alpha}) \) \( \chi \) is a joint eigenvalue on \( \lipschitzFunc \).
		\item
			The character \( \chi \) is a joint eigenvalue for the operator algebra
			\[
			\restr{{\mathcal A_{H}}}{{F_{1}(\secQuot)}} \coloneqq \spn*{\restr{\tOp_{\paramA}}{F_1} \mid \paramA \in H}_{\Cnum\text{-}\mathrm{alg.}}.
			\]
			In other words, \( \chi \) is a joint eigenvalue of the family of transfer operators restricted to the invariant subspace \( F_1 \).
	\end{enumerate}
\end{thm}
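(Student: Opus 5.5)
I will prove the cycle (i) \(\Rightarrow\) (iii) \(\Rightarrow\) (ii) \(\Rightarrow\) (i). The common input is that, since \(\alpha>0\) is a supremum, for any \(\vartheta<\min\{1,\alpha\}\) there exists \(k\in H^\circ\) with \(\abs{\chi(\tOp_k)}>\vartheta\). This is exactly the hypothesis of \thref{thm:spectrumInF1}, of \thref{thm:jointEigenvalues} (with \(\essSpecRad(\tOp_k)\le\vartheta\) from \thref{thm:transferOpInteriorQuasiComp}), and of \thref{prop:GenEigenspaceInFOne}.

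For (i) \(\Rightarrow\) (iii), fix the \(\vartheta\) from (i) and pick \(k\in H^\circ\) with \(\abs{\chi(\tOp_k)}>\vartheta\). By \thref{thm:spectrumInF1}, \(\Tor_i(\Cnum_\chi,\lipschitzFunc)\cong\Tor_i(\Cnum_\chi,F_1)\) for all \(i\). Since \(\chi\in\sigma_{\mathrm T}(\tOp)\), some \(\Tor_i\) on \(\lipschitzFunc\) is nonzero, so \(\chi\) lies in the Taylor spectrum of the family restricted to the finite-dimensional invariant space \(F_1\). \thref{lem:taylorSpecOnFiniteSpace} then gives that \(\chi\) is a joint eigenvalue on \(F_1\), which is (iii). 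Here \(F_1\) is \(\vartheta\)-independent as a set by \thref{rem:fct spaces}, and \(F_1\subseteq\lipschitzFunc\) for every \(\vartheta\) by \thref{lem:BHW5-2}.

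For (iii) \(\Rightarrow\) (ii), a joint eigenvector \(\varphi\in F_1\) satisfies \(\tOp_\mu\varphi=\chi(\tOp_\mu)\varphi\) for all \(\mu\in H\). Since \(F_1\subseteq\lipschitzFunc\) for every \(\vartheta\in(0,1)\), the same \(\varphi\) is a joint eigenvector in \(\lipschitzFunc\) for every \(\vartheta\), in particular for every \(\vartheta<\min\{1,\alpha\}\).

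For (ii) \(\Rightarrow\) (i), a joint eigenvalue lies in \(\sigma_{\mathrm T}\) by \eqref{eq:pSubeqTEqCo-T}. The interval \((0,\min\{1,\alpha\})\) is nonempty because \(\alpha>0\), so some admissible \(\vartheta\) exists.

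The main subtlety is the step (i) \(\Rightarrow\) (iii). One must check that the \(\polyAlg\)-module structure on \(F_1\) is the restriction of that on \(\lipschitzFunc\), i.e.\ that \(F_1\) is invariant under all \(\tOp_{h_i}\) (\thref{prop:fnInvariantQuot}). One must also check that the isomorphism in \thref{thm:spectrumInF1} is the one induced by the inclusion, so that nonvanishing transfers. If that is unclear, I would argue directly instead. By \thref{lem:projectionOnEigenspace}, \(\Tor\) on \(\lipschitzFunc\) equals \(\Tor\) on \(\Hau(\tOp_k,\chi(\tOp_k))\), which is contained in \(F_1\) by \thref{prop:GenEigenspaceInFOne}. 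Then apply \thref{lem:taylorSpecOnFiniteSpace} on this finite-dimensional space to get a joint eigenvector inside \(F_1\).

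\thref{cor:eigenvalIndOfTheta} is the underlying reason the eigenvalue statement is uniform in \(\vartheta\). In this route, however, its role is absorbed by the \(\vartheta\)-independence of \(F_1\).
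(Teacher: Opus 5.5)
Your proposal is correct and follows the paper's proof essentially step for step: it uses the same cycle, with (i)\(\Rightarrow\)(iii) from \thref{thm:spectrumInF1} plus finite-dimensionality of \(F_1\) (\thref{lem:taylorSpecOnFiniteSpace}), (iii)\(\Rightarrow\)(ii) from the \(\vartheta\)-independent invariant inclusion \(F_1\subseteq\lipschitzFunc\), and (ii)\(\Rightarrow\)(i) trivial. Your worry about whether the isomorphism in \thref{thm:spectrumInF1} is induced by the inclusion is unnecessary, because any linear isomorphism preserves nonvanishing of \(\Tor_i\), so the fallback argument is not needed.
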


\begin{proof}
	\mbox{}
	\begin{prooflist}
		\ximpliesy{(ii)}{(i)}
			Trivial.
		\ximpliesy{(i)}{(iii)}
			By assumption \( \chi \in \sigma_{\mathrm{T}}(\tOp) \), which tells us that there is some \( i \in \Set{0,\ldots,n} \) such that \( \Tor_i^{\polyAlg}(\Cnum_\chi, \lipschitzFunc) \neq 0 \).
			By \thref{thm:spectrumInF1} we have \( \Tor_i^{\polyAlg}(\Cnum_\chi, F_1) \neq 0 \).
			But as \( F_1 \) is a finite-dimensional vector space \( \chi \) is a common eigenvalue of the algebra.
			
			Recall that being a joint eigenvalue means
			\[
			\Eig(\tOp, \chi) = \bigcap\limits_{\paramA \in \semigr} \Eig(\tOp_\paramA, \chi(\paramA)) \neq \Set{0}.
			\]
			Now let \( \varphi \in \Eig(\tOp, \chi) \).
			By assumption we find \( \paramA \in \semigr^\circ \) fulfilling \( \abs{\chi(\paramA)} > \vartheta \).
			This gives us
			\[
			\varphi \in \Eig(\tOp_{\paramA}, \chi(\tOp_{\paramA})) \subseteq F_1
			\]
			by \thref{prop:GenEigenspaceInFOne}.
			Hence we have \( \Eig(\tOp, \chi) \subseteq F_1 \).
		\ximpliesy{(iii)}{(ii)}
			As \( F_1 \subseteq \lipschitzFunc \) is an invariant subspace for every \( \vartheta \in {}]0,1[{} \) the assertion follows. \qedhere
	\end{prooflist}
\end{proof}

\begin{cor}
	Let \( \Set{h_1,\ldots,h_m} \subseteq \semigr \subseteq \domCoweights \) be a set of generators for the monoid \( \semigr \).
	By construction the operators \( (\tOp_{h_i})_{i=1,\ldots,m} \) generate the algebra \( \mathcal A_\semigr \).
	Given a joint eigenvalue \( \chi \in \sigma_{\mathrm{T}}(\tOp) \) with \( \abs{\chi(\tOp_{k})} > \vartheta \) for some \( k \in \semigr^\circ \) the corresponding joint eigenspace \( \Eig(\tOp, \chi) \) is finite dimensional.
\end{cor}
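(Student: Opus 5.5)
The approach is to trap the joint eigenspace $\Eig(\tOp,\chi)$ inside a space already known to be finite dimensional, namely $F_1$. By \thref{eq:pSubeqTEqCo-T}, or directly by the hypothesis that $\chi$ is a joint eigenvalue, the space $\Eig(\tOp,\chi) = \bigcap_{i=1}^m \Ker(\tOp_{h_i} - \chi(\tOp_{h_i}))$ is nonzero. Using \thref{rem:jointEV-top}, it is contained in $\Eig(B,\chi(B))$ for every $B \in \mathcal A_\semigr$. In particular it is contained in $\Eig(\tOp_k, \chi(\tOp_k))$ for the given $k \in \semigr^\circ$, since $\tOp_k$ is a monomial in the generators $\tOp_{h_i}$ and $\chi$ is multiplicative.

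Next, $k \in \semigr^\circ = \innerCoweights \cap \semigr$ is strongly dominant and $\abs{\chi(\tOp_k)} > \vartheta$. So \thref{prop:GenEigenspaceInFOne}, applied with $\paramA = k$ and $z = \chi(\tOp_k)$, gives
\[
\Eig(\tOp_k, \chi(\tOp_k)) \subseteq \Hau(\tOp_k, \chi(\tOp_k)) \subseteq F_1 .
\]
Hence $\Eig(\tOp,\chi) \subseteq F_1$.

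Finally, \thref{prop:fnFiniteSpaces} shows that $F_1$ is finite dimensional, because $\quot$ is compact. This gives $\dim \Eig(\tOp,\chi) \le \dim F_1 < \infty$.

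An alternative route goes through quasi-compactness. By \thref{thm:transferOpInteriorQuasiComp}, $\essSpecRad(\tOp_k) \le \vartheta < \abs{\chi(\tOp_k)}$, so $\chi(\tOp_k)$ is a normal eigenvalue of $\tOp_k$ with a finite-dimensional eigenspace. The joint eigenspace, being a subspace of it, is then finite dimensional as well.

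The only point requiring care is the evaluation $\chi(\tOp_k)$, i.e.\ making sure the character is evaluated consistently on $\Cnum[\semigr]$ versus $\polyAlg$. This is exactly the abuse of notation justified in \thref{rem:barA-characters}, since $\chi$ lies in the Taylor spectrum and therefore factors through $\mathcal A_\semigr$. I do not expect any genuine obstacle here.
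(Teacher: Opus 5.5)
Your proposal is correct and follows essentially the paper's argument: the joint eigenspace lies in $\Eig(\tOp_k,\chi(\tOp_k))$, which \thref{prop:GenEigenspaceInFOne} places inside $F_1$, and $F_1$ is finite dimensional by \thref{prop:fnFiniteSpaces}. The paper also mentions that quasi-compactness of $\tOp_k$ makes $\chi(\tOp_k)$ a normal eigenvalue, which is your alternative route, and either argument alone already gives finite dimensionality.
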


\begin{proof}
	This follows from \thref{thm:jointEigenvaluesOnFiniteDimSpace}.
	To see this, recall that for any joint eigenvector \( v \) we have
	\[
	\tOp_{k} v = \chi(\tOp_{k}) v.
	\]
	Since \( \abs{\chi(\tOp_{k})} > \vartheta \) and \( \tOp_{k} \) is quasi-compact, the character \( \chi \) is a normal eigenvalue of \( \tOp_{k} \).
	In \thref{prop:fnFiniteSpaces} we have seen that \( F_1 \) is finite-dimensional.
	Therefore, by \thref{prop:GenEigenspaceInFOne} the corresponding eigenspace \( \Eig(\tOp_{k}, \chi(\tOp_{k})) \subseteq F_1 \) is finite-dimensional as well.
\end{proof}

%%%%%%%%%%%%%%%%%%%%%%%%%%%%%%%%%%%%%%%%%%%%%%%%%%%%%%%%%%%%%%%%%%

\bibliographystyle{amsalpha}
\bibliography{bibo}

\bigskip

%%%%%%%%%%%%%%%%%%%%%%%%%%%%%%%%%%%%%%%%%%%%%%%%%%%%%%%%%%%%%%%%%%

\end{document}

%%%%%%%%%%%%%%%%%%%%%%%%%%%%%%%%%%%%%%%%%%%%%%%%%%%%%%%%%%%%%%%%%%